\newtheorem{theorem}{Theorem}[section]
\newtheorem{corollary}[theorem]{Corollary}
\newtheorem{lemme}[theorem]{Lemma}
\newtheorem{proposition}[theorem]{Proposition}
\newtheorem{remark}[theorem]{Remark}
\theoremstyle{definition}
\newcommand{\E}{\mathbb{E}}		
\newcommand{\N}{\mathbb{N}}	
\renewcommand{\P}{\mathbb{P}}	
\newcommand{\R}{\mathbb{R}}		
\renewcommand{\S}{\mathbb{S}}	
\newcommand{\T}{\mathbb{T}}
\newcommand{\Z}{\mathbb{Z}}		
\newcommand{\cB}{\mathcal B}		
\newcommand{\cC}{\mathcal C}	
\newcommand{\cM}{\mathcal M}	
\newcommand{\cR}{\mathcal R}	
\newcommand{\cS}{\mathcal S}
\newcommand{\cT}{\mathcal T}		
\newcommand{\cZ}{\mathcal Z}
\newcommand{\1}{\mathds{1}}			
\definecolor{Myblue}{rgb}{0.2,0.2,0.7}
\newtheoremstyle{correction}{0.75em}{0.75em}{\color{Myblue}}{}{\bfseries}{.}{ }{\thmname{#1}\thmnumber{ #2}\thmnote{ (#3)}}
\theoremstyle{correction}
\title{Local scaling limits of quadrangulations rooted on geodesics}
\author{Mathieu Mourichoux}
\date{}
\begin{document}
\maketitle

\begin{abstract}
We identify the local scaling limit of the Uniform Infinite Planar Quadrangulation (UIPQ) and of critical Boltzmann quadrangulations, when one simultaneously rescales the distances and reroot them far away from the root of a distinguished geodesic. The limiting space is the bigeodesic Brownian plane, which appears as the local limit of the Brownian sphere around an interior point of a geodesic. We also show that the $\overline{\mathrm{UIPQ}}$ introduced by Dieuleveut, which is the local limit of the $\mathrm{UIPQ}$ rerooted at a far away point of its infinite geodesic, has the same scaling limit. These results can be seen as a commutation property between local limit, scaling limit and moving forward on a geodesic in random quadrangulations. The proofs are based on spinal decompositions of random trees and coupling results.

\end{abstract}

\tableofcontents

\section{Introduction}

In the last decades, a lot of attention has been dedicated to random planar maps, which are a model of discrete geometry. Of particular interest is the study of local limits, scaling limits, and geodesics in random planar maps. The purpose of this paper is to connect these different topics, by establishing the scaling limit of some models of random planar maps rooted on a geodesic.

Consider the embedding of a finite connected graph into the sphere $\mathbb{S}^2$ (loops and multiple edges are allowed). A planar map is an equivalence class of such embeddings, modulo orientation-preserving homeomorphisms of the sphere. The faces of $m$ are the connected components of the complement of $E(m)$. The degree of a face is the number of oriented edges that are incident to it.  We say that a planar map $m$ is a quadrangulation if all its faces have degree $4$. All the planar maps considered in this paper are rooted, meaning that they have a distinguished edge. A positive quadrangulation is a rooted quadrangulation with a distinguished vertex such that the distinguished point is closer from the tip of the root edge than from its origin. 

The first random maps that we will be dealing with are critical positive Boltzmann quadrangulations $Q$, which are random positive quadrangulations such that for every positive quadrangulation $\mathbf{q}$ with $n$ faces,
\[\P(Q=\mathbf{q})=\frac{1}{2\times12^n}.\]

This random map is called critical as its expected number of faces is infinite, see \cite{peeling} for more details.
This random map has a distinguished geodesic between its root vertex and its distinguished vertex (called the left-most geodesic from the root), that we denote by $\gamma$.

On the one hand, one can consider the local limit of such maps. This operation consists in looking at a fixed ball of radius $r>0$ around the root edge, and proving the convergence in distribution of these balls for every $r$, when the size of the map goes to infinity. When we condition $Q$ to be large, the limiting space is called the $\mathrm{UIPQ}$ \cite{Krikun}, that we denote by $Q_\infty$. This random map has been studied extensively since, see \cite{UIHPQ,GeometryUIHPQ,SeparatingCycles} for instance. In particular, it was proved in \cite{Viewfrominfinity} that, in a sense, $Q_\infty$ has an essentially unique infinite geodesic $\gamma_\infty$, meaning that there exists a sequence of points of $\gamma_\infty$ such that every infinite geodesic visits all except finitely many of these points. Moreover, Dieuleveut showed in \cite{dieuleveut} that $Q_\infty$ admits a local limit when we reroot it further and further on its infinite geodesic $\gamma_\infty$. The resulting space is called the $\overline{\mathrm{UIPQ}}$, and we denote it by $\overline{Q}_\infty$. Moreover, this random space has an infinite bigeodesic $\overline{\gamma}_\infty$.

On the other hand, one may consider the scaling limit of random maps. This consists in rescaling the distances by a certain factor, and studying the convergence in distribution of the whole map. It was proved in \cite{uniqueness,convergence} that, if we condition $Q$ to have $n$ faces and rescale distances by $n^{-1/4}$, the scaling limit exists, which is called the Brownian sphere. Moreover, this random metric space, denoted by $\cS$, has a distinguished geodesic $\Gamma$. Then, it was shown in \cite{Bigeodesicbrownianplane} that the Brownian sphere admits a local limit when one looks closer and closer at an interior point of $\Gamma$. The limiting space is called the bigeodesic Brownian plane, and we denote it by $\overline{\mathcal{BP}}$. We also mention that similar topics were studied for LQG-surfaces in \cite{basu2021environment}.

Our goal is to connect all these results together. Let $Q_n$ stand for the random variable $Q$ conditioned on the event $\{\mathrm{length}(\gamma)\geq n\}$, and denote the distinguished geodesic of $Q_n$ by $\gamma_n$. Our main result shows that, seen from their distinguished geodesics, the random maps $Q_n,Q_\infty,\overline{Q}_\infty$ have a common scaling limit, which is $\overline{\mathcal{BP}}$.

\begin{theorem}\label{Main result}
    We have
    \[\left(V(\overline{Q}_\infty),\lambda\cdot d_{\overline{Q}_\infty},\overline{\gamma}_\infty(0)\right)\xrightarrow[\lambda\rightarrow0]{(d)}\left(\overline{\mathcal{BP}},\overline{D}_\infty,\overline{\rho}_\infty\right)\]
 in distribution for the local Gromov-Hausdorff topology. Moreover, for any sequence $(k_n)_{n\in\N}$ of non-negative real numbers such that $k_n\rightarrow\infty$ and $k_n=o(n)$, we have 
    \[\left(V(Q_n),k_n^{-1}\cdot d_{Q_n},\gamma_n(n)\right)\xrightarrow[\lambda\rightarrow0]{(d)}\left(\overline{\mathcal{BP}},\overline{D}_\infty,\overline{\rho}_\infty\right)\]
    and
    \[\left(V(Q_\infty),k_n^{-1}\cdot d_{Q_\infty},\gamma_\infty(n)\right)\xrightarrow[\lambda\rightarrow0]{(d)}\left(\overline{\mathcal{BP}},\overline{D}_\infty,\overline{\rho}_\infty\right)\]
    in distribution for the local Gromov-Hausdorff topology.     
\end{theorem}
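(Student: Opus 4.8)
The plan is to reduce everything to a statement about random trees via the Schaeffer-type bijection, then use spinal decompositions. First I would recall the encoding of (positive) quadrangulations by labeled trees: a positive quadrangulation with a distinguished geodesic corresponds, under the Schaeffer correspondence, to a labeled tree whose labels record distances to the distinguished vertex, and the distinguished left-most geodesic $\gamma$ corresponds to following the successive minima of the label function, i.e.\ a distinguished spine in the tree. The key point is that ``moving forward on $\gamma$ by $n$ steps'' on the map side translates to a controlled shift along this distinguished spine on the tree side. So the first step is to set up, for each of the three objects $Q_n$, $Q_\infty$ and $\overline Q_\infty$, the corresponding tree encodings (a critical Galton--Watson tree conditioned appropriately, Kesten's tree with a marked spine, and Dieuleveut's bi-infinite-spine tree respectively), together with explicit couplings realizing the rerooting along $\gamma$ as a spine shift.

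Second, I would prove a spinal-decomposition convergence at the level of trees: after rerooting the distinguished spine at height $n$ (resp.\ looking at $\overline Q_\infty$'s bi-infinite spine at $0$) and rescaling labels by $k_n^{-1/2}$ (equivalently distances by $k_n^{-1}$, since the Schaeffer bijection scales labels like the square root of graph distance), the decorated tree converges to the object encoding $\overline{\mathcal{BP}}$ — namely a bi-infinite spine carrying independent Brownian-like label increments with the appropriate reflected/conditioned structure, to which independent critical continuum random trees with Brownian labels are grafted. This is where the coupling results alluded to in the abstract enter: one shows that the three discrete spinal decompositions can be coupled so that they agree on a large ball around the rerooting point with probability tending to $1$, using the fact that $k_n = o(n)$ (so we never ``see'' the far endpoint of $\gamma_n$, and the conditioning on $\{\mathrm{length}(\gamma)\ge n\}$ becomes invisible locally) and the essential uniqueness of the infinite geodesic in $Q_\infty$ from \cite{Viewfrominfinity} (so that $\gamma_\infty(n)$ and the Dieuleveut rerooting point coincide up to a bounded error). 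Combined with a local version of the Schaeffer bijection and the known convergence of rescaled labeled trees to their continuum analogues, this yields convergence of the rescaled balls in the local Gromov--Hausdorff topology.

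Third, I would identify the limit as precisely $(\overline{\mathcal{BP}},\overline D_\infty,\overline\rho_\infty)$. Here I would invoke \cite{Bigeodesicbrownianplane}: the continuum object obtained from the bi-infinite Brownian spine with grafted CRTs is, by construction there, the bigeodesic Brownian plane seen from an interior point of $\Gamma$; alternatively one checks that the finite-dimensional marginals (balls of radius $r$) match those of $\overline{\mathcal{BP}}$ by passing to the limit in the identity $\bigl(V(Q_n),n^{-1/4}d_{Q_n},\gamma_n\bigr)\to(\cS,D,\Gamma)$ from \cite{uniqueness,convergence} and then localizing around $\Gamma$ using \cite{Bigeodesicbrownianplane}. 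The commutation of the two limits (local then scaling versus scaling then local) is exactly the content to be verified, and it follows once the coupling in step two is uniform enough.

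The main obstacle I expect is step two: establishing that the spinal decompositions of $Q_n$, $Q_\infty$ and $\overline Q_\infty$ can be coupled to coincide locally around the point at distance $n$ along the geodesic, with high probability, \emph{uniformly} in the rescaling. This requires genuinely quantitative control — an absolute-continuity or total-variation estimate between the law of the tree near the spine at height $n$ under the conditioned measure $\P(\,\cdot \mid \mathrm{length}(\gamma)\ge n)$ and under the Kesten/Dieuleveut measures — together with a careful argument that the left-most geodesic in the discrete map does not ``wiggle'' away from the spine of the encoding tree by more than $o(k_n)$, so that rerooting at $\gamma_n(n)$ and rerooting the spine at height $n$ are interchangeable in the limit. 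The regime $k_n \to \infty$, $k_n = o(n)$ is exactly tuned to make both the conditioning negligible and the rescaled local picture nontrivial, and handling the two failure modes (seeing the far end of $\gamma$; the geodesic deviating from the spine) simultaneously is the crux of the proof.
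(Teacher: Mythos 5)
Your high-level plan — encode via the CVS bijection, realize the rerooting as a spinal shift, couple the three trees near the distinguished spine, transfer to the map metric, and identify the limit by localizing the Brownian sphere around a geodesic point — is exactly the strategy the paper follows. Two concrete corrections are needed, however.

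First, the claim that ``the Schaeffer bijection scales labels like the square root of graph distance'' is backwards: in the CVS correspondence, $d_{\Phi(\theta)}(u,v_*) = \ell_u - \ell_*$, so labels \emph{are} map distances (up to an additive shift), and if you rescale map distances by $k_n^{-1}$ you must rescale labels by $k_n^{-1}$ as well, not $k_n^{-1/2}$. What scales quadratically is the \emph{position along the spine} (tree height), since the spinal label process behaves like a squared Bessel-type walk: reaching label $\sim k_n$ takes $\sim k_n^2$ steps. Getting this the right way round is essential in the coupling step — Propositions~\ref{CouplingTrees1} and~\ref{CouplingTrees2} in the paper couple the spinal processes on time windows of order $\beta n^2$, and the localization estimates (Lemmas~\ref{Celia}--\ref{Control minimum}) depend crucially on this balance. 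A related slip: the distinguished geodesic $\gamma$ in the map is made of CVS \emph{successor arcs}, not tree edges, so it is not literally the spine of $\Theta_n$; the spine only appears after rerooting $\Theta_n$ at $\tau_n(n)$ (Proposition~\ref{decompo1}), and the gap between these two pictures is precisely what the localization lemma~\ref{localisation 2} controls.

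Second, the paper never proves a direct scaling convergence of the rescaled spinal tree to a continuum decoration of a bi-infinite Brownian spine. Instead, Theorem~\ref{Main result} is deduced by introducing an intermediate conditioned Boltzmann quadrangulation $Q_{m_n}$ with $m_n = \alpha^{-1}k_n$, applying Theorem~\ref{Convergergence sphere} (convergence of $Q_{m_n}$ rescaled by $m_n^{-1}$ to $\cS$ under $\N_0(\cdot\mid W_*<-1)$), the coupling Theorem~\ref{theorem 2} to equate macroscopic balls in $Q_n$, $Q_{m_n}$, $Q_\infty$, $\overline Q_\infty$, and Theorem~\ref{convergergenceBP} (local limit of $\cS$ around $\Gamma_1$ is $\overline{\mathcal{BP}}$). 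You mention this route as an ``alternative,'' but in the paper it is the \emph{only} route used to identify the limit; the continuum spinal decomposition of $\overline{\mathcal{BP}}$ is invoked only implicitly through \cite{Bigeodesicbrownianplane}. With the scaling corrected and the identification step made via the $Q_{m_n}$ trick, your sketch matches the paper's proof.
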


Informally, together with some results of \cite{brownianplane,Bigeodesicbrownianplane} about links between $Q,Q_\infty,\overline{\mathcal{BP}}$ and the Brownian plane, this means that the operations of local limits, scaling limits and ``moving forward on a geodesic'' commute.

\begin{figure}
    \centering
    \includegraphics[width=0.5\linewidth]{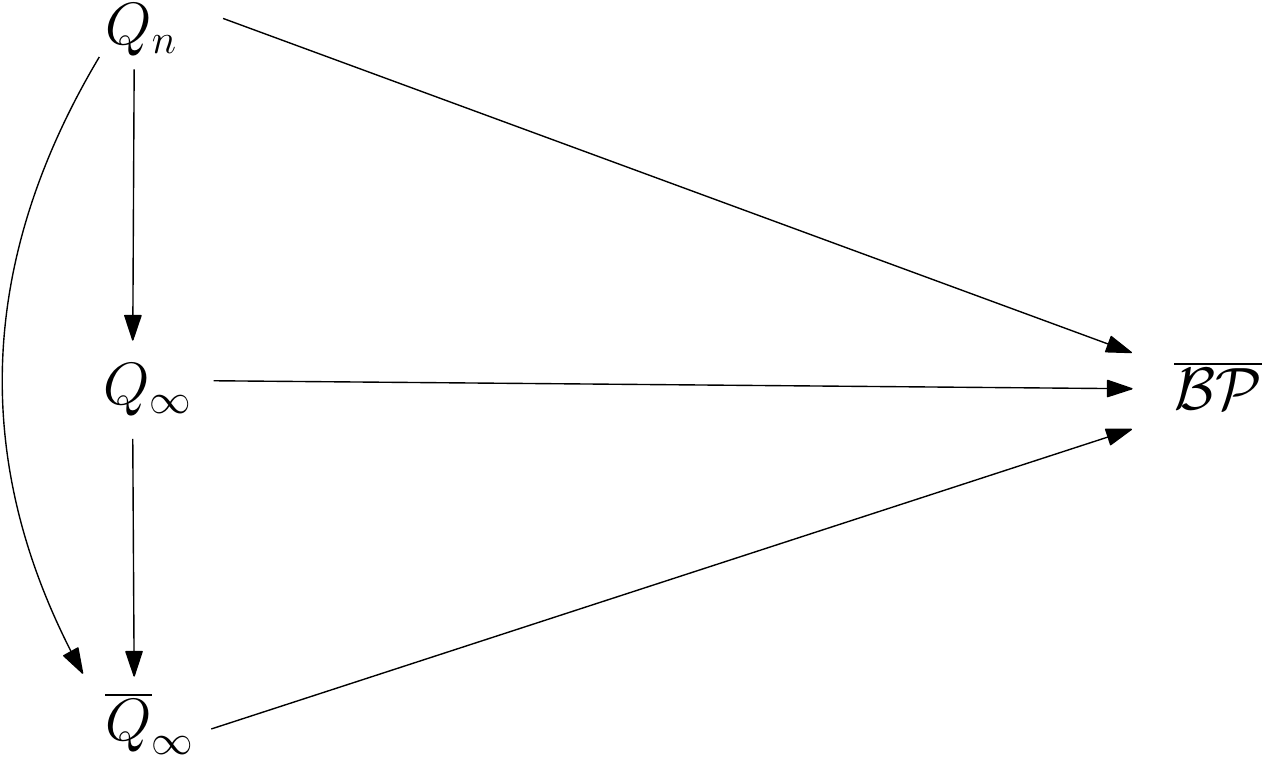}
    \caption{The diagram of convergences between the different surfaces. The arrows pointing down represent local limits, and those pointing right represent scaling limits.}
    \label{fig:enter-label}
\end{figure}

We also obtain a strong coupling result for $Q_n,Q_\infty,\overline{Q}_\infty$ around geodesics. 

\begin{theorem}\label{theorem 2}
    For every $\varepsilon>0$, there exists $\alpha>0$ such that for every $n\in\N$ large enough, the following is true. We can couple $\left(Q_n,Q_\infty,\overline{Q}_\infty\right)$ such that we have 
    \[B_{Q_n}(\gamma_n(n),\alpha n)=B_{Q_\infty}(\gamma_\infty(n),\alpha n)=B_{\overline{Q}_\infty}(\overline{\gamma}_\infty(0),\alpha n)\] 
    with probability at least $1-\varepsilon$. 
\end{theorem}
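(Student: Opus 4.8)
The plan is to pass through the Cori--Vauquelin--Schaeffer bijection between positive quadrangulations and labelled plane trees, under which the distinguished geodesic becomes the canonical \emph{successor chain} towards the distinguished vertex, its label decreasing by one at each step (I write $\gamma,Q,\ell$ generically for the geodesic, the map and the labels in any of the three models). Two reductions come first. (i) It suffices to couple the pairs $(Q_n,\overline{Q}_\infty)$ and $(Q_\infty,\overline{Q}_\infty)$ separately so that in each case the two balls of radius $\alpha n$ coincide with probability $\ge 1-\varepsilon/2$; gluing the couplings along a common realization of $\overline{Q}_\infty$, by sampling $Q_n$ and $Q_\infty$ conditionally independently given the $\alpha n$-ball of $\overline{Q}_\infty$, and using transitivity of equality of pointed maps, then yields the three-fold coincidence. (ii) Since $d(v^*,\cdot)$ is read off from the labels, $B_Q(\gamma(n),R)$ only involves corners whose label lies in the window $[\ell_0-R,\ell_0+R]$, where $\ell_0$ is the label of $\gamma(n)$; combining this with cactus-type lower bounds on distances, which rule out shortcuts through corners of label outside the window or of distant contour position, one checks that the ball is a deterministic function of the \emph{windowed spinal data} around $\gamma(n)$, namely the segment of the successor chain carrying labels in the window together with the forests grafted along it, truncated at label $\ell_0-R$. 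It is therefore enough to couple these windowed spinal data.

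Next I would record the spinal decomposition of each object. For $\overline{Q}_\infty$ this is Dieuleveut's very construction: a bi-infinite successor spine decorated with an i.i.d.\ family of critical labelled Galton--Watson forests and an explicit stationary law for the geometry between consecutive spine corners. For $Q_\infty$ rerooted at $\gamma_\infty(n)$ it is again Dieuleveut's result, since by definition $\overline{Q}_\infty=\lim_n(Q_\infty,\gamma_\infty(n))$: the spinal decomposition of the UIPQ along $\gamma_\infty$ converges to the bi-infinite one, the only $n$-dependence being concentrated in the spine near the root end of the geodesic, which sits at distance $n$. For $Q_n$ rerooted at $\gamma_n(n)$ one has to redo the analysis from the Boltzmann encoding: conditioning on $\{\mathrm{length}(\gamma)\ge n\}$ is exactly conditioning the encoding tree on $\min\ell\le 1-n$, i.e.\ forcing the label contour process to perform a descent of height $\ge n$; before conditioning, the forests grafted between consecutive successor corners form an i.i.d.\ family of critical labelled Galton--Watson forests (the classical renewal decomposition of the encoding tree along a successor geodesic); and a spinal/local-limit argument shows that, around the corner having descended by $\approx n$ within a descent of total height $\ge n$, the decoration converges to Dieuleveut's bi-infinite one. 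The inputs proper to this case are the regular variation of the tail of $\mathrm{length}(\gamma)$ (index $-2$, consistent with Brownian scaling), so that the descent is typically much longer than $n$ and the reference corner lies in its bulk, and a description of the label walk conditioned on a long descent.

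Then I would make the coupling quantitative. Fix $\varepsilon>0$ and choose $\alpha>0$ small. For $n$ large, on an event of probability $\ge 1-\varepsilon/3$ the ball of radius $\alpha n$ around the reference corner reaches neither endpoint of the geodesic: on the distinguished-vertex side this uses $\P(\mathrm{length}(\gamma)>(1+\alpha)n\mid \mathrm{length}(\gamma)\ge n)\to(1+\alpha)^{-2}$, which exceeds $1-\varepsilon/3$ for $\alpha$ small; on the root side it is immediate since that endpoint is at distance $n>\alpha n$ (for $Q_\infty$ the root of the UIPQ is likewise at distance $n$ and there is no other special point, and $\overline{Q}_\infty$ has none). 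On this event the ball is the deterministic function of the previous paragraph applied to windowed spinal data of ``size $O(\alpha n)$''. For each of the pairs $(Q_n,\overline{Q}_\infty)$ and $(Q_\infty,\overline{Q}_\infty)$, and for $n$ large, this windowed datum is mutually absolutely continuous with respect to the corresponding window of Dieuleveut's bi-infinite object, with total variation distance $\le\varepsilon/3$ once $\alpha$ is small, uniformly in large $n$: the grafted forests converge to the common i.i.d.\ law of critical labelled forests, and the spine data, read on a bulk window of relative length $\alpha$ inside a conditioned descent of height $\ge n$, converges to a free stretch of the stationary bi-infinite spine --- both facts being quantified by a local-limit theorem together with a ``cutting out the bulk of a bridge'' estimate. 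Coupling the windowed data to coincide and summing the three error terms yields a coupling of each pair with coincidence of the $\alpha n$-balls with probability $\ge 1-\varepsilon/2$; the gluing of reduction (i) finishes the proof.

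\emph{The main obstacle} is the combination of reduction (ii) with the $Q_n$ spinal decomposition: one must (a) prove rigorously that the ball of the growing radius $\alpha n$ really is a function of windowed spinal data, ruling out any macroscopic shortcut through corners of label outside the window or of distant contour position, which needs sharp cactus-type two-point distance bounds, and (b) control the tail of $\mathrm{length}(\gamma)$ and the law of the conditioned label contour process precisely enough that the windowed spinal data of the conditioned Boltzmann map is within total variation $o_{\alpha\to0}(1)$ of the bi-infinite spinal object on a window of length $\alpha n$, \emph{uniformly} in large $n$. Once these are established, the remaining steps reduce to Dieuleveut's results and standard path-coupling estimates.
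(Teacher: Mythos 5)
Your overall architecture reproduces the paper's: spine decompositions of the three encoding trees, coupling of the ``windowed'' data along the spine, and a localization step transferring the tree coupling to equality of balls in the quadrangulations. Reduction (i), gluing two pairwise couplings along a common marginal, differs cosmetically from the paper (which couples all three trees simultaneously via the trees $\Theta_n,\Theta_\infty^{(n)},\overline{\Theta}_\infty^{(i)}$) but is a valid reorganization. The genuine gap is in reduction (ii)/obstacle (a): you assert that \emph{cactus-type two-point distance bounds} rule out shortcuts through corners of ``distant contour position'' and therefore show that $B_Q(\gamma(n),\alpha n)$ is a function of the windowed spinal data. They do not. The cactus bounds $d(u,v)\ge|\ell_u-\ell_v|$ and $d(u,v)\ge \ell_u+\ell_v-2\min_{[u,v]}\ell+2$ control distances only in terms of labels along contour arcs; they are vacuous for a pair of corners that both carry labels in the window $[-\alpha n,\alpha n]$ while the contour arc between them stays high. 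This is precisely the regime that must be excluded, and it is the regime handled in the paper by Proposition~\ref{localisation 2}: the set $A_{\beta n}\cup\widehat A_{\beta n}$ is realized as two quadrangulations with geodesic boundaries $\tilde Q^{(1)}_{\lfloor\beta n\rfloor},\tilde Q^{(2)}_{\lfloor\beta n\rfloor}$, any path exiting them must cross one of the two boundary geodesics $\gamma^{(\ell)}\cup\gamma^{(r)}$ (Proposition~\ref{Bord topologique}), the near segment of $\gamma^{(\ell)}$ is excluded by a contour/label argument, the far part of $\gamma^{(\ell)}$ by a genuine cactus bound, but the ``other'' geodesic boundary $\gamma^{(r)}$, which carries labels \emph{inside} the window, is shown to be at distance $\gtrsim\delta n$ from $\gamma(n)$ only through Lemma~\ref{distance bord}, i.e.\ through the scaling-limit convergence of the slice to the Brownian slice and an analogous continuum distance-to-boundary estimate. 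This is not a cactus bound, and without it the localization step fails.

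A secondary but still substantive point concerns the coupling of $(Q_\infty,\overline Q_\infty)$. You write that ``the only $n$-dependence [is] concentrated in the spine near the root end'' and invoke Dieuleveut's theorem. Dieuleveut's result is a local limit at a \emph{fixed} radius; to couple the spine processes on a window of length of order $\alpha n$, uniformly in large $n$, one needs a quantitative total-variation estimate such as Proposition~\ref{couplage processus 2}, which in the paper is proved by explicit Green-function computations for the spine Markov chain (and which, incidentally, required correcting a formula of \cite{dieuleveut}, see Lemma~\ref{ERREUR 404}). Your ``local-limit theorem plus cutting-out-the-bulk-of-a-bridge'' heuristic points in the right direction, but it is not an off-the-shelf input and is one of the paper's real technical contributions; the same is true, to a lesser extent, of the $(Q_n,\overline Q_\infty)$ case where the new three-arm decomposition of $\Theta_n$ (Proposition~\ref{Three arms}) together with Lemma~\ref{ancetre commun} replaces your ``renewal decomposition plus conditioned descent'' sketch.
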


In particular, we obtain a direct link between $Q_n$ and $\overline{Q}_\infty$.
\begin{corollary}\label{coro convergence}
    We have the convergence in distribution 
    \[\left(Q_n,d_{Q_n},\gamma_n(n)\right)\xrightarrow[n\rightarrow\infty]{(d)}\left(\overline{Q}_\infty,d_{\overline{Q}_\infty},\overline{\gamma}_\infty(0)\right)\]
    for the local topology. 
\end{corollary}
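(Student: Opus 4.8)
\textbf{Proof strategy for Corollary~\ref{coro convergence}.} The plan is to obtain this as a soft consequence of the coupling in Theorem~\ref{theorem 2}. The first step is to recall that the local topology on (isomorphism classes of) pointed maps is separable and metrized by a distance of the form
\[
d_{\mathrm{loc}}\big((m,\rho),(m',\rho')\big)=\Big(1+\sup\{\,r\ge 0:\ B_r(m,\rho)=B_r(m',\rho')\,\}\Big)^{-1},
\]
together with the elementary observation that, for $r\le R$, the ball $B_r(m,\rho)$ is a deterministic function of $B_R(m,\rho)$: any graph geodesic from a vertex at distance at most $r$ from $\rho$ to $\rho$ stays inside $B_R(m,\rho)$, so the distances to $\rho$ not exceeding $R$, and hence the submap $B_r(m,\rho)$, are already read off from $B_R(m,\rho)$. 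Consequently, if $B_R(m,\rho)=B_R(m',\rho')$ then $d_{\mathrm{loc}}\big((m,\rho),(m',\rho')\big)\le (1+R)^{-1}$.

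Next I would reduce the asserted convergence in distribution to testing against bounded Lipschitz functionals $\varphi$ of $(\cdot\,,d_{\mathrm{loc}})$, which is legitimate by the standard portmanteau-type argument on separable metric spaces. Fix such a $\varphi$ and let $\eps>0$. Applying Theorem~\ref{theorem 2} with this $\eps$ provides $\alpha>0$ and an integer $N$ so that, for every $n\ge N$, one can couple $(Q_n,Q_\infty,\overline{Q}_\infty)$ in such a way that the event
\[
E_n=\big\{\,B_{Q_n}(\gamma_n(n),\alpha n)=B_{\overline{Q}_\infty}(\overline{\gamma}_\infty(0),\alpha n)\,\big\}
\]
has probability at least $1-\eps$. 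On $E_n$, the observation above yields $d_{\mathrm{loc}}\big((Q_n,\gamma_n(n)),(\overline{Q}_\infty,\overline{\gamma}_\infty(0))\big)\le (1+\alpha n)^{-1}$, hence $\big|\varphi(Q_n,\gamma_n(n))-\varphi(\overline{Q}_\infty,\overline{\gamma}_\infty(0))\big|\le \mathrm{Lip}(\varphi)(1+\alpha n)^{-1}$ there, while this quantity is at most $2\norm{\varphi}_\infty$ on $E_n^{c}$. Since the coupling preserves the law of $\overline{Q}_\infty$, taking expectations gives, for $n\ge N$,
\[
\big|\E[\varphi(Q_n,\gamma_n(n))]-\E[\varphi(\overline{Q}_\infty,\overline{\gamma}_\infty(0))]\big|\le \mathrm{Lip}(\varphi)\,(1+\alpha n)^{-1}+2\norm{\varphi}_\infty\,\eps .
\]
Letting $n\to\infty$ and then $\eps\to 0$ would conclude the proof.

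As for the difficulty, there is essentially no obstacle here: all the geometric content is packaged in Theorem~\ref{theorem 2}, and the corollary is just a matter of organising the coupling argument. The only points deserving care are the deterministic fact that $B_r$ is determined by $B_R$ for $r\le R$ (so that equality of large balls controls $d_{\mathrm{loc}}$), and the order of the limits ($n\to\infty$ first, then $\eps\to 0$), the coupling depending on both parameters. If one prefers to bypass the explicit metric $d_{\mathrm{loc}}$, the same computation can be run ball by ball: for each fixed $r$ and each bounded functional $F$ of $B_r$, one gets $\big|\E[F(B_r(Q_n,\gamma_n(n)))]-\E[F(B_r(\overline{Q}_\infty,\overline{\gamma}_\infty(0)))]\big|\le 2\norm{F}_\infty\,\eps$ as soon as $n$ is large enough that $\alpha n\ge r$, which yields the convergence of every finite ball and hence the claim.
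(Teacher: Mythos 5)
Your proof is correct and takes essentially the same route as the paper: the paper derives the corollary inside Proposition~\ref{couplage boules}, where it is stated to be ``a direct consequence of the coupling result together with the definition of the local topology,'' and your argument simply fills in the details of that reduction (equality of $\alpha n$-balls under the coupling bounds $d_{\mathrm{loc}}$ by $(1+\alpha n)^{-1}$ on an event of probability at least $1-\varepsilon$, then send $n\to\infty$ and $\varepsilon\to 0$).
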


This paper is organized as follows. 
Section \ref{sec preliminaries} contains a number of preliminaries about random trees, random maps and the CVS bijection. In Section \ref{Coupling trees}, we prove some coupling results about several random trees, that rely on spine decompositions. Section \ref{Localisation} is devoted to the proof of a key lemma, which allows us to transfer the coupling result of random trees to coupling results of random maps. In Section \ref{sec main result}, we prove Theorem \ref{Main result}. Finally, Appendix \ref{appendix} contains the proof of some lemmas.

\subsection*{Acknowledgements}

I thank Grégory Miermont for his constant support and for his comments on earlier versions of this paper. I also thank Lou Le Bihan and Simon Renouf for their help in typing this article.

\section{Preliminaries}\label{sec preliminaries}

\subsection{Planar maps}

As mentioned in the introduction, a finite planar map is an equivalence class of embeddings of a finite connected graph into the sphere $\mathbb{S}^2$ (loops and multiple edges are allowed), modulo orientation-preserving homeomorphisms of the sphere. For a map $m$, let $V(m),E(m)$ and $\overrightarrow{E}(m)$ denote the set of vertices, edges and oriented edges of $m$. A quadrangulation with a boundary is a planar map with a distinguished face (which has a simple boundary), and such that all the other faces have degree $4$. In this paper, we will only consider rooted maps, which are maps with a distinguished oriented edge. This edge is called the root edge, and its origin is called the root vertex. We denote by $\mathcal{Q}_f$ (respectively $\mathcal{Q}_{f,b}$) the set of finite rooted quadrangulations (respectively finite rooted quadrangulations with a boundary).

Let us define the local topology for finite maps (we refer to \cite[Section 2.1.2]{peeling} for more details). Every map $m$ can be equipped with the associated graph distance $d_m$. Let $B_m(r)$ stand for the ball of radius $r$ around the root vertex in $m$. For every finite planar map $m$ and $m'$, we can define 
\[D(m,m')=\left(1+\sup\{r\geq0:B_r(m)=B_r(m')\}\right)^{-1}.\]
The local topology is the topology induced by the distance $D$. We denote by $\mathcal{Q}$ and $\mathcal{Q}_b$ the completion of $\mathcal{Q}_f$ and $\mathcal{Q}_{f,b}$ for this topology, and we set $\mathcal{Q}_\infty=\mathcal{Q}\backslash\mathcal{Q}_f$ and $\mathcal{Q}_{\infty,b}=\mathcal{Q}_b\backslash\mathcal{Q}_{f,b}$. In this article, we will only work with elements of $\mathcal{Q}_\infty$ that are quadrangulations of the plane, and elements of $\mathcal{Q}_{\infty,b}$ that are quadrangulations of the half-plane with an infinite boundary.

\subsection{Labeled trees}

\paragraph{Formalism about trees.}

In this work, we will use the standard formalism for planar trees that can be found in \cite{Neveu1986}, and that we recall here. Define
\[\mathcal{U}=\bigcup_{n=0}^\infty \N^n\]
where $\N=\{1,2,3...\}$ and $\N^0=\{\varnothing\}$. Hence, an element of $\mathcal{U}$ is a finite sequence of positive integers. If $u,v\in\mathcal{U}$, $uv$ stands for the concatenation of $u$ and $v$. If $v$ is of the form $uj$ for some $j\in\N$, we say that $u$ is the parent of $v$, and that $v$ is the child of $u$. Similarly, if $v$ is of the form $uw$ for some $u,w\in\mathcal{U}$, we say that $u$ is an ancestor of $v$ and that $v$ is a descendant of $u$. A rooted planar tree $\mathbf{t}$ is a (finite or infinite) subset of $\mathcal{U}$ such that 
\begin{enumerate}
    \item $\varnothing\in\mathbf{t}$ ($\varnothing$ is called the root of $\mathbf{t}$)
    \item if $v\in\mathbf{t}$ and $v\neq\varnothing$, the parent of $v$ belongs to $\mathbf{t}$
    \item for every $u\in\mathcal{U}$, there exists $k_u(\mathbf{t})\geq0$ such that $uj\in\mathbf{t}$ if and only if $j\leq k_u(\mathbf{t})$.
\end{enumerate}
A rooted planar tree can be seen as a graph, where two vertices $u$ and $v$ are linked by an edge if and only if $u$ is the parent of $v$, or vice versa. This graph has a natural embedding in the plane, in which the edges from a vertex $u$ to its children $u1,u2,...,uk_u(\mathbf{t})$ are drawn from left to right. A spine of $\mathbf{t}$ is an infinite sequence $(u_i)_{i\geq0}$ such that $u_0=\varnothing$ and $u_i$ is the parent of $u_{i+1}$ for every $i\geq0$. If $u$ and $v$ are two vertices of $\mathbf{t}$, $\llbracket u,v \rrbracket$ denotes the unique simple path from $u$ to $v$. For every tree $\theta$, we denote by $|\theta|$ the number of edges of $\theta$. A forest is a sequence of rooted planar trees, and its size is the number of trees it contains.\\
We also define the notion of corner in a tree. A corner of a vertex $v\in\mathbf{t}$ is an angular sector formed by two consecutive edges (taken in clockwise order) around $v$. If $\mathbf{t}$ has $n$ edges, we define the corner sequence of $\mathbf{t}$ as the sequence of corners $(c_0,...,c_{2n-1})$ taken in clockwise order, starting from the corner incident to the root and located to the left of the oriented edge from $\varnothing$ to $1\in\mathbf{t}$ (we call this corner the root corner). The notion of corner sequence also makes sense for infinite trees with a unique spine. Let $(c_0^{(L)},c_1^{(L)},c_2^{(L)},...)$ be the sequence of corners visited by the contour process on the left side of the tree (that is, when we explore the tree clockwise, starting from the root). Similarly, let $(c_0^{(R)},c_1^{(R)},c_2^{(R)},...)$ be the sequence of corners visited when we explore the tree counter-clockwise. Then, we can concatenate these sequences to define $(c_i)_{i\in\mathbb{Z}}$:
\[c_i=\left\{
    \begin{array}{ll}
    c^{(L)}_i\quad&\text{ if $i\geq0$} \\
     c^{(R)}_{-i}\quad&\text{ if $i<0$}
    \end{array}\right. \]
In what follows, we write $c_i<c_j$ if $i<j$. This allows us to define intervals on $\mathbf{t}$, by setting 
\[[c_i,c_j]=\left\{
    \begin{array}{ll}
    \{c_i,c_{i+1},...,c_j\}\quad&\text{ if $i\leq j$} \\
    \{c_K:k\notin(i,j)\}\quad&\text{ if $j>i$}
    \end{array}\right.\]
    
We also extend this definition to vertices. Given two vertices $v$ and $v'$, let $C_v$ and $C_{v'}$ be the sets of corner respectively associated to $v$ and $v'$. We define $[v,v']$ as the set of vertices with at least one corner in
\[\bigcap_{c_v\in C_v,c_{v'}\in C_{v'}}[c_v,c_{v'}].\]

\paragraph{Trees with labels.}

A rooted labeled tree is a pair $\theta=(\mathbf{t},(\ell_v)_{v\in\mathbf{t}})$, where $\mathbf{t}$ is a rooted planar tree and $\ell:\mathbf{t}\mapsto\mathbb{Z}$ is a labeling function such that if $u,v\in\mathbf{t}$ and $v$ is a child of $u$, then $|\ell(u)-\ell(v)|\leq1$. If  $\theta=(\mathbf{t},\ell)$ is a rooted labelled tree and $k\in\mathbb{Z}$, we use the notation $\theta+k=(\mathbf{t},\ell+k)$. We denote by $\mathbb{T}$ the set of labeled trees, and for every $x\in\mathbb{Z}$, $\mathbb{T}^{(x)}$ is the subset of $\mathbb{T}$ consisting of the labeled trees such that $\ell(\varnothing)=x$. Finally, let $\mathbb{T}^{(x)}_\infty$ (respectively $\mathbb{T}^{(x)}_f$) be the subset of $\mathbb{T}^{(x)}$ consisting of infinite (respectively finite) trees. For $\theta=(\mathbf{t},\ell)$, we let $|\theta|=|\mathbf{t}|$ be the size of $\theta$. Let us mention that we can define a distance on $\mathbb{T}$, which is analogous to the local metric on planar maps, and which turns this set into a Polish space. \newline
In this work, we will mostly be interested in infinite labeled  trees with a unique spine, denoted by $(S_{\mathbf{t}}(i))_{i\geq0}$ (we call this set $\mathbb{S}$). If $\theta=(\mathbf{t},\ell)\in\mathbb{S}$, the spine of $\mathbf{t}$ splits the tree in two parts, and for every element $S_{\mathbf{t}}(i)$  of the spine, we can define a subtree of $\mathbf{t}$ attached to its left and to its right. We call these subtrees $L_n(\theta)$ and $R_n(\theta)$, and formally define them as follows. Consider the lexicographic order $<$ and the partial genealogical order $\prec$. Then
\begin{align*}
    L_n(\theta)&=\{u\in \mathbf{t},\, S_{\mathbf{t}}(n)\leq u<S_{\mathbf{t}}(n+1)\},\\
    R_n(\theta)&=\{u\in \mathbf{t},\, S_{\mathbf{t}}(n)\preceq u, S_{\mathbf{t}}(n+1)\nprec u \text{ and } S_{\mathbf{t}}(n+1)<u\}.
\end{align*}
These subtrees inherit the labels from $\theta$, and can be seen as elements of $\mathbb{T}^{(\ell(S_{\mathbf{t}}(n))}$. Moreover, note that the tree $\theta$ is completely determined by the two sequences $\left(L_n(\theta),R_n(\theta)\right)_{n\geq 0}$. When there is no ambiguity, we will often just write $S(n),L_n,R_n$. 

\subsection{The CVS bijection}\label{sec cvs}

\paragraph{For finite labeled trees.}

The Cori-Vauquelin-Schaeffer bijection, which associates well-labelled trees and quadrangulations, goes as follows. Let $\theta=(\mathbf{t},\ell)$ be an element of $\mathbb{T}_f$, where $\mathbf{t}$ is canonically embedded in the plane. If $\mathbf{t}$ has $n$ edges, we consider its corner sequence $(c_0,...,c_{2n-1})$, and we extend it by $2n$-periodicity. For every $0\leq i\leq2n-1$, we define $\sigma_\theta(c_i)$ the successor of $c_i$ as $c_{\phi(i)}$, where
\begin{equation}\label{Successor}
\phi(i)=\inf\{j>i,\,\ell(c_j)=\ell(c_i)-1\}
\end{equation} 
(by convention, we set $\inf\{\varnothing\}=+\infty$). If $\phi(i)=+\infty$, then $\sigma_\theta(c_i)$ is an extra element $\partial$. \\
We can construct a new graph as follows. First, add an extra vertex $v_*$ in the plane, which does not belong to the embedding of $\mathbf{t}$. Then, for every corner $c$, draw an arc between $c$ and its successor (if $\sigma_\theta(c)=\partial$, draw an edge between $c$ and $v_*$). This can be done in such a way that the arcs do not cross each other, and do not cross the edges of $\mathbf{t}$. Then, if we remove the interior of the edges of $\mathbf{t}$, we are left with an embedded graph, with vertex set $\mathbf{t}\cup \{v_*\}$, and edges given by the arcs. Finally,  we root this graph on the arc from $c_0$ to its successor. We denote the resulting graph by $\Phi(\theta)$.
\begin{proposition}[\cite{cori_vauquelin_1981,schaeffer}]
    The mapping $\Phi$ is a bijection between $\mathbb{T}_f^{(0)}$ and finite planar positive quadrangulations. 
\end{proposition}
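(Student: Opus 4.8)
The plan is to prove the three facts which together give the statement: for every $\theta=(\mathbf t,\ell)\in\mathbb T_f^{(0)}$ the graph $\Phi(\theta)$, equipped with the distinguished vertex $v_*$, is a finite positive quadrangulation; $\Phi$ is injective; and $\Phi$ is surjective onto finite positive quadrangulations. Injectivity and surjectivity will both come from exhibiting an explicit inverse $\Psi$ (Schaeffer's reverse construction) and checking $\Psi\circ\Phi=\mathrm{id}$ and $\Phi\circ\Psi=\mathrm{id}$.

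First I would check that $\Phi(\theta)$ is well defined, i.e. that the successor arcs can be drawn pairwise non-crossing and disjoint from the interiors of the edges of $\mathbf t$; this laminarity follows directly from the definition \eqref{Successor} of $\phi$ together with the fact that two consecutive corners carry labels differing by at most $1$. Next, one shows $\Phi(\theta)$ is a quadrangulation. A tree with $n\ge1$ edges has $n+1$ vertices and $2n$ corners, so $\Phi(\theta)$ has $n+2$ vertices and $2n$ edges, and Euler's formula gives $n$ faces; since $\Phi(\theta)$ is bipartite (vertices split according to the parity of $\ell$) all face degrees are even and sum to $2\cdot 2n=4n$, but this alone does not force every degree to be $4$. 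The genuine content is the local analysis: one verifies that the interiors of the edges of $\mathbf t$ lie in pairwise distinct faces of $\Phi(\theta)$, each such face being bounded by exactly four arcs — distinguishing whether the two endpoints of the edge carry equal labels or labels differing by $1$ — and since there are $n$ edges and $n$ faces this is a bijection exhibiting every face as a quadrilateral. I expect this face analysis to be the main obstacle: it is the combinatorial heart of the bijection, the rest being bookkeeping.

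For positivity, extend $\ell$ by $\ell(v_*):=\min_{u\in\mathbf t}\ell(u)-1$. Iterating the successor map from any corner decreases the label by exactly $1$ at each step and terminates at $v_*$, giving $d_{\Phi(\theta)}(v,v_*)\le\ell(v)-\ell(v_*)$; conversely each arc joins vertices whose labels differ by exactly $1$, so $d_{\Phi(\theta)}(v,v_*)\ge\ell(v)-\ell(v_*)$, whence $d_{\Phi(\theta)}(v,v_*)=\ell(v)-\min\ell+1$ for all $v\in\mathbf t$ (in particular $\Phi(\theta)$ is connected). The root edge runs from the corner $c_0$ of $\varnothing$, of label $0$, to its successor, of label $-1$ (or $v_*$ itself), so the tip of the root edge lies exactly one unit closer to $v_*$ than its origin, and $(\Phi(\theta),v_*)$ is positive.

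It remains to describe $\Psi$ and check it inverts $\Phi$. Given a finite positive quadrangulation $\mathbf q$ with distinguished vertex $v_0$, set $\ell(v)=d_{\mathbf q}(v,v_0)$; as $\mathbf q$ is bipartite, adjacent labels differ by exactly $1$, so around each face the labels read, up to rotation, either $(a,a+1,a+2,a+1)$ or $(a,a+1,a,a+1)$, and in each face Schaeffer's rule joins by a new edge the unique pair of corners from which the label decreases along the contour orientation of the face. Deleting the edges of $\mathbf q$ and the vertex $v_0$ leaves, on the remaining $n+1$ vertices, a graph with $n$ new edges which one checks (again via the local label patterns) to be connected, hence a tree $\mathbf t$; rooting its contour at the corner prescribed by the root edge of $\mathbf q$ and subtracting from $\ell$ its value at the root vertex yields $\Psi(\mathbf q)=(\mathbf t,\ell)\in\mathbb T_f^{(0)}$. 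Finally $\Psi\circ\Phi=\mathrm{id}$ because the distance-to-$v_*$ labels recover $\ell$ up to the shift found above and the Schaeffer rule applied to $\Phi(\theta)$ reproduces precisely the edges that were inserted as successor arcs; and $\Phi\circ\Psi=\mathrm{id}$ because, by the local label patterns, the successor arcs of $\Psi(\mathbf q)$ are exactly the original edges of $\mathbf q$ while the added vertex $v_*$ is identified with $v_0$. The verification that $\Psi$ is well defined — that the new edge set is a tree, that the labels define an element of $\mathbb T_f^{(0)}$, and compatibility with the rooting conventions — is routine but a little lengthy.
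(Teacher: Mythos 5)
The paper does not give its own proof of this proposition; it simply cites Cori--Vauquelin and Schaeffer. Your outline is a correct sketch of exactly the classical argument from those references: laminarity of the successor arcs, the Euler-formula count $V-E+F=2$ giving $n$ faces, the local case analysis of the two face types $(a,a+1,a+2,a+1)$ and $(a,a+1,a,a+1)$ showing each face containing a tree edge is a quadrilateral, the distance identity $d_{\Phi(\theta)}(v,v_*)=\ell(v)-\min\ell+1$ (which is the paper's equation~\eqref{Distance}) establishing positivity, and the reverse Schaeffer construction $\Psi$ yielding mutual inverses. You correctly flag that the face-by-face local analysis and the well-definedness of $\Psi$ are the nontrivial parts you leave unverified, but there is no error in the plan; it matches the approach of the cited sources.
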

Let us state some properties of this bijection. For every $\theta\in\mathbb{T}_f^{(0)}$, set $\ell_*=\inf\{\ell_v,v\in\theta\}-1$. For every $u\in\theta$, and viewing $u$ as a vertex of $\Phi(\theta)$, we have
\begin{equation}\label{Distance}
    d_{\Phi(\theta)}(u,v_*)=\ell_u-\ell_*.
\end{equation}
In particular, the path made of $c_0$ and its consecutive successors correspond to the geodesic $\gamma$ mentioned in the introduction.
More generally, for every $u,v\in\theta$, we have the bound 
\begin{equation}\label{Borne}
    d_{\Phi(\theta)}(u,v)\geq|\ell_u-\ell_v|.
\end{equation}

\paragraph{For infinite labelled trees.}\label{CVS infinite}

\begin{figure}
        \centering
        \includegraphics[scale=0.5]{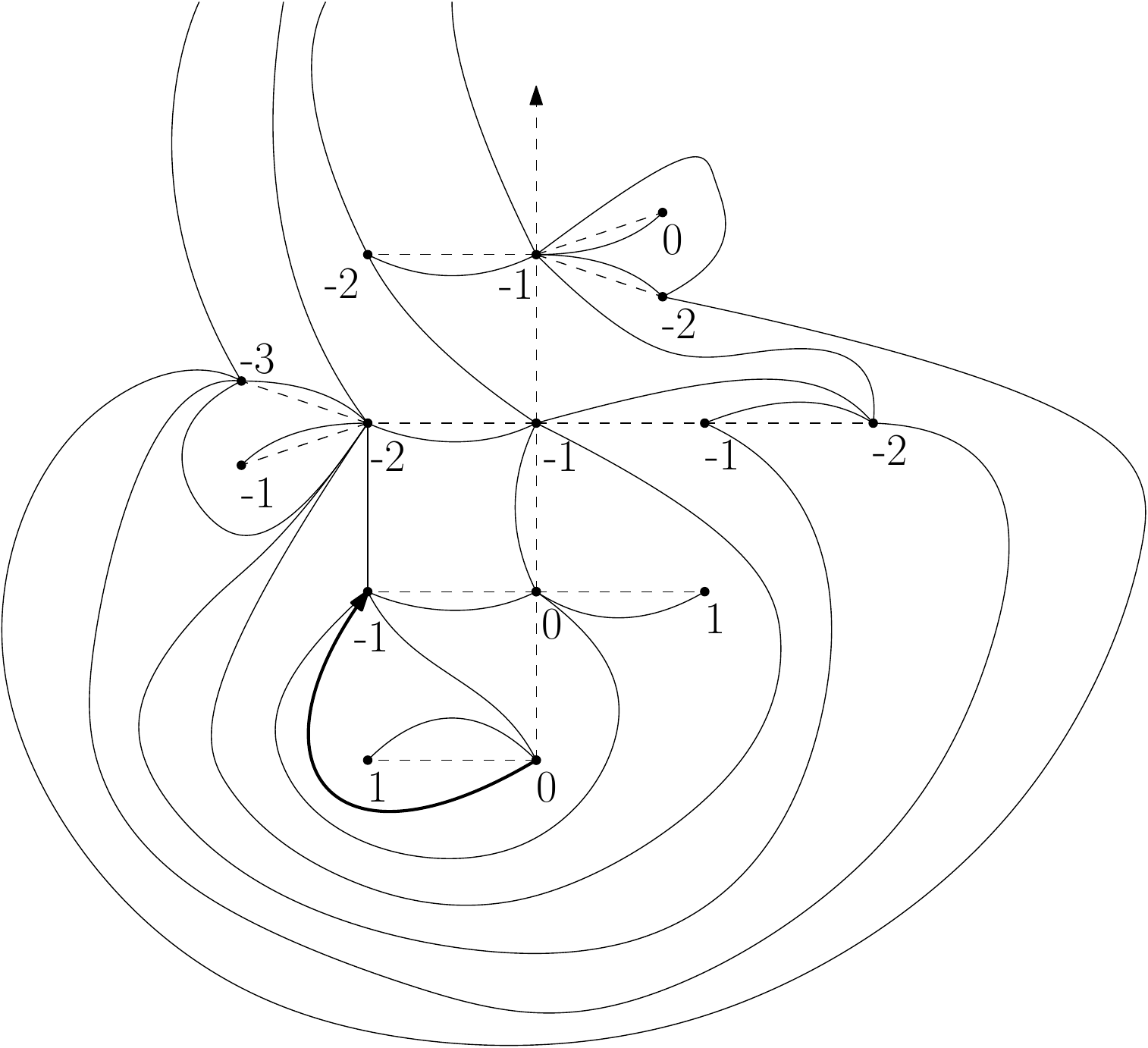}
        \caption{An illustration of the CVS bijection when $\theta\in\S_{-}$.}
        \label{CVS UIPQ}
\end{figure}

Here, we present some extensions of the $\mathrm{CVS}$ bijection to some families of infinite labelled trees. These families are the following :
\begin{align*}
\mathbb{S}_-:&=\{\theta\in\mathbb{S},\inf_{n\geq0}\ell(S(n))=-\infty\}.\,\\
\mathbb{S}_{(1)}:&=\left\{\theta\in\mathbb{S}:\,\ell(c_0)=0,\min_{n\leq-1}\ell(c_n)=1,\lim_{n\rightarrow-\infty}\ell(c_n)=+\infty\text{ and }\inf_{n\geq0}\ell(c_n)=-\infty\right\}\\
    \mathbb{S}_{(2)}:&=\left\{\theta\in\mathbb{S}:\,\ell(c_0)=1,\min_{n\geq 1}\ell(c_n)=2,\lim_{n\rightarrow\infty}\ell(c_n)=+\infty\text{ and }\inf_{n\leq0}\ell(c_n)=-\infty\right\}.
\end{align*} 
Given a labelled tree $\theta\in\mathbb{S}$, consider its corner sequence $(c_i)_{i\in\mathbb{Z}}$. 
    \begin{itemize}[label=\textbullet]
        \item If $\theta\in\mathbb{S}_-$, we can define the successor of a corner with formula \eqref{Successor}. Note that in this case, every corner has a successor. Then, we can construct a graph just like we did in the final case (but we don't need to add a distinguished vertex $v_*$). See Figure \ref{CVS UIPQ} for an example.
        \item If $\theta\in\mathbb{S}_{(1)}$, we can proceed as in the previous case. See Figure \ref{CVS UIPQ1} for an example.
        \item If $\theta\in\mathbb{S}_{(2)}$, we consider $(\lambda_i)_{i\in\mathbb{Z}}$ a sequence of points outside the planar imbedding of $\theta$. For every $i\in\Z$, we let where $i'>i$ be the smallest integer such that $\ell(c_{i'})=\ell(c_i)-1$ (if it exists). We define the successor of a corner $c_i$ as 
        \[\sigma(c_i)=\left\{
    \begin{array}{ll}
   c_{i'}\quad&\text{if $i'$ it exists, } \\
     \lambda_{\ell(c_i)-1}\quad&\text{ otherwise. }
    \end{array}\right.\]
    \end{itemize}
    Also set $\ell(\lambda_i)=i$. Then, we consider the graph obtained by drawing an edge between every corner and its successor, together with an edge between every pair $(\lambda_i,\lambda_{i-1})$.
    We abuse notations by letting $\Phi(\theta)$ stand for either of the previous constructions. The following result was proved in \cite{Viewfrominfinity} and \cite{dieuleveut}.
    \begin{proposition}\label{CVS general}
        For every $\theta\in\mathbb{S}_-$, the graph $\Phi(\theta)$ is a quadrangulation of the plane. Moreover, for every $\theta\in\mathbb{S}_{(1)}\cup\mathbb{S}_{(2)}$, the graph $\Phi(\theta)$ is a quadrangulation of the half plane. 
    \end{proposition}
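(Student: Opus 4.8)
The plan is to verify that each of the three constructions of $\Phi(\theta)$ produces a graph whose faces all have degree $4$, and which is an infinite planar map that is one-ended (respectively has an infinite boundary in the boundary cases). The strategy is the standard ``local'' argument for the CVS bijection: rather than analysing the infinite map globally, one exhausts $\theta$ by an increasing sequence of finite labelled trees and checks that the construction is compatible with passing to the limit, face by face. Concretely, for $\theta \in \mathbb{S}_-$ I would fix an integer $m$, truncate $\theta$ at generation $m$ (keeping the finitely many vertices of the spine up to $S(m)$ together with their attached subtrees), and shift the labels so the restricted tree lies in some $\mathbb{T}_f^{(x)}$; applying the finite CVS bijection of the earlier proposition, the finite map $\Phi(\theta^{(m)})$ is a positive quadrangulation. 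The key point is that the successor of a corner $c_i$, defined by \eqref{Successor}, depends only on the labels of the corners between $c_i$ and its successor, so for any fixed corner the arc drawn is the same in $\Phi(\theta)$ as in $\Phi(\theta^{(m)})$ for all $m$ large; hence $\Phi(\theta)$ is the increasing union of the $\Phi(\theta^{(m)})$ up to boundary effects, and every face of $\Phi(\theta)$ is eventually a face of some $\Phi(\theta^{(m)})$, therefore of degree $4$. The condition $\inf_{n\geq 0}\ell(S(n)) = -\infty$ is exactly what guarantees that every corner does have a finite successor (there is always a later corner with strictly smaller label along the spine), so no vertex $v_*$ or escape points $\lambda_i$ are needed, and it also forces the map to be infinite and one-ended.

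For $\theta \in \mathbb{S}_{(1)}$ the same exhaustion works, the only difference being the bookkeeping near the root: the constraints $\ell(c_0) = 0$, $\min_{n \le -1}\ell(c_n) = 1$ and $\lim_{n\to-\infty}\ell(c_n) = +\infty$ ensure that the corners on the ``left'' side of the spine never reach a label forcing a successor arc to be drawn across the root, so that a single infinite face — the boundary face — is created there, while $\inf_{n\geq 0}\ell(c_n) = -\infty$ again makes the successor well-defined for every corner on the other side. One checks that this infinite face has a simple boundary and that all bounded faces have degree $4$, so $\Phi(\theta)$ is a quadrangulation of the half-plane with infinite boundary. For $\theta \in \mathbb{S}_{(2)}$ one argues similarly, except that now the finitely-many-corner argument must be supplemented by the analysis of the chain of escape points $(\lambda_i)_{i \in \mathbb{Z}}$: by construction $\ell(\lambda_i) = i$, the $\lambda_i$ are joined in a bi-infinite path, and each corner $c_i$ whose label cannot be decreased to the right is joined to $\lambda_{\ell(c_i)-1}$; I would verify that the faces incident to this $\lambda$-path are quadrangles, using that consecutive $\lambda_i$'s differ by one in label and that the hypotheses $\min_{n\geq 1}\ell(c_n) = 2$, $\lim_{n\to\infty}\ell(c_n) = +\infty$, $\inf_{n\leq 0}\ell(c_n) = -\infty$ control which corners point into which $\lambda_i$. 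Again one identifies the (unique) infinite face as the boundary.

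The main obstacle I anticipate is not the degree count per se — that is a local check once the exhaustion is set up — but rather the careful verification that the constructions genuinely glue along the exhaustion and that the resulting infinite graph is a bona fide planar map with the claimed topology: that it is connected, locally finite, one-ended in the $\mathbb{S}_-$ case, and has a single infinite face with simple boundary in the $\mathbb{S}_{(1)}$ and $\mathbb{S}_{(2)}$ cases. In the $\mathbb{S}_{(2)}$ case in particular, one must rule out that some escape point $\lambda_i$ receives no incident arc at all, or receives infinitely many, and check that the bi-infinite $\lambda$-path ends up on the boundary of the infinite face rather than isolating a finite region. Since these are precisely the statements established in \cite{Viewfrominfinity} and \cite{dieuleveut}, the cleanest route is to reproduce their argument: reduce everything to the finite CVS bijection via truncation, invoke stability of the successor map under truncation, and then identify the limit. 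I would therefore present the proof as: (1) set up the truncation $\theta^{(m)} \nearrow \theta$ and the label shifts; (2) prove the successor-stability lemma; (3) deduce that bounded faces of $\Phi(\theta)$ have degree $4$; (4) in the boundary cases, isolate the infinite face and check its boundary is simple and infinite; (5) conclude one-endedness in the plane case.
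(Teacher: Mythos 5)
The paper does not actually prove Proposition~\ref{CVS general}: the sentence preceding it reads ``The following result was proved in \cite{Viewfrominfinity} and \cite{dieuleveut},'' and the statement is simply quoted from those references. So there is no in-paper argument for you to compete against. That said, your sketch is a faithful reconstruction of the strategy used in those references: truncate $\theta$ to a finite labelled tree, apply the finite CVS bijection, observe that the successor map \eqref{Successor} stabilises under truncation (once a corner's successor lies within the truncation window it never changes), and pass to the limit face by face. This is indeed how \cite{Viewfrominfinity} handles $\mathbb{S}_-$ and how \cite{dieuleveut} handles $\mathbb{S}_{(1)}$ and $\mathbb{S}_{(2)}$.

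Two small cautions if you were to flesh this out. First, your description of the $\mathbb{S}_{(1)}$ case has the sides reversed: with the paper's conventions, $\inf_{n\geq0}\ell(c_n)=-\infty$ holds on the \emph{left} side, so every corner (on either side) has a finite successor with no escape vertices needed; the infinite boundary face arises because the right side stays at labels $\geq1$, not because arcs ``never cross the root.'' Second, for $\mathbb{S}_-$ the harder part is not the degree count but establishing that the limit object is a one-ended planar map (a quadrangulation of the plane, not merely an infinite planar quadrangulation); \cite{Viewfrominfinity} does this by showing that the ball $B_r$ of $\Phi(\theta)$ is already determined by, and equal to, the corresponding ball in $\Phi(\theta^{(m)})$ once $m$ is large enough, from which one-endedness follows. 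Your outline mentions one-endedness in step (5) but does not say how it would be obtained; that is the step one would need to make precise. Similarly, in the $\mathbb{S}_{(2)}$ case you correctly flag the need to check that every $\lambda_i$ receives at least one and only finitely many arcs, and that the $\lambda$-path lies on the boundary of the unique infinite face; these are nontrivial and are exactly the checks carried out in \cite{dieuleveut}.
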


    \begin{figure}
        \centering
        \includegraphics[scale=0.4]{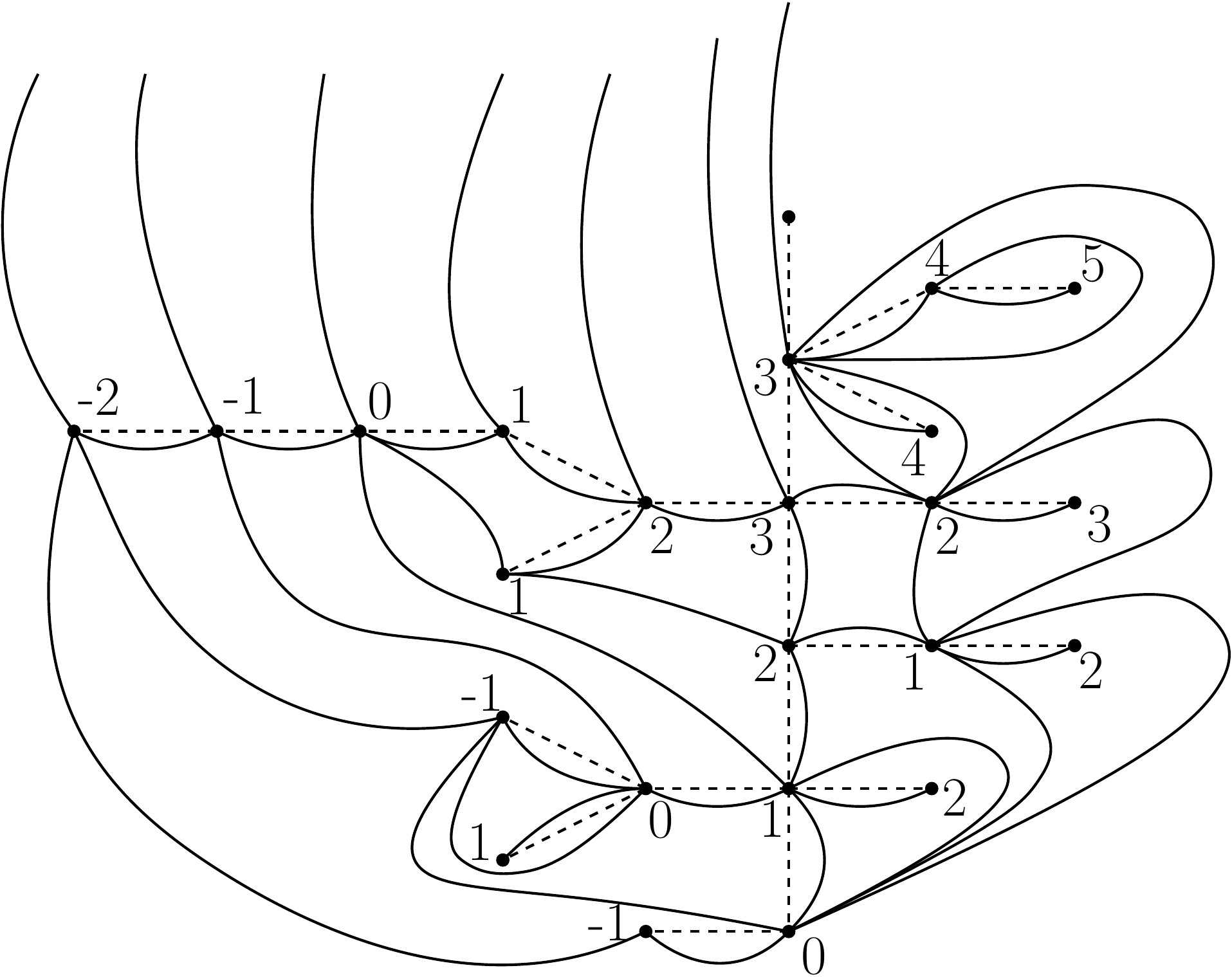}
        \caption{An illustration of the CVS bijection when $\theta\in\S_{(1)}$.}
        \label{CVS UIPQ1}
\end{figure}

\subsection{Random labeled trees}\label{Random labeled trees}

In this section, we introduce the three different random labeled trees that we will study.

\paragraph{The conditioned uniform labeled tree.}\label{Proba arbre}

For every $x\in \mathbb{Z}$, let $\rho_{(x)}$ be the distribution on $\T^{(x)}$ defined by the following formula: for every $\theta\in\mathbb{T}^{(x)}$, we have 
\[\rho_{(x)}(\{\theta\})=\frac{1}{2\times12^{|\theta|}},\]

so that the tree is distributed as a critical Bienaymé-Galton-Watson with offspring distribution $\mathrm{Geom}(1/2)$, and the labels are chosen uniformly at random among all possibles labellings with a root label $x$.  

Let $\mathbb{T}^+$ be the set of labeled tree with positive labels. It is shown in \cite[Proposition 2.4]{ChassaingDurhuus} that for every $x\geq 0$,
\begin{equation}\label{minimum discret}
    \rho_{(x)}(\mathbb{T}^+)=w(x):=\frac{x(x+3)}{(x+1)(x+2)}.
\end{equation}
Hence, we can define $\rho_{(x)}^+$ as the law $\rho_{(x)}$ conditioned to have positive labels. In particular, for every $\theta\in \mathbb{T}^+\cap\mathbb{T}^{(x)}$, we have 
\begin{equation}\label{Condition weight}
\rho_{(x)}^+(\{\theta\})=\frac{1}{2w(x)12^{|\theta|}}.
\end{equation}
Finally, for every $x\geq0$, we define $\rho_{(x)}^-$ as the law $\rho_{(x)}$ conditioned to have a non-positive label. Consequently, for every $\theta\in\mathbb{T}^{(x)}\backslash\mathbb{T}^+$, we have 
\[\rho_{(x)}^-(\{\theta\})=\frac{1}{2(1-w(x))12^{|\theta|}}.\]
In what follows, we will consider a random variable $\Theta_n$ whose distribution is $\rho_{(n)}^-$.

\paragraph{The uniform infinite labeled tree.} 

We consider a random variable $\Theta_\infty=(\mathbf{t}_\infty,\ell_\infty)$ which takes value in $\mathbb{T}_\infty^{(0)}\cap\mathbb{S}$ as follows :
\begin{itemize}[label=\textbullet]
    \item The root has label $0$
    \item The labels of the spine evolve as a random walk with uniform steps in $\{-1,0,1\}$
    \item Given the labels on the spine $(\ell(S(n)))_{n\geq0}=(x_n)_{n\geq0}$, the sequences of subtrees $(L_n)_{n\geq0}$ and $(R_n)_{n\geq0}$ form to independent sequences of independent labeled trees with law $(\rho_{(x_n)})_{n\geq0}$.
\end{itemize}
Finally, note that almost surely, $\Theta_\infty\in\mathbb{S}_-$.

\paragraph{The infinite rerooted labeled tree.}\label{inf rerooted}

Before introducing one last random tree, we need to introduce a Markov chain $X$ taking values in $\N$. Its transition probabilities are given as follows : we have $\P(X_1=1\,|\,X_0=0)=1$, and otherwise, for $x\geq 1$, 
\begin{align}\label{Transition}
    p(x,x+1):=\P(X_1=x+1|X_0=x)&=\frac{(x+4)(2x+5)}{3(x+2)(2x+3)}=\frac{f(x+1)w(x)}{3f(x)}\nonumber\\
    p(x,x):=\P(X_1=x|X_0=x)&=\frac{x(x+3)}{3(x+1)(x+2)}=\frac{w(x)}{3}\\
    p(x,x-1):=\P(X_1=x-1|X_0=x)&=\frac{(x-1)(2x+1)}{3(x+1)(2x+3)}=\frac{f(x-1)w(x)}{3f(x)}\nonumber
\end{align} 
where $f(x)=x(x+3)(2x+3)$. Moreover, if $\P_x=\P(\cdot\,|\,X_0=x)$ and
\[H_k=\inf\{n\geq1,\,X_n=k\},\] 
the proof of \cite[Lemma 2.3]{dieuleveut} shows that
\begin{equation}\label{Probabilité de retour 2}
    \P_k(H_k<\infty)=\frac{6k-1}{3(2k+3)}
\end{equation}

and 
\begin{equation}\label{Probabilité de retour}
    \P_x(H_k<\infty)=\frac{h(k)}{h(x)}
\end{equation} 
for $x>k$, where $h(x)=x(x+1)(x+2)(x+3)(2x+3).$\\
Finally, as observed in \cite{dieuleveut}, a Theorem of Lamperti \cite{Lamperti} shows that we have the convergence
\begin{equation}\label{convergence Bessel}
\left(\frac{X_{\lfloor nt\rfloor}}{\sqrt{n}}\right)_{t\geq0}\xrightarrow[n\rightarrow\infty]{(d)}  (Z_{2t/3})_{t\geq0},
\end{equation} 
where $Z$ is a Bessel process of dimension $7$ starting at $0$.

With this process being defined, we introduce another random variable $\overline{\Theta}_\infty^{(1)}$ taking values in $\mathbb{T}_\infty^{(1)}\cap\mathbb{S}$ as follows :
\begin{itemize}[label=\textbullet]
    \item The labels on the spine $(S_n)_{n\geq0}$ evolve as the Markov chain $X$ started from $0$
    \item Given the labels on the spine $(\ell(S_n))_{n\geq0}=(y_n)_{n\geq0}$, the sequences of subtrees $(L_n)_{n\geq0}$ and $(R_n)_{n\geq0}$ form two independent sequences of independent labeled trees with respective laws $(\rho_{(y_n)})_{n\geq0}$ and $(\rho_{(y_n)}^+)_{n\geq0}$.
\end{itemize}

We will write $(X^{(\infty)},F^{(\infty)}_1,F^{(\infty)}_2)$ for the process and the forests encoding this random tree.
Finally, we also introduce the random variable $\overline{\Theta}_\infty^{(2)}=(\overline{T}_\infty^{(2)},\overline{\ell}_\infty^{(2)})$, which is independent of $\overline{\Theta}_\infty^{(1)}=(\overline{T}_\infty^{(1)},\overline{\ell}_\infty^{(1)})$, and whose distribution is related to the one of $\overline{\Theta}_\infty^{(1)}$ by the following relations :
\begin{itemize}[label=\textbullet]
    \item $(\ell(S_n(\overline{\Theta}_\infty^{(2)}))_{n\geq0}\overset{(d)}{=}(\ell(S_n(\overline{\Theta}_\infty^{(1)})+1)_{n\geq0}$
    \item $(L_n(\overline{T}_\infty^{(2)}),\overline{\ell}_\infty^{(2)})_{n\geq0}\overset{(d)}{=}(R_n(\overline{T}_\infty^{(1)}),\overline{\ell}_\infty^{(1)}+1)_{n\geq0}$
    \item $(R_n(\overline{T}_\infty^{(2)}),\overline{\ell}_\infty^{(2)})_{n\geq0}\overset{(d)}{=}(L_n(\overline{T}_\infty^{(1)}),\overline{\ell}_\infty^{(1)}+1)_{n\geq0}$.
\end{itemize}
As previously, we write $(\widehat{X}^{(\infty)},\widehat{F}^{(\infty)}_1,\widehat{F}^{(\infty)}_2)$ for the process and the forests encoding the random tree $\overline{\Theta}_\infty^{(2)}$.

Note that for these two trees, the labels on the spine diverge toward $+\infty$ almost surely. More precisely, almost surely, we have $\Theta_\infty^{(1)}\in\mathbb{S}_{(1)}$ and $\Theta_\infty^{(2)}\in\mathbb{S}_{(2)}$.
    
\subsection{Boltzmann planar maps, the $\mathrm{UIPQ}$ and $\mathrm{\overline{UIPQ}}$}\label{random maps}

We introduce three random quadrangulations that will be studied in this work. First, for every $n\geq1$, consider the random tree $\Theta_n$ introduced in Subsection \ref{Random labeled trees}, and set $Q_n=\Phi(\Theta_n)$. As a consequence of \eqref{Distance}, this random quadrangulation is a critical positive Boltzmann pointed quadrangulation, conditioned on $\{d_Q(v_0,v_*)\geq n\}$. Furthermore, for every $0\leq i\leq n-\ell_*$, let $\tau_n(i)$ (respectively $\widehat{\tau}_n(i)$) be the first corner in $\Theta_n$ with label $n-i$ after the root, in clockwise order (respectively counter-clockwise order). It is easy to see that the path $\gamma_n:=(\tau_n(i))_{0\leq i\leq n-\ell_*}$ is a geodesic in $Q_n$ between $v_0$ and $v_*$, and that $\tau_n(i)$ is the successor of $\widehat{\tau}_n(i-1)$.

Then, we define $Q_\infty=\Phi(\Theta_\infty)$. This random quadrangulation of the plane is called the UIPQ (Uniform Infinite Planar Quadrangulation), and was introduced in \cite{Krikun} as the local limit of uniform quadrangulations with $n$ faces as $n\rightarrow\infty$. This space is a central object in random geometry, and has been extensively studied (see \cite{ChassaingDurhuus,MenardLaw,LegallMenard,SeparatingCycles} for instance). The definition used in this article was proven to be equivalent to the previous ones in \cite{Viewfrominfinity}. For every $i>0$, let $\tau_\infty(i)$ be the first corner with label $-n$ after the root, in clockwise order. We use the same notation for the associated vertices. Then, the path $\gamma_\infty:=(\tau_\infty(i))_{i\geq0}$ is an infinite geodesic starting at the root, called the left-most geodesic. Furthermore, it was shown in \cite{Viewfrominfinity} that every infinite geodesic starting from the root must contain infinitely many vertices of $\gamma_\infty$. For later purposes, for every $n>0$, we also let $\widehat{\tau}_\infty(i)$ be the first corner with label $-i$ after the root, in counterclockwise order. Note that these vertices do not form a geodesic, but we always have $d_{Q_\infty}(\tau_\infty(i),\widehat{\tau}_\infty(i-1))=1$, since $\tau_\infty(i)$ is the successor of $\widehat{\tau}_\infty(i-1))=1$.

Similarly, we define $\overline{Q}_\infty^{(1)}=\Phi(\overline{\Theta}_\infty^{(1)})$ and $\overline{Q}_\infty^{(2)}=\Phi(\overline{\Theta}_\infty^{(2)})$. These random quadrangulations of the half-plane were introduced and studied in \cite{dieuleveut}. Just as previously, for every $n\in\mathbb{Z}$, let $\overline{\tau}_\infty^{(1)}(n)$ be either 
\begin{itemize}[label=\textbullet]
    \item the first corner in $\overline{\Theta}_\infty^{(1)}$ with label $n$ after the root in clockwise order if $n\leq0$, 
    \item or the last corner in $\overline{\Theta}_\infty^{(1)}$ with label $n$ after the root in counter-clockwise order if $n\geq0$.
\end{itemize}
We also set $\overline{\tau}_\infty^{(2)}(n)=\lambda_n$, where the vertices $(\lambda_i)_{i\in\Z}$ were introduced right before Proposition \ref{CVS general}. The two paths $\overline{\gamma}_\infty^{(1)}=(\overline{\tau}_\infty^{(1)}(n))_{n\in\mathbb{Z}}$ and $\overline{\gamma}_\infty^{(2)}=(\overline{\tau}_\infty^{(2)}(n))_{n\in\mathbb{Z}}$ are infinite bigeodesics of $\overline{Q}_\infty^{(1)}$ and $\overline{Q}_\infty^{(2)}$ respectively. Finally, we define $\overline{Q}_\infty$ as the gluing of $\overline{Q}_\infty^{(1)}$ and $\overline{Q}_\infty^{(2)}$ along the curves $\overline{\gamma}_\infty^{(1)}$ and $\overline{\gamma}_\infty^{(2)}$. More precisely, for every $n\in\mathbb{Z}$, we identify the vertices $\overline{\tau}_\infty^{(1)}(i)$ and $\overline{\tau}_\infty^{(2)}(i)$. We call $\overline{\gamma}_\infty$ the curve obtained after the gluing, and we root this space at $\overline{\gamma}_\infty(0)$. The resulting space, which we denote by $\overline{\mathrm{UIPQ}}$, is a random quadrangulation of the plane, and appears as the local limit of the UIPQ, re-rooted further and further along its left most geodesic $\gamma_\infty$ (see \cite{dieuleveut}). 

\subsection{The Brownian sphere and the bigeodesic Brownian plane}

Here, we recall a few results about the Brownian sphere and the bigeodesic Brownian plane that will be needed in this article. The standard Brownian sphere $(\mathcal{S},D)$ is a random compact metric space, which has a root $x_0$ and a distinguished point $x_*$. It also has a volume measure $\mu$, of mass $1$. This random space appears as the scaling limit of several models of random planar maps (see \cite{ConvergenceBJM,ConvergenceSimple,AbrahamConvergence,uniqueness,convergence}). To construct the Brownian sphere, we use a measurable function $\Psi$ that associates to any pair $(B,Z)$ of functions satisfying some properties a pointed metric space $\Psi(B,Z)$.  The Brownian sphere is defined as $\Psi(\cB,\cZ)$ for a well-chosen pair of random processes $(\cB,\cZ)$. However, we will not give an explicit definition of the function $\Psi$ here, and refer to \cite{uniqueness,convergence} for details.

It is sometimes more convenient to work with the free Brownian sphere, which is constructed from the excursion measure of the Brownian snake $\mathbb{N}_0$, which can be viewed as a measure on pairs of function $(B,Z)$ satisfying the conditions required to apply $\Psi$. In this case, the volume is not fixed, and corresponds to the duration of the encoding processes. We will not give more details about this measure here, and we refer to \cite{serpent} for a detailed study of this object. 

Almost surely, there exists a unique geodesic $\Gamma$ between $x_0$ and $x_*$ (see \cite{geodesic1}). The length of this geodesic corresponds to the minimum label reached by the Brownian snake under $\mathbb{N}_0$, called $W_*$. Even though $\mathbb{N}_0$ is a $\sigma$-finite measure, for every $b>0$, the event $\{W_*<-b\}$ has a finite mass. Therefore, we can define a probability measure $\mathbb{N}_0(\cdot\,|\,W_*<-b)$, which corresponds to the Brownian sphere with $D(x_0,x_*)>b$. 

The following result deals with the scaling limit the contour and label processes of $\Theta_n$, together with the quadrangulation $Q_n$. Recall that, informally, the contour and label processes of a labelled tree record the distance to the root and the label as one moves around the tree in clockwise order (see \cite{LeGall2005} for more details).

\begin{theorem}\label{Convergergence sphere}
   Let $(X_n,L_n)$ be the contour and label processes of a random tree $\Theta_n$, that we interpolate linearly between integer times. Also let $(Q_n,d_{Q_n},v_0^{(n)},v_*^{(n)})$ be the random positive quadrangulation associated to \textit{the same} random tree $\Theta_n$ via the CVS bijection. We have the following joint convergence 
 \begin{multline*}
     \left(\left(\frac{X_n(\lfloor n^4t\rfloor)}{n^2},\frac{L_n(\lfloor n^4t\rfloor)}{n}\right)_{t\in\R_+},\left(Q_n,\frac{\sqrt{3}}{\sqrt{2}n}d_{Q_n},v_0^{(n)},v_*^{(n)}\right)\right)
     \xrightarrow[n\rightarrow\infty]{(d)}\left(\left(\cB_t,\cZ_t\right)_{t\in\R_+},\left(\mathcal{S},D,x_0,x_*\right)\right)
 \end{multline*}
        for the Skorokhod topology for the first coordinate and the marked Gromov-Hausdorff topology for the second  coordinate, where $(\cB,\cZ)$ is distributed under $\mathbb{N}_0(\cdot\,|\,W_*<-1)$, and $\left(\mathcal{S},D,x_0,x_*\right)=\Psi(\cB,\cZ)$. 
\end{theorem}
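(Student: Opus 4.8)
The plan is to combine the known convergence of the rescaled contour and label processes of $\Theta_n$ with the known scaling limit of positive Boltzmann quadrangulations, and then to upgrade the two marginal convergences to a \emph{joint} convergence by exploiting the fact that the CVS bijection realizes $Q_n$ as a measurable (indeed, almost continuous) functional of the encoding pair. First I would recall the unconditioned picture. A tree with law $\rho_{(0)}$ is, after forgetting labels, a critical $\mathrm{Geom}(1/2)$ Bienaymé--Galton--Watson tree, which has variance $2$, so its contour function rescaled by $n^{-1/2}$ (with time rescaled by $n^{-2}$, after conditioning on $n$ edges) converges to a Brownian excursion, and the labels, being uniform in $\{-1,0,1\}$ given the tree, contribute a Brownian-snake head; this is the classical invariance principle of Chassaing--Schaeffer and Le Gall \cite{LeGall2005}. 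Conditioning on $\{d_Q(v_0,v_*)\geq n\}$ amounts, via \eqref{Distance}, to conditioning $\rho_{(n)}^-$, i.e. a tree with root label $n$ conditioned to reach a non-positive label; after the shift $\ell\mapsto \ell-n$ and the parametrization of the statement, a standard absolute-continuity/Vervaat-type argument (already present in the literature on the Brownian sphere with $D(x_0,x_*)$ prescribed, cf. \cite{serpent,geodesic1}) identifies the limit of the rescaled $(X_n,L_n)$ as the pair $(\cB,\cZ)$ under $\mathbb{N}_0(\cdot\mid W_*<-1)$. This gives the first coordinate.

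Next I would invoke Theorem of \cite{uniqueness,convergence}: the rescaled metric space $(Q_n,\frac{\sqrt3}{\sqrt2 n}d_{Q_n},v_0^{(n)},v_*^{(n)})$ converges in the marked Gromov--Hausdorff topology to $(\mathcal S,D,x_0,x_*)=\Psi(\cB,\cZ)$, with the normalization constant $\sqrt3/\sqrt2$ matching the conventions of \cite{LeGall2005}. The key point for the \emph{joint} statement is that $\Psi$ is the map that sends an encoding pair to the quotient of $[0,1]$ by the pseudo-distance built from $(B,Z)$, and that this construction is continuous at $(\cB,\cZ)$ for the relevant topologies — this is precisely the content of the convergence proof in \cite{uniqueness,convergence}, where the limiting metric space is obtained as a continuous image of the limiting snake. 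Concretely, I would use the Skorokhod representation theorem to put all the $(X_n,L_n)$ on a common probability space so that they converge almost surely (uniformly on compacts) to $(\cB,\cZ)$; then, since $Q_n$ is built from $\Theta_n$ and $\Theta_n$ is encoded by $(X_n,L_n)$, and since the discrete metric $d_{Q_n}$ is controlled above and below by functionals of the label process (the upper bound coming from the CVS arcs, the lower bound from \eqref{Borne}), the arguments of \cite{uniqueness,convergence} show that $\frac{\sqrt3}{\sqrt2 n}d_{Q_n}\to D$ in the Gromov--Hausdorff sense \emph{on this same space}, with the same pair of marked points $v_0^{(n)}\to x_0$, $v_*^{(n)}\to x_*$. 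Transferring back, the pair (process, space) converges jointly in distribution to $((\cB,\cZ),(\mathcal S,D,x_0,x_*))$, which is the claim.

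The main obstacle is not the convergence of either marginal — both are in the literature — but the bookkeeping needed to see that the \emph{same} random tree drives both limits simultaneously, i.e. that the two convergences are compatible and that the limiting metric space is genuinely $\Psi$ applied to the limiting snake rather than merely having the same law. This is handled by the Skorokhod-coupling argument above: once the encoding processes converge almost surely, the Gromov--Hausdorff convergence of the associated quadrangulations is a deterministic consequence of the estimates in \cite{uniqueness,convergence} (this is exactly how those papers prove their theorem, starting from a subsequential limit of the snake). A secondary technical point is the passage from the unconditioned measure $\mathbb{N}_0$ to the conditioned one $\mathbb{N}_0(\cdot\mid W_*<-1)$ and the matching of scaling constants; here one checks that conditioning $\rho_{(n)}^-$ corresponds, in the limit, exactly to conditioning on $\{W_*<-1\}$ by comparing the discrete re-rooting/shift with the continuous one, using \eqref{minimum discret} to see that the conditioning probabilities behave well and that no extra Radon--Nikodym factor survives in the limit. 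Modulo these two points, the theorem follows by assembling the cited results.
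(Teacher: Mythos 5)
Your overall strategy (Skorokhod coupling the encoding processes, then invoking the fact that the arguments of \cite{uniqueness,convergence} deliver the Gromov--Hausdorff convergence jointly with the snake) is correct and is indeed what the paper ultimately does. But your handling of the conditioning step — the passage from $\rho_{(n)}^-$ to $\mathbb{N}_0(\cdot\mid W_*<-1)$ — is the real content of the theorem, and ``a standard absolute-continuity/Vervaat-type argument'' does not by itself settle it. The paper takes a cleaner, more concrete route: it embeds the conditioned object inside an \emph{unconditioned} one. Specifically, one starts from the two-sided pair $(X,L)$ where $X$ is a simple random walk and $L$ the label process of the infinite forest encoded by $X$; the excursions of $(X,L)$ above the running infimum of $X$ each encode an independent tree with law $\rho_{(0)}$, hence (via CVS) i.i.d.\ critical Boltzmann quadrangulations. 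After shifting labels by $-n$, the tree $\Theta_n$ is then \emph{exactly} the first such excursion whose label minimum reaches $-n$. One applies \cite{ConvergenceSerpent} to the full two-sided process, Skorokhod-couples to get a.s.\ convergence, and observes that the selected excursion index converges a.s.; the limit excursion is automatically distributed under $\mathbb{N}_0(\cdot\mid W_*<-1)$ because $\mathbb{N}_0$ \emph{is} the excursion measure of the Brownian snake. No separate Radon--Nikodym bookkeeping is needed.

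Two smaller points where your sketch is imprecise. First, the trees here have random (and growing) size, so $\Psi$ is not the quotient of $[0,1]$; the paper handles this by conditioning on the excursion duration $\sigma^{(i,j,n)}$, under which $Q^{(i,j,n)}$ is \emph{uniform} among pointed quadrangulations with that many faces, so that \cite{uniqueness,convergence} applies excursion by excursion. If you work directly with the conditioned law you have to justify why the random-volume version of the invariance principle holds, which is precisely what the excursion decomposition gives you for free. Second, $\Psi$ is not a continuous map on the space of encoding pairs; what is true — and what both you and the paper rely on — is that the \emph{proofs} of \cite{uniqueness,convergence} establish joint convergence of (snake, rescaled map) under a Skorokhod coupling, not that the limit is a continuous image. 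Your phrasing (``this construction is continuous at $(\cB,\cZ)$'') should be replaced by the weaker, correct statement, which is all that is used.
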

This theorem follows from the convergence uniform quadrangulations towards the standard Brownian sphere \cite{uniqueness,convergence} by using standard arguments from excursion theory. We postpone the proof to the Appendix \ref{appendix0}.

On the other hand, the bigeodesic Brownian plane $(\overline{\mathcal{BP}},\overline{D},\overline{\rho}_\infty)$ is a non-compact random metric space, that was introduced and studied in \cite{Bigeodesicbrownianplane}. This space is the tangent plane (in distribution) of the Brownian sphere at a point of the geodesic $\Gamma$. More precisely, we have the following convergence result.
\begin{theorem}\label{convergergenceBP}
     For every $\delta>0$ and $b>0$, we can find $\varepsilon>0$ such that we can construct $(\cS,D,\rho)$ (under $\N_0(\cdot\,|\,W_*<-b)$) and $(\overline{\mathcal{BP}},\overline{D}_\infty,\overline{\rho}_\infty)$ on the same probability space such that $B_\varepsilon(\cS,\Gamma_b)$ and $B_\varepsilon(\overline{\mathcal{BP}})$ are isometric with probability at least $1-\delta$. Moreover, we have the convergence 
    \begin{equation}        (\cS,\varepsilon^{-1}D,\Gamma_b)\xrightarrow[\varepsilon\rightarrow 0]{(d)}(\overline{\mathcal{BP}},\overline{D}_\infty,\overline{\rho}_\infty)
    \end{equation}
    in distribution for the local Gromov-Hausdorff topology. 
\end{theorem}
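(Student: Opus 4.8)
This statement is, in essence, the tangent-cone construction of \cite{Bigeodesicbrownianplane}: the bigeodesic Brownian plane is what one sees by magnifying the Brownian sphere around an interior point $\Gamma_b$ of its distinguished geodesic $\Gamma$. The natural route is through the Brownian snake. Write $(\cS,D,x_0,x_*)=\Psi(\cB,\cZ)$ with $(\cB,\cZ)$ under $\N_0(\cdot\mid W_*<-b)$, and recall that $\Gamma$ is the a.s.\ unique geodesic from $x_0$ to the point $x_*$ of minimal label $W_*$, the point of $\Gamma$ at distance $t$ from $x_0$ carrying label $-t$; on the event $\{W_*<-b\}$ the marked point $\Gamma_b$ is genuinely interior to $\Gamma$. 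The first step is a \emph{spinal} (Bismut-type) decomposition of $(\cB,\cZ)$ along the ancestral line of $s_*$ in the tree coded by $\cB$, which carries $\Gamma$: conditionally on the labels read along this spine --- a path running from $0$ down to $W_*$ --- the rest of $(\cB,\cZ)$ is a Poisson collection of subtrees grafted to the left and to the right of the spine, each equipped with an independent Brownian snake started at the corresponding spine label.

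One then zooms in around $\Gamma_b$, i.e.\ rescales $D$ by $\varepsilon^{-1}$. Because $\Gamma_b$ is an interior point, the spine extends on \emph{both} sides of the ancestor $u_b$ that projects to $\Gamma_b$; under the scaling of the Brownian snake that dilates distances (and leaves $\N_0$ quasi-invariant), the restriction of the decomposed data to a window of parameters of the appropriate order around $u_b$ converges, as $\varepsilon\to0$, to a \emph{bi-infinite} spine carrying two independent Poissonian families of grafted subtrees, one per side. This is precisely the coding data of the bigeodesic Brownian plane --- the prefix ``bi'' recording that we are in the interior of a two-ended geodesic, so that subtrees accumulate on both sides and in both directions --- and the scale invariance of $\N_0$ passes to $\overline{\mathcal{BP}}$, which is why $\varepsilon^{-1}B_\varepsilon(\overline{\mathcal{BP}})\overset{(d)}{=}B_1(\overline{\mathcal{BP}})$: the ball-isometry statement and the convergence of $(\cS,\varepsilon^{-1}D,\Gamma_b)$ in the theorem are then two faces of the same fact.

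The real work is to turn this convergence of coding data into the \emph{exact isometric coupling of balls}, i.e.\ to show that with probability tending to $1$ the ball $B_\varepsilon(\cS,\Gamma_b)$ is already a measurable functional of the local window of the decomposed snake described above (and similarly $B_\varepsilon(\overline{\mathcal{BP}})$ of the limiting window). This is where the main difficulty lies: since $D$ is the largest pseudometric dominated by the a priori pseudometric $D^\circ$, a distance-minimising chain between two points near $\Gamma_b$ could a priori wander through distant parts of $\cS$. One must therefore establish a confinement estimate for Brownian-snake geodesics --- any geodesic joining two points at $D$-distance $\leq\varepsilon$ from $\Gamma_b$ stays within $D$-distance $\leq C\varepsilon$ of $\Gamma_b$, and the relevant $D$-distances are realised by chains confined to that neighbourhood --- using the lower bound $D\geq|\Delta\ell|$, the Hölder regularity of labels, and rerouting/cactus-type confinement arguments in the spirit of those developed for the Brownian plane in \cite{brownianplane}. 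Granting this estimate, both balls become measurable functions of coupled coding windows that coincide with probability $\to1$, giving the coupling and hence the local Gromov--Hausdorff convergence, with $\varepsilon=\varepsilon(\delta,b)$ read off from the rate in the confinement estimate.
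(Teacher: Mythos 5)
The paper does not prove Theorem~\ref{convergergenceBP}; it imports it verbatim from the reference on the bigeodesic Brownian plane. Immediately before the statement the paper says the space ``was introduced and studied in'' that reference, and immediately after it says ``we do not need any result about this space other than Theorem~\ref{convergergenceBP}.'' So there is no internal proof to compare against, only the citation.

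That said, your sketch is a fair roadmap of what the cited reference does, and the paper's own remarks corroborate this: the Remark after Proposition~\ref{Three arms} points to the continuous spinal decompositions (Propositions~3.2 and~3.6 of that reference) that are the analogue of your Bismut-type decomposition along the ancestral line of $s_*$, and the Remark after Proposition~\ref{localisation 2} points to the continuous confinement estimate (Proposition~4.1 there) that is the analogue of the ``real work'' you identify. Two caveats. First, as a proof attempt this is an outline and not a proof: the confinement estimate --- that $D$-geodesics between points near $\Gamma_b$ stay near $\Gamma_b$ and that the relevant distances are realised inside a local window of the decomposed snake --- is precisely the hard step, and you state it as a goal rather than argue it; merely invoking $D\ge|\Delta\ell|$ and Hölder regularity is not enough to rule out chains wandering far and returning. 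Second, the phrase ``leaves $\N_0$ quasi-invariant'' under dilations and the claimed exact scale invariance $\varepsilon^{-1}B_\varepsilon(\overline{\mathcal{BP}})\overset{(d)}{=}B_1(\overline{\mathcal{BP}})$ are heuristics rather than established facts in your write-up; the Brownian plane enjoys such exact scale invariance, but for the bigeodesic variant this has to come out of the construction and is not to be assumed. In short: right strategy, consistent with the paper's pointers, but far from a self-contained argument.
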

Even though our convergence results involve the bigeodesic Brownian plane, we do not need any result about this space other than Theorem \ref{convergergenceBP}. However, we emphasize that the proof of Theorem \ref{Main result} relies on several methods used to prove Theorem \ref{convergergenceBP}.

\section{Coupling trees}\label{Coupling trees}

In this section, we establish coupling results between the trees $\Theta_n,\Theta_\infty$ and $\overline{\Theta}_\infty$. For $\Theta_\infty$ and $\overline{\Theta}_\infty$, this is an improvement of \cite[Theorem 1.4]{dieuleveut}. The proof relies on spine decomposition of $\Theta_n$ and explicit calculations in $\Theta_\infty$. We start with some notations.

For every $n\in\N$, let $\eta_n$ (respectively $\widehat{\eta}_{n}$) be the geodesic path (in $\Theta_n$) from $\tau_n(n)$ to $\rho_n$ (respectively, from $\widehat{\tau}_{n}(n-1)$ to $\rho_n$). Let $\kappa_m$ (respectively $\widehat{\kappa}_m$) be the last hitting time of $m$ by $\ell(\eta_n)$ (respectively $\ell(\widehat{\eta}_n)$). Then, for every $\beta<1$, let $A_{\beta n}(\Theta_n)$ be the subset of $\Theta_n$ formed by the path $ \{\eta_n(0),...,\eta_n(\kappa_{\lfloor\beta n\rfloor})\}$, together with the subtrees branching off this path (see Figure \ref{Fig ensemble arbres}).
It is clear that $A_{\beta n}(\Theta_n)$ is a tree, which corresponds to a portion of the trajectory between $\tau_n(n)$ and $\rho_n$, together with the subtrees branching off this trajectory. We also define $\widehat{A}_{\beta n}(\Theta_n)$ in a similar fashion, but with the trajectory $\widehat{\eta}_{n}$ instead of $\eta_n.$ Note that $A_{\beta n}(\Theta_n)$ and $\widehat{A}_{\beta n}(\Theta_n)$ are not necessarily disjoint.

\begin{figure}
    \centering
    \includegraphics[width=0.6\linewidth]{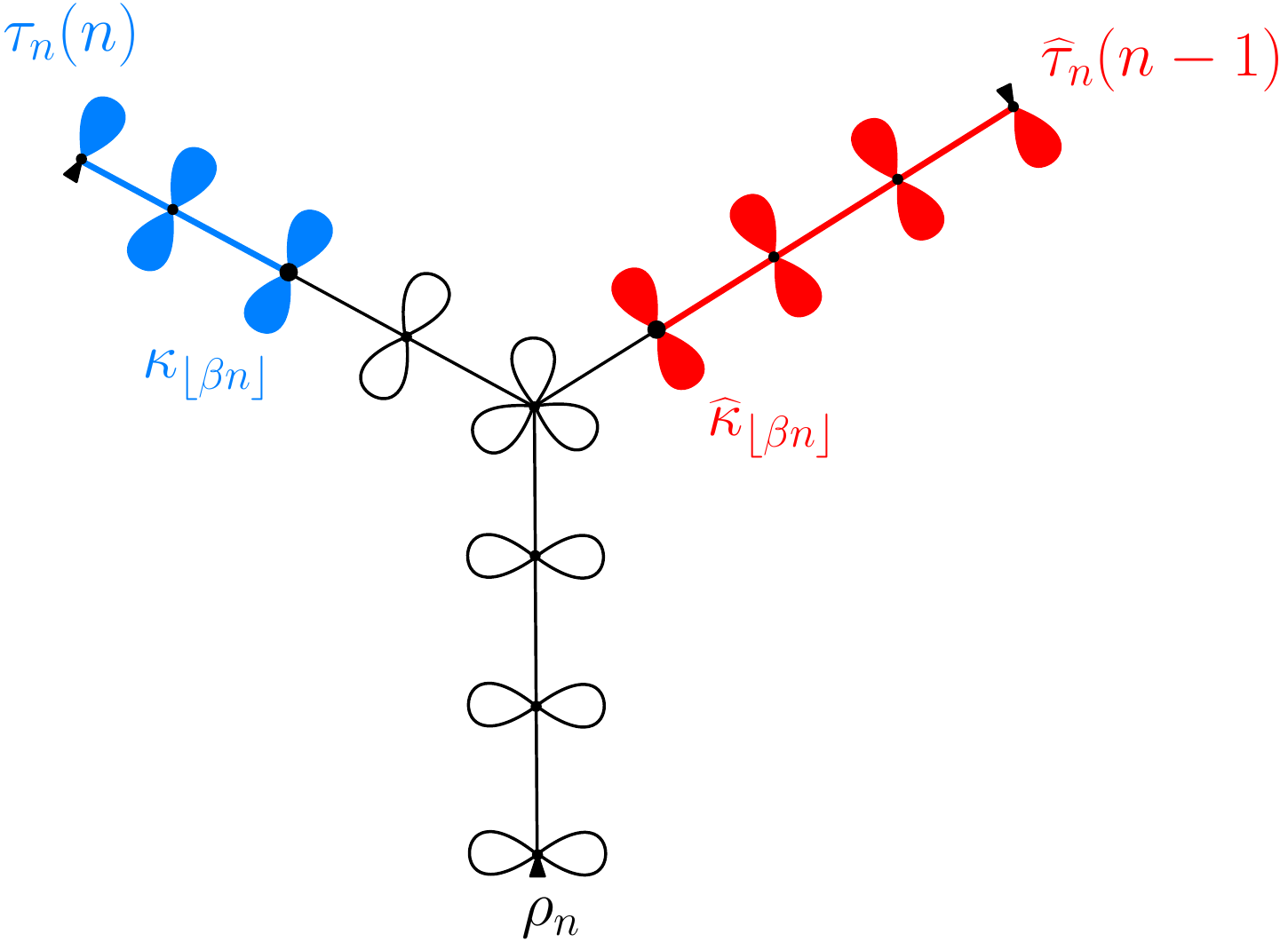}
    \caption{Illustration of the sets $A_{\beta n}(\Theta_n)$ and $\widehat{A}_{\beta n}(\Theta_n)$, respectively in blue and red.}
    \label{Fig ensemble arbres}
\end{figure}

Then, let $A_{\beta n}(\overline{\Theta}_\infty^{(1)})$ and $\widehat{A}_{\beta n}(\overline{\Theta}_\infty^{(2)})$ be the restrictions of $\overline{\Theta}_\infty^{(1)}$ and $\overline{\Theta}_\infty^{(2)}$ to their unique infinite spines up to their last hitting time of $\beta n$, together with the subtrees branching off these paths. 

\subsection{Spine decompositions for discrete trees}\label{sec spine}

We start by giving new constructions of the random labelled tree $\Theta_k$ introduced in Section \ref{Proba arbre}, which rely on spine decompositions. 

Consider a random tree $\mathbb{T}_k$ obtained as follows. First, let $(X^{(k)}_n)_{0\leq n\leq S_k}$ denote the process $X$ starting at $0$ and stopped at the last time it visits $k$. Then, consider a line formed by $S_k$ points, equipped with labels given by the process $X^{(k)}$. Then, conditionally on $X^{(k)}$ :
\begin{itemize}[label=\textbullet]
    \item For every $0\leq i\leq S_k$, graft to the left of the vertex $i$ independent trees with laws $(\rho_{X_i^k})_{0\leq i\leq S_k}$.
    \item For every $1\leq i\leq S_k$, graft to the right of the vertex $i$ independent trees with laws $(\rho_{X_i^k}^+)_{1\leq i\leq S_k}$.
\end{itemize}
Finally, we root this tree at the bottom of the spine, which is identified with $0$. Observe that this tree is very similar to a ``finite version'' of the tree $\overline{\Theta}_\infty^{(1)}$.
\begin{proposition}\label{decompo1}
    The random tree $\mathbb{T}_k$ has the same law as $\Theta_k$, rerooted at $\tau_k(k)$.
\end{proposition}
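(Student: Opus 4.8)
The plan is to exhibit an explicit weight-preserving bijection between the trees appearing in the definition of $\mathbb{T}_k$ and the trees $\theta \in \mathbb{T}^{(k)}$ (equivalently $\Theta_k$), rerooted at $\tau_k(k)$, by carefully tracking how the CVS/Schaeffer spinal structure interacts with the "last visit of $k$" on the geodesic $\eta_k$. First I would recall that for $\theta$ distributed as $\rho_{(n)}^-$ (i.e. $\Theta_n$), the geodesic $\gamma_n$ in $Q_n = \Phi(\theta)$ traced by successors of $c_0$ ends at $v_*$ at distance $n - \ell_*$, and that the relevant portion is the path $\eta_k$ from $\tau_k(k)$ back to the root $\rho_k$, whose label process is $\ell(\eta_k)$. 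Rerooting $\Theta_k$ at $\tau_k(k)$ means reading the tree with $\tau_k(k)$ as the new root, which turns the geodesic segment $\{\eta_k(0),\dots,\eta_k(\kappa_k)\}$ — the part of the geodesic below the last visit of level $k$ — into a spine of the rerooted tree; the subtrees branching off $\eta_k$ on its left and right become the left and right decorations of this spine.

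The key computation is the following: if $\theta$ has law $\rho_{(k)}$, the spinal decomposition of the rerooted tree — i.e. the law of the label process along the new spine $\eta_k$ read from $\tau_k(k)$, together with the conditional laws of the grafted subtrees — is exactly the one describing $\mathbb{T}_k$: the spine labels follow the Markov chain $X$ of \eqref{Transition} run from $0$ and stopped at its last visit to $k$, the left subtrees along the spine are independent with laws $\rho_{X_i^{(k)}}$, and the right subtrees are independent with laws $\rho^+_{X_i^{(k)}}$. To prove this I would argue as in \cite{dieuleveut}: decompose $\rho_{(k)}$ along the geodesic $\eta_k$, using that the successor geodesic separates the tree into a "left part" (which is unconstrained, contributing factors $\rho_{X_i^{(k)}}$) and a "right part" (which, because corners to the right of the successor line have strictly larger label than the current spine vertex by \eqref{Successor}, is forced to have positive labels relative to the spine, contributing factors $\rho^+_{X_i^{(k)}}$, whence the weights $w(x)$ appear). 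The transition probabilities of $X$ then emerge by combining \eqref{minimum discret} and \eqref{Condition weight} with the combinatorial count $f(x) = x(x+3)(2x+3)$ of configurations giving the label increment — this is precisely the content of \cite[Lemma 2.3]{dieuleveut}, so I would invoke it rather than redo it. Finally, the "last visit to $k$" stopping time on the chain $X$ matches the geometric fact that $\eta_k(\kappa_k) = \tau_k(k)$ is the last vertex of the geodesic at label $k$, and the conditioning $\rho_{(k)}^-$ (non-positive minimal spine label, which holds $\Theta_k$-a.s. since the geodesic reaches $\ell_* \le 0$) is automatically satisfied here.

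The main obstacle I expect is getting the bookkeeping of corners and labels exactly right when rerooting: one must check that the left/right asymmetry of the Schaeffer construction (left subtrees unconstrained, right subtrees with positive labels) is preserved — and not swapped — under rerooting at $\tau_k(k)$, and that the geodesic $\eta_k$ (defined via label-decreasing successors) really does become the spine after rerooting, with no spurious extra spine vertices. A clean way to handle this is to phrase everything in terms of the contour exploration and the interval notation $[c_i, c_j]$ already set up in the preliminaries, identifying $\tau_k(k)$ and $\rho_k$ with specific corners and checking that the portion of the contour between them decomposes as claimed. Once that identification is made, the weight computation is a short manipulation of \eqref{Condition weight}, \eqref{minimum discret} and the definitions in \eqref{Transition}, so the proposition follows.
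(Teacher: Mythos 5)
Your high-level plan — reroot $\Theta_k$ at $\tau_k(k)$, decompose along the new spine, identify the laws of the subtrees on each side, and compute the weight — does match the structure of the paper's proof, and the conclusion (left subtrees unconstrained, right subtrees with positive labels) is correct. However, there are several conceptual errors that would derail a careful write-up.

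The path $\eta_k$ is the \emph{tree} path (the ancestral line) from $\tau_k(k)$ to $\rho_k$ inside $\Theta_k$, \emph{not} the successor geodesic $\gamma_k=(\tau_k(i))_i$ in the quadrangulation $Q_k$. These are genuinely different curves: $\tau_k(i)$ for $0<i<k$ is the first corner with label $k-i$ in the contour and in general is not an ancestor of $\tau_k(k)$. Proposition~\ref{decompo1} is a purely tree-level statement; the CVS machinery, \eqref{Successor}, and ``corners to the right of the successor line'' play no role in its proof. You also have the orientation of $\eta_k$ reversed: $\eta_k(0)=\tau_k(k)$ has label $0$, while $\eta_k(\kappa_k)=\rho_k$ has label $k$, so your claim that ``$\eta_k(\kappa_k)=\tau_k(k)$ is the last vertex of the geodesic at label $k$'' is wrong on both counts. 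Finally, the left/right asymmetry does not come from ``the Schaeffer construction.'' It comes from the fact that $\tau_k(k)$ is by definition the \emph{first} corner of $\Theta_k$ with label $0$ in the contour: everything visited before it — exactly the subtrees branching off $\eta_k$ on its left side in $\Theta_k$, which become the \emph{right} subtrees of the spine after rerooting — has labels $\geq 1$. Thus the constraint on those subtrees is \emph{absolute} positivity (membership in $\mathbb{T}^+$), not ``positive labels relative to the spine.'' Once these identifications are fixed, the remaining content is the short weight computation the paper carries out explicitly: multiply $\prod_i p(x_i,x_{i+1})$, the ``last visit'' factor $\P_k(H_k=\infty)$ from \eqref{Probabilité de retour 2}, and the forest weights $\prod_i\rho_{x_i}(f_i)\prod_i\rho^+_{x_i}(f'_i)$, then simplify via \eqref{Condition weight} to recover $\rho_{(k)}^-(\theta)=\frac{1}{2(1-w(k))12^{|\theta|}}$. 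Citing \cite[Lemma~2.3]{dieuleveut} for the chain $X$ is acceptable, but it is not much of a shortcut — the algebra is only a few lines, and the paper does it from scratch.
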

\begin{proof}
    Note that every tree $\theta\in\mathbb{T}^{(x)}\backslash\mathbb{T}^+$ can be entirely described by the path between its first vertex with label $0$ and the root, together with the subtrees grafted to this path. Fix $\mathbf{x}=(x_0,...,x_n)$ a path in $\N^{n+1}$ with 
positive labels (except for $x_0$) satisfying $|x_{i+1}-x_i|\leq1$, $x_0=0$ and $x_n=k$, and two labeled forests $\mathbf{f}=(f_0,...,f_n),\,\mathbf{f'}=(f'_1,...,f'_n)$ such that the root of $f_i$ and $f'_i$ have label $x_i$, and the trees $(f'_i)$ have positive labels. For simplicity, we denote by $\left[\mathbf{x},\mathbf{f},\mathbf{f'}\right]=\theta$ the resulting tree. We have, using \eqref{Condition weight}, 
      \begin{align*}
\P\left(\mathbb{T}_k=\left[\mathbf{x},\mathbf{f},\mathbf{f'}\right]\right)&=\left(\prod_{i=1}^{n-1}p(x_i,x_{i+1})\right)\left(\prod_{i=0}^n\rho_{x_i}(f_i)\right)\left(\prod_{i=1}^n\rho^+_{x_i}(f'_i)\right)\P_k(T_k=\infty)\\
    &=\frac{f(k)}{f(1)3^{n-1}}\frac{10}{3(2k+3)}\left(\prod_{i=1}^{n-1}w(x_i)\right)\left(\prod_{i=0}^n \rho_{x_i}(f_i)\right)\left(\prod_{i=1}^n \rho^+_{x_i}(f'_i)\right)\\
    &=\frac{f(k)}{2\times3^{n}(2k+3)}\left(\prod_{i=1}^{n-1}w(x_i)\right)\left(\prod_{i=0}^n \frac{1}{2\times 12^{|f_i|}}\right)\left(\prod_{i=1}^n \frac{1}{2w(x_i) 12^{|f'_i|}}\right)\\
    &=\frac{f(k)}{2\times3^{n}w(k)(2k+3)}\left(\prod_{i=0}^n \frac{1}{2\times 12^{|f_i|}}\right)\left(\prod_{i=1}^n \frac{1}{2\times 12^{|f'_i|}}\right).
      \end{align*}
    Since
      \[\frac{f(k)}{w(k)(2k+3)}=(k+1)(k+2)=\frac{2}{1-w(k)}\]
      and
\begin{equation}
    |\theta|=n+\sum_{i=1}^n |f_i|+\sum_{i=0}^n |f'_i|,
\end{equation}
      we have
      \begin{align*}
         \P\left( \mathbb{T}_k=\left[\mathbf{x},\mathbf{f},\mathbf{f'}\right]\right)&=\frac{1}{2\times3^n(1-w(k))}\frac{1}{4^n12^{\sum_{i=1}^n |f_i|+\sum_{i=0}^n |f'_i|}}\\
          &=\frac{1}{(1-w(k))2\times12^{|\theta|}}\\
          &=\P\left(\Theta_k=\theta\right)
      \end{align*}         
\end{proof}

Before proving our next result, let us explain how to obtain a labelled tree from three paths and six forests. Informally, the paths represent the labels along three distinguished branches of a tree, and the forest encodes the subtrees grafter to these branches. For every $m,x,x'\in\N$, let $\mathcal{M}_{m,x\rightarrow x'}$ be the set of walks $\mathbf{x}=(x_0,...,x_m)\in\Z^{m+1}$ such that $x_0=x,x_m=x'$ and for every $0\leq i\leq m-1$, $|x_{i+1}-x_i|\leq 1$. Consider three paths $\mathbf{x}\in \cM_{a,k\rightarrow n},\mathbf{\widehat{x}}\in \cM_{b,l\rightarrow n},\mathbf{y}\in \cM_{c,m\rightarrow n}$ for any admissible values of $a,b,c>0$ and $k,l,m,n\in\Z$, the only condition being that these three paths have the same terminal value. Then, consider six forests of labelled trees $(\mathbf{f}^0,\mathbf{\widehat{f}}^0,\mathbf{f}^1,\mathbf{f}^2,\mathbf{\widehat{f}}^1,\mathbf{\widehat{f}}^2)$ of sizes $(c+1,c,a,a,b,b)$. One can glue these paths and forest together, in order to obtain a tree as shown in Figure \ref{Fig trois bouts}. In order to have a well defined labelled tree, the root label of each subtree must be equal to the value of the path at the point where it is grafted. We denote by \[\left[\mathbf{y},\mathbf{x},\mathbf{\widehat{x}},\mathbf{f}^0,\mathbf{\widehat{f}}^0,\mathbf{f}^1,\mathbf{f}^2,\mathbf{\widehat{f}}^1,\mathbf{\widehat{f}}^2\right]\] the resulting labelled tree, which is rooted at the vertex corresponding to $y_0$. Note that this tree also comes with two distinguished corners, which are the ones associated to $x_0$ and $\widehat{x}_0$.  

\begin{figure}
    \centering
    \includegraphics[width=0.55\linewidth]{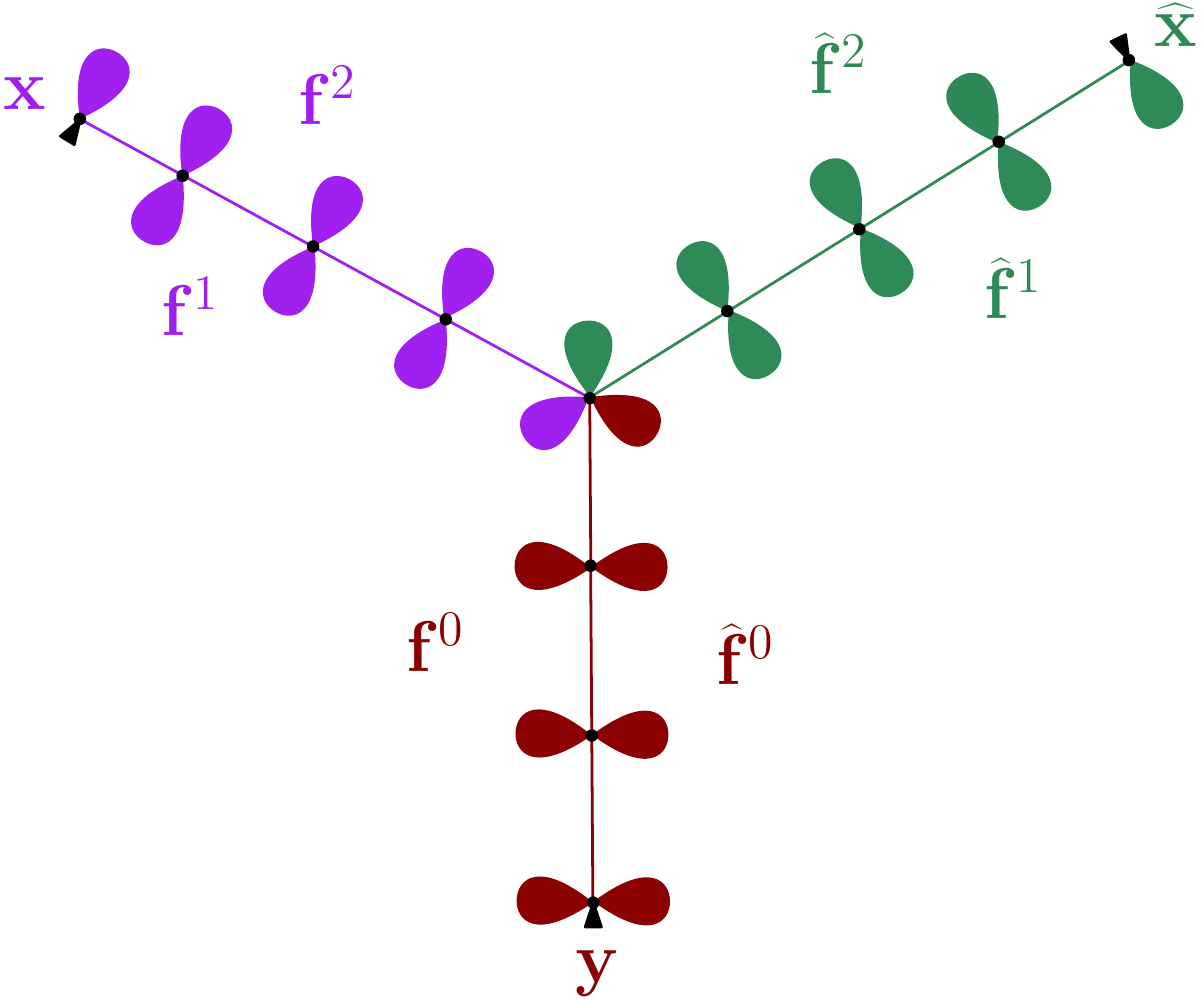}
    \caption{The random tree $\Theta_k$, obtained by gluing three subtrees}
    \label{Fig trois bouts}
\end{figure} 

Using this approach, we give another construction of the random tree $\Theta_n$, which is more symmetric with respect to $\tau_k(k)$ and $\widehat{\tau}_k(k-1)$. We mention that this construction is similar to the one of the random tree $\Theta_\infty^{(n)}$ that will be given in Section \ref{Couplage Arbre 2}. To this end, we introduce another Markov chain $(Y_n)_{n\geq0}$ with values in $\N$ and absorbed in $0$ with transition probabilities : 
\begin{align}\label{Transition2}
    \P(Y_{n+1}=x+1\,|\,Y_n=x)=q(x,x+1)=\frac{w(x)(1-w(x+1))}{3(1-w(x))}\nonumber\\
    \P(Y_{n+1}=x\,|\,Y_n=x)=q(x,x)=\frac{w(x)}{3}\\
    \P(Y_{n+1}=x-1\,|\,Y_n=x)=q(x,x-1)=\frac{w(x)(1-w(x-1))}{3(1-w(x))}.\nonumber
\end{align}
One can check that these formulas define probability transitions for a Markov chain. We write $T_0$ for its first hitting time of $0$. Furthermore, conditionally on $Y$, we define a random variable $S$ with values in $\llbracket 0,T_0\rrbracket$ by the formula :
\begin{align}\label{Formule J}
\P(J=x\,|\,Y)=(1-w(Y_x-1))\prod_{i=0}^{x-1}w(Y_i-1).
\end{align}
Note that, considering a sequence of independent trees with law $(\rho_{Y_i})_{0\leq i\leq T_0}$, this formula is also the probability that the first tree with a label below $1$ is the $x-$th one.

Consider four random variables $(Y^{(k)},J,X^{(k)},\widehat{X}^{(k)})$, defined as follows 
\begin{itemize}[label=\textbullet]
    \item $Y^{(k)}$ has transition probabilities given by \eqref{Transition2} and starts at $k$,
    \item $J$ is the random variable associated to $Y^{(k)}$ defined with the formula \eqref{Formule J}
    \item Conditionally on $(Y^{(k)},J)$, $X^{(k)}=\left(X^{(k)}_i\right)_{0\leq i \leq S_{Y^{(k)}_J}}$ and $\widehat{X}-1=\left(\widehat{X}^{(k)}_i-1\right)_{0\leq i \leq \widehat{S}_{Y^{(k)}_J}}$ are two independent Markov chains with transition probabilities \eqref{Transition} starting at $0$, and stopped at their last hitting time of $Y^{(k)}_J$.
\end{itemize}
We denote these last hitting times by $S_{Y^{(k)}_J}$ and $\widehat{S}_{Y^{(k)}_J}$.
Furthermore, conditionally on $(Y^{(k)},J,X^{(k)},\widehat{X}^{(k)})$, we consider six forests $\left(F_0^{(k)},\widehat{F}_0^{(k)},F_1^{(k)},F_2^{(k)},\widehat{F}_1^{(k)},\widehat{F}_2^{(k)}\right)$ with the following distributions : 
\begin{itemize}
    \item The six forests are independent. 
    \item The trees $\left(F^{(k)}_{0,i}\right)_{0\leq i\leq J-1}$ and $\left(\widehat{F}^{(k)}_{0,i}\right)_{0\leq i\leq J}$ are independent random variables, and the tree $F^{(k)}_{0,i}$ (respectively $\widehat{F}^{(k)}_{0,i}$) has law $\rho^+_{Y_i^{(k)}}$ (respectively the law obtained by adding $1$ to the labels of $\rho^+_{Y_i^{(k)}-1}$).
    \item The trees $\left(F^{(k)}_{1,i}\right)_{1\leq i\leq S_{Y^{(k)}_J}}$ and $\left(F^{(k)}_{2,i}\right)_{0\leq i\leq S_{Y^{(k)}_J}-1}$ are independent, and $F^{(k)}_{1,i}$ (respectively $F^{(k)}_{2,i}$) has law $\rho^+_{X_i^{(k)}}$ (respectively $\rho_{X_i^{(k)}}$).
    \item The trees $\left(\widehat{F}^{(k)}_{1,i}\right)_{0\leq i\leq \widehat{S}_{Y^{(k)}_J}-1}$ and $\left(\widehat{F}^{(k)}_{2,i}\right)_{1\leq i\leq \widehat{S}_{Y^{(k)}_J}}$ are independent, and $\widehat{F}^{(k)}_{1,i}$ (respectively $\widehat{F}^{(k)}_{2,i}$) has the law obtained by adding $1$ to the labels of $\rho^+_{\widehat{X}_i^{(k)}}$ (respectively $\rho_{\widehat{X}_i^{(k)}}$).
\end{itemize}
As explained previously, we can use these processes to construct a random labelled tree, namely 
\[\mathbb{T}'_k:=\left[\left(Y^{(k)}_i\right)_{0\leq i\leq J},X^{(k)},\widehat{X}^{(k)},F_0^{(k)},\widehat{F}_0^{(k)},F_1^{(k)},F_2^{(k)},\widehat{F}_1^{(k)},\widehat{F}_2^{(k)}\right].\]
We construct a random tree $\mathbb{T}'_k$ by gluing these paths together with their subtrees, as depicted in Figure \ref{Fig trois bouts}. 
The following result identifies the law of $\mathbb{T}'_k$. 
\begin{proposition}\label{Three arms}
    The random tree $\mathbb{T}'_k$ has the same law as $\Theta_k$.
\end{proposition}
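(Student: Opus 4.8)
The plan is to mimic the computation used in the proof of Proposition~\ref{decompo1}, but now for a tree split into three pieces along the two distinguished corners at $\tau_k(k)$ and $\widehat\tau_k(k-1)$. First I would recall the structure of $\Theta_k$: a tree $\theta\in\mathbb{T}^{(k)}\setminus\mathbb{T}^+$ is completely determined by the three branches from its root to the first corner with label $0$ met clockwise (call it $v$, with the branch passing through $\tau_k(k)$) and from $v$ to the root, together with all subtrees grafted to these branches. Equivalently, one can describe $\theta$ by the spine from the root down to $v$, and then the two pieces of that spine hanging off the vertex $Y_J$ where the branch first dips (this is exactly the decomposition encoded by $[\mathbf{y},\mathbf{x},\widehat{\mathbf{x}},\mathbf{f}^0,\widehat{\mathbf{f}}^0,\mathbf{f}^1,\mathbf{f}^2,\widehat{\mathbf{f}}^1,\widehat{\mathbf{f}}^2]$). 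So I would fix admissible deterministic data $(\mathbf{y},\mathbf{x},\widehat{\mathbf{x}},\mathbf{f}^0,\widehat{\mathbf{f}}^0,\mathbf{f}^1,\mathbf{f}^2,\widehat{\mathbf{f}}^1,\widehat{\mathbf{f}}^2)$ giving a tree $\theta$, and compute $\P(\mathbb{T}'_k=\theta)$ by multiplying out the law of $Y^{(k)}$, of $J$ given $Y^{(k)}$, of the two chains $X^{(k)},\widehat X^{(k)}$ with transitions \eqref{Transition}, and of the six forests.

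The key algebraic steps are the telescoping of the transition probabilities. For the $X^{(k)}$ chain, exactly as in Proposition~\ref{decompo1}, the product of the $p(x_i,x_{i+1})$ along a path from $0$ to $Y_J$ telescopes via $p(x,x\pm1)=f(x\pm1)w(x)/(3f(x))$ and $p(x,x)=w(x)/3$, leaving $f(Y_J)/(f(1)\,3^{a-1})$ times $\prod w(x_i)$, and then the $\P_{Y_J}(T_{Y_J}=\infty)$ factor contributes $10/(3(2Y_J+3))$; the $\rho^+$ weights on the right-side forests cancel the $\prod w$ factors via \eqref{Condition weight}. The same happens for $\widehat X^{(k)}$ (with the shift by $1$ on the labels, which does not affect weights). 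For the $Y^{(k)}$ chain, the transitions \eqref{Transition2} telescope using $q(x,x\pm1)=w(x)(1-w(x\pm1))/(3(1-w(x)))$ and $q(x,x)=w(x)/3$: the product from $k$ down to $Y_J$ gives $(1-w(Y_J))/(1-w(k))$ times $3^{-J}\prod_{i}w(Y_i)$. Then the law of $J$ from \eqref{Formule J} is $(1-w(Y_J-1))\prod_{i<J}w(Y_i-1)$ — note this is in terms of $w(Y_i-1)$, matching the fact that the $\rho_{Y_i}$-subtrees on the $Y$-branch are conditioned (via $\rho^+_{Y_i}$ and the shifted $\rho^+_{Y_i-1}$) — and the $\rho^+$ weights of $F_0^{(k)},\widehat F_0^{(k)}$ cancel the remaining $w(Y_i)$ and $w(Y_i-1)$ factors. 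I would then collect all the surviving combinatorial factors, the powers of $3$ and $4$ (one factor $4$ per spine vertex, as in Proposition~\ref{decompo1}, coming from the pair of forests on each side) and the $1/(2\times12^{|f|})$ weights of the plain $\rho$-trees, use the size identity $|\theta|=(\text{total spine length})+\sum|f_i|$, and check that everything reduces to $\frac{1}{2(1-w(k))12^{|\theta|}}=\P(\Theta_k=\theta)$.

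Two bookkeeping points deserve care. First, the endpoint factors: in Proposition~\ref{decompo1} the chain was ``stopped at the last visit to $k$'', producing the factor $\P_k(T_k=\infty)=10/(3(2k+3))$; here the $X^{(k)},\widehat X^{(k)}$ chains are stopped at their last visit to the \emph{random} level $Y_J$, so I must carry $\P_{Y_J}(T_{Y_J}=\infty)^2$ and make sure the two copies of $f(Y_J)/(w(Y_J)(2Y_J+3))=(Y_J+1)(Y_J+2)$ cancel against the normalising constants of the shifted/unshifted $\rho^+$ forests on the two sub-branches. Second, the fact that the construction produces a tree in $\mathbb{T}^{(k)}\setminus\mathbb{T}^+$ exactly once for each admissible tuple — i.e.\ the map from tuples to labelled trees is a bijection onto that set — should be stated; it is the three-branch analogue of the one-line observation opening the proof of Proposition~\ref{decompo1}, and is visually justified by Figure~\ref{Fig trois bouts}.

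The main obstacle will be the telescoping/cancellation of the $Y^{(k)}$-chain contribution together with the $J$-law: the appearance of $w(Y_i-1)$ in \eqref{Formule J} versus $w(Y_i)$ in \eqref{Transition2} means the cancellations are not as immediate as in the one-spine case, and one has to track which forest-law ($\rho^+_{Y_i}$ on one side, the $+1$-shift of $\rho^+_{Y_i-1}$ on the other) absorbs which factor. Once this is organised correctly, the rest is the same telescoping as in Proposition~\ref{decompo1} applied three times, and the final identity $\frac{f(k)}{w(k)(2k+3)}=\frac{2}{1-w(k)}$ (already recorded in the excerpt) closes the computation.
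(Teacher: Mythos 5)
Your proposal follows the same route as the paper's own proof: fix the deterministic data $(\mathbf{y},\mathbf{x},\widehat{\mathbf{x}})$ and forests, write $\P(\mathbb{T}'_k=\theta)$ as the product of the $Y$-chain, $J$-law, $X,\widehat X$-chain, escape-probability and forest weights, telescope each branch via the $p$- and $q$-identities, and cancel the resulting $w$-factors against the $\rho^+$ normalisations until $\frac{1}{2(1-w(k))12^{|\theta|}}$ survives. One bookkeeping correction to carry into the execution: since $\widehat X^{(k)}-1$ (not $\widehat X^{(k)}$) runs the $X$-dynamics, the two escape probabilities sit at levels $Y_J$ and $Y_J-1$, not ``two copies at $Y_J$,'' and the paper closes the computation with the two-level identity $f(y_l)f(y_l-1)(1-w(y_l))(1-w(y_l-1))\P_{y_l}(H_{y_l}=\infty)\P_{y_l-1}(H_{y_l-1}=\infty)=\tfrac{400}{9}w(y_l)w(y_l-1)$ rather than with the single-level identity from Proposition~\ref{decompo1}.
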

\begin{proof}
    We proceed as in Proposition \ref{decompo1}. Consider three paths $(\mathbf{y},\mathbf{x},\mathbf{\widehat{x}})$ (of respective durations $l,n,m$) satisfying $y_0=b,x_0=0,\widehat{x}_0=1,$ and $y_l=x_n=\widehat{x}_m$. We also consider 6 forests $(\mathbf{f}^0,\mathbf{\widehat{f}}^0,\mathbf{f}^1,\mathbf{f}^2,\mathbf{\widehat{f}}^1,\mathbf{\widehat{f}}^2)$ with appropriate sizes (we keep the notations introduced previously), and let $\left[\mathbf{y},\mathbf{x},\mathbf{\widehat{x}},\mathbf{f}^0,\mathbf{\widehat{f}}^0,\mathbf{f}^1,\mathbf{f}^2,\mathbf{\widehat{f}}^1,\mathbf{\widehat{f}}^2\right]=\theta$ be the tree obtained from these paths and forests. We have 
\[\P\left(\mathbb{T}'_k=\left[\mathbf{y},\mathbf{x},...,\mathbf{\widehat{f}^2}\right]\right)=A(\mathbf{y})  A(\mathbf{x}) A(\mathbf{\widehat{x}})\P(J=l\,|\,\mathbf{y})\] where 
\begin{align*}
    &A(\mathbf{y})=\left(\prod_{i=0}^{l-1}q(y_i,y_{i+1})\right)\left(\prod_{i=0}^l \rho^+_{y_i}(f^0_i) \rho^+_{y_i-1}(\widehat{f}^0_i-1)\right)\\
    &A(\mathbf{x})=\left(\prod_{i=1}^{n-1}p(x_i,x_{i+1})\right)\left(\prod_{i=0}^n\rho_{x_i}(f^2_i)\right)\left(\prod_{i=1}^{n-1}\rho^+_{x_i}(f^1_i)\right)\P_{x_n}(H_{x_n}=\infty)\\
    &A(\widehat{\mathbf{x}})=\left(\prod_{i=1}^{m-1}p(\widehat{x}_i-1,\widehat{x}_{i+1}-1)\right)\left(\prod_{i=0}^{m-1}\rho_{\widehat{x}_i-1}(\widehat{f}^2_i-1) \right)\left(\prod_{i=1}^{m-1}\rho^+_{\widehat{x}_i-1}(\widehat{f}^1_i-1)\right)\P_{\widehat{x}_n-1}(H_{\widehat{x}_n-1}=\infty)
\end{align*} and
\[\P(J=l\,|\,\mathbf{y})=(1-w(y_l-1))\prod_{i=0}^{l-1}w(y_i-1).\] The last terms in $A(\mathbf{x})$ and $A(\mathbf{\widehat{x}})$ comes from the fact that the paths $X^{(k)}$ and $\widehat{X}^{(k)}$ are stopped at their last hitting time of $Y^{(k)}_J$. Moreover, by \eqref{Probabilité de retour 2}, this term is explicit.

Recall that $w(x)=\frac{x(x+3)}{(x+1)(x+2)}$, and $f(x)=x(x+3)(2x+3)$.
First, note that 
\[\prod_{i=0}^{l-1}q(y_i,y_{i+1})=\frac{1-w(y_l)}{3^l(1-w(k))}\prod_{i=0}^{l-1}w(y_i).\]
Similarly, 
\[\prod_{i=1}^{n-1}p(x_i,x_{i+1})=\frac{f(x_n)}{f(1)3^{n-1}}\prod_{i=1}^{n-1}w(x_i)\quad\text{ and }\quad\prod_{i=1}^{m-1}p(\widehat{x}_i-1,\widehat{x}_{i+1}-1)=\frac{f(\widehat{x}_m-1)}{f(1)3^{m-1}}\prod_{i=1}^{m-1}w(\widehat{x}_i-1).\]
Using the fact that $y_l=x_n=\widehat{x}_m$, and by formula $\eqref{Probabilité de retour 2}$ we have 
\[f(y_l)f(y_l-1)(1-w(y_l))(1-w(y_l-1))\P_{y_l}(H_{y_l}=\infty)\P_{y_l-1}(H_{y_l-1}=\infty)=\frac{400w(y_l)w(y_l-1)}{9}.\]
Putting together these calculations, and since $f(1)=20$, we obtain 
\begin{multline*}
    \P\left(\mathbb{T}'_k=\left[\mathbf{y},\mathbf{x},...,\mathbf{\widehat{f}^2}\right]\right)=\frac{B(\mathbf{y},\mathbf{x},...,\mathbf{\widehat{f}^2})}{3^{l+n+m}}
    \frac{w(y_l)w(y_l-1)}{36(1-w(k))}\\ \times\left(\prod_{i=0}^{l-1}w(y_i)w(y_i-1)\right)\left(\prod_{i=1}^{n-1}w(x_i)\right)\left(\prod_{i=1}^{m-1}w(\widehat{x}_i-1)\right),
\end{multline*}

where 
\begin{multline*}
    B(\mathbf{y},\mathbf{x},...,\mathbf{\widehat{f}^2})=\left(\prod_{i=0}^l \rho^+_{y_i}(f^0_i) \rho^+_{y_i-1}(\widehat{f}^0_i-1)\right)\left(\prod_{i=0}^n\rho_{x_i}(f^2_i)\right)\left(\prod_{i=1}^{n-1}\rho^+_{x_i}(f^1_i)\right)\\ \times\left(\prod_{i=0}^{m-1}\rho_{\widehat{x}_i-1}(\widehat{f}^2_i-1)\right)\left(\prod_{i=1}^{m-1}\rho^+_{\widehat{x}_i-1}(\widehat{f}^1_i-1)\right).
\end{multline*}

Finally, using \eqref{Condition weight}, we have 
\begin{equation*}    B(\mathbf{y},\mathbf{x},...,\mathbf{\widehat{f}^2})=\frac{\left(\prod_{i=0}^{l}w(y_i)w(y_i-1)\right)^{-1}\left(\prod_{i=1}^{n-1}w(x_i)\right)^{-1}\left(\prod_{i=1}^{m-1}w(\widehat{x}_i-1)\right)^{-1}}{2\times4^{l+n+m}\times 12^{|\mathbf{f}^0|+|\mathbf{\widehat{f}}^0|+|\mathbf{f}^1|+|\mathbf{\widehat{f}}^1|+|\mathbf{f}^2|+|\mathbf{\widehat{f}}^2|}}.
\end{equation*}
Consequently, this gives
\begin{align*}
    \mathbb{T}'_k(\theta=(\mathbf{y},\mathbf{x},...,\mathbf{\widehat{f}^2}))&=\frac{1}{2\times12^{|\theta|}(1-w(k))}\\
    &=\Theta_k(\theta)
\end{align*}
\end{proof}
\begin{remark}
    These two decompositions are reminiscent of \cite[Proposition 3.2, Proposition 3.6]{Bigeodesicbrownianplane} for a random labelled tree with law $\N_0(\cdot\,|\,W_*<-b)$. 
\end{remark}
\begin{remark}\label{Remark1}
    Using Proposition \ref{Three arms}, if we identify $\Theta_n$ with $\mathbb{T}_n'$, we see that $A_{\beta n}(\Theta_n)$ and $\widehat{A}_{\beta n}(\Theta_n)$ are disjoint if and only if $Y^{(n)}_J>\beta n$.
\end{remark}

\subsection{Coupling $\Theta_n$ with $(\overline{\Theta}_\infty^{(1)},\overline{\Theta}_\infty^{(2)})$}\label{CouplageArbre1}

First, we establish the following coupling result between the trees $\Theta_n$ and $(\overline{\Theta}_\infty^{(1)},\overline{\Theta}_\infty^{(2)})$.
\begin{proposition}\label{CouplingTrees1}
    For every $\delta>0$, there exists $\beta>0$ and $n_0\in\N$ such that for every $n>n_0$, we can couple $\Theta_n$ and $(\overline{\Theta}_\infty^{(1)},\overline{\Theta}_\infty^{(2)})$ in such a way that the equality 
    \begin{equation*}        
    \left(A_{\beta n}(\Theta_n),\widehat{A}_{\beta n}(\Theta_n)\right)=\left(A_{\beta n}(\overline{\Theta}^{(1)}_\infty),\widehat{A}_{\beta n}(\overline{\Theta}^{(2)}_\infty)\right)
    \end{equation*}
    holds with probability at least $1-\delta$.
\end{proposition}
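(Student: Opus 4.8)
The strategy is to work with the symmetric three-arms decomposition of $\Theta_n$ given by Proposition~\ref{Three arms}, identifying $\Theta_n$ with $\mathbb{T}'_n$, and to compare it with the analogous spinal description of $(\overline{\Theta}^{(1)}_\infty,\overline{\Theta}^{(2)}_\infty)$. Recall that in $\mathbb{T}'_n$, the region $A_{\beta n}(\Theta_n)$ (resp.\ $\widehat{A}_{\beta n}(\Theta_n)$) is, by Remark~\ref{Remark1}, read off from the arm encoded by $(X^{(n)},F_1^{(n)},F_2^{(n)})$ (resp.\ $(\widehat{X}^{(n)},\widehat{F}_1^{(n)},\widehat{F}_2^{(n)})$), restricted to the portion of the arm above label $\lfloor\beta n\rfloor$, provided $Y^{(n)}_J>\beta n$ so that the two arms do not interact. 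On the other side, by construction $\overline{\Theta}^{(1)}_\infty$ is encoded by $(X^{(\infty)},F_1^{(\infty)},F_2^{(\infty)})$ where $X^{(\infty)}$ is the chain $X$ of \eqref{Transition} started at $0$ and run forever, with forests of laws $(\rho^+_{X_i})$ and $(\rho_{X_i})$; and $\overline{\Theta}^{(2)}_\infty$ is the mirror-image-plus-one version. So the proof reduces to: (i) the chain $X^{(n)}$ run up to its last visit to $Y^{(n)}_J$ can be coupled with $X^{(\infty)}$ to agree on the initial segment up to the last visit of $\lfloor\beta n\rfloor$, with high probability when $Y^{(n)}_J$ is large; and (ii) the event $\{Y^{(n)}_J>\beta n\}$ has probability close to $1$ for $\beta$ small and $n$ large.

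\textbf{Step 1: controlling $Y^{(n)}_J$.} I would first show that for every $\varepsilon>0$ there is $\beta>0$ with $\liminf_n\P(Y^{(n)}_J>\beta n)\geq 1-\varepsilon$. The chain $Y^{(n)}$ starts at $n$ and is absorbed at $0$; $J$ is the (geometric-type) index defined by \eqref{Formule J}, which is the position of the first subtree whose label dips below $1$ among independent trees of laws $(\rho_{Y^{(n)}_i})$. The key point is that $Y^{(n)}$ rescaled converges — as in \eqref{convergence Bessel}, one expects $Y^{(n)}$ to behave like $n$ times a Bessel-type process (this is implicit in \cite{dieuleveut}), so it stays of order $n$ for a macroscopic time before getting near $0$; meanwhile $J$ is of order $n$ as well but with a limiting law that, suitably rescaled, is a.s.\ positive and finite, so $Y^{(n)}_J$ is of order $n$. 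Concretely, since $w(x)\to 1$ with $1-w(x)\sim 2/x$, the conditional law of $J/n$ given $Y^{(n)}$ concentrates according to the functional $t\mapsto 1-\exp(-\int_0^t \frac{du}{Y^{(n)}_{un}})$-type expression, and one deduces that $Y^{(n)}_J/n$ converges in distribution to a positive random variable. This gives the claim. I would quote or lightly adapt the corresponding estimates from \cite{dieuleveut}.

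\textbf{Step 2: coupling the arms.} On the event $\{Y^{(n)}_J>\beta n\}=:E_n$, the trees $A_{\beta n}(\Theta_n)$ and $A_{\beta n}(\overline{\Theta}^{(1)}_\infty)$ depend only on: the arm-chain up to its last visit of $\lfloor\beta n\rfloor$, namely $(X^{(n)}_i)_{0\le i\le L}$ with $L$ the last visit of $\lfloor\beta n\rfloor$ (resp.\ $(X^{(\infty)}_i)$ up to its last visit of $\lfloor\beta n\rfloor$), together with the grafted forests of laws $\rho^+_{X_i}$ and $\rho_{X_i}$ along that segment. Since both $X^{(n)}$ (up to the last visit of $Y^{(n)}_J$) and $X^{(\infty)}$ are the \emph{same} Markov chain \eqref{Transition} started at $0$, and since on $E_n$ the target $Y^{(n)}_J$ lies strictly above $\lfloor\beta n\rfloor$, the law of $X^{(n)}$ killed at its last visit of $Y^{(n)}_J$, restricted to times up to the last visit of $\lfloor\beta n\rfloor$, is absolutely continuous with respect to that of $X^{(\infty)}$ restricted likewise — indeed, for a transient-like chain conditioned to reach a high level, the initial segment below a fixed intermediate level looks the same up to a Radon–Nikodym factor which tends to $1$; one makes this quantitative using \eqref{Probabilité de retour 2}--\eqref{Probabilité de retour}. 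The forests are then grafted with identical conditional laws given the chain, so they can be coupled to be equal. The same argument, verbatim up to the shift-by-one and mirror symmetry built into the definition of $\overline{\Theta}^{(2)}_\infty$, handles $\widehat{A}_{\beta n}(\Theta_n)$ versus $\widehat{A}_{\beta n}(\overline{\Theta}^{(2)}_\infty)$, and crucially on $E_n$ the two arms of $\mathbb{T}'_n$ are disjoint (Remark~\ref{Remark1}) and, given $(Y^{(n)},J)$, independent, so a \emph{simultaneous} coupling of the two arms is possible. Choosing $\beta$ small enough that $\P(E_n)\geq 1-\delta/2$ and then $n_0$ large enough that the Radon–Nikodym discrepancy over the (bounded-in-probability) number of relevant steps costs at most $\delta/2$ more, we get the stated probability $1-\delta$.

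\textbf{Main obstacle.} The delicate point is Step~2: making precise the sense in which the law of the killed arm $X^{(n)}$ (stopped at the last visit of the random, large level $Y^{(n)}_J$), restricted below $\lfloor\beta n\rfloor$, is close in total variation to the law of $X^{(\infty)}$ restricted below $\lfloor\beta n\rfloor$. This requires a uniform (in $n$) total-variation estimate; the natural route is to write the Radon–Nikodym derivative explicitly via the $h$-transform / return-probability formulas \eqref{Probabilité de retour 2}--\eqref{Probabilité de retour} — the chain $X$ conditioned to have last visit of $k$ equal to its last visit before reaching a high level $m$ is, up to an explicit harmonic factor $h(k)/h(m)$-type correction, the same as the unconditioned chain on the corresponding segment — and then to control this factor using that, on $E_n$, one has $k=\lfloor\beta n\rfloor$ and $m=Y^{(n)}_J\geq\beta n$ both large, and that $X^{(\infty)}$ does not make more than $O_{\mathbb{P}}(n^2)$ steps below level $\lfloor\beta n\rfloor$ — or rather, that the relevant graft data is controlled. (A slicker alternative, closer to \cite{dieuleveut}, is to couple directly using the fact that $X^{(\infty)}$ is transient and a.s.\ has a \emph{last} visit to every level, so one can couple the two chains from their respective last visits of $\lfloor\beta n\rfloor$ downwards to the root, which is a finite-length object.) I would also need the minor technical check that ``restricting $A_{\beta n}$'' is a measurable function of the finite initial data I claimed, and that the stopping/last-visit times are a.s.\ finite, which is immediate from transience of $X$.
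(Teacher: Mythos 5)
Your overall structure matches the paper's: pass through the symmetric three-arm decomposition of Proposition~\ref{Three arms}, control the label $Y^{(n)}_J$ of the branching point, and couple the two arm processes (with their grafted forests) with the spinal encodings of $\overline{\Theta}^{(1)}_\infty,\overline{\Theta}^{(2)}_\infty$. Two remarks, though.

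For Step~1, the paper does not analyze the backward chain $Y^{(n)}$ and the index $J$ directly as you propose. It instead reads $Y^{(n)}_J$ off the contour and label processes of $\Theta_n$: $Y^{(n)}_J$ is the label of the first common ancestor of $\tau_n(n)$ and $\widehat\tau_n(n-1)$, and the joint convergence of $(C_n,Z_n)$ toward the Brownian snake (Theorem~\ref{Convergergence sphere}) identifies the limit of $Y^{(n)}_J/n$ as a snake label $\cZ_{\tilde\cT}$ at a certain time, which is shown to be a.s.\ positive. Your route (scaling of $Y^{(n)}$ to a Bessel-type process, plus the concentration of $J$ via $1-w(x)\sim 2/x$) is plausible and in the spirit of \cite{dieuleveut}, but it reproves a scaling limit that the contour/label route gets for free; it is a genuinely different lemma, not a cosmetic variation.

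For Step~2, what you call the ``slicker alternative'' is in fact the paper's argument, and it deserves to be the main route, not a fallback. There is no Radon--Nikodym estimate to make: because the chain $X$ moves by $\pm1$ or $0$ and is transient, once it makes its last visit to a level $m$ it never returns below $m$, so for any $\ell<m$ the last visit to $\ell$ occurs before the last visit to $m$. Hence $X$ stopped at its last visit of $m$, restricted to times up to its last visit of $\ell$, has \emph{exactly} the same law as the unstopped chain restricted to the same window — the stopping has no effect on that segment. Conditionally on $\{Y^{(n)}_J>\beta n\}$ the two restricted pairs therefore have \emph{identical} laws, and the unconditional total-variation distance is at most $2\P(Y^{(n)}_J\le\beta n)$. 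Your phrasing (``a Radon--Nikodym factor which tends to $1$'', ``the discrepancy over the number of relevant steps costs at most $\delta/2$ more'') suggests you expect a non-trivial density to control; there is none, and trying to control one step-by-step would be both pointless and technically delicate. This is not a fatal flaw — if you actually computed the Radon--Nikodym derivative you would find it equals $1$ — but it reflects a missed observation that is the crux of why this coupling is clean.

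Your handling of the remaining pieces is correct: Remark~\ref{Remark1} for disjointness of the two arms on $\{Y^{(n)}_J>\beta n\}$, conditional independence of the two arms given $(Y^{(n)},J)$, and equality of the conditional forest laws given the spine chains, which allows the forests to be coupled exactly once the chains agree.
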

Before proving this statement, we need an intermediate lemma. 
\begin{lemme}\label{ancetre commun}
    There exists a random variable $\cZ_{\tilde{\cT}}$ with values in 
$(0,\infty)$ such that 
    \begin{equation}\label{ancêtre convergence}
        \frac{Y^{(n)}_S}{n}\xrightarrow[n\rightarrow\infty]{(d)}\cZ_{\tilde{\cT}}.
    \end{equation}
\end{lemme}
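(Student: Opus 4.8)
The random variable $Y^{(n)}_S$ is the label of the common ancestor of the two distinguished corners $\tau_n(n)$ and $\widehat\tau_n(n-1)$ in $\Theta_n$, in the three-arm decomposition of Proposition~\ref{Three arms}. My plan is to obtain the limit law of $Y^{(n)}_S/n$ by studying the Markov chain $Y$ started at $n$, together with the independent ``stopping index'' $J$, and to exploit the scaling limit already available for the chain $X$ (and hence for $Y$) via Lamperti's theorem.

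First I would record that, after time-reversal, the pair $(Y^{(n)}_i)_{0\le i\le T_0}$ started from $n$ can be compared with the $h$-transform structure underlying $X$: the transition kernel $q$ in \eqref{Transition2} is, up to the explicit Doob transform by $x\mapsto 1-w(x)$, the kernel of a chain that rescales like a Bessel-type diffusion. Concretely, I would first prove the functional convergence $(Y^{(n)}_{\lfloor n^2 t\rfloor}/n)_{t\ge 0}\to (\mathcal Y_t)_{t\ge0}$ for an explicit diffusion $\mathcal Y$ started at $1$ and absorbed at $0$, by checking that the generator of the rescaled chain converges (the drift and diffusion coefficients of $q(x,\cdot)$ have limits of the right order, since $w(x)=1-\tfrac{2}{x}+O(x^{-2})$), exactly as in the derivation of \eqref{convergence Bessel}. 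The absorption time $T_0$ then rescales as $n^2$ times the hitting time of $0$ by $\mathcal Y$, which is a.s.\ finite.

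Next I would handle $J$. Conditionally on $Y^{(n)}$, formula \eqref{Formule J} says $J$ is the first index $x$ at which a sequence of independent $\rho_{(Y^{(n)}_i-1)}$-trees produces a tree with a label below $1$; equivalently, $\mathbb P(J>x\mid Y^{(n)})=\prod_{i=0}^{x-1}w(Y^{(n)}_i-1)$. Since $w(y)=1-\tfrac{2}{y}+O(y^{-2})$ and along the scaling $Y^{(n)}_{\lfloor n^2t\rfloor}\approx n\,\mathcal Y_t$, the product $\prod_{i=0}^{\lfloor n^2 s\rfloor}w(Y^{(n)}_i-1)$ behaves like $\exp\!\big(-2\int_0^{s} \frac{dt}{\mathcal Y_t}+o(1)\big)$ (a Riemann-sum approximation, valid because $\int_0^{T}\mathcal Y_t^{-1}\,dt<\infty$ up to the absorption time by properties of the limit diffusion — this is the analogue of integrability used throughout \cite{Bigeodesicbrownianplane}). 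Hence $J/n^2$ converges jointly with $Y^{(n)}/n$ to a random time $\mathcal S$ characterized by $\mathbb P(\mathcal S>s\mid \mathcal Y)=\exp(-2\int_0^s \mathcal Y_t^{-1}dt)$. Then $Y^{(n)}_S/n = Y^{(n)}_{\lfloor n^2\cdot (J/n^2)\rfloor}/n \to \mathcal Y_{\mathcal S}=:\mathcal Z_{\tilde{\mathcal T}}$ by continuity of $\mathcal Y$ and the continuous mapping theorem applied to the joint convergence. Finally I would check $\mathcal Z_{\tilde{\mathcal T}}\in(0,\infty)$ a.s.: it is positive because $\mathcal S$ is a.s.\ strictly before the absorption time of $\mathcal Y$ at $0$ (the exponential clock has finite rate on $[0,\epsilon)$ only if $\int_0^\epsilon \mathcal Y_t^{-1}dt=\infty$, which fails since $\mathcal Y$ starts at $1>0$), and it is finite because $\mathcal Y$ has continuous paths.

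The main obstacle I anticipate is the joint convergence $(Y^{(n)}/n,\,J/n^2)$ rather than either marginal: one must control the Riemann-sum approximation of $\sum_i \big(1-w(Y^{(n)}_i-1)\big)\approx 2\int \mathcal Y_t^{-1}dt$ uniformly enough near the absorption region, where $Y^{(n)}_i$ is small and $w(Y^{(n)}_i-1)$ is bounded away from $1$, so that the tail of the product is negligible on the event $\{J\approx n^2 T_0\}$; equivalently one must rule out that $J$ is ``pushed to the end'' with non-vanishing probability. This can be done by a first-moment/Borel–Cantelli estimate showing $\mathbb P(J> (1-\epsilon) T_0)\to 0$, using \eqref{Probabilité de retour} to bound the time $Y^{(n)}$ spends at small values. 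Everything else is a routine application of Lamperti's theorem in the form \eqref{convergence Bessel} together with Skorokhod representation.
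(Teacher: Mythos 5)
Your route is genuinely different from the paper's. The paper does not work with the spine chain $Y^{(n)}$ and the index $J$ directly; instead it expresses $Y^{(n)}_S$ as the label $Z_n(\tilde T_n)$, where $(C_n,Z_n)$ are the contour and label processes of $\Theta_n$ and $\tilde T_n$ is the time of the common ancestor of $\tau_n(n)$ and $\widehat\tau_n(n-1)$, i.e.\ the argmin of $C_n$ on $[T_n,\widehat T_n]$. It then quotes Theorem~\ref{Convergergence sphere} (scaling of $(C_n,Z_n)$ to a Brownian snake under $\N_0(\cdot\mid W_*<-1)$), shows with a small excursion-theoretic argument that the hitting times converge jointly, and identifies the limit as $\cZ_{\tilde\cT}$. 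Positivity is obtained by a soft path argument: if $\cR=\sup\{t\le\cT:\cC_t=\cC_{\tilde\cT}\}$ then $\cZ_\cR=\cZ_{\tilde\cT}$ and $\cR<\cT$ a.s., so $\cZ_{\tilde\cT}>0$. The advantage of that route is economy: it reuses the already-established snake convergence, needs no new functional limit theorem for $Y^{(n)}$, and handles the positivity check without any integral estimate. Your route, if carried out, would give an explicit alternative description of the limit as $\cY_{\cS}$ for a diffusion $\cY$ and a conditional clock $\cS$, which is potentially informative but requires extra work (a Lamperti-type convergence of $Y^{(n)}$ together with control of the Riemann sum defining $J$ near absorption) that the paper simply avoids.

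There is, however, a genuine computational error in your proposal that propagates to the positivity step. You write $w(x)=1-\tfrac{2}{x}+O(x^{-2})$, but in fact $1-w(x)=\tfrac{2}{(x+1)(x+2)}\sim\tfrac{2}{x^2}$. (The ratio $\tfrac{1-w(x+1)}{1-w(x)}=\tfrac{x+1}{x+3}=1-\tfrac{2}{x}+O(x^{-2})$ is what looks like your expansion, and that ratio is what produces the drift $-\tfrac{4}{3y}$ for $\cY$, so the drift you would compute is still correct; but the tail of $J$ is governed by $1-w$ itself.) Consequently the exponent in the conditional tail of $\cS$ must be $\exp\bigl(-2\int_0^s\cY_t^{-2}\,dt\bigr)$, not $\exp\bigl(-2\int_0^s\cY_t^{-1}\,dt\bigr)$; this is consistent with the $\,Z^{-2}$ integrals that appear later in the paper (e.g.\ in the proof of Lemma~\ref{Celia2}). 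The distinction is not cosmetic: near the absorption time $T_0$ one has $\cY_t\asymp\sqrt{T_0-t}$, so $\int_0^{T_0}\cY_t^{-2}\,dt=\infty$ while $\int_0^{T_0}\cY_t^{-1}\,dt<\infty$. With the correct exponent $-2$, the clock rings a.s.\ strictly before $T_0$ and $\cY_{\cS}>0$; with your exponent $-1$ the survival probability at $T_0$ would be strictly positive and you could not conclude $\cZ_{\tilde\cT}\in(0,\infty)$ a.s. You also invoke the finiteness of $\int_0^{T_0}\cY_t^{-1}\,dt$ as the key integrability ingredient, whereas the divergence of $\int_0^{T_0}\cY_t^{-2}\,dt$ is what you actually need. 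Fixing the power of $\cY$ throughout and establishing the functional convergence of $Y^{(n)}/n$ (not proved in the paper, which only has \eqref{convergence Bessel} for $X$) would make your alternative argument sound.
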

\begin{proof}
    Recall that $Y^{(n)}_S$ corresponds to the label of the first common ancestor of $\tau_n(n)$ and $\widehat{\tau}_n(n-1)$ in $\Theta_n$. Consider $(C_n,Z_n)$ the contour and label processes of the random tree $\Theta_n$, and set 
    \[T_n=\inf\{i\geq0,\,Z_n(i)=0\}\quad\text{ and }\quad\widehat{T}_n=\sup\{i\geq0,\,Z_n(i)=1\}\]
    (these two times correspond to the vertices $\tau_n(n)$ and $\widehat{\tau}_n(n-1)$).
    We recall that these processes, once rescaled properly, converge in distribution toward a pair $(\cC,\cZ)$ (see Theorem \ref{Convergergence sphere}). We also define  
    \[\cT=\inf\{t\geq0,\,\cZ_t=0\}\quad\text{ and }\quad\widehat{\cT}=\sup\{t\geq0,\,\cZ_t=0\}.\]
  By the Markov property of the Brownian snake and with standard arguments from excursion theory, we know that for every $\varepsilon>0$, we have $\inf_{t\in[\cT,\cT+\varepsilon]}\cZ_t<0$ a.s (see \cite[Lemma 4.7.1]{Duquesne}. Together with Theorem \ref{Convergergence sphere}, this implies that, jointly with the convergence of $(C_n,Z_n)$, we have
    \[\left(\frac{T_n}{n^4},\frac{\widehat{T}_n}{n^4}\right)\xrightarrow[n\rightarrow\infty]{(d)}\left(\cT,\widehat{\cT}\right).\]
    Then, let $\tilde{T}_n=\inf\left\{i\geq0,\,C_n(i)=\min_{T_n\leq j\leq\widehat{T}_n}C_n(j)\right\}$. This time corresponds to the first common ancestor of $\tau_n(n)$ and $\widehat{\tau}_n(n-1)$. The previous convergences imply that  
    \[\frac{\tilde{T}_n}{n^4}\xrightarrow[n\rightarrow\infty]{(d)}\tilde{\cT},\] where $\tilde{\cT}=\inf\left\{t\geq0,\,\cC_t=\inf_{s\in[\cT,\widehat{\cT}]}\cC_s\right\}$. Finally, this gives 
    \[\frac{Y_S^{(n)}}{n}=\frac{Z_n\left(n^4\times\frac{\tilde{T}_n}{n^4}\right)}{n}\xrightarrow[n\rightarrow\infty]{(d)}\cZ_{\tilde{\cT}}.\]
    To conclude, we just need to check that $\P(\cZ_{\tilde{\cT}}=0)=0$. Let $\cR=\sup\{0\leq t\leq\cT,\cC_t=\cC_{\tilde{\cT}}\}.$ It is easy to see that $\cZ_{\cR}=\cZ_{\tilde{\cT}}$, and that $\cR<\cT$ a.s. By definition, this implies that $\cZ_{\cR}>0$, which concludes the proof. 
\end{proof}
\begin{proof}[Proof of Proposition \ref{CouplingTrees1}]
   Fix $\delta>0$. By Lemma \ref{ancetre commun}, there exists $\beta>0$ such that for every $n\in\N$ large enough, we have 
   \begin{equation*}
       \P(Y^{(n)}_H>\beta n)>1-\delta/2.
   \end{equation*}
   Using Proposition \ref{Three arms} and as mentioned in Remark \ref{Remark1}, on the event $\{Y^{(n)}_H>\beta n\}$, we see that $A_{\beta n}(\Theta_n)$ (respectively $\widehat{A}_{\beta n}(\Theta_n)$) is composed of the path $X^{(n)}$ (respectively $\widehat{X}^{(n)}$) up to its last hitting time of $\beta n$, together with the subtrees branching off this path.

   Then, note that the total variation distance between the laws of \[\left(X^{(n)}|_{\llbracket 0,S_{\lfloor \beta n\rfloor\wedge Y^{(n)}_H}\rrbracket},\widehat{X}^{(n)}|_{\llbracket 0,S_{\lfloor \beta n\rfloor\wedge Y^{(n)}_H}\rrbracket}\right) \quad\text{ and } \quad\left(X^{(\infty)}|_{\llbracket 0,S^{(\infty)}_{\lfloor \beta n\rfloor}\rrbracket},\widehat{X}^{(\infty)}|_{\llbracket 0,\widehat{S}^{(\infty)}_{\lfloor \beta n\rfloor}\rrbracket}\right)\] 
   is bounded above by $2\P(Y^{(n)}_H<\beta n)$. Therefore, we can construct these processes on the same property space in such a way that they are equal with probability at least $1-\delta$.\\
   Finally, on the event where these processes are equal, by comparing Proposition \ref{Three arms} with the construction of $\overline{\Theta}_\infty^{(1)}$ and $\overline{\Theta}_\infty^{(2)}$ given in Section \ref{Random labeled trees}, we see that we can couple the forests \[\left(F^{(n)}_1,F^{(n)}_2,\widehat{F}^{(n)}_1,\widehat{F}^{(n)}_2\right)\quad\text{ and }\quad \left(F^{(\infty)}_1,F^{(\infty)}_2,\widehat{F}^{(\infty)}_1,\widehat{F}^{(\infty)}_2\right)\] so that the subtrees appearing in $\left(A_{\beta n}(\Theta_n),\widehat{A}_{\beta n}(\Theta_n)\right)$ and $\left(A_{\beta n}(\overline{\Theta}^{(1)}_\infty),\widehat{A}_{\beta n}(\overline{\Theta}^{(2)}_\infty)\right)$ are the same. Hence, on an event of probability at least $1-\delta$, the processes encoding the sets $\left(A_{\beta n}(\Theta_n),\widehat{A}_{\beta n}(\Theta_n)\right)$ and $\left(A_{\beta n}(\overline{\Theta}^{(1)}_\infty),\widehat{A}_{\beta n}(\overline{\Theta}^{(2)}_\infty)\right)$ are equal, which gives the wanted result. 
\end{proof}

\subsection{Coupling $\Theta_\infty$ with $(\overline{\Theta}_\infty^{(1)},\overline{\Theta}_\infty^{(2)})$}\label{Couplage Arbre 2}

The goal of this section is to prove a result which is analogous to Proposition \ref{CouplageArbre1}, but with the tree $\Theta_\infty$ instead of $\Theta_n$.

 However, the calculations are more technical in this case. To prove this result, we rely on another construction of $\Theta_\infty$, which was obtained in \cite{dieuleveut}. This construction is based on a decomposition of $\Theta_\infty$ into three paths, together with subtrees branching off these paths.

We start with some notation. For every $m,x,x'\in\N$, let $\mathcal{M}^+_{m,x\rightarrow x'}$ be the set of walks $\mathbf{x}=(x_0,...,x_m)\in\N^{m+1}$ such that $x_0=x,x_m=x'$ and for every $0\leq i\leq m-1$, $|x_{i+1}-x_i|\leq 1$. We also define $\mathcal{M}^{>0}_{m,x\rightarrow x'}$ (resp. $\mathcal{M}^{>1}_{m,x\rightarrow x'}$) to be the elements of $\mathcal{M}^+_{m,x\rightarrow x'}$ such that $x_i>0$ (resp. $x_i>1$) for every $1\leq i\leq m$. Finally, given a path $(x_0,...,x_m)$, let $\mu_{(x_0,...,x_m)}$ denote the distribution of the labelled forest $(\tau_0,...,\tau_m)$ defined as follows. Let $I$ be a uniform random variable in $\{0,...,m\}$. Then, $\tau_I$ is a random tree distributed as $(\Theta_\infty,\ell_\infty+x_I)$ and for every $i\in\{0,...,m\}\backslash\{I\}$, $\tau_i$ is an independent random tree distributed as $\rho_{x_i}$, and independent of $\tau_I$. \\

We can now apply this construction with random paths and forests. Let \[H_\infty^{(n)}=(a_\infty^{(n)},b_\infty^{(n)},c_\infty^{(n)},X_\infty^{(n)},\widehat{X}_\infty^{(n)},Y_\infty^{(n)})\] be a random variable on $\N^3\times\mathcal{M}^3$ with the following distribution : for every $a,b,c,k\geq1,\,\mathbf{x}\in\mathcal{M}^{>0}_{a,0\rightarrow k},\,\mathbf{y}\in\mathcal{M}^{>1}_{b,1\rightarrow k}\,\mathbf{z}\in\mathcal{M}^{>1}_{c,k\rightarrow n}$, 
\[\P\bigg(H_\infty^{(n)}=(a,b,c,\mathbf{x},\mathbf{y},\mathbf{z})\bigg)=\frac{a+b+1}{3^{a+b+c}}\bigg(\prod_{i=1}^a w(x_i)\bigg)\bigg(\prod_{i=1}^b w(y_i-1)\bigg)\bigg(\prod_{i=1}^{c} w(z_i)w(z_i-1)\bigg).\]
Then, we need to consider the subtrees branching off the previous paths. Given $H_\infty^{(n)}=(a,b,c,\mathbf{x},\mathbf{y},\mathbf{z})$, consider six random forests  $\left(F^{(\infty,n)}_0,\widehat{F}^{(\infty,n)}_0,F^{(\infty,n)}_1,F^{(\infty,n)}_2,\widehat{F}^{(\infty,n)}_1,\widehat{F}^{(\infty,n)}_2\right)$ of sizes 
$(c,c+1,a,a,b,b)$ with the following distribution : 
\begin{itemize}
    \item The six forests are independent. 
    \item The trees $\left(F^{(\infty,n)}_{0,i}\right)_{0\leq i\leq c-1}$ and $\left(\widehat{F}^{(\infty,n)}_{0,i}\right)_{0\leq i\leq c}$ are independent random variables, and the tree $F^{(\infty,n)}_{0,i}$ (respectively $\widehat{F}^{(\infty,n)}_{0,i}$) has law $\rho^+_{z_i}$ (respectively the law obtained by adding $1$ to the labels of $\rho^+_{z_i-1}$).
    \item The trees $\left(F^{(\infty,n)}_{1,i}\right)_{1\leq i\leq a}$ and $\left(\widehat{F}^{(\infty,n)}_{1,i}\right)_{0\leq i\leq b-1}$ are independent, and $F^{(\infty,n)}_{1,i}$ (respectively $\widehat{F}^{(\infty,n)}_{1,i}$) has law $\rho^+_{x_i}$ (respectively the law obtained by adding $1$ to the labels of $\rho^+_{y_i-1}$).
    \item The forest $\left(F^{(\infty,n)}_{2,0},...,F^{(\infty,n)}_{2,a-1},\widehat{F}^{(\infty,n)}_{2,b},...,\widehat{F}^{(\infty,n)}_{2,1}\right)$ has the law of $\mu_{(x_0,...,x_{a-1},y_{b},...,y_1)}$.
\end{itemize}

As explained in Section \ref{sec spine}, we define  
\[\Theta_\infty^{(n)}=\left[Y^{(n)}_\infty,X^{(n)}_\infty,\widehat{X}^{(n)}_\infty,F_0^{(\infty,n)},\widehat{F}_0^{(\infty,n)},F_1^{(k)},F_2^{(\infty,n)},\widehat{F}_1^{(\infty,n)},\widehat{F}_2^{(\infty,n)}\right]\]
We also let $I_\infty^{(n)}$ be the uniform random variable in $\left[0,a_\infty^{(n)}+b_\infty^{(n)}\right]$ corresponding to the rank of the unique infinite tree of the forest $\left(F^{(\infty,n)}_{2,0},...,F^{(\infty,n)}_{2,a-1},\widehat{F}^{(\infty,n)}_{2,b},...,\widehat{F}^{(\infty,n)}_{2,}\right)$. The following proposition is just a restatement of \cite[Corollary 3.2]{dieuleveut}.

\begin{proposition}\label{construction alternative}
    The random labelled tree $\Theta_\infty^{(n)}$ rooted at the vertex corresponding to $x_0$ is distributed as $\left(\mathbf{t}_\infty,\ell_\infty+n\right)$  rooted at $\tau_\infty(n)$.
\end{proposition}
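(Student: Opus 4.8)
\emph{Proof strategy.} The plan is to obtain the statement from \cite[Corollary 3.2]{dieuleveut}, of which it is a reformulation in the present notation; the real task is therefore to set up a dictionary between the encoding $(H_\infty^{(n)},F_0^{(\infty,n)},\widehat{F}_0^{(\infty,n)},F_1^{(\infty,n)},F_2^{(\infty,n)},\widehat{F}_1^{(\infty,n)},\widehat{F}_2^{(\infty,n)})$ and Dieuleveut's three-arm spine decomposition of $(\mathbf{t}_\infty,\ell_\infty+n)$ re-rooted at $\tau_\infty(n)$, and then to check that the two descriptions produce the same probability measure. First I would recall the geometry behind that decomposition. After re-rooting $\mathbf{t}_\infty$ at $\tau_\infty(n)$ and shifting all labels by $+n$, the corner $\tau_\infty(n)$ carries label $0$, the old root $\rho$ of $\Theta_\infty$ carries label $n$, and the left-most geodesic $\gamma_\infty$ is cut at $\tau_\infty(n)$ into an infinite half-line and a finite segment $\llbracket\tau_\infty(n),\rho\rrbracket$. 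Via the CVS successor structure this data is equivalent to: a ``middle'' arm $\mathbf{z}$ carrying labels from the value $k$ at the common branch point of the new root, the old root and the spine, up to the value $n$ at $\rho$ (with a two-sided positivity constraint, whence $\mathbf{z}\in\mathcal{M}^{>1}$, coming from the fact that both the clockwise and the counter-clockwise explorations of $\gamma_\infty$ beyond that point must avoid the relevant thresholds); two ``lower'' arms $\mathbf{x}\in\mathcal{M}^{>0}$ (from label $0$) and $\mathbf{y}\in\mathcal{M}^{>1}$ (from label $1$, i.e.\ shifted by one), which are the two corners $\tau_\infty(n)$ and $\widehat{\tau}_\infty(n-1)$ of the new root seen from inside; the six branching forests hanging off these arms; and finally the unique infinite subtree of $\mathbf{t}_\infty$, which, viewed from the new root, re-attaches on one of the slots along the $\mathbf{x}$- and $\mathbf{y}$-sides at the level of $\gamma_\infty$ --- this is precisely what the factor $a+b+1$, the size-biased law $\mu$ and the uniform variable $I_\infty^{(n)}$ record.

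With the dictionary fixed, I would verify the equality of laws by a direct computation, entirely parallel to the proof of Proposition \ref{Three arms} (and of Proposition \ref{decompo1}). On Dieuleveut's side the probability of a given path-and-forest configuration is a product of transition weights of three Markov chains of the types \eqref{Transition} and \eqref{Transition2} along $\mathbf{z},\mathbf{x},\mathbf{y}$, times the $\rho$- and $\rho^+$-weights of the branching forests (using \eqref{Condition weight}), times the last-hitting-time corrections supplied by \eqref{Probabilité de retour 2}--\eqref{Probabilité de retour}. The $w$-factors hidden in these transition probabilities telescope against the normalising $w$-factors of the $\rho^+$-forests, the functions $f$ and $h$ cancel through the identities $f(1)=20$ and $h(x)=x(x+1)(x+2)(x+3)(2x+3)$, and what remains is exactly
\[
\frac{a+b+1}{3^{a+b+c}}\Big(\prod_{i=1}^a w(x_i)\Big)\Big(\prod_{i=1}^b w(y_i-1)\Big)\Big(\prod_{i=1}^{c}w(z_i)w(z_i-1)\Big)
\]
multiplied by the clean forest weights $\tfrac{1}{2\times 12^{|\cdot|}}$ --- that is, the law defining $H_\infty^{(n)}$ together with the prescribed laws of the $F_\bullet^{(\infty,n)}$. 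Since both constructions are probability measures on the same set of decorated combinatorial objects, agreement of the explicit formulas gives the identity in distribution.

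The genuinely delicate point, which I would import from \cite[Section 3]{dieuleveut} rather than redo, is the geometric half: checking that re-rooting the infinite labelled tree $\mathbf{t}_\infty$ at $\tau_\infty(n)$ and applying the (infinite) CVS bijection really does reorganise the genealogical spine, the subtrees $(L_m,R_m)$ and the single infinite branch into precisely the three-arm pattern above --- in particular that the ``$+1$'' shifts attached to the $\mathbf{y}$- and $\widehat{F}$-families correctly encode the counter-clockwise side of $\gamma_\infty$ (compare $\overline{\Theta}_\infty^{(2)}\in\mathbb{S}_{(2)}$ and the auxiliary vertices $\lambda_i$ from Proposition \ref{CVS general}), and that the infinite subtree lands on the $\mathbf{x}/\mathbf{y}$-level with the size-biased law $\mu$ and a uniform position. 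All the re-rooting combinatorics are the content of \cite{dieuleveut}, so once the notation is aligned the proposition follows.
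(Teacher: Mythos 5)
Your proposal takes the same route as the paper, which gives no proof at all beyond the remark that the proposition is a restatement of \cite[Corollary 3.2]{dieuleveut}; you correctly identify this citation and go further by sketching the notational dictionary and the weight-cancellation computation one would run to verify it. The extra detail is sound and consistent with the pattern of the paper's own Propositions~\ref{decompo1} and~\ref{Three arms}, so this is a fuller write-up of the same argument rather than a different one.
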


Since a shift of labels does not impact the CVS bijection, for every $n\in\N$, $\Phi(\Theta_\infty^{(n)})$ is also distributed as $Q_\infty$ rerooted at the corner $\tau_\infty(n)$. For every $n\in\N$, we abuse notation by letting $\tau_\infty(i)$ (respectively $\widehat{\tau}_\infty(i)$) be the first corner of $\Theta_\infty^{(n)}$ with label $n-i$ after the root, in clockwise order (respectively in counter-clockwise order). In order to avoid confusions, we will always specify which tree we are considering before using this notation. In order to study $Q_\infty$ near $\gamma_\infty(n)$, it will be more convenient to work with the tree $\Theta_\infty^{(n)}$. By construction, $\tau_\infty(n)$ and $\widehat{\tau}_\infty(n-1)$ correspond to the corners associated to $x_0$ and $y_0$ in $\Theta_\infty^{(n)}$. 

We define $A_{\beta n}(\Theta_\infty^{(n)})$ and $\widehat{A}_{\beta n}(\Theta_\infty^{(n)})$ as subtrees of $\Theta_\infty^{(n)}$, just like we defined $A_{\beta n}(\Theta_n)$ and $\widehat{A}_{\beta n}(\Theta_n)$ for $\Theta_n$, \textit{mutatis mutandis}. Note that in this case, we can use Proposition \ref{construction alternative} to study $A_{\beta n}(\Theta_\infty^{(n)})$ and $\widehat{A}_{\beta n}(\Theta_\infty^{(n)})$. Moreover, these trees may be infinite, but we will always work on events where both trees are finite. With all these objects at hand, we can state as the main result of this section.

\begin{proposition}\label{CouplingTrees2}
    For every $\varepsilon>0$, there exists $\beta>0$ and $n_0\in\N$ such that for every $n>n_0$, we can couple $\Theta_\infty^{(n)}$ and $(\overline{\Theta}_\infty^{(1)},\overline{\Theta}_\infty^{(2)})$ in such a way that the equality 
    \begin{equation*}        
    \left(A_{\beta n}(\Theta_\infty^{(n)}),\widehat{A}_{\beta n}(\Theta_\infty^{(n)})\right)=\left(A_{\beta n}(\overline{\Theta}^{(1)}_\infty),\widehat{A}_{\beta n}(\overline{\Theta}^{(2)}_\infty)\right)
    \end{equation*}
    holds with probability at least $1-\varepsilon$.
\end{proposition}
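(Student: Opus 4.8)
The strategy is to mirror the proof of Proposition \ref{CouplingTrees1}, replacing the three-arm decomposition of $\Theta_n$ (Proposition \ref{Three arms}) by the three-arm decomposition of $\Theta_\infty^{(n)}$ (Proposition \ref{construction alternative}). The key point is that in both decompositions, once we know the label $Y^{(n)}_J$ (resp. the terminal value $k$ of the central branch $Y_\infty^{(n)}$, i.e.\ the label of the common ancestor of $\tau_\infty(n)$ and $\widehat{\tau}_\infty(n-1)$) is larger than $\beta n$, the sets $A_{\beta n}$ and $\widehat{A}_{\beta n}$ are \emph{disjoint} and are entirely described by the two side-arms $X_\infty^{(n)}$ and $\widehat{X}_\infty^{(n)}$ (run with transitions \eqref{Transition} from $0$), stopped at their last hitting time of $\beta n$, together with the forests $F_1,F_2$ (resp.\ $\widehat{F}_1,\widehat{F}_2$) grafted to them. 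Since $X^{(\infty)}$ and $\widehat{X}^{(\infty)}$, the spine processes of $\overline{\Theta}_\infty^{(1)}$ and $\overline{\Theta}_\infty^{(2)}$, are precisely copies of $X$ started at $0$, and the attached forests have laws $\rho^+$ and $\rho$ in exactly the same pattern (as one reads off by comparing the bullet lists defining the forests of $\Theta_\infty^{(n)}$ with those of $\overline{\Theta}_\infty^{(1)},\overline{\Theta}_\infty^{(2)}$ in Section \ref{Random labeled trees}), the restrictions to $A_{\beta n},\widehat A_{\beta n}$ agree in law on the event $\{k>\beta n\}$.

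\textbf{Step 1: the analogue of Lemma \ref{ancetre commun}.} First I would establish that $a_\infty^{(n)}+b_\infty^{(n)}$ and the terminal label $k$ of the central branch, suitably rescaled, converge to a non-degenerate limit; more precisely, that for every $\delta>0$ there is $\beta>0$ and $n_0$ with $\P(k > \beta n)>1-\delta/2$ for $n>n_0$. This should follow the same route as Lemma \ref{ancetre commun}: the label of the common ancestor of $\tau_\infty(n)$ and $\widehat\tau_\infty(n-1)$ in $\Theta_\infty^{(n)}$, which equals $k$ by the last sentence before Proposition \ref{CouplingTrees2}, converges after rescaling by $n$ to a strictly positive random variable, because $\Theta_\infty^{(n)}$ is $Q_\infty$ rerooted at $\gamma_\infty(n)$, and by the known scaling limit of the UIPQ towards the Brownian plane this common-ancestor label scales like $n$ times a positive variable. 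Alternatively, one can use the explicit law of $H_\infty^{(n)}$ to compute the law of $k$ and check directly that $k/n$ is tight and bounded away from $0$; in any case some care is needed because here the measure is $\sigma$-finite in spirit (the central branch weights involve $\prod w(z_i)w(z_i-1)$ rather than honest transition probabilities), so I expect this is the step requiring the most bookkeeping.

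\textbf{Step 2: coupling the side arms.} On the event $\{k>\beta n\}$, as recorded in the text, $A_{\beta n}(\Theta_\infty^{(n)})$ and $\widehat{A}_{\beta n}(\Theta_\infty^{(n)})$ are disjoint and depend only on $X_\infty^{(n)}$ and $\widehat X_\infty^{(n)}$ up to their last hitting times of $\lfloor\beta n\rfloor$, plus the forests $(F_1^{(\infty,n)},F_2^{(\infty,n)})$ and $(\widehat F_1^{(\infty,n)},\widehat F_2^{(\infty,n)})$; none of the central-branch forests $F_0,\widehat F_0$ nor the "$\mu$" forest on the segment $(x_0,\dots,x_{a-1},y_b,\dots,y_1)$ enter, since those lie beyond level $\beta n$ on the central branch. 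Conditionally on $\{k > \beta n\}$ the pair $(X_\infty^{(n)},\widehat X_\infty^{(n)})$ consists of two independent copies of $X$ from $0$ run to their last visit of $\lfloor\beta n\rfloor$; comparing with $(X^{(\infty)},\widehat X^{(\infty)})$ stopped at their last hitting times of $\lfloor\beta n\rfloor$, the total variation distance between the two laws is bounded by $2\,\P(k\le\beta n)\le\delta$, exactly as in the proof of Proposition \ref{CouplingTrees1}. Hence I would couple the two pairs of processes to coincide off an event of probability $\le\delta$.

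\textbf{Step 3: coupling the forests and conclusion.} Finally, on the event where the side-arm processes coincide, I would couple the forests: comparing the bullet list defining $(F_1^{(\infty,n)},F_2^{(\infty,n)},\widehat F_1^{(\infty,n)},\widehat F_2^{(\infty,n)})$ with the definition of the spine-subtrees of $\overline{\Theta}_\infty^{(1)}$ (laws $\rho^+$ to the right, $\rho$ to the left along the spine) and of $\overline{\Theta}_\infty^{(2)}$ (obtained by reflecting and shifting labels by $+1$), the conditional laws of the grafted subtrees up to level $\lfloor\beta n\rfloor$ match exactly, so they can be coupled to be equal. Therefore the encodings of $(A_{\beta n}(\Theta_\infty^{(n)}),\widehat A_{\beta n}(\Theta_\infty^{(n)}))$ and $(A_{\beta n}(\overline{\Theta}^{(1)}_\infty),\widehat A_{\beta n}(\overline{\Theta}^{(2)}_\infty))$ agree on an event of probability at least $1-\varepsilon$ (absorbing the $\delta/2$ from Step 1 and the $\delta$ from Step 2 into a single $\varepsilon$), which is the claim. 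The main obstacle is Step 1: getting a clean, self-contained proof that the common-ancestor label $k$ is of order $n$ with the error probabilities controlled, since unlike in Proposition \ref{CouplingTrees1} we cannot directly invoke Theorem \ref{Convergergence sphere} and instead must either go through the Brownian-plane scaling limit of the UIPQ or carry out an explicit computation with the weights defining $H_\infty^{(n)}$.
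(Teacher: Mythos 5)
Your plan correctly identifies that the argument should run parallel to Proposition \ref{CouplingTrees1}, but there are two genuine gaps, both stemming from structural differences between the decomposition of $\Theta_\infty^{(n)}$ (Proposition \ref{construction alternative}) and that of $\Theta_n$ (Proposition \ref{Three arms}).

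The first gap is in Step 2, where you assert that the $\mu$-forest ``on the segment $(x_0,\dots,x_{a-1},y_b,\dots,y_1)$'' does not enter because it ``lies beyond level $\beta n$ on the central branch.'' This is a misreading: the forest $\left(F^{(\infty,n)}_{2},\widehat{F}^{(\infty,n)}_{2}\right)$ with law $\mu_{(x_0,\dots,x_{a-1},y_b,\dots,y_1)}$ is grafted along the two \emph{side} arms, not the central branch, and it contains exactly one \emph{infinite} tree (a shifted copy of $\Theta_\infty$), placed at the uniformly random index $I_\infty^{(n)}\in\left[0,a_\infty^{(n)}+b_\infty^{(n)}\right]$. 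If $I_\infty^{(n)}$ is within $O(n^2)$ of either endpoint, that infinite subtree falls inside $A_{\beta n}(\Theta_\infty^{(n)})$ or $\widehat{A}_{\beta n}(\Theta_\infty^{(n)})$, which cannot be matched by the all-finite subtrees of $\overline{\Theta}_\infty^{(1)}$, $\overline{\Theta}_\infty^{(2)}$. Controlling $I_\infty^{(n)}$ is precisely the role of the paper's Proposition \ref{control 1}, and it is not optional.

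The second, more serious gap is your claim that, conditionally on $\{k>\beta n\}$, the side arms $(X_\infty^{(n)},\widehat X_\infty^{(n)})$ are two independent copies of $X$ from $0$ run to their last visit of $\lfloor\beta n\rfloor$, so the total variation distance to $(X^{(\infty)},\widehat X^{(\infty)})$ is bounded by $2\P(k\le\beta n)$. In the $\Theta_n$ decomposition (Proposition \ref{Three arms}), the two side arms are genuinely conditionally independent Markov chains, which is what makes the $\Theta_n$ argument so clean. But the joint law of $H_\infty^{(n)}$ carries the size-biasing factor $(a_\infty^{(n)}+b_\infty^{(n)}+1)$, coming from the uniform choice of $I_\infty^{(n)}$; after conditioning on the central branch, this couples the lengths $a$ and $b$ together and the conditional law of $(\mathbf{x},\mathbf{y})$ is not a product of two $p$-Markov-chain laws. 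Quantifying the resulting total variation distance is the content of the paper's Proposition \ref{couplage processus 2}, whose proof is the long Green-function computation in Appendix \ref{appendix} and also requires the corrected formula of Lemma \ref{ERREUR 404}. You also need Lemma \ref{convergence temps d'atteinte} to translate ``stopped at the last hitting time of $\beta n$'' into a deterministic time $\alpha n^2$, since the TV bound is stated over a fixed-length window. So the heavy bookkeeping is in what you call Step 2 (and the extra control of $I_\infty^{(n)}$), not in Step 1 where you anticipated it.
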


 The proof of Proposition \ref{CouplingTrees2} rely on three technical lemmas.
\begin{proposition}\label{control 1}
    For every $\varepsilon>0$, there exists $c>0$ such that for every $n$ large enough,
    \begin{equation*}
     \P(a_\infty^{(n)}>cn^2)>1-\varepsilon   
    \end{equation*}
    and
    \begin{equation*}
        \P\left(cn^2<I_\infty^{(n)}<\left(a_\infty^{(n)}+b_\infty^{(n)}-cn^2\right)\right)>1-\varepsilon.
    \end{equation*}
\end{proposition}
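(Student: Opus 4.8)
The plan is to identify the scaling limits of the relevant quantities using Proposition \ref{construction alternative}, which tells us that $\Theta_\infty^{(n)}$ is $(\mathbf{t}_\infty,\ell_\infty+n)$ rerooted at $\tau_\infty(n)$. Under this identification, $a_\infty^{(n)}$ is (up to an additive constant) the graph distance in $\mathbf{t}_\infty$ from $\tau_\infty(n)$ to the branchpoint $b_n$ separating $\tau_\infty(n)$ from $\widehat\tau_\infty(n-1)$ along the tree, i.e. $a_\infty^{(n)}$ is the number of steps of the path $X^{(n)}_\infty$, which goes from label $n$ down (essentially) to the label $Y$-value of that branchpoint. Meanwhile $I_\infty^{(n)}$ is the rank in the spinal forest of the unique infinite subtree hanging off the $Y$-branch, and $a_\infty^{(n)}+b_\infty^{(n)}$ is the total length of that $Y$-branch. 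So what we really need is: (i) $a_\infty^{(n)}$ grows like $n^2$, and (ii) $I_\infty^{(n)}$ sits well inside $[0,a_\infty^{(n)}+b_\infty^{(n)}]$, bounded away from both ends on the scale $n^2$.

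For (i), I would first recall from \cite{dieuleveut} (or re-derive from the explicit law of $H_\infty^{(n)}$) that the spinal walk $X^{(n)}_\infty$, suitably rescaled, converges: $(n^{-2} a_\infty^{(n)})$ should converge in distribution to the hitting time of (a multiple of) the branchpoint label by a diffusion related to the Bessel-$7$ process appearing in \eqref{convergence Bessel}. The key point is that the label at the branchpoint, rescaled by $n$, converges to a strictly positive random variable --- this is exactly the analogue of Lemma \ref{ancetre commun}, and the same Brownian-snake / excursion-theory argument (the branchpoint corresponds to $\tilde{\mathcal T}=\inf\{t:\mathcal C_t=\inf_{[\mathcal T,\widehat{\mathcal T}]}\mathcal C\}$ and $\mathcal Z_{\tilde{\mathcal T}}>0$ a.s.) shows the limit cannot be $0$. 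Since the label must drop from $n$ to this $\Theta(n)$ value, and the walk has $\pm1,0$ steps with drift of order $1/\sqrt{n}$ over $\Theta(n^2)$ steps, $a_\infty^{(n)}$ is of order $n^2$; hence for $c$ small enough $\P(a_\infty^{(n)}>cn^2)\to 1$.

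For (ii), the strategy is to use the definition of $\mu_{(x_0,\dots)}$: conditionally on the paths, $I_\infty^{(n)}$ is \emph{uniform} on $\{0,\dots,a_\infty^{(n)}+b_\infty^{(n)}\}$. Therefore $\P(I_\infty^{(n)}\le cn^2)=\E[(cn^2+1)/(a_\infty^{(n)}+b_\infty^{(n)}+1)\wedge 1]$, and since $a_\infty^{(n)}+b_\infty^{(n)}\ge a_\infty^{(n)}$ is of order $n^2$ with high probability by part (i), this expectation can be made $<\varepsilon/2$ by choosing $c$ small (split on the event $\{a_\infty^{(n)}>Cn^2\}$ for $C$ large, use the uniform bound on the complement). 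The same argument controls $\P(I_\infty^{(n)}\ge a_\infty^{(n)}+b_\infty^{(n)}-cn^2)$. Combining the two bounds gives the second inequality; a union bound with part (i) finishes the proof.

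\textbf{Main obstacle.} The delicate step is establishing that $a_\infty^{(n)}$ (equivalently, the branchpoint label) is genuinely of order $n^2$ (resp. $n$) and not smaller --- i.e.\ transferring the positivity statement $\mathcal Z_{\tilde{\mathcal T}}>0$ from the Brownian snake picture to the discrete tree $\Theta_\infty^{(n)}$. One must be careful that the relevant scaling limit here is taken in the UIPQ-tree $\mathbf{t}_\infty$ rerooted far along its spine, where the convergence to the Brownian snake (or rather, to the object describing the UIPQ near a far spine point) is not literally Theorem \ref{Convergergence sphere}; one would invoke instead the corresponding convergence from \cite{dieuleveut} or a direct computation with the explicit transition kernel of $H_\infty^{(n)}$ and \eqref{convergence Bessel}. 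Once the positive lower bound on the rescaled branchpoint label is in hand, everything else is a routine consequence of the uniformity of $I_\infty^{(n)}$ and elementary estimates.
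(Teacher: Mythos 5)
Your handling of the second inequality via the conditional uniformity of $I_\infty^{(n)}$ on $\{0,\dots,a_\infty^{(n)}+b_\infty^{(n)}\}$ is sound and essentially matches the paper's; the paper additionally notes that $I_\infty^{(n)}$ stochastically dominates a certain distance $D_n$ (see below), gets the lower bound on $I_\infty^{(n)}$ from that, and obtains the upper bound from the distributional symmetry of the uniform variable, but your expectation computation gives the same conclusion once the first inequality is in hand.

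The first inequality is where the gap sits, and you flag it yourself without closing it. Establishing that $a_\infty^{(n)}$ is of order $n^2$ via a scaling limit of the explicit three-arm spinal walk encoded by $H_\infty^{(n)}$, together with a positivity statement for the rescaled branchpoint label, would require a new convergence theorem for the rerooted tree's encoding: Lemma~\ref{ancetre commun} concerns $\Theta_n$ and rests on Theorem~\ref{Convergergence sphere}, neither of which applies to $\Theta_\infty^{(n)}$, and the joint law of $(a_\infty^{(n)},b_\infty^{(n)},X_\infty^{(n)},\widehat X_\infty^{(n)},Y_\infty^{(n)})$ carries a size-biasing factor $(a+b+1)$ that the argument would have to absorb. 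The paper sidesteps all of this. Since $\Theta_\infty^{(n)}$ is just $\Theta_\infty$ rerooted at $\tau_\infty(n)$, one has deterministically $a_\infty^{(n)}+1\ge D_n:=d_{\Theta_\infty}(\tau_\infty(n),S)$, the distance from $\tau_\infty(n)$ to the infinite spine $S$ of the \emph{original} tree $\Theta_\infty$. This admits the closed form $D_n=X_\infty(\delta_n)-\inf_{0\le u\le\delta_n}X_\infty(u)$ in terms of the contour process $X_\infty$ of $\Theta_\infty$ and the first hitting time $\delta_n$ of $-n$ by the label process, so the already-established convergence \eqref{Convergence Brownian} of $(X_\infty,L_\infty)$ from \cite{ConvergenceSerpent} yields
\[
\frac{D_n}{n^2}\xrightarrow[n\to\infty]{(d)}\mathcal{B}_{T_1}-\inf_{0\le s\le T_1}\mathcal{B}_s,\qquad T_1=\inf\{s\ge0:\mathcal{Z}_s=-1\},
\]
and this limit is strictly positive a.s.\ because $p_{\cT_\infty}(T_1)$ is a leaf of the tree coded by $\mathcal{B}$ (quoting \cite[Lemma~2.2]{sphericity}). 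That route requires no new scaling limit for the rerooted-tree encoding, only a comparison back to the well-understood contour/label processes of $\Theta_\infty$, and it fills exactly the hole your plan leaves open.
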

\begin{proof}
    The labeled tree $\Theta_\infty$ is encoded by $(X_\infty,L_\infty)$, where $X_\infty$ is a two-sided simple random walk and $L_\infty$ is the label process of the infinite forest encoded by $X$. By \cite[Theorem 3]{ConvergenceSerpent}, we have : 
    \begin{equation}\label{Convergence Brownian}
        \bigg(\bigg(\frac{X_\infty({\lfloor nt \rfloor})}{n^{1/2}}\bigg)_{t\in \R},\,\bigg(\frac{L_\infty({\lfloor nt \rfloor})}{n^{1/4}}\bigg)_{t\in\R}\bigg)\xrightarrow[n\rightarrow\infty]{(d)}\bigg(\mathcal{B}_t,\mathcal{Z}_t\bigg)_{t\in\R},
    \end{equation}
    where $\mathcal{B}$ is a two-sided standard Brownian motion and $\mathcal{Z}$ the Brownian snake driven by $\mathcal{B}$. Let $S$ be the infinite spine of the tree $\Theta_\infty$, and for every $n\geq0$, set 
    \[\delta_n=\inf\{k\geq0,L_\infty(k)=-n\}.\]
    Then, observe that both $a_\infty^{(n)}+1$ and $I_\infty^{(n)}$ stochastically dominate the random variable
    \[D_n:=d_{\Theta_\infty}(\tau_\infty(n),S)=X_{\delta_n}-\inf_{0\leq u\leq\delta_n}X_u.\]
    Indeed, we have $I_\infty^{(n)}=d_{\Theta_\infty}(\tau_\infty(n),S)$ if $I_\infty^{(n)}\in[0,a_\infty^{(n)}+1]$, and $I_\infty^{(n)}\geq d_{\Theta_\infty}(\tau_\infty(n),S)=a_\infty^{(n)}+1$ otherwise.
    The convergence \eqref{Convergence Brownian} implies that 
    \[\frac{D_n}{n^2}\xrightarrow[n\rightarrow\infty]{}\mathcal{B}_{T_1}-\inf_{0\leq s\leq T_1}\mathcal{B}_s\]
    where $T_1=\inf\{s\geq0,\,\mathcal{Z}_s=-1\}.$
    Furthermore, as a consequence of \cite[Lemma 2.2]{sphericity}, we know that $p_{\cT_\infty}(T_1)$ is almost surely a leaf of $\cT_\infty$, which means that $\mathcal{B}_{T_1}-\inf_{0\leq s\leq T_1}\mathcal{B}_s>0$ a.s. The first inequality follows from this observation and the convergence \eqref{Convergence Brownian}, by taking $c$ small enough. 
    To obtain the second inequality, we just need to show that
    \[\P\left(I_\infty^{(n)}<a_\infty^{(n)}+b_\infty^{(n)}-cn^2\right)>1-\varepsilon.\]
    However, this follows from the first part of the proof (up to reducing $c$), since $I_\infty^{(n)}$ is uniform on $[0,a_\infty^{(n)}+b_\infty^{(n)}]$. This concludes the proof.
\end{proof}
Next, the following lemma deals with the convergence of the last hitting time of the process $X$. 
\begin{lemme}\label{convergence temps d'atteinte}
    For every $n\in\N$, define 
    \begin{equation*}
        S_n=\sup\{i\in\N,\,X_i=n\}
    \end{equation*}
    and for every $t\in\R_+$
    \begin{equation*}
        \mathcal{S}_t=\sup\{s\geq0,\,Z_s=t\}.
    \end{equation*}
    Then, for every $t\geq0$,
    \begin{equation*}
        \frac{S_{\lfloor nt\rfloor}}{n^2}\xrightarrow[n\rightarrow\infty]{(d)}\frac{3}{2}\mathcal{S}_t.
    \end{equation*}
\end{lemme}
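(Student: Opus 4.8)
The plan is to recover the statement from the functional convergence \eqref{convergence Bessel}, by realising $S_{\lfloor nt\rfloor}$ as a last-passage functional of a rescaled version of the walk $X$ and checking that this functional is almost surely continuous at the limiting Bessel path. We may assume $t>0$: for $t=0$ both sides vanish, since after its first step $X$ never returns to $0$ and $Z$ never returns to $0$ either. Applying \eqref{convergence Bessel} with $n^2$ in place of $n$ gives
\[\widetilde X^{(n)}:=\Big(\tfrac{1}{n}X_{\lfloor n^2 u\rfloor}\Big)_{u\ge 0}\ \xrightarrow[n\to\infty]{(d)}\ R:=\big(Z_{2u/3}\big)_{u\ge 0}\]
for the Skorokhod topology; as the limit has continuous paths, after extracting a subsequence and invoking Skorokhod's representation theorem we may assume that $\widetilde X^{(n)}\to R$ uniformly on compacts, almost surely. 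Since $\widetilde X^{(n)}_{i/n^2}=X_i/n$ and $\widetilde X^{(n)}$ is constant on each interval $[i/n^2,(i+1)/n^2)$, the quantity $S_{\lfloor nt\rfloor}/n^2$ is, up to an error $O(n^{-2})$, the last time $\widetilde X^{(n)}$ takes the value $a_n:=\lfloor nt\rfloor/n$, and $a_n\to t$; on the limiting side, $\sup\{u\ge 0:R_u=t\}=\tfrac32\mathcal{S}_t$, which is finite because $Z$ is transient.

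For the lower bound, fix $\delta>0$ and use the following transversality property, proved below: almost surely, for every $\varepsilon>0$ one has $\inf_{s\in[\mathcal{S}_t-\varepsilon,\mathcal{S}_t]}Z_s<t$, i.e. $R$ drops strictly below $t$ in every left-neighbourhood of $\tfrac32\mathcal{S}_t$. Pick then $u'\in(\tfrac32\mathcal{S}_t-\delta,\tfrac32\mathcal{S}_t)$ with $R_{u'}<t$, and $u''$ slightly above $\tfrac32\mathcal{S}_t$, so that $R_{u''}>t$ (because $Z>t$ after $\mathcal{S}_t$). Uniform convergence on $[0,u'']$ together with $a_n\to t$ yields $\widetilde X^{(n)}_{u'}<a_n<\widetilde X^{(n)}_{u''}$ for all large $n$; since $\widetilde X^{(n)}$ is $\tfrac1n\Z$-valued with steps in $\{-\tfrac1n,0,\tfrac1n\}$ and $a_n\in\tfrac1n\Z$, it must take the value $a_n$ somewhere in $[u',u'']$, hence $S_{\lfloor nt\rfloor}/n^2\ge u'-n^{-2}$. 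Letting $n\to\infty$ and then $\delta\to0$ gives $\liminf_n S_{\lfloor nt\rfloor}/n^2\ge\tfrac32\mathcal{S}_t$ almost surely. The transversality property is of the same nature as the fact used in the proof of Lemma \ref{ancetre commun} (compare \cite[Lemma 4.7.1]{Duquesne}): since $t>0$ is a regular point of the diffusion $Z$, regular for both sides, the zero set of $Z-t$ is almost surely perfect, so its (finite) supremum $\mathcal{S}_t$ is accumulated from the left by time-intervals carrying downward excursions of $Z-t$, on each of which $Z<t$.

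For the upper bound, fix $T<\infty$ and $\delta>0$. Since $R>t$ on the compact interval $[\tfrac32\mathcal{S}_t+\delta,T]$, its minimum there exceeds $t$, so uniform convergence and $a_n\to t$ give $\widetilde X^{(n)}>a_n$ on $[\tfrac32\mathcal{S}_t+\delta,T]$ for all large $n$, whence $S_{\lfloor nt\rfloor}/n^2\notin[\tfrac32\mathcal{S}_t+\delta,T]$. It remains to prove tightness of $(S_{\lfloor nt\rfloor}/n^2)_n$, which rules out escape to infinity. As $\{S_{\lfloor nt\rfloor}>Tn^2\}$ forces $X$ to visit $\lfloor nt\rfloor$ after time $\lfloor Tn^2\rfloor$, the Markov property and \eqref{Probabilité de retour} give
\[\P\big(S_{\lfloor nt\rfloor}>Tn^2\big)\le\E\big[\P_{X_{\lfloor Tn^2\rfloor}}(H_{\lfloor nt\rfloor}<\infty)\big]\le \P\big(X_{\lfloor Tn^2\rfloor}\le\lfloor nt\rfloor\big)+\E\Big[\1_{\{X_{\lfloor Tn^2\rfloor}>\lfloor nt\rfloor\}}\,\tfrac{h(\lfloor nt\rfloor)}{h(X_{\lfloor Tn^2\rfloor})}\Big].\]
Since $h$ is increasing with $h(x)\sim 2x^5$, the last bracket is bounded by $1$ and by $C\,t^5\,(X_{\lfloor Tn^2\rfloor}/n)^{-5}$; hence, by \eqref{convergence Bessel} and bounded convergence, the right-hand side converges as $n\to\infty$ to a quantity bounded by $\P(Z_{2T/3}\le t)+\E[\min(C\,t^5\,Z_{2T/3}^{-5},1)]$, which tends to $0$ as $T\to\infty$ since $Z_{2T/3}\to\infty$ almost surely. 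Thus $(S_{\lfloor nt\rfloor}/n^2)_n$ is tight, and, combining with the two previous bounds, every subsequential limit of $(\widetilde X^{(n)},S_{\lfloor nt\rfloor}/n^2)$ equals $(R,\sup\{u:R_u=t\})=(R,\tfrac32\mathcal{S}_t)$; this yields $S_{\lfloor nt\rfloor}/n^2\xrightarrow[n\to\infty]{(d)}\tfrac32\mathcal{S}_t$.

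The step I expect to be the main obstacle is the transversality property of $Z$ at the level $t$ — equivalently, the almost sure continuity of the last-passage functional at the Bessel path — which relies on the excursion-theoretic description of the zero set of $Z-t$; by contrast, the tightness estimate is routine once the return probabilities \eqref{Probabilité de retour} are in hand.
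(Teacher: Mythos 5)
Your proof is correct and takes a genuinely different route from the paper's. The paper fixes $z\geq 0$ and shows directly that the distribution function $\P(S_{n}\leq zn^2)$ converges to $\P\left(\tfrac{3}{2}\cS_1\leq z\right)$: the event $\{S_n\leq zn^2\}$ is squeezed between events depending only on the walk on the compact window $[zn^2,bn^2]$, with a truncation error controlled on one side by $\P_{\lfloor an\rfloor}(\tau_n<\infty)\to a^{-5}$ and on the other by the matching Bessel non-return probability $\P_a(T_1<\infty)=a^{-5}$, so that the conclusion follows from the convergence \eqref{convergence Bessel} on compact intervals and a triangle inequality, with no sample-path analysis of $Z$ near $\cS_1$. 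You instead pass through a Skorokhod coupling and establish almost sure continuity of the last-passage functional at the Bessel path, which forces you to prove a transversality property of $Z$ at level $t$ (downward excursions of $Z-t$ accumulate from the left at $\cS_t$), via regularity of $t$ from both sides and perfection of the zero set of $Z-t$. Your tightness estimate --- bounding $\P(S_{\lfloor nt\rfloor}>Tn^2)$ by conditioning at time $\lfloor Tn^2\rfloor$ and using $h(\lfloor nt\rfloor)/h(X_{\lfloor Tn^2\rfloor})$ --- is in substance the same calculation the paper performs for its truncation bound. Two small points worth tightening: the passage to the limit $\E[\min(1,Ct^5(X_{\lfloor Tn^2\rfloor}/n)^{-5})]\to\E[\min(1,Ct^5 Z_{2T/3}^{-5})]$ uses weak convergence against a bounded continuous test function rather than ``bounded convergence''; and the transversality claim is stated rather than fully proved, though it is a standard excursion-theoretic fact of the same type as $\inf_{[\cT,\cT+\varepsilon]}\cZ<0$ already invoked in Lemma~\ref{ancetre commun}. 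Overall your route is slightly heavier but more robust (it gives joint convergence with the rescaled walk for free), while the paper's direct CDF computation is shorter and avoids the path-continuity lemma.
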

\begin{proof}
    To lighten notations, we only give the proof for $t=1$. The idea is to show that with an arbitrary large probability, the event $\{\sup_{t\geq s}X_s>1\}$ can be verified on a compact time interval, on which we can use the convergence of the process.

For $a\in\N$, denote $\tau_a=\inf\{n\geq0:X_n=a\}$. 
From the proof of \cite[Lemma 2.3]{dieuleveut}, we have, for every $x>k$:
\begin{equation*}
 \P_{x}(\tau_k<\infty)=\frac{h(k)}{h(x)}
\end{equation*} where $h(x)=x(x+1)(x+2)(x+3)(2x+3)$. In particular, for every $a>1$, 
\begin{equation}\label{momo}
    \P_{\lfloor an\rfloor}(\tau_n<\infty)\xrightarrow[n\rightarrow\infty]{}a^{-5}.
\end{equation}
Using the Markov property, for every $n\in\N$, $z\in\R_+$ and $a,b$, we have 
\begin{align*}
  \P(S_{n}\leq zn^2)&=\P\left(\inf_{s>zn^2}X_s>n\right)\\  &=\P\left(\inf_{s>zn^2}X_s>n,\,X_{bn^2}>an\right)+\P\left(\inf_{s>zn^2}X_s>n,\,X_{bn^2}<an\right)\\  &=\E\left[\1_{\{\inf_{zn^2<s<bn^2}X_s>n,\,X_{bn^2}>an\}}\E_{X_{bn^2}}[\1_{\{\tau_n=+\infty\}}]\right]+\P\left(\inf_{s>zn^2}X_s>n,\,X_{bn^2}<an\right).
\end{align*}
On the event $\{X_{bn^2}>an\}$, using \eqref{momo}, we have $1-2a^{-5}<\P_{X_{bn^2}}(\tau_n=+\infty)<1$ for $n$ large enough, and thus 
\begin{equation}\label{Satie}
    \left|\P(S_n\leq zn^2)-\P\left(\inf_{zn^2<s<bn^2}X_s>n,\,X_{bn^2}>an\right)\right|\leq\P\left(\inf_{s>zn^2}X_s>n,\,X_{bn^2}<an\right)+2a^{-5}.
\end{equation}
Furthermore, we have 
\begin{equation}\label{Tonka}
\P\left(\inf_{s>zn^2}X_s>n,\,X_{bn^2}<an\right)\leq \P(X_{bn^2}<an)\xrightarrow[n\rightarrow\infty]{}\P(Z_{2b/3}<a).    
\end{equation}
We can now prove the statement. Fix $\varepsilon>0,\,z\geq 0$, and choose $a,b>1$ such that 
\begin{itemize}[label=\textbullet]
    \item $a^{-5}<\varepsilon/12$
    \item $\P(Z_{2b/3}<a)<\varepsilon/12$ (which is possible because $Z$ diverges to $\infty$).
\end{itemize}
We have 
\begin{align}
    |\P(S_n\leq zn^2)-&\P(\cS_1\leq 2z/3)|\leq \left|\P(S_n\leq zn^2)-\P\left(\inf_{zn^2<s<bn^2}X_s>n,\,X_{bn^2}>an\right)\right|\label{calcul}\\
    &+\left|\P\left(\inf_{zn^2<s<bn^2}X_s>n,\,X_{bn^2}>an\right)-\P\left(\inf_{2z/3<s<2b/3}Z_s>1,\,Z_{2b/3}>a\right)\right|\nonumber\\    &+\left|\P\left(\inf_{2z/3<s<2b/3}Z_s>1,\,Z_{2b/3}>a\right)-\P(\cS_1\leq 2z/3)\right|.\nonumber
\end{align}
By \eqref{Satie} and \eqref{Tonka}, the first term of the sum is smaller than $\varepsilon/3$ for $n$ large enough. The second term also goes to 0 as $n\rightarrow\infty$, as a consequence of the convergence of our processes on compact sets. Finally, with the same idea that we used to obtain \eqref{Satie}, we have 
\[\left|\P\left(\inf_{2t/3<s<2b/3}Z_s>1,\,Z_{2b/3}>a\right)-\P(\cS_1>2t/3)\right|\leq \P_a(T_1<\infty)+\P(Z_{2b/3}<a).\]
Then, we can use the explicit formula (see the proof of \cite[Proposition 2.1]{lawlerbessel}) :
\[\P_a(T_1<\infty)=a^{-5}<\varepsilon/12,\]
we have proved that the right-hand side of \eqref{calcul} is smaller than $\varepsilon$ for $n$ large enough. The result follows, $\varepsilon$ being arbitrary.
\end{proof}
Finally, we need a strong coupling result between the pair of processes $\left(X^{(n)}_{\infty,k},Y^{(n)}_{\infty,k}\right)$ and $(\widehat{X}_k,\widehat{Y}_k)$. We mention that similar results were already obtained in \cite[Proposition 3.3]{dieuleveut}, where it has been proved that the processes converge in distribution. However, this is not sufficient for our purposes, since we need to compare the processes for duration that depends on $n$.   
\begin{proposition}\label{couplage processus 2}
    For every $\varepsilon>0$, there exist $\beta>0$ and $n_0>0$ such that for every $n>n_0$, the total variation distance between the law of $(X^{(n)}_{\infty,k},Y^{(n)}_{\infty,k})_{0\leq k\leq\beta n^2}$ and the law of $(\widehat{X}_k,\widehat{Y}_k)_{0\leq k\leq\beta n^2}$ is smaller than $\varepsilon$. 
\end{proposition}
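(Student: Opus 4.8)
The plan is to reduce the statement to a quantitative comparison between an $X$-bridge and a free $X$-chain, to which one can then apply a local limit theorem.

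First I would unwind the weights defining $H^{(n)}_\infty$. Write $a=a^{(n)}_\infty$ and $b=b^{(n)}_\infty$ for the lengths of the two arms emanating from $\tau_\infty(n)$ and $\widehat\tau_\infty(n-1)$, and $\ell$ for the level at which these arms meet. Using the factorisation $p(x,y)=\tfrac{w(x)}{3}\,\tfrac{f(y)}{f(x)}$ of the kernel \eqref{Transition} for $x\geq 1$ (so that a product of transitions telescopes into endpoint factors), together with the fact that a nearest-neighbour path started at $0$ is forced through $1$ and then never meets $0$ again since $p(1,0)=0$, one checks that \emph{conditionally on $(a,b,\ell)$} the two arm processes of $\Theta^{(n)}_\infty$ are two independent $X$-bridges: one from $0$ to $\ell$ of length $a$, and, after subtracting $1$, one from $0$ to $\ell-1$ of length $b$. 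All the remaining structure of $H^{(n)}_\infty$ — the path from $\ell$ up to $n$, the six forests, the size-biasing factor $a+b+1$ — is absorbed into the law of $(a,b,\ell)$ and disappears once we condition on it. On the limiting side, the spine processes of $\overline{\Theta}^{(1)}_\infty$ and $\overline{\Theta}^{(2)}_\infty$ are, by construction, two independent free copies of $X$ started from $0$, the second shifted up by $1$.

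Next I would localise. By Proposition~\ref{control 1} there is $c>0$ with $\P(a\wedge b>cn^2)>1-\varepsilon/4$ for $n$ large, and by the Brownian-snake scaling arguments already used in Lemma~\ref{ancetre commun} (or directly from \eqref{convergence Bessel}, a path of $X$ of length of order $n^2$ ending at a level of order $n$) one can also ensure $\kappa_1 n\leq \ell\leq\kappa_2 n$ and $a\wedge b\leq Cn^2$ with probability $>1-\varepsilon/4$, for suitable constants $0<\kappa_1<\kappa_2<\infty$ and $C$. On this good event, for a window of length $m=\beta n^2$ with $\beta<c$, the Markov property gives that the density of the law of the above $X$-bridge, restricted to $\llbracket 0,m\rrbracket$, with respect to the law of the free $X$-chain restricted to $\llbracket 0,m\rrbracket$, is exactly $\P_{X_m}(X_{a-m}=\ell)\big/\P_0(X_a=\ell)$. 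I would then invoke a local limit theorem for $X$ — which follows from \eqref{convergence Bessel} by standard arguments, the $f$-Doob-transform structure above identifying $X$ with a killed nearest-neighbour walk conditioned to survive, whose rescaled transition density converges to that of a $7$-dimensional Bessel process, which is continuous up to the boundary point $0$ — to see that, uniformly over the good range of $(a,\ell)$, this ratio is a ratio of rescaled Bessel transition densities evaluated at arguments differing by $O(\sqrt\beta)$ in space and $O(\beta)$ in time, hence is $1+O(\beta/c)$ on the event $\{X_m=O(\sqrt\beta\,n)\}$ (which has probability $>1-\varepsilon/4$ by a maximal inequality and \eqref{convergence Bessel}), while the ratio stays bounded, with negligible contribution, on the complement. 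Taking expectations of $|\mathrm{density}-1|$, this shows that conditioned on $(a,b,\ell)$ in the good range each arm, restricted to $\llbracket 0,\beta n^2\rrbracket$, is within total variation $o_\beta(1)$ of a free $X$-chain on the same window.

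Finally I would assemble the pieces: conditionally on $(a,b,\ell)$ in the good range the two arms are independent and each close in total variation to a free $X$-chain, and the two limiting spine processes are independent free $X$-chains, so by sub-additivity of total variation for product measures the joint law of the two arm processes on $\llbracket 0,\beta n^2\rrbracket$ is within $o_\beta(1)+\tfrac{3\varepsilon}{4}$ of the joint law of the two limiting spine processes on that window; choosing $\beta$ small enough — depending on $c$ and $\varepsilon$, not on $n$ — makes this $<\varepsilon$ for all $n$ large. I expect the genuinely delicate point to be the uniform local limit estimate in the previous paragraph: the analogous result \cite[Proposition~3.3]{dieuleveut} only compares the processes on a \emph{fixed} time window, on which the bridge conditioning and the size-biasing are asymptotically invisible for free, whereas here the window $\llbracket 0,\beta n^2\rrbracket$ is a fixed macroscopic fraction of the bridge length $a\asymp n^2$, so one must genuinely quantify how little the bridge's behaviour on $\llbracket 0,\beta n^2\rrbracket$ is affected by its far-away endpoint — this is what forces $\beta$ to be small and is the technical heart of the argument.
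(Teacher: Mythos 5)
Your proposal takes a genuinely different route from the paper, and it has a real gap.

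\textbf{What the paper does.} The paper's proof avoids conditioning on the arm lengths entirely. It starts from the density formula \eqref{Densité} of \cite[Lemma~3.3]{dieuleveut}, which expresses the Radon--Nikodym derivative of the restricted arm processes with respect to the free spine processes as an explicit sum $\sum_{k\geq1}\mathcal{H}_{x,y,k}(n)$ built from the Green functions $H_x(k)$, $H_x^*(k)$ and the prefactor $G_k(n)$. These are sums over the arm length $a$ — effectively, the integration over $a$ that you propose to postpone is done \emph{before} anything else — and because they have closed forms (after correcting the formula for $H_x^*(k)$ in the $x>k$ regime, which is the content of Lemma~\ref{ERREUR 404}), one can split the $k$-sum into three regimes and show directly, by elementary estimates, that it tends to $1+O(\beta)$ uniformly on the good event $\{\sup_{i\leq\beta n^2}X_i^\infty\vee\widehat X_i^\infty\leq c_\beta n\}$. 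No local limit theorem is needed anywhere in this computation.

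\textbf{What your approach needs, and where it breaks.} Your reduction is structurally sound: conditionally on $(a,b,\ell)$ the two arms are indeed independent $X$-bridges (your unwinding of the weights, using $p(x,y)=\tfrac{w(x)}{3}\tfrac{f(y)}{f(x)}$ and $p(1,0)=0$, is correct), the Markov property gives the bridge density $\P_{X_m}(X_{a-m}=\ell)/\P_0(X_a=\ell)$, and the assembly via sub-additivity of total variation is fine. The gap is the sentence asserting that a local limit theorem for $X$ ``follows from \eqref{convergence Bessel} by standard arguments.'' The functional convergence \eqref{convergence Bessel} is a weak convergence statement; it does not by itself yield a local limit theorem, and certainly not the \emph{uniform} one you need (uniform over $x=O(\sqrt\beta\,n)$, over $a\in[cn^2,Cn^2]$, and over $\ell\in[\kappa_1 n,\kappa_2 n]$, with an explicit $O(\beta)$ error in the ratio). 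You flag this as ``the technical heart of the argument,'' which is exactly right — but a flag is not a proof. To make your route rigorous you would have to establish this local CLT independently, e.g.\ by an explicit computation of one-step transition densities and a characteristic-function or coupling argument exploiting the near-$h$-transform structure of $X$, which is a nontrivial undertaking. The paper sidesteps this entirely by working with the Green functions, whose closed forms substitute for the local CLT.

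A secondary remark: your good-range localisation should also pin down the meeting level $\ell=Y^{(n)}_J$ away from $0$ (which is what Lemma~\ref{ancetre commun} provides in the finite-$n$ case, and what the Bessel-leaf argument in Proposition~\ref{control 1} provides here), not only $a,b\asymp n^2$; you gesture at this but it would need to be made precise, since the bridge density degenerates as $\ell\to 0$.
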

The proof requires many calculations, and is given in the appendix. With these three lemmas, we can prove Proposition \ref{CouplingTrees2}.

\begin{proof}[Proof of Proposition \ref{CouplingTrees2}]
    Fix $\varepsilon>0$ . By Proposition \ref{couplage processus 2}, we can choose $\alpha >0$ such that for $n$ large enough, there exists a coupling of $(X^{(n)}_{\infty,k},\widehat{X}^{(n)}_{\infty,k})_{0\leq k\leq\alpha n^2}$ and $(X_k^\infty,\widehat{X}_k^\infty)_{0\leq k\leq\alpha n^2}$ for which the processes are equal with probability at least $1-\varepsilon/3$. Then, we can choose $\beta>0$ small enough so that 
    \[\P(S_{\beta n}<\alpha n^2,\widehat{S}_{\beta n}<\alpha n^2)>1-\varepsilon/3\] 
    and 
    \[\P\left(\beta n^2<I_\infty^{(n)}<\left(a_\infty^{(n)}+b_\infty^{(n)}+1-\beta n^2\right)\right)>1-\varepsilon/3.\]
    Note that the existence of such $\beta$ is given by Propositions \ref{control 1} and \ref{convergence temps d'atteinte}. Then, consider the event $\mathcal{J}_{\beta n}$ which is the intersection of the events 
    \begin{itemize}[label=\textbullet]
        \item $\left\{(X^{(n)}_{\infty,k},\widehat{X}^{(n)}_{\infty,k})_{0\leq k\leq\alpha n^2}=(X_k^\infty,\widehat{X}_k^\infty)_{0\leq k\leq\alpha n^2}\right\}$
        \item $\left\{S_{\beta n}<\alpha n^2,\widehat{S}_{\beta n}<\alpha n^2\right\}$
        \item $\left\{\beta n^2<I_\infty^{(n)}<\left(a_\infty^{(n)}+b_\infty^{(n)}+1-\beta n^2\right)\right\}$.
    \end{itemize}
    For $n$ large enough and with our choice of $\beta$, we have $\P(\mathcal{J}_{\beta n})>1-\varepsilon$. Moreover, on this event, by comparing the construction of 
$\Theta_\infty^{(n)}$ with the construction of $\overline{\Theta}_\infty^{(1)}$ and $\overline{\Theta}_\infty^{(2)}$, we see that we can couple the first $\alpha n^2$ trees of the forests $\left(F^{(\infty,n)}_1,F^{(\infty,n)}_2,\widehat{F}^{(\infty,n)}_1,\widehat{F}^{(\infty,n)}_2\right)$ and $\left(F^{(\infty)}_1,F^{(\infty)}_2,\widehat{F}^{(\infty)}_1,\widehat{F}^{(\infty)}_2\right)$ so that they are equal. This gives a coupling between $\left(A_{\beta n}(\Theta_\infty^{(n)}),\widehat{A}_{\beta n}(\Theta_\infty^{(n)})\right)$ and $\left(A_{\beta n}(\overline{\Theta}^{(1)}_\infty),\widehat{A}_{\beta n}(\overline{\Theta}^{(2)}_\infty)\right)$ where the processes encoding these sets are equal with probability at least $1-\varepsilon$, which concludes the proof. 
\end{proof}

\section{Localization lemma}\label{Localisation}

In this section, we prove that vertices that are close to $\tau_n(n)$ in $Q_n$ (respectively $\tau_\infty(n)$ in $Q_\infty$ and $\overline{\rho}_\infty$ in $\overline{Q}_\infty$) must be close to $\tau_n(n)$ or $\widehat{\tau}_n(n-1)$ in $\Theta_n$ (respectively $\tau_\infty(n)$ or $\widehat{\tau}_\infty(n-1)$ in $Q_\infty$, and $\overline{\rho}_\infty^{(1)}$ or $\overline{\rho}_\infty^{(2)}$ in $\overline{Q}_\infty$), meaning that the balls of radius $r$ around $\tau_n(n)$ in $Q_n$ is contained in the union of the balls of radius $h(r)$ around $\tau_n(n)$ and $\widehat{\tau}_n(n-1)$ in $\Theta_n$. These results were already known in \cite{dieuleveut} when $r$ was fixed. However, for our purpose, we need a strengthened version, by allowing $r$ to depend on $n$. More precisely, we will prove the following result.

\begin{proposition}\label{localisation 2}
    For every $\varepsilon,\beta>0$, there exists $\alpha>0$ such that, for every $n$ large enough
    \begin{itemize}[label=\textbullet]
     \item\centering $\P(B_{Q_n}(\gamma_n(n),\alpha n)\subset A_{\beta n}(\Theta_n)\cup\widehat{A}_{\beta n}(\Theta_n))>1-\varepsilon.$
        \item\centering $\P(B_{Q_\infty}(\gamma_\infty(n),\alpha n)\subset A_{\beta n}(\Theta_\infty^{(n)})\cup\widehat{A}_{\beta n}(\Theta_\infty^{(n)}))>1-\varepsilon.$
        \item\centering $\P(B_{\overline{Q}_\infty}(\overline{\gamma}_\infty(0),\alpha n)\subset A_{\beta n}(\overline{\Theta}^{(1)}_\infty)\cup\widehat{A}_{\beta n}(\overline{\Theta}^{(2)}_\infty))>1-\varepsilon.$
    \end{itemize}
\end{proposition}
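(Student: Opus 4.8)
The plan is to establish the three inclusions by a common scheme: reduce each to a single \emph{quantitative confinement estimate} comparing balls of the quadrangulation with balls of the encoding tree, and then feed in the good events supplied by the lemmas of Section~\ref{Coupling trees}.

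\textbf{Reduction to a tree estimate.} Fix $\beta>0$. In each of the three settings the distinguished point of the map is the image under the CVS bijection of a label-$0$ vertex $\tau$ of the relevant encoding tree ($\tau_n(n)$ in $\Theta_n$; the root of $\Theta_\infty^{(n)}$, corresponding to $\gamma_\infty(n)$ by Proposition~\ref{construction alternative}; or $\overline\gamma_\infty(0)$, seen as a root of $\overline\Theta^{(1)}_\infty$ and of $\overline\Theta^{(2)}_\infty$), and the second reference point of label $1$ ($\widehat\tau_n(n-1)$, $\widehat\tau_\infty(n-1)$, or $\overline\gamma_\infty(0)$ again via the other half-plane) is a map-neighbour of $\tau$. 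By construction $A_{\beta n}$ is the subtree of the encoding tree cut off at the last ancestor (or last spine vertex) of $\tau$ carrying label $\lfloor\beta n\rfloor$, and likewise $\widehat A_{\beta n}$ for $\widehat\tau$. Since labels change by at most one along an ancestral line, that label-$\lfloor\beta n\rfloor$ vertex lies at tree-graph-distance at least $\lfloor\beta n\rfloor$ from $\tau$, so every vertex of the tree outside $A_{\beta n}$ is at tree-distance $\ge\lfloor\beta n\rfloor$ from $\tau$, and symmetrically for $\widehat\tau$. Hence it suffices to prove that, for a constant $C$ not depending on $n$, with probability at least $1-\varepsilon$ one has $B_{Q}(\tau,\alpha n)\subseteq B_{\Theta}(\tau,C\alpha n)\cup B_{\Theta}(\widehat\tau,C\alpha n)$, where $B_{\Theta}$ denotes graph balls in the (appropriate) tree; one then takes $\alpha<\beta/C$.

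\textbf{The quantitative confinement (the main obstacle).} For \emph{fixed} radius, the inclusion $B_{Q}(\tau,r)\subseteq B_{\Theta}(\tau,h(r))\cup B_{\Theta}(\widehat\tau,h(r))$ is exactly the key lemma of \cite{dieuleveut}; the new work is to rerun that argument tracking all constants so that it applies with $r=\alpha n\to\infty$ and $h(r)=O(r)$. The mechanism is that of \cite{dieuleveut}: if $d_Q(\tau,u)\le\alpha n$ then $|\ell_u|\le\alpha n$ by \eqref{Borne}; the chain of successors issued from (a corner of) $u$ is a geodesic of the map (ending at $v_*$ in the finite case, a geodesic ray of decreasing label in the infinite ones), and likewise for $\tau$; the discrete cactus-type lower bound forces these two geodesics to coalesce within $O(\alpha n)$ steps, and then, using the description of the edges of $Q$ as successor arcs together with the fact that the corners of $A_{\beta n}$ (resp.\ $\widehat A_{\beta n}$) form a contiguous arc of the contour guarded at both ends by corners of the label-$\lfloor\beta n\rfloor$ attachment vertex, one shows that $u$ lies within tree-distance $C\alpha n$ of $\{\tau,\widehat\tau\}$. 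I expect this to be the delicate point: a single map-edge can join tree vertices that are far apart in the tree, so the confinement cannot be proved edge by edge; one must quantify how far a length-$\alpha n$ map-path can drift in the encoding tree, and this is where the explicit scaling-limit estimates of the paper (Theorem~\ref{Convergergence sphere}, Lemma~\ref{convergence temps d'atteinte}) are used, and what makes the growing-radius statement genuinely stronger than the fixed-radius one of \cite{dieuleveut}.

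\textbf{The good events.} It remains to guarantee, with probability $\ge 1-\varepsilon$, the structural picture used in the first step. For $\Theta_n$ one works on $\{Y^{(n)}_J>\beta n\}$, whose probability tends to $1$ for a suitable choice of $\beta$ by Lemma~\ref{ancetre commun}, and on which $A_{\beta n}(\Theta_n)$ and $\widehat A_{\beta n}(\Theta_n)$ are disjoint and are described by the three-arm decomposition (Remark~\ref{Remark1}). For $\Theta_\infty^{(n)}$ one works on the analogous event controlling $I_\infty^{(n)}$ and the last hitting times $S_{\beta n},\widehat S_{\beta n}$, of probability close to $1$ by Proposition~\ref{control 1} and Lemma~\ref{convergence temps d'atteinte}. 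For $(\overline\Theta^{(1)}_\infty,\overline\Theta^{(2)}_\infty)$ nothing extra is needed, the two half-plane pieces being disjoint by construction and the confinement taking place separately in each. On the intersection of these events with the event of the second step, the inclusions of Proposition~\ref{localisation 2} hold; since $\varepsilon>0$ and then $\beta>0$ were arbitrary, this completes the proof.
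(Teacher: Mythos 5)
Your reduction in the first step is where the argument breaks. You claim it suffices to show that, with high probability, $B_{Q}(\tau,\alpha n)\subseteq B_{\Theta}(\tau,C\alpha n)\cup B_{\Theta}(\widehat\tau,C\alpha n)$ for a universal constant $C$, but this inclusion cannot hold for large $n$, for simple volume reasons. In the scaling used throughout the paper (labels of order $n$, contour of order $n^4$, heights of order $n^2$), the map ball $B_{Q_n}(\gamma_n(n),\alpha n)$ has $\Theta(n^4)$ vertices (the Brownian sphere has dimension $4$), whereas a tree ball of radius $C\alpha n$ in $\Theta_n$ is a ball of radius $O(n)$ in a tree of diameter $\Theta(n^2)$ and hence contains only $\Theta(n^2)$ vertices. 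The sets $A_{\beta n}(\Theta_n)$ and $\widehat A_{\beta n}(\Theta_n)$ appearing in the proposition are \emph{not} tree balls: they consist of a path of length $\Theta(n^2)$ together with \emph{every} subtree hanging from it, so they have $\Theta(n^4)$ vertices — which is exactly why the inclusion of the map ball into them is plausible. Concretely, a vertex $u$ in a subtree hanging from the parent of $\tau$ can have $d_{\Theta}(u,\tau)$ arbitrarily large while $d_{Q}(u,\tau)=1$ (e.g.\ $\ell_u=1$ and $\tau$ is the successor of a corner of $u$ in the CVS bijection), so no deterministic linear function $h$, and in fact no sublinear-in-volume confinement, can work. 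The confinement you propose is therefore strictly stronger than what is needed, and false.

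The paper's actual proof avoids tree balls entirely. It realizes $A_{\beta n}(\overline\Theta^{(1)}_\infty)$ as encoding a submap $\tilde Q^{(1)}_{\lfloor\beta n\rfloor}$ of $\overline Q_\infty$ distributed as a quadrangulation with two geodesic boundaries (Proposition~\ref{sous ensemble}), identifies those two boundary geodesics $\gamma^{(\ell)}_{\lfloor\beta n\rfloor},\gamma^{(r)}_{\lfloor\beta n\rfloor}$ as the topological boundary of $\tilde Q^{(1)}_{\lfloor\beta n\rfloor}$ inside $\overline Q_\infty$ (Proposition~\ref{Bord topologique}), and invokes the scaling-limit estimate for the Brownian slice (Lemma~\ref{distance bord}) to show the two boundaries stay at distance at least $\delta n$ from each other away from the endpoints, with high probability. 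Then any geodesic from a vertex outside $A_{\beta n}\cup\widehat A_{\beta n}$ to $\overline\gamma_\infty(0)$ must cross that boundary, and a label argument (using Lemmas~\ref{Celia}, \ref{Celia2}, \ref{Control minimum}, Corollary~\ref{Mininmum ailleurs}) shows the crossing point is at map distance at least $\alpha n$ from $\overline\gamma_\infty(0)$. Your second step — ``rerun Dieuleveut's argument tracking all constants,'' then ``quantify how far a length-$\alpha n$ map-path can drift in the tree'' via Theorem~\ref{Convergergence sphere} and Lemma~\ref{convergence temps d'atteinte} — is not a proof and, more to the point, is aimed at a target (linear $h$) that cannot be hit. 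Replacing the tree-ball confinement with the geodesic-boundary argument, or an estimate of equivalent strength, is precisely the content of Sections~\ref{estimates}--\ref{geodesic boundaries} of the paper, none of which appears in your sketch.
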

\begin{remark}
    Similar results concerning balls around the root of $Q_\infty$ were obtained in \cite{brownianplane}, as well as an analogous result in the continuum (see \cite[Proposition 4.1]{Bigeodesicbrownianplane}). 
\end{remark}
The proof consists of showing the result for $\overline{Q}_\infty$, and then to transfer it to $Q_n$ and $Q_\infty$ using the coupling results obtained in Section \ref{Coupling trees}. Indeed, using Propositions \ref{CouplingTrees1} and \ref{CouplingTrees2}, we will often couples the different trees involved and work on events such as $\{A_{\beta n}(\Theta_n)=A_{\beta n}(\Theta_\infty^{(n)})=A_{\beta n}(\overline{\Theta}_\infty^{(1)}\}$. The proof of Proposition \ref{localisation 2} is based on the following observation. The set $A_{\beta n}(\overline{\Theta}^{(1)}_\infty)$ is encoded by the Markov chain $X^{(\infty)}$ up to its last hitting time of $\lfloor\beta n\rfloor$, together with two forests $(F^\infty_1,F^\infty_2)$ of Bienaymé-Galton-Watson trees whose laws are as described in Section \ref{inf rerooted}. Therefore, according to Proposition \ref{decompo1}, the tree $A_{\beta n}(\overline{\Theta}^{(1)}_\infty)$ has the same law as the tree $\Theta_{\lfloor\beta n\rfloor}$. Moreover, this tree encodes an instance of a quadrangulation with geodesic boundaries \textit{inside} $\overline{Q}_\infty$, which will be key in the proof.

\subsection{Some estimates on the labels}\label{estimates}

First, we give a few results about the sets $A_{\beta n}(\Theta_n),A_{\beta n}(\Theta_\infty^{(n)})$ and $A_{\beta n}(\overline{\Theta}^{(1)}_\infty)$ introduced in Section \ref{Coupling trees}. These results will allow us to control the labels in each of these sets. Our first result shows that with large probability, the set $A_{\beta n}(\Theta_\infty^{(n)})$ (respectively $A_{\beta n}(\Theta_n)$) contains a macroscopic portion of the set $(\tau_\infty(k))_{k\geq 0}$ (respectively $(\tau_n(k))_{0\leq k \leq -\ell_*}$).

\begin{lemme}\label{Celia}
    For every $\varepsilon,\beta>0$, there exists $0<\alpha<1$ such that, for every $n$ large enough, 
    \[\P(\tau_\infty(\lfloor(1-\alpha)n\rfloor)\in A_{\beta n}(\Theta_\infty^{(n)}))>1-\varepsilon\quad\text{ and }\quad\P(\tau_\infty(\lfloor(1+\alpha)n\rfloor)\in A_{\beta n}(\Theta_\infty^{(n)}))>1-\varepsilon.\]
    Furthermore, we also have
    \[\P(\tau_n(\lfloor(1-\alpha)n\rfloor)\in A_{\beta n}(\Theta_n))>1-\varepsilon\quad\text{ and }\quad\P(\tau_n(\lfloor(1+\alpha)n\rfloor)\in A_{\beta n}(\Theta_n))>1-\varepsilon.\]
\end{lemme}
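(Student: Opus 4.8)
The statement is about showing that the "spine distance" at which the tree $A_{\beta n}$ stops cutting off the geodesic $(\tau_\infty(k))_k$ is macroscopic. The key observation is the following translation: recall that $A_{\beta n}(\Theta_\infty^{(n)})$ (resp.\ $A_{\beta n}(\Theta_n)$) is, by construction, encoded by the Markov chain $X^{(\infty)}$ (resp.\ $X^{(n)}$) run until its last hitting time of $\lfloor\beta n\rfloor$, together with the subtrees branching off it. The corner $\tau_\infty(j)$ of $\Theta_\infty^{(n)}$ is the first corner (clockwise, after the root) with label $n-j$; equivalently it lives on the branch corresponding to $X^{(\infty)}$ at the moment this branch has label $n-j$. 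Since the branch of $A_{\beta n}$ is exactly the portion of $X^{(\infty)}$ up to its last visit of $\lfloor\beta n\rfloor$, the event $\{\tau_\infty(j)\in A_{\beta n}(\Theta_\infty^{(n)})\}$ essentially asks that $X^{(\infty)}$ still takes the value $n-j$ \emph{before} it last leaves $\lfloor\beta n\rfloor$, which — since the walk has value $0$ at time $0$ and diverges to $+\infty$ — is controlled by the hitting/last-exit times of the walk. Concretely, writing $j=\lfloor(1\pm\alpha)n\rfloor$ so that $n-j=\mp\lfloor\alpha n\rfloor$ or a small label, one reduces to a statement about the one-sided chain $X^{(\infty)}$ near small labels.

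\textbf{Step 1: reduce to a statement about the chain $X^{(\infty)}$.} Using Proposition~\ref{construction alternative} (and its analogue Proposition~\ref{decompo1} for $\Theta_n$), I would identify precisely, in terms of the encoding paths $X_\infty^{(n)}$ and $Y_\infty^{(n)}$ (resp.\ $X^{(n)}$ and $Y^{(n)}$), when the corner $\tau_\infty(j)$ belongs to $A_{\beta n}$. The point is that $\tau_\infty(j)$ sits on the left spine $X$ as long as its label $n-j$ is attained on that spine, and the truncation $A_{\beta n}$ keeps the spine up to its last passage at level $\lfloor\beta n\rfloor$; so $\tau_\infty(j)\in A_{\beta n}$ holds on the event $\{S_{\lfloor\beta n\rfloor}>$ (first time $X^{(\infty)}$ hits $n-j$)$\}$ together with the event that the relevant forests are finite, which I can arrange to have high probability by the coupling of Section~\ref{Coupling trees}. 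For $j=\lfloor(1+\alpha)n\rfloor$ one needs level $-\lfloor\alpha n\rfloor<0$, which does not occur on the positive spine; in that regime $\tau_\infty(j)$ lands on the \emph{other} branch (the one encoded by $Y_\infty^{(n)}$ going down from level $n$ to the common ancestor), so the relevant statement becomes a last-exit statement for the $Y$-chain, handled identically.

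\textbf{Step 2: scaling limit input.} I would invoke Lemma~\ref{convergence temps d'atteinte}: $S_{\lfloor nt\rfloor}/n^2 \to \frac32 \mathcal S_t$ in distribution, where $\mathcal S_t=\sup\{s\ge 0:Z_s=t\}$. Combined with the convergence $(X_{\lfloor n^2 t\rfloor}/n)\to Z_{2t/3}$ (a Bessel-$7$ change of time, equation~\eqref{convergence Bessel}), and using that for a $7$-dimensional Bessel process the first hitting time of level $t$ and the last exit time of level $\beta$ are jointly a.s.\ finite with the first strictly smaller than the last when $\beta$ is small and $t$ is close to a prescribed level, I get: for $\beta>0$ fixed and $\delta>0$, there is $\alpha\in(0,1)$ such that with probability $\ge 1-\delta$, the chain $X^{(\infty)}$ attains every label in $[\mp\lfloor\alpha n\rfloor,\; \text{small}]$ strictly before $S_{\lfloor\beta n\rfloor}$, for $n$ large. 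Transferring back through Step~1 yields the four inequalities (the $\Theta_n$ statements follow by the exact same argument, using Proposition~\ref{decompo1} and the fact that the spine of $\Theta_n$ rerooted at $\tau_n(n)$ is again driven by the $X$-chain stopped at its last visit of the relevant level, together with Lemma~\ref{ancetre commun} to ensure the two arms are disjoint so that the combinatorial picture of Remark~\ref{Remark1} applies).

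\textbf{Main obstacle.} The delicate point is Step~1: making the combinatorial identification precise, i.e.\ showing that ``$\tau_\infty(j)$ lies on the truncated spine of $A_{\beta n}$'' is \emph{equivalent} (up to events of small probability) to an explicit last-exit event for $X^{(\infty)}$ or $Y_\infty^{(n)}$. One must be careful that $\tau_\infty(j)$ could a priori be a vertex of one of the grafted subtrees rather than of the spine itself, but because the grafted trees on the relevant side have nonnegative labels shifted up (they are $\rho^+$-trees on the right, and on the left the first corner with a given label is by construction on the spine), the first corner with label $n-j$ after the root is genuinely a spine vertex, so this issue does not arise. Once this bookkeeping is done, everything else is a routine application of Lemma~\ref{convergence temps d'atteinte}, equation~\eqref{convergence Bessel}, and basic properties of the Bessel-$7$ process; the argument for $\Theta_n$ is the finite-horizon analogue and presents no additional difficulty.
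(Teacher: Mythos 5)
Your approach is genuinely different from the paper's, but the combinatorial identification on which Step~1 rests does not hold. The paper sidesteps this identification entirely: it uses the deterministic inclusion $B_{\Theta_\infty^{(n)}}(\tau_\infty(n),S_{\lfloor\beta n\rfloor})\subset A_{\beta n}(\Theta_\infty^{(n)})$, lower-bounds the radius $S_{\lfloor\beta n\rfloor}$ in probability via Lemma~\ref{convergence temps d'atteinte}, and upper-bounds the tree distance $d_{\Theta_\infty^{(n)}}\big(\tau_\infty(n),\tau_\infty(\lfloor(1\pm\alpha)n\rfloor)\big)$ using the scaling limit~\eqref{Convergence Brownian} of the \emph{contour and label} processes of $\Theta_\infty$ (and Theorem~\ref{Convergergence sphere} for $\Theta_n$) together with the a.s.\ continuity of $s\mapsto T_s$ at $s=1$. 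This handles $\lfloor(1-\alpha)n\rfloor$ and $\lfloor(1+\alpha)n\rfloor$ uniformly, with no case distinction and no need to locate $\tau_\infty(j)$ combinatorially.

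The obstacle you flag is real and your resolution of it is incorrect. First, the first corner with label $n-j$ after the root need not be a spine vertex: in the decomposition, the subtrees grafted on the side swept first by the clockwise contour are \emph{unconditioned} Boltzmann trees (the positivity-conditioned $\rho^+$-trees are on the opposite side), so $\tau_\infty(j)$ can perfectly well sit inside one of those grafted trees, or even inside the unique infinite subtree; the assertion that ``on the left the first corner with a given label is by construction on the spine'' is false. Second, and more seriously, for $j=\lfloor(1+\alpha)n\rfloor$ the target label $n-j\approx-\alpha n$ is negative, while \emph{all three} arms of $\Theta_\infty^{(n)}$ (the $X$, $\widehat{X}$ and $Y$ paths) carry nonnegative labels; so $\tau_\infty(j)$ is never on the $Y$-branch, and the event $\{\tau_\infty(j)\in A_{\beta n}\}$ is not a hitting/last-exit event for any of your encoding chains. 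Treating this case as ``a last-exit statement for the $Y$-chain, handled identically'' cannot work: one would have to locate $\tau_\infty(j)$ among the grafted subtrees and do the kind of explicit computation carried out in Lemma~\ref{Celia2} for $\overline{\Theta}^{(1)}_\infty$ (where no snake scaling limit is known). Here, though, the Brownian-snake scaling limit of $\Theta_\infty$ and $\Theta_n$ is available, and the paper's ball-inclusion argument is the cleaner route.
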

\begin{proof}
    We only give the proof for the first inequality, since the other ones can be proved in a very similar manner. By definition, we have $B_{\Theta_\infty^{(n)}}(\tau_\infty(n),S_{\lfloor\beta n\rfloor})\subset A_{\beta n}(\Theta_\infty^{(n)})$. Furthermore, by Lemma \ref{convergence temps d'atteinte}, there exists $c>0$ such that $\P(S_{\lfloor\beta n\rfloor}>cn^2)>1-\varepsilon/2$. Hence, it is enough to prove that 
    \[\P(d_{\Theta_\infty^{(n)}}(\tau_\infty(n),\tau_\infty({\lfloor(1-\alpha)n\rfloor}))<cn^2)>1-\varepsilon/2\] for some $0<\alpha<1$ and $n$ large enough. However, the convergence \eqref{Convergence Brownian} implies that 
    \[\P(d_{\Theta_\infty^{(n)}}(\tau_\infty(n),\tau_\infty({\lfloor(1-\alpha)n\rfloor}))<cn^2)\xrightarrow[n\rightarrow\infty]{}\P(d_\cT(T_1,T_{1-\alpha})<c).\] The result follows by taking $\alpha$ small enough, using the almost sure continuity of the process $(T_s)_{s\geq0}$ at $s=1$. 
\end{proof}
The following result is similar to Lemma \ref{Celia}, but deals with the tree $\overline{\Theta}^{(1)}_\infty$. However, contrary to the previous proofs, we cannot use the fact that we know the scaling limit of this tree (even though we expect it to be the tree $\overline{\cT}_\infty$, used in \cite{Bigeodesicbrownianplane} to construct the bigeodesic Brownian plane). Consequently, the proof is a bit more technical and relies on explicit computations.
\begin{lemme}\label{Celia2}
    For every $\varepsilon,\beta>0$, there exists $\alpha>0$ such that for every $n$ large enough, 
    \begin{equation*}
        \P\bigg(\overline{\tau}_\infty({\lfloor \alpha n \rfloor})\in A_{\beta n}(\overline{\Theta}^{(1)}_\infty)\bigg)>1-\varepsilon\quad\text{and}\quad \P\bigg(\overline{\tau}_\infty({\lfloor -\alpha n \rfloor})\in A_{\beta n}(\overline{\Theta}^{(1)}_\infty)\bigg)>1-\varepsilon.
    \end{equation*}
\end{lemme}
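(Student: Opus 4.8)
The plan is to mimic the structure of the proof of Lemma~\ref{Celia}, but since we do not have a scaling limit for $\overline{\Theta}^{(1)}_\infty$ at our disposal, we must replace the soft Brownian-snake argument by explicit estimates on the encoding chain $X^{(\infty)}$ and the attached forests. Recall from Section~\ref{inf rerooted} that $\overline{\Theta}^{(1)}_\infty$ is encoded by the spine chain $X^{(\infty)}$ (with transition probabilities \eqref{Transition}) together with the forests $F^{(\infty)}_1,F^{(\infty)}_2$ of laws $\rho^+$ and $\rho$. By definition $A_{\beta n}(\overline{\Theta}^{(1)}_\infty)$ consists of the spine up to its last hitting time of $\lfloor\beta n\rfloor$ and the subtrees branching off it. The key geometric observation, which I would isolate first, is that the corner $\overline{\tau}_\infty(\lfloor\alpha n\rfloor)$ lies in $A_{\beta n}(\overline{\Theta}^{(1)}_\infty)$ as soon as it is visited by the contour \emph{before} the spine reaches label $\lfloor\beta n\rfloor$ for the last time — equivalently, as soon as the (counter-clockwise) contour has explored fewer than $\widehat{S}^{(\infty)}_{\lfloor\beta n\rfloor}$ corners by the time it first sees label $\lfloor\alpha n\rfloor$. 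So the proof reduces to a comparison between two contour times.

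The next step is to control $\widehat{S}^{(\infty)}_{\lfloor\beta n\rfloor}$ from below. By the Lamperti-type convergence \eqref{convergence Bessel}, $X^{(\infty)}_{\lfloor nt\rfloor}/\sqrt n$ converges to a (time-changed) Bessel-$7$ process, so $X^{(\infty)}$ reaches level $\lfloor\beta n\rfloor$ at a time of order $n^2$; combined with Lemma~\ref{convergence temps d'atteinte} applied to $\widehat X$, we get $c>0$ with $\P(\widehat S^{(\infty)}_{\lfloor\beta n\rfloor}>cn^2)>1-\varepsilon/2$ for $n$ large. Then I would bound from above the number of contour steps needed to reach the first corner of label $\lfloor\alpha n\rfloor$ on the counter-clockwise side: this number is $d_{\overline{\Theta}^{(1)}_\infty}$-comparable to $\sum$ of the sizes of the subtrees of $F^{(\infty)}_1$ hanging on the first $\widehat S^{(\infty)}_{\lfloor\alpha n\rfloor}$ spine vertices, plus the spine length itself. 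Since each subtree $F^{(\infty)}_{1,i}$ has law $\rho^+_{X^{(\infty)}_i}$, its expected size is $O((X^{(\infty)}_i)^2)=O(n)$ uniformly while $X^{(\infty)}_i\le\lfloor\alpha n\rfloor$ (using \eqref{Condition weight} and standard moment estimates for $\rho_{(x)}$-trees, e.g. that a $\rho_{(x)}$-tree has size with mean $O(x^2)$; this is where a short explicit computation enters). Hence the expected contour time to reach label $\lfloor\alpha n\rfloor$ is $O(\alpha n\cdot n)=O(\alpha n^2)$, so by Markov's inequality it is $<\tfrac{c}{2}n^2$ with probability $>1-\varepsilon/2$ once $\alpha$ is chosen small enough. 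Intersecting the two events gives the claim; the symmetric statement for $\overline{\tau}_\infty(\lfloor-\alpha n\rfloor)$ (now using the clockwise contour and the forest $F^{(\infty)}_2$, together with the fact that on that side labels are allowed to be non-positive and the argument is even easier) is handled identically, \emph{mutatis mutandis}.

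The main obstacle I anticipate is making rigorous the claim that the counter-clockwise contour reaches label $\lfloor\alpha n\rfloor$ within $\widehat S^{(\infty)}_{\lfloor\alpha n\rfloor}$ spine vertices — i.e. that the \emph{first} corner of label $\lfloor\alpha n\rfloor$ on that side sits on a subtree grafted before the spine itself hits level $\lfloor\alpha n\rfloor$. A clean way around this is to note that $\overline{\tau}_\infty(\lfloor\alpha n\rfloor)$ is, by construction, the first corner of label $\lfloor\alpha n\rfloor$, hence it belongs either to a subtree hanging on a spine vertex of label $\le\lfloor\alpha n\rfloor$, or to the spine; in the former case that spine vertex has index $\le\widehat S^{(\infty)}_{\lfloor\alpha n\rfloor}\le\widehat S^{(\infty)}_{\lfloor\beta n\rfloor}$ (since $\alpha<\beta$), so it lies in $A_{\beta n}(\overline{\Theta}^{(1)}_\infty)$, and the whole subtree it carries does too; in the latter case the conclusion is immediate. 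Thus the only genuine probabilistic work is the two-sided time estimate above, and I would also double-check the uniformity in $n$ of the moment bound on $|F^{(\infty)}_{1,i}|$, which is the one place where a crude estimate could fail to be summable over the $O(n)$ relevant indices — here one uses that conditionally the subtree sizes are independent with means $O(n)$ each, so the sum concentrates around its mean $O(\alpha n^2)$ and Markov suffices.
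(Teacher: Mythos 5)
Your proposal has two concrete errors that make it unsalvageable in its current form.

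First, you misread the definition of $\overline{\tau}_\infty^{(1)}(k)$ for $k\geq 0$: by the construction in Section~\ref{random maps}, it is the \emph{last} corner with label $k$ after the root in counter-clockwise order, not the first. This reverses the logic of your reduction: the corner $\overline{\tau}_\infty(\lfloor\alpha n\rfloor)$ sits in the \emph{last} subtree $R_i$ that reaches label $\lfloor\alpha n\rfloor$, not the first, so the claim that it must hang on a spine vertex of index at most the last hitting time of $\lfloor\alpha n\rfloor$ is simply false. The real danger is that some $R_i$ with $i$ far beyond $S_{\lfloor\beta n\rfloor}$, rooted at a spine label much larger than $\lfloor\beta n\rfloor$, dips all the way down to $\lfloor\alpha n\rfloor$; ruling this out is exactly the probabilistic content of the lemma, and it is not a pure geometric/combinatorial consequence of the definitions as you suggest.

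Second, the moment estimate your Markov-inequality step relies on is wrong: a $\rho_{(x)}$-tree is a critical $\mathrm{BGW}(\mathrm{Geom}(1/2))$ tree, whose total size has infinite expectation (and the conditioning defining $\rho^+_{(x)}$ does not fix this for $x$ of order $n$, since $w(x)\to 1$). There is therefore no ``expected contour time $O(\alpha n^2)$,'' and the concentration claim at the end of your third paragraph cannot hold in the form you state. The paper's actual proof avoids any contour-time or moment argument entirely: it writes
\[
\P\bigl(j_\delta < S_{\beta n}\bigr)=\E\Bigl[\prod_{i\ge S_{\beta n}}\frac{w(X_i-\lfloor\alpha n\rfloor)}{w(X_i)}\Bigr]
\]
using the explicit formula $\rho_{(x)}(\T^+)=w(x)$ from \eqref{minimum discret}, lower-bounds the product by an exponential of a Riemann sum, and passes to a Bessel integral via \eqref{convergence Bessel} and Lemma~\ref{convergence temps d'atteinte}; the second inequality is treated symmetrically, showing the corresponding integral $\int_0^{(3/2)\cS_\beta}(Z_{2t/3}+\alpha)^{-2}\,dt$ blows up as $\alpha\to 0$. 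Your high-level instinct that the Lamperti/Bessel convergence must enter is correct, but the mechanism is a direct product-to-integral computation, not a first/second moment bound on subtree sizes.
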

\begin{proof}
  We start with the first inequality. Recall that in $\overline{\Theta}^{(1)}$, conditionally on the labels on the spine $(X_n)_{n\geq0}$, the subtrees $(R_n)_{n\geq0}$ are independent labelled trees with laws $(\rho_{(X_n)}^+)_{n\geq0}$. Let $j_\delta$ be the index of the unique subtree such that $\overline{\tau}_\infty({\lfloor \alpha n \rfloor})\in R_{j_\delta}$. Set 
    \[H(i)=\frac{2}{(X_i+1)(X_i+2)}\quad\text{ and }\quad M(i,n)=\frac{2}{(X_i-\lfloor\alpha n\rfloor+1)(X_i-\lfloor\alpha n\rfloor+2)}.\]
Observe that for $i\geq S_{\beta n}$, we have
\[0<H(i)\leq M(i,n)\leq\frac{2}{((\beta-\alpha)n)^2}\]
    We have : 
    \begin{align*}
        \P(j_\delta<S_{\beta n})&=\P(\text{for every $i>S_{\beta n}$, $\inf_{u\in R_i}\ell_u>\lfloor \alpha n \rfloor$})\\
         &=\E\bigg[\prod_{i=S_{\beta n}}^\infty \frac{w(X_i-\lfloor\alpha n\rfloor)}{w(X_i)}\bigg]\\
        &=\E\bigg[\exp\bigg(\sum_{i=S_{\beta n}}^\infty (\log(w(X_i-\lfloor\alpha n\rfloor))-\log(w(X_i)))\bigg)\bigg]\\
        &=\E\bigg[\exp\bigg(\sum_{i=S_{\beta n}}^\infty(\log(1-M(i,n))-\log(1-H(i))\bigg)\bigg]\\
        &\geq \E\bigg[\exp\bigg(-C_n(\beta,\alpha)\sum_{i=S_{\beta n}}^\infty(M(i,n)-H(i))\bigg)\bigg]\\
        &=\E\bigg[\exp\bigg(-C_n(\beta,\alpha)\int_{S_{\beta n}}^\infty \bigg(M(\lfloor t\rfloor,n)-H(\lfloor t\rfloor)\bigg)dt\bigg)\bigg]\\
        &=\E\bigg[\exp\bigg(-n^2C_n(\beta,\alpha)\int_{S_{\beta n}/n^2}^\infty \bigg( M(\lfloor n^2t\rfloor,n)-H(\lfloor n^2t\rfloor)\bigg)dt\bigg)\bigg]
    \end{align*} 
    where $C_n(\beta,\alpha)=\bigg(1-\frac{2}{((\beta-\alpha)n)^2}\bigg)^{-1}$.
    Using the convergence of $\left(\frac{X_{\lfloor nt\rfloor}}{\sqrt{n}}\right)_{t\geq0}$ towards $(Z_{2t/3})_{t\geq0}$ and Lemma \ref{convergence temps d'atteinte}, we have
    \[\liminf_{n\rightarrow\infty}\P(j_\delta<S_{\beta n})\geq\E\bigg[\exp\bigg(-2\int_{\frac{3}{2}\cS_\beta}^\infty\bigg(\frac{1}{(Z_{2t/3}-\alpha)^2}-\frac{1}{(Z_{2t/3})^2}\bigg)dt\bigg)\bigg].\]
    Observe that the integral in the last display is finite, since there exists a constant $C_\alpha$ such that
    \[\frac{1}{(Z_{2t/3}-\alpha)^2}-\frac{1}{(Z_{2t/3})^2}\leq\frac{C_\alpha}{Z_{2t/3}^3},\]
    which is almost surely integrable. 
  Therefore, by dominated convergence and convergence of the last integral, we have 
    \begin{equation*}
        \lim_{\alpha\rightarrow0}\int_{\frac{3}{2}\cS_\beta}^\infty\bigg(\frac{1}{(Z_{2t/3}-\alpha)^2}-\frac{1}{(Z_{2t/3})^2}\bigg)dt=0\quad\text{ a.s. }
    \end{equation*}
    Hence, we can choose $\alpha>0$ small enough such that 
    \[\E\bigg[\exp\bigg(-2\int_{\frac{3}{2}\cS_\beta}^\infty\bigg(\frac{1}{(Z_{2t/3}-\alpha)^2}-\frac{1}{(Z_{2t/3})^2}\bigg)dt\bigg)\bigg]>1-\varepsilon/2.\]
    The first inequality follows for $n$ large enough. \\
    For the second inequality, we set
    \[K(i,n)=\frac{2}{(X_i+\lfloor\alpha n\rfloor+1)(X_i+\lfloor\alpha n\rfloor+2)}.\]
    Similarly to the above, we have 
    \begin{align}\label{equation intégrale}
        \P\bigg(\overline{\tau}_\infty({\lfloor -\alpha n \rfloor})\in A_{\beta n}(\overline{\Theta}^{(1)}_\infty)\bigg)&=1-\P\left(\forall u\in A_{\beta n}(\overline{\Theta}^{(1)}_\infty),\ell_u>\lfloor-\alpha n\rfloor\right)\nonumber\\
        &=1-\E\left[\prod_{i=0}^{S_{\beta n}} w(X_i+\lfloor \alpha n\rfloor)\right]\nonumber\\
        &=1-\E\left[\exp\left(\sum_{i=0}^{S_{\beta n}}\log(1-K(i,n))\right)\right]\nonumber\\
        &=1-\E\left[\exp\left(-\sum_{i=0}^{S_{\beta n}}K(i,n)+o(K(i,n))\right)\right]\nonumber\\
        &\xrightarrow[n\rightarrow\infty]{}1-\E\left[\exp\left(-2\int_0^{\frac{3}{2}\cS_\beta}\frac{1}{(Z_{2t/3}+\alpha)^2}dt\right)\right].
    \end{align}
    By monotone convergence and the scaling property of Bessel processes, the last displayed integral tends to $+\infty$ a.s. when $\alpha$ tends to 0. Therefore, we can choose $\alpha $ small enough so that
    \[\E\left[\exp\left(-2\int_0^{\frac{3}{2}\cS_\beta}\frac{1}{(Z_{2t/3}+\alpha)^2}dt\right)\right]<\varepsilon/2,\]
    and this proves the result for $n$ large enough.
\end{proof}
Finally, we give a control of the minimal label in each set of the form $A_{\beta n}(\Theta_n)$. 
\begin{lemme}\label{Control minimum}
    For every $\varepsilon,\delta>0$, there exists $\beta>0$ such that for every $n$ large enough, 
    \[\P\left(\inf_{u\in A_{\beta n}(\Theta_n)}\ell_u>\lfloor-\delta n\rfloor\right)>1-\varepsilon\quad\text{ and }\quad\P\left(\inf_{u\in\widehat{A}_{\beta n}(\Theta_n)}\ell_u>\lfloor-\delta n\rfloor\right)>1-\varepsilon.\]    
\end{lemme}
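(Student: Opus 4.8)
The plan is to recognise that $A_{\beta n}(\Theta_n)$ is \emph{exactly} a copy of the tree $\Theta_{\lfloor\beta n\rfloor}$, after which the statement reduces to a one-line computation based on the explicit formula \eqref{minimum discret}. First I would use Proposition \ref{decompo1} to identify $\Theta_n$ with the tree $\mathbb{T}_n$ rerooted at $\tau_n(n)$: its spine is the Markov chain $X$ of \eqref{Transition} started from $0$ and stopped at its last visit of $n$, and on the left (resp.\ right) of the $i$-th spine vertex one grafts independent trees of law $\rho_{X_i}$ (resp.\ $\rho_{X_i}^+$). By definition $A_{\beta n}(\Theta_n)$ is the portion of this spine up to its last visit of $\lfloor\beta n\rfloor$ together with the subtrees branching off it. Since $X$ is transient and $\lfloor\beta n\rfloor<n$, once $X$ has left $n$ for good it stays strictly above $\lfloor\beta n\rfloor$, so the last visit of $\lfloor\beta n\rfloor$ by the stopped spine coincides with that of the free chain, and the corresponding spine prefix has exactly the law of the spine of $\mathbb{T}_{\lfloor\beta n\rfloor}$. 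As the grafts are, conditionally on the spine, independent with the laws just described, it follows that $A_{\beta n}(\Theta_n)\overset{(d)}{=}\mathbb{T}_{\lfloor\beta n\rfloor}\overset{(d)}{=}\Theta_{\lfloor\beta n\rfloor}$ --- this is the discrete counterpart of the identity $A_{\beta n}(\overline{\Theta}_\infty^{(1)})\overset{(d)}{=}\Theta_{\lfloor\beta n\rfloor}$ already mentioned above. One point to be careful about is that the right-grafts at the last spine vertex are included on both sides of this equality.

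Next I would compute, for $k\geq 0$ and $j\geq 0$, the probability that a $\rho_{(k)}^-$-tree has all its labels $>-j$. Since $\{\inf\ell\leq -j\}\subset\{\inf\ell\leq 0\}$ for $j\geq 0$, and since shifting labels up by $j$ turns $\{\inf\ell\geq 1-j\}$ into $\{\inf\ell\geq 1\}$ while sending $\rho_{(k)}$ to $\rho_{(k+j)}$, formula \eqref{minimum discret} together with $1-w(y)=\tfrac{2}{(y+1)(y+2)}$ gives
\[
\P\Big(\inf_{u\in\Theta_k}\ell_u\leq -j\Big)=\frac{1-w(k+j)}{1-w(k)}=\frac{(k+1)(k+2)}{(k+j+1)(k+j+2)}.
\]
Applying this with $k=\lfloor\beta n\rfloor$ and $j=-\lfloor-\delta n\rfloor=\lceil\delta n\rceil$, and using the first paragraph, I obtain
\[
\P\Big(\inf_{u\in A_{\beta n}(\Theta_n)}\ell_u\leq\lfloor-\delta n\rfloor\Big)=\frac{(\lfloor\beta n\rfloor+1)(\lfloor\beta n\rfloor+2)}{(\lfloor\beta n\rfloor+\lceil\delta n\rceil+1)(\lfloor\beta n\rfloor+\lceil\delta n\rceil+2)}\leq\frac{(\beta n+2)^2}{(\delta n)^2},
\]
which tends to $\beta^2/\delta^2$ as $n\to\infty$. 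Choosing $\beta=\beta(\varepsilon,\delta)$ small enough that $\beta^2/\delta^2<\varepsilon/2$ then yields the first inequality for all $n$ large enough.

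For $\widehat A_{\beta n}(\Theta_n)$ I would run the symmetric argument: either through the obvious analogue of Proposition \ref{decompo1} obtained by rerooting $\Theta_n$ at $\widehat\tau_n(n-1)$ (whose label is $1$), or through the $\widehat X^{(n)}$-arm of the three-arm decomposition of Proposition \ref{Three arms}, restricted to the event $\{Y^{(n)}_J>\lfloor\beta n\rfloor\}$, which by Lemma \ref{ancetre commun} has probability tending to $1$ as $\beta\to 0$. Either way, on that event $\widehat A_{\beta n}(\Theta_n)$ has the law of $\Theta_{\lfloor\beta n\rfloor-1}$ with all labels shifted up by $1$, so $\inf_{u\in\widehat A_{\beta n}(\Theta_n)}\ell_u$ is distributed as $1+\inf_{u\in\Theta_{\lfloor\beta n\rfloor-1}}\ell_u$, and the same computation (up to adjusting the constants and absorbing $\P(Y^{(n)}_J\leq\lfloor\beta n\rfloor)$ into $\varepsilon$) closes the proof.

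I do not expect a serious obstacle here; the only genuinely delicate step is the exact distributional identification of the first and third paragraphs. If one prefers to avoid it, it suffices to use the coarser structure of $A_{\beta n}(\Theta_n)$ (spine prefix plus $\rho_{X_i}$-grafts on one side and positively labelled grafts on the other): conditionally on the spine one bounds $\P\big(\inf_{u\in A_{\beta n}(\Theta_n)}\ell_u\leq -j\mid\text{spine}\big)\leq\sum_i\big(1-w(X_i+j)\big)\leq(\kappa+1)\tfrac{2}{(j+1)(j+2)}$, where $\kappa$ is the last visit of $\lfloor\beta n\rfloor$ by $X$, and then controls $\kappa$ via Lemma \ref{convergence temps d'atteinte} together with the Bessel scaling $\mathcal{S}_\beta\overset{(d)}{=}\beta^2\mathcal{S}_1$ of the last-passage time, which makes the bound small once $\beta$ is small; this variant needs only the spine-plus-grafts description of $A_{\beta n}(\Theta_n)$.
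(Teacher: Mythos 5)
Your proof is correct, but it follows a genuinely different — and more elementary — route than the paper's. The paper first proves the estimate for $\overline{\Theta}_\infty^{(1)}$, using the Bessel-process asymptotics already developed for Lemma \ref{Celia2} (the convergence \eqref{equation intégrale} followed by a dominated-convergence argument to pick $\beta$), and then transfers the bound to $\Theta_n$ through the coupling of Proposition \ref{CouplingTrees1}. You instead observe that, under the spine decomposition of Proposition \ref{decompo1}, $A_{\beta n}(\Theta_n)$ consists of the chain $X$ stopped at its last visit of $\lfloor\beta n\rfloor$ together with its grafts, hence has the law of $\mathbb{T}_{\lfloor\beta n\rfloor}$, so that $\inf_{u\in A_{\beta n}(\Theta_n)}\ell_u$ and $\inf_{u\in\Theta_{\lfloor\beta n\rfloor}}\ell_u$ have the same distribution; formula \eqref{minimum discret} then yields a closed-form expression and the estimate drops out with no scaling-limit analysis at all. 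This is the very computation the paper performs for the full tree $\Theta_n$ in Corollary \ref{Mininmum ailleurs}, and your distributional identity is the $\Theta_n$-analogue of the identity $A_{\beta n}(\overline{\Theta}^{(1)}_\infty)\overset{(d)}{=}\Theta_{\lfloor\beta n\rfloor}$ that the paper itself records just after Proposition \ref{localisation 2} and uses in Proposition \ref{sous ensemble}, but curiously does not apply here. One caveat for the second inequality: reading $\widehat A_{\beta n}(\Theta_n)$ off the $\widehat X$-arm of Proposition \ref{Three arms} really requires the entire $Y$-arm to stay above $\lfloor\beta n\rfloor$ on $\{0,\dots,J\}$, not merely $Y^{(n)}_J>\lfloor\beta n\rfloor$ --- otherwise the last label-$\lfloor\beta n\rfloor$ visit of $\widehat\eta_n$ lies on the $Y$-arm and $\widehat A_{\beta n}$ picks up the $X$-arm as well; this is the same unstated strengthening implicit in Remark \ref{Remark1} and in the proof of Proposition \ref{CouplingTrees1}, and it is handled, as you suggest, by absorbing the exceptional event into $\varepsilon$. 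Finally, your fallback union-bound variant would additionally require a tightness or uniform-integrability estimate on $S_{\lfloor\beta n\rfloor}/n^{2}$ before taking expectations, so the exact-law argument is the cleaner of the two.
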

\begin{proof}
        We only check the first inequality, the second one being obtained in a similar manner. First, we prove the same inequality for $\overline{\Theta}_\infty^{(1)}$ instead of $\Theta_n$. In this case, the convergence \eqref{equation intégrale} shows that 
        \begin{align*}
            \P\left(\inf_{u\in A_{\beta n}(\overline{\Theta}_\infty^{(1)})}\ell_u>\lfloor-\delta n\rfloor\right)&=\P\bigg(\overline{\tau}_\infty({\lfloor -\delta n \rfloor})\notin A_{\beta n}(\overline{\Theta}^{(1)}_\infty)\bigg)\\
            &\xrightarrow[n\rightarrow\infty]{}\E\left[\exp\left(-2\int_0^{\frac{3}{2}\mathcal{S}_\beta}\frac{1}{(Z_{2t/3}+\delta)^2}dt\right)\right].      
        \end{align*}
       Then, by dominated convergence, we can choose $\beta>0$ small enough so that 
        \begin{equation}\label{Estimée integrale}
        \E\left[\exp\left(-2\int_0^{\frac{3}{2}\mathcal{S}_\beta}\frac{1}{(Z_{2t/3}+\delta)^2}dt\right)\right]>1-\varepsilon/2.
        \end{equation}
     Up to reducing $\beta$, we can use the coupling of $\Theta_n$ and $\overline{\Theta}_\infty^{(1)}$ of Proposition \ref{CouplingTrees1}, which gives 
        \begin{align*}
            \P\left(\inf_{u\in A_{\beta n}(\Theta_n)}\ell_u>\lfloor-\delta n\rfloor\right)&\geq\P\left(\left\{\inf_{u\in A_{\beta n}(\Theta_n)}\ell_u>\lfloor-\delta n\rfloor\right\}\cap\left\{ A_{\beta n}(\Theta_n)=A_{\beta n}(\overline{\Theta}_\infty^{(1)}\right\}\right)\\
            &=\P\left(\left\{\inf_{u\in A_{\beta n}(\overline{\Theta}_\infty^{(1)})}\ell_u>\lfloor-\delta n\rfloor\right\}\cap \left\{A_{\beta n}(\Theta_n)=A_{\beta n}(\overline{\Theta}_\infty^{(1)}\right\}\right)\\
            &\geq 1-\varepsilon,
        \end{align*}
        where the last line follows from \eqref{Estimée integrale}. This concludes the proof
    \end{proof}
    \begin{corollary}\label{Mininmum ailleurs}
        For every $\varepsilon>0$, there exists $\beta>0$ such that for every $n$ large enough, 
        \[\P\left(\inf_{u\in\Theta_n}\ell_u<\inf_{u\in A_{\beta n}(\Theta_n)\cup\widehat{A}_{\beta n}(\Theta_n)}\ell_u\right)>1-\varepsilon.\]
    \end{corollary}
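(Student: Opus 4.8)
The plan is to combine Lemma \ref{Control minimum}, which says that for a suitable $\beta$ the labels inside $A_{\beta n}(\Theta_n)\cup\widehat A_{\beta n}(\Theta_n)$ stay above $-\delta n$, with the fact that the global label minimum of $\Theta_n$ is macroscopically more negative than $-\delta n$, precisely because $\Theta_n$ is conditioned to reach a non-positive label.

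First I would quantify how negative $\inf_{u\in\Theta_n}\ell_u$ typically is. Since $\Theta_n$ has law $\rho_{(n)}^-$, i.e. $\rho_{(n)}$ conditioned on $\{\inf_u\ell_u\le 0\}$, and since a $\rho_{(n)}$-tree reaches a label $\le -k$ precisely when the same tree with all its labels shifted up by $k$ (which has law $\rho_{(n+k)}$) fails to lie in $\mathbb{T}^+$, the formula \eqref{minimum discret} yields, for every integer $k\ge 0$,
\[\P\left(\inf_{u\in\Theta_n}\ell_u\le -k\right)=\frac{1-w(n+k)}{1-w(n)}=\frac{(n+1)(n+2)}{(n+k+1)(n+k+2)}.\]
Taking $k=\lfloor\delta n\rfloor$, the right-hand side tends to $(1+\delta)^{-2}$ as $n\to\infty$, so I can fix $\delta>0$ small enough that this limit exceeds $1-\varepsilon/2$, and then $\P(\inf_{u\in\Theta_n}\ell_u\le-\lfloor\delta n\rfloor)>1-\varepsilon/2$ for all $n$ large.

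Next I would apply Lemma \ref{Control minimum} with the parameters $\varepsilon/4$ and $\delta/2$ to obtain $\beta>0$ such that, for $n$ large, the events $\{\inf_{u\in A_{\beta n}(\Theta_n)}\ell_u>\lfloor-\tfrac{\delta}{2}n\rfloor\}$ and $\{\inf_{u\in\widehat A_{\beta n}(\Theta_n)}\ell_u>\lfloor-\tfrac{\delta}{2}n\rfloor\}$ each have probability at least $1-\varepsilon/4$. Intersecting these two events with the one from the previous paragraph produces an event of probability at least $1-\varepsilon$ on which, using that the labels are integer-valued and that $-\lfloor\delta n\rfloor<-\tfrac{\delta}{2}n<\lfloor-\tfrac{\delta}{2}n\rfloor+1$ once $n>2/\delta$,
\[\inf_{u\in\Theta_n}\ell_u\le-\lfloor\delta n\rfloor<\lfloor-\tfrac{\delta}{2}n\rfloor+1\le\inf_{u\in A_{\beta n}(\Theta_n)\cup\widehat A_{\beta n}(\Theta_n)}\ell_u,\]
which is exactly the event in the statement. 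There is no serious obstacle; the only point to watch is that the threshold $-\lfloor\delta n\rfloor$ controlling the global minimum and the threshold coming from Lemma \ref{Control minimum} be separated by a macroscopic gap, which is why I invoke Lemma \ref{Control minimum} with $\delta/2$ rather than $\delta$.
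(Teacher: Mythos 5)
Your proposal is correct and follows essentially the same route as the paper: compute $\P(\inf_{u\in\Theta_n}\ell_u\le-k)$ via \eqref{minimum discret} and the shift trick, choose $\delta$ so this exceeds $1-\varepsilon/2$ at the scale $k\approx\delta n$, invoke Lemma \ref{Control minimum} for the complementary bound, and intersect. The extra margin you build in by using $\delta/2$ in Lemma \ref{Control minimum} is a harmless over-precaution (the paper just uses the same $\delta$ with strict inequalities on both sides), and your computation of the tail probability is in fact slightly more careful than the paper's, which writes a strict inequality where a non-strict one is what \eqref{minimum discret} actually gives.
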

    \begin{proof}
        First, for every $k>0$, using \eqref{minimum discret}, we have 
        \[\P\left(\inf_{u\in\Theta_n}\ell_u<-k\right)=\frac{(n+1)(n+2)}{(n+k+1)(n+k+2)}.\]
        Therefore, we can choose $\delta>0$ small enough so that for $n$ large enough,
        \[\P\left(\inf_{u\in\Theta_n}\ell_u<-\delta n\right)>1-\varepsilon/2.\]
        Then, by Lemma \ref{Control minimum}, we can choose $\beta>0$ small enough so that for $n$ large enough,
        \[\P\left(\inf_{u\in A_{\beta n}(\Theta_n)\cup\widehat{A}_{\beta n}(\Theta_n)}\ell_u>-\delta n\right)>1-\varepsilon/2.\]
        For these choices of $\delta$ and $\beta$, we have
        \begin{align*}
            \P\left(\inf_{u\in\Theta_n}\ell_u<\inf_{u\in A_{\beta n}(\Theta_n)\cup\widehat{A}_{\beta n}(\Theta_n)}\ell_u\right)&\geq\P\left(\inf_{u\in\Theta_n}\ell_u<-\delta n,\inf_{u\in A_{\beta n}(\Theta_n)\cup\widehat{A}_{\beta n}(\Theta_n)}\ell_u>-\delta n\right)\\
            &\geq 1-(\varepsilon/2+\varepsilon/2)=1-\varepsilon,
        \end{align*}
        which concludes the proof.
    \end{proof}
    
\subsection{Quadrangulations with geodesic boundaries}\label{geodesic boundaries}

Here, we give a construction and some basic results about quadrangulations with geodesic boundaries. This object, introduced in \cite{uniqueness}, played a major role in the proof of the convergence of random uniform quadrangulations toward the Brownian sphere.

Consider a labelled tree $\theta=(\mathbf{t},\ell)\in\mathbb{T}_f^{(0)}$, and set
\[\delta=-\min\{\ell(v),v\in\theta\}+1.\]
Then, we add a line of $\delta-1$ vertices to $\theta$ in the following way. First, if $k=k_\varnothing(\mathbf{t})$ stands for the number of children of the root in $\mathbf{t}$, set $\Tilde{v}_1=(k+1)$. Then, set $\tilde{v}_2=(k+1,1),\tilde{v}_3=(k+1,1,1),...$ and so on until $\tilde{v}_{\delta-1}=(k+1,1,...,1)$. We also set $\tilde{v}_0=\varnothing$ and $\tilde{v}_\delta=v_*$. Consider the labelled tree $\tilde{\theta}=(\tilde{\mathbf{t}},\tilde{\ell})$ where of $\tilde{\mathbf{t}}=\mathbf{t}\cup\{\tilde{v}_1,...,\tilde{v}_{\delta-1}\}$, and where $\tilde{\ell}(v)=\ell(v)$ if $v\in\mathbf{t}$ and $\tilde{\ell}(\tilde{v}_i)=-i$.

\begin{figure}
        \centering
        \includegraphics[scale=0.7]{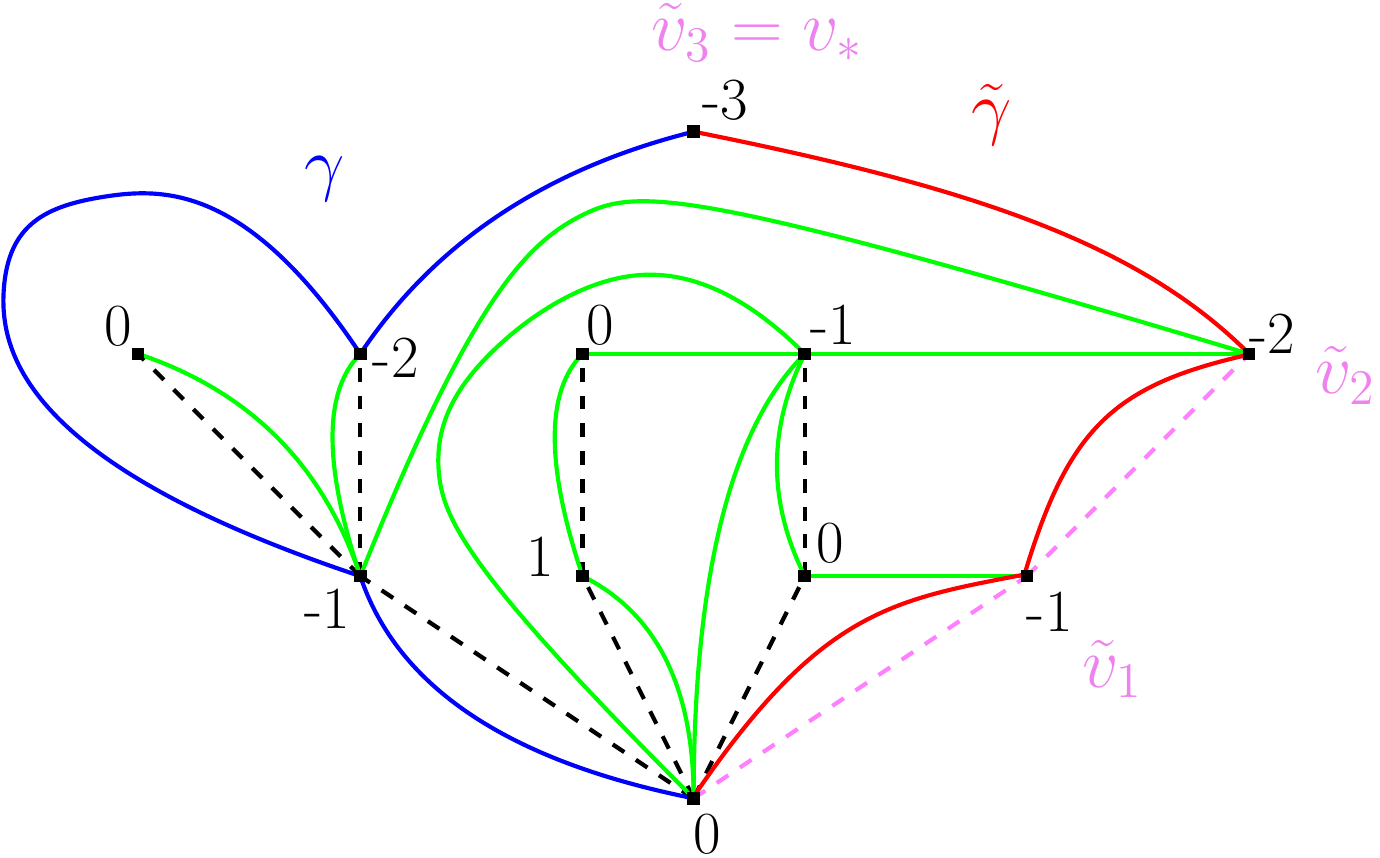}
        \caption{A representation of the CVS bijection for quadrangulations with geodesic boundaries. The underlying tree $\mathbf{t}$ is represented in black, while the elements added to obtain $\tilde{\mathbf{t}}$ are in pink.}
        \label{CVS Quad Bord}
\end{figure}

To construct a quadrangulation with geodesic boundaries, we apply a variant of the CVS bijection to $\tilde{\theta}$ (see Figure \ref{CVS Quad Bord}). First, for every corner $c$ of $\mathbf{t}$, we can define its successor $\sigma(c)$ \textbf{in $\tilde{\mathbf{t}}$}, and draw an edge between $c$ and $\sigma(c)$. Then, for every $i\in\{0,...,\delta-1\}$, draw an edge between $\tilde{v}_i$ and $\tilde{v}_{i+1}$. The resulting map $\tilde{Q}$ can be viewed as a quadrangulation with boundary. This boundary is formed of two paths $\gamma$ and $\tilde{\gamma}$, where $\gamma$ is the path from $c_0$ to its consecutive successors, and $\tilde{\gamma}$ is made of the edges $\left(\tilde{v}_i,\tilde{v}_{i+1}\right)_{0\leq i\leq\delta-1}$. 
   It is easy to verify that $\gamma$ and $\tilde{\gamma}$ are two geodesic paths between $\varnothing$ and $v_*$, only intersecting at their endpoints.

   Let $\tilde{Q}_n$ be the random quadrangulation with geodesic boundaries obtained by applying the previous construction to the random tree $\Theta_n$, and let $\gamma_n,\tilde{\gamma}_n$ be the two geodesics boundaries. Just like random quadrangulations, this random metric space admits a scaling limit $\left(\tilde{\cS},\tilde{D}\right)$ with two geodesic boundaries $\Gamma$ and $\tilde{\Gamma}$.
   \begin{theorem}\label{Convergergence slice}
    There exists a random metric space $\left(\tilde{\mathcal{S}},\tilde{D}\right)$,
    Such that we have the following convergence in distribution 
    \[\left(\tilde{Q}_n,\frac{\sqrt{3}}{\sqrt{2}n}d_{\tilde{Q}_n},\gamma_n,\tilde{\gamma}_n\right)\xrightarrow[n\rightarrow\infty]{(d)}\left(\tilde{\mathcal{S}},\tilde{D},\Gamma,\tilde{\Gamma}\right),\]
        for the marked Gromov-Hausdorff topology.
\end{theorem}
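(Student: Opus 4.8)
The plan is to deduce this scaling limit from the already-known convergence of $Q_n$ (Theorem \ref{Convergergence sphere}) together with an explicit combinatorial comparison between $\tilde Q_n$ and $Q_n$. First I would observe that the trees encoding $\tilde Q_n$ and $Q_n$ are the same random tree $\Theta_n$: the construction of $\tilde Q_n$ attaches the deterministic pink line $\tilde v_0,\dots,\tilde v_\delta$ to $\Theta_n$ (with $\delta = -\min\ell + 1 = n - \ell_*$ plus a shift) and then applies the variant of the CVS bijection, whereas $Q_n = \Phi(\Theta_n)$ collapses all successor chains hitting $\partial$ onto the single vertex $v_*$. Hence there is a natural surjection from $V(\tilde Q_n)$ onto $V(Q_n)$ that identifies the boundary path $\tilde\gamma_n$ to a single point and is the identity elsewhere; and conversely $\tilde Q_n$ is obtained from $Q_n$ by ``opening up'' the vertex $v_*$ into a path of length $\delta$. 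I would make this correspondence precise and record that, under it, $d_{\tilde Q_n}(u,w) \le d_{Q_n}(u,w) + d_{Q_n}(u,w)$ is not literally what we want; rather, one has $d_{Q_n}(\pi(u),\pi(w)) \le d_{\tilde Q_n}(u,w)$ always, and an upper bound $d_{\tilde Q_n}(u,w) \le d_{Q_n}(\pi(u),\pi(w)) + 2$ for vertices $u,w$ that are ``far'' from the opened boundary (say at $Q_n$-distance $\ge C$ from $v_*$), by rerouting a $Q_n$-geodesic through $\tilde\gamma_n$ at a controlled cost.

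Next I would argue that the rescaled spaces $(\tilde Q_n, \tfrac{\sqrt3}{\sqrt2 n} d_{\tilde Q_n})$ and $(Q_n, \tfrac{\sqrt3}{\sqrt2 n} d_{Q_n})$ are at vanishing Gromov--Hausdorff distance along the correspondence $\pi$, \emph{up to the contribution of the boundary $\tilde\gamma_n$ itself}. Since $\tilde\gamma_n$ has length $\delta \sim n$, after rescaling by $n^{-1}$ it contributes a macroscopic segment, so one cannot simply discard it. The clean way is to encode everything at the level of the Brownian snake: let $(C_n, Z_n)$ be the contour and label processes of $\Theta_n$, converging (Theorem \ref{Convergergence sphere}) to $(\mathcal B, \mathcal Z)$ under $\mathbb N_0(\cdot\mid W_*<-1)$. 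The quadrangulation $Q_n$ is $\Psi$ applied to (the discrete analogue of) $(C_n,Z_n)$; the quadrangulation with geodesic boundaries $\tilde Q_n$ is, by the construction above, a deterministic functional of the \emph{same} data --- concretely, one glues to the snake-quotient metric an interval $[0, -\min \mathcal Z]$ representing $\tilde\Gamma$, identifying its two endpoints with $\rho$ and the minimizing leaf $x_*$, and then takes the quotient metric on the union. I would define $\tilde{\mathcal S} := \tilde\Psi(\mathcal B, \mathcal Z)$ by exactly this procedure (interval of length $-W_* = D(x_0,x_*)$ glued to $\mathcal S$ along $\Gamma$'s endpoints, with the natural length metric), with $\Gamma$ the snake geodesic and $\tilde\Gamma$ the added interval, and then show that $\tilde\Psi$ is continuous at $(\mathcal B,\mathcal Z)$ for the topology in which $(C_n,Z_n)$ converges, so that the convergence $\tilde Q_n \to \tilde{\mathcal S}$ follows from the continuous mapping theorem exactly as $Q_n \to \mathcal S$ does.

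The key steps, in order: (1) identify $\tilde Q_n$ as a deterministic functional $\tilde\Psi_n$ of $\Theta_n$ and describe the vertex/metric correspondence with $Q_n = \Phi(\Theta_n)$; (2) prove the two-sided metric comparison $d_{Q_n}(\pi u,\pi w) \le d_{\tilde Q_n}(u,w) \le d_{Q_n}(\pi u,\pi w) + 2$ away from the boundary, plus the statement that the boundary $\tilde\gamma_n$ is at Hausdorff distance $o(n)$ from nothing --- rather, that it is an isometrically embedded path of length $\delta$; (3) pass to the continuum: define $\tilde{\mathcal S}$ by gluing the interval of length $-W_*$ to $\mathcal S$ along $\{x_0,x_*\}$, and check $\tilde\Psi$ is a.s.\ continuous at $(\mathcal B,\mathcal Z)$; (4) conclude joint convergence with the marked points $\gamma_n \to \Gamma$ (already in Theorem \ref{Convergergence sphere}) and $\tilde\gamma_n \to \tilde\Gamma$ (the added interval), in the marked Gromov--Hausdorff topology. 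Steps (1) and (4) are bookkeeping; step (3) is routine once $\tilde\Psi$ is set up, since gluing a fixed-endpoint interval is $1$-Lipschitz in the Gromov--Hausdorff sense in both the ambient space and the interval length. The main obstacle is step (2): one must show that inserting the boundary path does not \emph{shorten} distances in the bulk by more than $O(1)$, i.e.\ that geodesics of $\tilde Q_n$ between bulk vertices do not gain a macroscopic shortcut by travelling along $\tilde\gamma_n$ --- equivalently, that for $u,w$ in the bulk, $d_{\tilde Q_n}(u,w) \ge d_{Q_n}(\pi u, \pi w) - o(n)$; this requires a uniform lower bound saying the two geodesic boundaries $\gamma_n,\tilde\gamma_n$ of $\tilde Q_n$ are, away from their common endpoints, at distance of order $n$ from each other, which one gets from the label lower bound \eqref{Borne} applied inside $\tilde\theta$ together with the fact that $\tilde\ell(\tilde v_i) = -i$ decreases linearly while the bulk labels stay $\ge \ell_* + 1$ only near $x_*$. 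I would isolate this as a lemma and prove it by the same snake estimates used for Theorem \ref{Convergergence sphere}.
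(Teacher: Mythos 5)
The paper's proof is essentially a citation: \cite[Proposition 9.2]{uniqueness} already gives the scaling limit of uniform quadrangulations with geodesic boundaries conditioned to have $n$ faces toward the Brownian slice, and one transfers to $\tilde Q_n$ by the same excursion-theory argument used for Theorem \ref{Convergergence sphere} in Appendix \ref{appendix0}, with the convergence of the marked boundaries handled as in \cite[Section 4.5]{bettinelli2025compactbrowniansurfacesii}. Your proposal instead tries to rederive the statement from Theorem \ref{Convergergence sphere} via a direct metric comparison between $\tilde Q_n$ and $Q_n$, which is a fundamentally different route.

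That route has a genuine gap, starting with your definition of the limit. You set $\tilde{\mathcal S}$ to be $\mathcal S$ with an interval of length $-W_*$ glued at $\{x_0,x_*\}$, with the length metric on the union. This is not the Brownian slice: gluing an arc to a sphere at two points does not produce a disk, and, more importantly, the metric relation runs the other way. The correct statement is that $\mathcal S$ is a \emph{quotient} of $\tilde{\mathcal S}$, obtained by gluing $\Gamma$ to $\tilde\Gamma$, exactly as $Q_n$ is the quotient of $\tilde Q_n$ identifying $\gamma_n(i)$ with $\tilde v_i$. Cutting $\mathcal S$ along $\Gamma$ strictly increases distances between points on opposite sides of $\Gamma$, and this cannot be reproduced by appending a segment to $\mathcal S$. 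The same confusion appears in your step (2): $d_{Q_n}(\pi u,\pi w)\le d_{\tilde Q_n}(u,w)$ holds by non-expansiveness of the quotient map, but the converse bound $d_{\tilde Q_n}(u,w)\le d_{Q_n}(\pi u,\pi w)+2$ for $u,w$ at $Q_n$-distance $\ge C$ from $v_*$ is false. The boundary that gets opened up is the entire geodesic $\gamma_n$, not just the point $v_*$; two vertices close to $\gamma_n$ on opposite sides of it, both far from $v_*$, can be adjacent in $Q_n$ yet at distance of order $n$ in $\tilde Q_n$. Controlling this requires a coalescence-of-geodesics argument of the same depth as \cite[Proposition 9.2]{uniqueness} itself, so the proposed shortcut would end up reproving, with an incorrect candidate limit, the very result the paper cites.
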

   This convergence was proved for uniform quadrangulations with geodesic boundaries with $n$ faces \cite[Proposition 9.2]{uniqueness}, but as we showed for the Brownian sphere in Theorem \ref{Convergergence sphere}, it also holds for $\tilde{Q}_n$ by excursion theory. Even though with the convergence result of \cite[Proposition 9.2]{uniqueness} does not take into account as a geodesic boundaries, the convergence of the geodesic boundaries follows of the encoding processes, see \cite[Section 4.5]{bettinelli2025compactbrowniansurfacesii} for a proof of this fact in a similar context.
   
   We will only use this scaling limit result to prove the following lemma, which states that $\gamma_n$ and $\tilde{\gamma}_n$ cannot be too close from each other away from their end point. 
   \begin{lemme}\label{distance bord}
   For every $\varepsilon>0$, there exists $\alpha,\delta>0$ such that for every $n\in\N$ large enough, 
   \begin{equation*}
       \P\left(d_{\tilde{Q}_n}(B_{\alpha n}(\Tilde{Q}_n,\tilde{\gamma}_n(n)),\gamma_n)>\delta n\right)>1-\varepsilon.
   \end{equation*}   
\begin{proof}
    This is just a consequence of a similar result for the Brownian slice (see \cite[Proposition 4.11]{Bigeodesicbrownianplane}), and of the convergence of quadrangulations with geodesic boundaries towards the Brownian slice \cite{uniqueness}.
\end{proof}
\end{lemme}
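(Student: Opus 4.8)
The plan is to deduce the lemma from the corresponding statement for the Brownian slice, via the scaling limit of Theorem~\ref{Convergergence slice}. First I would reduce to a single point. By the triangle inequality, for every $y\in B_{\alpha n}(\tilde{Q}_n,\tilde{\gamma}_n(n))$ and every $z\in\gamma_n$ one has $d_{\tilde{Q}_n}(y,z)\ge d_{\tilde{Q}_n}(\tilde{\gamma}_n(n),z)-\alpha n$, whence
\[
d_{\tilde{Q}_n}\big(B_{\alpha n}(\tilde{Q}_n,\tilde{\gamma}_n(n)),\gamma_n\big)\ \ge\ d_{\tilde{Q}_n}\big(\tilde{\gamma}_n(n),\gamma_n\big)-\alpha n .
\]
Hence it is enough to produce $c>0$ with $\P\big(d_{\tilde{Q}_n}(\tilde{\gamma}_n(n),\gamma_n)>c n\big)>1-\varepsilon$ for all large $n$; one then takes $\alpha=\delta=c/3$.

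Next I would identify the scaling limit of $d_{\tilde{Q}_n}(\tilde{\gamma}_n(n),\gamma_n)$. By Theorem~\ref{Convergergence slice}, $\big(\tilde{Q}_n,\tfrac{\sqrt3}{\sqrt2\,n}\,d_{\tilde{Q}_n},\gamma_n,\tilde{\gamma}_n\big)$ converges in distribution, for the marked Gromov--Hausdorff topology, to the Brownian slice $(\tilde{\mathcal{S}},\tilde{D},\Gamma,\tilde{\Gamma})$, the two boundary geodesics converging as curves. Since $\tilde{\gamma}_n$ is a geodesic from $v_0$ to $v_*$, the vertex $\tilde{\gamma}_n(n)$ is its unique point at graph distance $n$ from $v_0$, i.e.\ at rescaled distance $\tfrac{\sqrt3}{\sqrt2}$ from $v_0$, so it converges to the point $p$ of $\tilde{\Gamma}$ at distance $\tfrac{\sqrt3}{\sqrt2}$ from $x_0$; and $p$ is almost surely an interior point of $\tilde{\Gamma}$, because the rescaled length of $\tilde{\gamma}_n$, namely $\tfrac{\sqrt3}{\sqrt2\,n}\,d_{\tilde{Q}_n}(v_0,v_*)$, converges to $\tfrac{\sqrt3}{\sqrt2}\,|W_*|$ with $|W_*|>1$ a.s.\ (after rescaling, $\Theta_n$ corresponds to $\N_0(\cdot\,|\,W_*<-1)$, cf.\ Theorem~\ref{Convergergence sphere}). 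As the map sending a marked compact metric space with a distinguished point and a distinguished compact curve to the distance between them is continuous for the marked Gromov--Hausdorff topology, the continuous mapping theorem yields
\[
\tfrac{\sqrt3}{\sqrt2\,n}\,d_{\tilde{Q}_n}\big(\tilde{\gamma}_n(n),\gamma_n\big)\ \xrightarrow[n\to\infty]{(d)}\ \tilde{D}(p,\Gamma).
\]

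Finally I would invoke the continuum input: by \cite[Proposition~4.11]{Bigeodesicbrownianplane}, in the Brownian slice an interior point of one boundary geodesic is at positive distance from the other, so $\tilde{D}(p,\Gamma)>0$ a.s.; pick $c'>0$ with $\P(\tilde{D}(p,\Gamma)>c')>1-\varepsilon/2$. Since $(c',\infty)$ is open, the portmanteau theorem together with the convergence above gives $\P\big(\tfrac{\sqrt3}{\sqrt2\,n}\,d_{\tilde{Q}_n}(\tilde{\gamma}_n(n),\gamma_n)>c'\big)>1-\varepsilon$ for $n$ large, that is $\P\big(d_{\tilde{Q}_n}(\tilde{\gamma}_n(n),\gamma_n)>c n\big)>1-\varepsilon$ with $c=\tfrac{\sqrt2}{\sqrt3}\,c'$. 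Combined with the first step (taking $\alpha=\delta=c/3$), this proves the lemma.

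The step I expect to be the main obstacle is the passage to the scaling limit: one must make sure that the marked Gromov--Hausdorff convergence of Theorem~\ref{Convergergence slice} genuinely transports the marked point $\tilde{\gamma}_n(n)$ onto an interior point of the limiting geodesic — which rests on the convergence of the boundary geodesics as parametrised curves and on the almost sure strict inequality $|W_*|>1$ coming from the conditioning — and that the distance-to-a-marked-curve functional is continuous for this topology. Everything else is a routine weak-convergence argument built on top of the already available continuum result of \cite{Bigeodesicbrownianplane}.
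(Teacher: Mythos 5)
Your argument is correct and follows exactly the route the paper indicates: reduce to showing $d_{\tilde{Q}_n}(\tilde{\gamma}_n(n),\gamma_n)$ is macroscopic, transfer this via Theorem~\ref{Convergergence slice} (with convergence of the boundary geodesics as parametrised curves) to the Brownian slice, and invoke \cite[Proposition~4.11]{Bigeodesicbrownianplane} for the a.s.\ positivity of the distance from an interior boundary point to the opposite boundary. You correctly flag the only delicate point, namely that $\tilde{\gamma}_n(n)$ lands on an interior point of $\tilde{\Gamma}$ because $|W_*|>1$ a.s.\ under the conditioning; this matches the paper's proof, which states the same chain of references without elaboration.
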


\subsection{End of the proof of Proposition \ref{localisation 2}}

This subsection is devoted to the proof of Proposition \ref{localisation 2}. To do so, our approach relies on Proposition \ref{sous ensemble}, together with the estimates obtained in Subsection \ref{estimates}.

For every $n>0$, define 
\[Z_*^{(n)}=\inf_{u\in A_n(\overline{\Theta}^{(1)}_\infty)}\ell_u.\] Let $\tilde{c}_n$ be the last corner associated to $S_n$, the last hitting time of $n$ by $X^{(\infty)}$, and for every $i\in [0,n-Z_*^{(n)}+1]$, set
\[\gamma_n^{(r)}(i)=\inf\{c\geq \tilde{c}_n,\,\ell_{v}=n-i\}.\]
In what follows, we keep the notation $\gamma_n^{(r)}(i)$ to denote the vertex associated to this corner.

Consider the map $\Tilde{Q}_n^{(1)}$ which is the restriction of $\overline{Q}_\infty$ to the vertex set
\[V_n=A_n(\overline{\Theta}^{(1)}_\infty)\cup \bigcup_{i\in [0,n-Z_*^{(n)}+1]}\gamma_n^{(r)}(i),\]
equipped with the intrinsic metric (see Figure \ref{plongement}).

\begin{figure}
        \centering
        \includegraphics[scale=0.5]{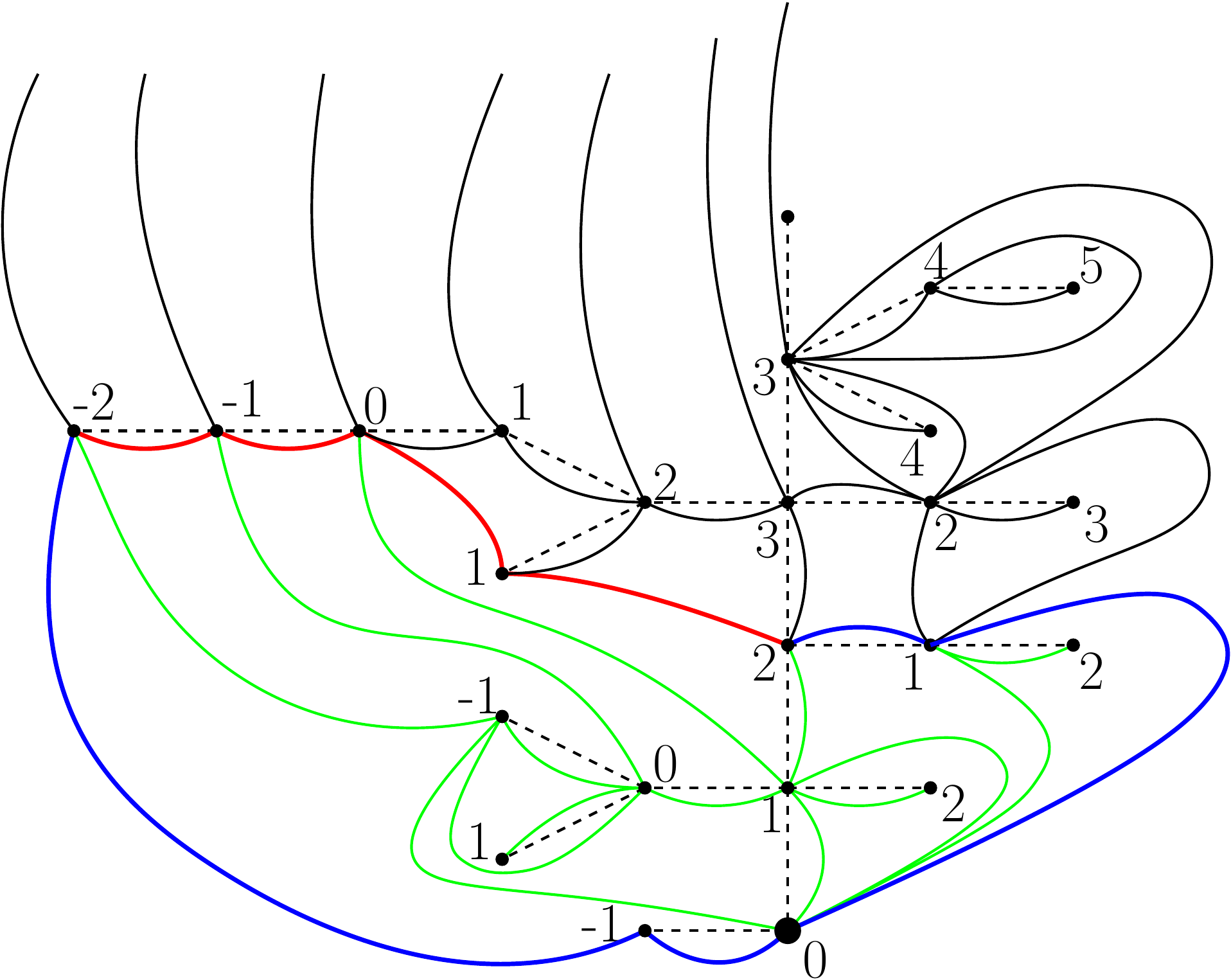}
        \caption{An exemple of $\tilde{Q}_2^{(1)}$ inside $\overline{Q}_\infty^{(1)}$. The edges composing $\tilde{Q}_2^{(1)}$ are colored in green, the blue path corresponds to $\gamma_2^{(\ell)}$ and the red one to $\gamma_2^{(r)}$.}
        \label{plongement}
\end{figure}

\begin{proposition}\label{sous ensemble}
   The quadrangulation $\tilde{Q}_n^{(1)}$ is distributed as $\tilde{Q}_n$.
\end{proposition}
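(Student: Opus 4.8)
The plan is to identify $\tilde Q_n^{(1)}$, which is built inside $\overline Q_\infty^{(1)}$ via the CVS construction applied to $\overline\Theta_\infty^{(1)}$, with the image under the ``quadrangulation with geodesic boundaries'' variant of CVS of a tree distributed as $\Theta_n$, and then invoke Proposition \ref{decompo1}. First I would recall the structure of $A_n(\overline\Theta_\infty^{(1)})$: by the very definition of the sets $A_{\beta n}$, it consists of the spine of $\overline\Theta_\infty^{(1)}$ run up to its \emph{last} hitting time $S_n$ of $n$, together with the left and right subtrees $(F_1^{(\infty)},F_2^{(\infty)})$ grafted along that spine. By the discussion right after the statement of Proposition \ref{localisation 2} in the excerpt, this spine-plus-forests datum has exactly the law of $\Theta_{n}$ rerooted at $\tau_n(n)$, i.e.\ of $\mathbb T_n$ from Proposition \ref{decompo1}; hence $A_n(\overline\Theta_\infty^{(1)})$, viewed as a labelled tree with root $\tilde c_n$ and with labels shifted so that this root has label $n$, is distributed as $\Theta_n$.

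The second step is the geometric identification. One has to check that the map $\tilde Q_n^{(1)}$ — the restriction of $\overline Q_\infty$ to the vertex set $V_n$ equipped with the intrinsic metric — coincides (as a rooted map with two marked geodesics) with the output of the geodesic-boundary CVS construction of Subsection \ref{geodesic boundaries} applied to the labelled tree $A_n(\overline\Theta_\infty^{(1)})$ (relabelled to start at $0$). The two boundary paths are: the path $\gamma_n^{(\ell)}$ made of $\tilde c_n$ and its consecutive successors inside $\overline Q_\infty^{(1)}$ — which, because every corner of $A_n(\overline\Theta_\infty^{(1)})$ has its successor inside $A_n(\overline\Theta_\infty^{(1)})$ until the minimal label $Z_*^{(n)}$ is reached, is precisely the path $\gamma$ of the CVS construction applied to the relabelled tree; and the path $\gamma_n^{(r)}$, defined as the leftmost corners with decreasing labels after $\tilde c_n$ going around the \emph{other} side, which plays the role of the added line $\tilde\gamma=(\tilde v_i,\tilde v_{i+1})$ in the geodesic-boundary CVS bijection. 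The key point is that the arcs drawn by the CVS rule inside $\overline Q_\infty^{(1)}$ between a corner of $A_n(\overline\Theta_\infty^{(1)})$ and its successor never leave $V_n$: a corner with label $m$ has its successor at the first later corner with label $m-1$, and as long as that successor exists inside the tree $A_n(\overline\Theta_\infty^{(1)})$ this is an internal edge, while the corners whose successor would escape are exactly those lying on $\gamma_n^{(r)}$, which are then joined to the appropriate vertex of the line $\gamma_n^{(r)}$. Thus the edge set of $\tilde Q_n^{(1)}$ is literally the edge set produced by the geodesic-boundary CVS applied to the relabelled $A_n(\overline\Theta_\infty^{(1)})$, and the intrinsic metric on $V_n$ is the graph metric of that map.

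Combining the two steps: since $A_n(\overline\Theta_\infty^{(1)})$ (suitably relabelled) is distributed as $\Theta_n$ by the first step, and the geodesic-boundary CVS construction is a deterministic function of the labelled tree that turns $\Theta_n$ into $\tilde Q_n$ by definition, we conclude $\tilde Q_n^{(1)}\overset{(d)}{=}\tilde Q_n$, together with the marked geodesics $(\gamma_n^{(\ell)},\gamma_n^{(r)})\overset{(d)}{=}(\gamma_n,\tilde\gamma_n)$. I expect the main obstacle to be the careful bookkeeping in the second step — verifying that every CVS arc stays within $V_n$ and that the ``extra line'' $\gamma_n^{(r)}$ matches the added vertices $\tilde v_i$ with the correct labels — which is essentially a matter of unwinding the definition of the successor function and the fact that $Z_*^{(n)}=\inf_{u\in A_n(\overline\Theta^{(1)}_\infty)}\ell_u$ is exactly the depth of the added line; once that is in place the distributional statement is immediate from Proposition \ref{decompo1}.
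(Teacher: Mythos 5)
Your overall strategy coincides with the paper's: (i) use Proposition~\ref{decompo1} to identify the law of $A_n(\overline\Theta_\infty^{(1)})$ with that of $\Theta_n$ rerooted, and (ii) observe that the geodesic-boundary CVS applied to that tree reproduces the restriction of $\overline Q_\infty$ to $V_n$, with the extra line of vertices $\tilde v_i$ realised by $\gamma_n^{(r)}$. The paper's own proof is exactly this, stated very briefly. Where you go further, trying to make step (ii) precise, you introduce an error that needs to be repaired.

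Specifically, you write that $\gamma_n^{(\ell)}$ is ``$\tilde c_n$ and its consecutive successors inside $\overline Q_\infty^{(1)}$'' and that ``every corner of $A_n(\overline\Theta_\infty^{(1)})$ has its successor inside $A_n(\overline\Theta_\infty^{(1)})$ until the minimal label $Z_*^{(n)}$ is reached.'' Both statements are false. By the definition $\gamma_n^{(r)}(i)=\inf\{c\ge\tilde c_n:\ell_v=n-i\}$, the chain of iterated successors of $\tilde c_n$ \emph{in the contour of $\overline\Theta_\infty^{(1)}$} is precisely $\gamma_n^{(r)}$, and since $\tilde c_n$ is the last corner of $A_n(\overline\Theta_\infty^{(1)})$ in that contour, the successor of $\tilde c_n$ already lies outside $A_n(\overline\Theta_\infty^{(1)})$. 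In particular the set $A_n(\overline\Theta_\infty^{(1)})$ is \emph{not} stable under the successor map of $\overline\Theta_\infty^{(1)}$: the corners whose successor escapes are the ones contributing arcs to $\gamma_n^{(r)}$. The path $\gamma_n^{(\ell)}(i)=\inf\{v\in A_n(\overline\Theta_\infty^{(1)}):\ell_v=n-i\}$ is something else, namely the leftmost vertex of $A_n(\overline\Theta_\infty^{(1)})$ with each label. The reason it corresponds to the boundary $\gamma$ of the geodesic-boundary CVS is not that it is the successor chain of $\tilde c_n$ in $\overline Q_\infty$, but that it becomes the successor chain of $\tilde c_n$ \emph{after rerooting} $A_n(\overline\Theta_\infty^{(1)})$ at $\tilde c_n$, i.e.\ after the cyclic shift of the corner sequence implicit in the identification with $\Theta_n$. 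Your argument conflates the successor relation in the ambient infinite tree with the successor relation in the rerooted finite tree, and step (ii) as written does not go through. Correcting this requires explicitly tracking how the corner sequence of $A_n(\overline\Theta_\infty^{(1)})$ cyclically matches that of $\tilde{\mathbf t}$ in Subsection~\ref{geodesic boundaries}, with the extra segment being visited by the $\gamma_n^{(r)}$ corners; once that is done, the identification of edge sets and hence of the intrinsic metric follows, and the distributional conclusion is indeed immediate from Proposition~\ref{decompo1}.
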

\begin{proof}
    As mentioned after Proposition \ref{localisation 2}, the labelled tree $A_n(\overline{\Theta}^{(1)}_\infty)$ is distributed as $\Theta_n$. Then, by comparing the constructions given in Subsection \ref{CVS infinite} and \ref{geodesic boundaries}, one can see that the vertices $(\gamma_n^{(r)}(i))$ have the same role \textit{in the CVS bijection for $\overline{\Theta}_\infty^{(1)}$} as the line of vertices $(\tilde{v}_i)$ \textit{in the CVS bijection for quadrangulations with geodesic boundaries}. This gives the result.
\end{proof}
\begin{remark}\label{symetry}
    In a similar fashion, we can define $\tilde{Q}_n^{(2)}$, mutatis mutandis. The result of Proposition \ref{sous ensemble} still holds if we replace $\tilde{Q}_n^{(1)}$ by $\tilde{Q}_n^{(2)}$. We omit the proof, but the careful reader should bear in mind that the CVS bijection on $\overline{\Theta}_\infty^{(1)}$ and $\overline{\Theta}_\infty^{(2)}$ are not exactly the same.
\end{remark}
Next, for every $i\in[0,n-Z_*^{(n)}]$, we set 
\[\gamma_n^{(\ell)}(i)=\inf\{v\in A_n(\overline{\Theta}^{(1)}_\infty),\ell_v=n-i\}\] and 
\[\gamma_n^{(\ell)}(n-Z_*^{(n)}+1)=\gamma_n^{(r)}(n-Z_*^{(n)}+1).\]
These two paths correspond to the geodesics $\gamma$ and $\tilde{\gamma}$ of $\tilde{Q}_n^{(1)}$. Note that the bound \eqref{Borne} implies that, for every $i,j\in[0, n-Z_*^{(n)}+1]$,
\[d_{\overline{Q}_\infty}(\gamma_n^{(\ell)}(i),\gamma_n^{(\ell)}(j))=|i-j|\quad\text{ and }\quad d_{\overline{Q}_\infty}(\gamma_n^{(r)}(i),\gamma_n^{(r)}(j))=|i-j|.\]
Therefore, these two curves are geodesics paths in $\overline{Q}_\infty$. Observe that they are disjoint except at their endpoints. 
\begin{proposition}\label{Bord topologique}
    The paths $(\gamma_n^{(\ell)},\gamma_n^{(r)})$ are the boundaries of $\tilde{Q}_n^{(1)}$ in $\overline{Q}_\infty$, meaning that every path starting in $\tilde{Q}_n^{(1)}$ and ending in its complementary must intersect $\gamma_n^{(\ell)}\cup\gamma_n^{(r)}$.
\end{proposition}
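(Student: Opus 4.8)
The statement is a deterministic fact about the fixed tree $\overline{\Theta}^{(1)}_\infty$, and the plan is to recast it as a local property: I claim it is equivalent to the assertion that \emph{no edge of $\overline{Q}_\infty$ joins a vertex of $V_n\setminus(\gamma_n^{(\ell)}\cup\gamma_n^{(r)})$ to a vertex outside $V_n$}. Indeed, a path from a vertex of $V_n\setminus(\gamma_n^{(\ell)}\cup\gamma_n^{(r)})$ to a vertex of the complement of $V_n$ either meets $\gamma_n^{(\ell)}\cup\gamma_n^{(r)}$, or stays inside $V_n\setminus(\gamma_n^{(\ell)}\cup\gamma_n^{(r)})$ until the first step where it leaves $V_n$ --- and that step is then a forbidden edge. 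So it suffices to prove this local statement, which I would do by working inside $\overline{Q}_\infty^{(1)}$ with the explicit CVS bijection. The structural input to establish first is that $A_n(\overline{\Theta}^{(1)}_\infty)$ is exactly $\overline{\Theta}^{(1)}_\infty$ with the subtree $T'$ hanging above the spine vertex $S(S_n)$ deleted; consequently, writing $(c_i)_{i\in\mathbb{Z}}$ for the corner sequence of $\overline{\Theta}^{(1)}_\infty$, the corners of the vertices of $A_n(\overline{\Theta}^{(1)}_\infty)$ form a single contiguous block $[c_{-p},c_q]$ containing the root corner, with $\tilde{c}_n=c_q$, while the corners of $T'$ are precisely those of index $>q$ (visited by the left contour) and those of index $<-p$ (visited by the right contour).

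With this picture in hand, fix $u\in A_n(\overline{\Theta}^{(1)}_\infty)$ and an edge of $\overline{Q}_\infty^{(1)}$ incident to $u$; it is an arc from some corner to its successor $\sigma(\cdot)$, and there are two cases. If it runs from a corner $c$ of $u$ to $\sigma(c)$: as successors occur later in the corner sequence, either $\sigma(c)$ lies in $[c_{-p},c_q]$ and is then a corner of a vertex of $A_n\subset V_n$; or $\sigma(c)$ has index $>q$, in which case one checks that $\ell(c)\le n$ (were $\ell(c)>n$, the label would drop through $\ell(c)-1$ before $\tilde{c}_n$ and the successor would stay in the block), so that $\sigma(c)$, being the first corner of index $\ge q$ with label $\ell(c)-1$, is a corner of $\gamma_n^{(r)}(n-\ell(c)+1)\in V_n$. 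If instead it runs from a corner $c'$ of a vertex $w\notin V_n$ to a corner $\sigma(c')$ of $u$: then $w\notin\gamma_n^{(r)}$ so all corners of $w$ lie in $T'$, and they cannot have index $>q$ (all later corners, hence $\sigma(c')$, would too, contradicting $\sigma(c')\in A_n$), so $c'$ has index $<-p$; consequently the first corner with label $\ell(c')-1$ past $c'$ is the first such corner inside $[c_{-p},c_q]$ (none occurs earlier, else $\sigma(c')\notin A_n$), and by the definition of the contour order this forces $\sigma(c')$ to be a corner of $\gamma_n^{(\ell)}(n-\ell(c')+1)$, hence $u\in\gamma_n^{(\ell)}$. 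So every vertex of $A_n(\overline{\Theta}^{(1)}_\infty)\setminus\gamma_n^{(\ell)}$ has all of its $\overline{Q}_\infty^{(1)}$-neighbours in $V_n$.

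To conclude, I would note that $V_n\setminus(\gamma_n^{(\ell)}\cup\gamma_n^{(r)})\subseteq A_n(\overline{\Theta}^{(1)}_\infty)\setminus\gamma_n^{(\ell)}$, and that a routine comparison of the defining formulas for $\overline{\gamma}_\infty^{(1)}$, $\gamma_n^{(\ell)}$ and $\gamma_n^{(r)}$ gives $V_n\cap\overline{\gamma}_\infty^{(1)}\subseteq\gamma_n^{(\ell)}\cup\gamma_n^{(r)}$, so that the gluing of $\overline{Q}_\infty^{(1)}$ with $\overline{Q}_\infty^{(2)}$ adds no new neighbour to a vertex of $V_n\setminus(\gamma_n^{(\ell)}\cup\gamma_n^{(r)})$; together with the previous paragraph this yields the local statement, hence the proposition. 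The part I expect to require real care is the first paragraph: pinning down the contiguous-block description of $A_n(\overline{\Theta}^{(1)}_\infty)$ inside the corner sequence of $\overline{\Theta}^{(1)}_\infty$, together with the label bookkeeping that forces escaping successors onto $\gamma_n^{(r)}$ and arcs coming back in from $T'$ onto $\gamma_n^{(\ell)}$; once that is in place the behaviour at $S(S_n)$ and the compatibility with the gluing are routine.
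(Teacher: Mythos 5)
Your proposal is correct and follows essentially the same approach as the paper: reduce to showing that any edge of $\overline{Q}_\infty$ joining $u\in V_n$ to $v\notin V_n$ forces $u$ onto $\gamma_n^{(\ell)}\cup\gamma_n^{(r)}$, then split into two cases according to which of the two corners is the CVS source and which is the sink, and use the label condition defining the successor to pin $u$ to one of the two boundary geodesics. The only difference is that you make explicit a few points the paper's terser argument leaves implicit --- the contiguous-block description of the corners of $A_n(\overline{\Theta}_\infty^{(1)})$ inside the corner sequence, and the check that the gluing with $\overline{Q}_\infty^{(2)}$ adds no neighbours to a vertex of $V_n\setminus(\gamma_n^{(\ell)}\cup\gamma_n^{(r)})$.
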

\begin{proof}
 Fix $u\in V_n,v\notin V_n$ such that $d_{\overline{Q}_\infty}(u,v)=1$. Let $c_u$ and $c_v$ be the corresponding corners linked by an edge. There are two possibilities :
\begin{itemize}
    \item if $c_u$ is the successor of $c_v$ in the CVS bijection, then for every $c\in[c_v,c_u]$, $\ell_c>\ell_u$. This implies that 
    \[c_u=\inf\{c\in V_n,\ell_c=\ell_u\},\] which means that $u\in\gamma_n^{(\ell)}$.
    \item if $c_v$ is a successor of $c_u$ in the CVS bijection, then for every $c\in[c_u,c_v]\cap V_n$, $\ell_c>\ell_u$. We can easily see that the only elements of $V_n$ that satisfy this property are the ones in $\gamma_n^{(r)}$. 
\end{itemize}
This means that the boundary of $\tilde{Q}_n^{(1)}$ is included in $\gamma_n^{(\ell)}\cup\gamma_n^{(r)}$. Conversely, if $u\in\gamma_n^{(\ell)}\cup\gamma_n^{(r)}$, one can find an element $v\in\overline{Q}_\infty\backslash \tilde{Q}_n^{(1)}$ such that $d_{\overline{Q}_\infty}(u,v)=1$, which concludes the proof. 
\end{proof} 

Finally, the following Proposition allows us to compare distances between $Q_n,Q_\infty$ and $\overline{Q}_\infty$. In particular, it will be used to transfer some result obtained for one of these spaces to the other to ones. 

\begin{proposition}\label{comparaison distance1}
   On the event where \[(A_{\beta n}(\Theta_n),\widehat{A}_{\beta n}(\Theta_n))=(A_{\beta n}(\Theta_\infty^{(n)}),\widehat{A}_{\beta n}(\Theta_\infty^{(n)}))=(A_{\beta n}(\overline{\Theta}^{(1)}_\infty),\widehat{A}_{\beta n}(\overline{\Theta}^{(2)}_\infty))\] and $\left\{\inf_{u\in\Theta_n}\ell_u<\inf_{u\in A_{\beta n}(\Theta_n)\cup\widehat{A}_{\beta n}(\Theta_n)}\ell_u\right\}$, we have,  for every $u,v\in A_{\beta n}(\Theta_n)$ (or $u,v\in \widehat{A}_{\beta n}(\Theta_n)$),
    \[d_{Q_n}(u,v)=1\Longleftrightarrow d_{Q_\infty}(u,v)=1\Longleftrightarrow d_{\overline{Q}_\infty}(u,v)=1.\]
\end{proposition}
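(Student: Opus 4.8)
The plan is to show that, on the event of the statement, the three maps $Q_n$, $Q_\infty$, $\overline{Q}_\infty$ have literally the same edges among the vertices of $A:=A_{\beta n}(\Theta_n)$ (and, separately, the same among those of $\widehat A_{\beta n}(\Theta_n)$). Recall that each of these maps is obtained from its labelled tree by the CVS construction, whose edges are exactly the arcs joining a corner $c$ to its successor $\sigma(c)$, together with the arcs to the extra vertex ($v_*$ in the finite case, the $\lambda_i$'s for a tree in $\mathbb{S}_{(2)}$); since these extra vertices are not vertices of $A$, every $A$-internal edge is an $A$-internal CVS arc. Such an arc joins $u$ to $v$ with $\ell_v=\ell_u-1$ and goes ``from $u$'', so it suffices to prove: for $u,v\in A$ with $\ell_v=\ell_u-1$, the property ``some corner of $u$ has its successor at a corner of $v$'' has the same truth value in $Q_n$, $Q_\infty$ and $\overline{Q}_\infty^{(1)}$, and then to pass from $\overline{Q}_\infty^{(1)}$ to $\overline{Q}_\infty$.

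The key input I would extract from the constructions of Section~\ref{Coupling trees} is a structural description of how $A$ sits inside each ambient tree. On the event under consideration $A$ is \emph{the same labelled planar tree} in the three models, and in each ambient tree its complement is a single (possibly infinite) descendant subtree, attached to $A$ at one and the same corner $\mathfrak c_b$ of the vertex $b\in A$ of label $\lfloor\beta n\rfloor$ that is the far endpoint of the distinguished branch encoded by the common process $X^{(\infty)}$ (in $\Theta_n$ this is the parent side of $b$; in $\Theta_\infty^{(n)}$ and in $\overline{\Theta}_\infty^{(1)}$ it is the side where $X^{(\infty)}$, respectively the spine, continues). It follows that the finite word of corners-with-labels of the vertices of $A$, listed in the order inherited from the ambient corner sequence, is identical in the three models: it is the clockwise contour of the planar tree $A$, read starting just after $\mathfrak c_b$. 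In particular every corner of a vertex $v\in A$ lies in this word, and if $c$ is a corner of $u\in A$ whose successor $\sigma(c)$ is a corner of some $v\in A$, then all corners between $c$ and $\sigma(c)$ lie in this word too — for the two infinite models this is automatic, since beyond $A$'s arc the corner sequence has labels with $\liminf=-\infty$ and hence the search for label $\ell_u-1$ stops before leaving $A$ or stays outside $A$ for good; for $Q_n$, where the sequence is $2n$-periodic, this is exactly where the hypothesis $\inf_{\Theta_n}\ell<\inf_{A\cup\widehat A}\ell$ enters: it forces $\Theta_n\setminus A$ to contain a corner of every label in $[\inf_{\Theta_n}\ell,n]$, in particular of label $\ell_u-1$, so the successor search started at $c$ meets a corner of label $\ell_u-1$ outside $A$ before it could wrap all the way around and re-enter $A$.

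Granting this, the proof concludes by bookkeeping on the successor function. Fix $u,v\in A$ with $\ell_v=\ell_u-1$ and a corner $c$ of $u$. If a corner of label $\ell_u-1$ occurs after $c$ within $A$'s arc, then by the previous paragraph $\sigma(c)$ equals the first such corner in \emph{every} model, since this first corner depends only on the common word; hence ``$\sigma(c)$ is a corner of $v$'' has the same truth value throughout. Otherwise $\sigma(c)$ is not a corner of $v$ in any model (it has label $\ell_u-1$ but lies outside $A$'s arc, whereas all corners of $v$ lie inside it). Taking the disjunction over the corners of $u$ shows that the $A$-internal CVS arcs of $Q_n$, $Q_\infty$ and $\overline{Q}_\infty^{(1)}$ coincide. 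To get $\overline{Q}_\infty$, observe that an edge of $\overline{Q}_\infty$ with both endpoints in $A$ is either an edge of $\overline{Q}_\infty^{(1)}$ or comes from $\overline{Q}_\infty^{(2)}$, and in the latter case it must join two vertices of $\overline{\gamma}_\infty^{(1)}$ identified with \emph{consecutive} vertices $\lambda_i,\lambda_{i+1}$ of $\overline{\gamma}_\infty^{(2)}$ — but those are already at distance $1$ in $\overline{Q}_\infty^{(1)}$ because $\overline{\gamma}_\infty^{(1)}$ is a geodesic there. Hence $\overline{Q}_\infty$ and $\overline{Q}_\infty^{(1)}$ have the same $A$-internal edges, which yields $d_{Q_n}(u,v)=1\iff d_{Q_\infty}(u,v)=1\iff d_{\overline{Q}_\infty}(u,v)=1$ for all $u,v\in A$. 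The case of $\widehat A_{\beta n}(\Theta_n)$ is handled identically, comparing $Q_n$, $Q_\infty$ and $\overline{Q}_\infty^{(2)}$ via the $\widehat\tau$-corners and $\overline{\Theta}_\infty^{(2)}$; the only changes are that the CVS bijection on $\overline{\Theta}_\infty^{(2)}$ also produces arcs to the $\lambda_i$'s — which play there the role the escape to $v_*$ plays for $Q_n$ — and that, since the vertices of $\widehat A_{\beta n}(\Theta_n)$ lie away from $\overline{\gamma}_\infty^{(2)}$ in $\overline{Q}_\infty$, the gluing leaves their incidences untouched.

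The hard part is the structural claim of the second paragraph: one must check carefully that the couplings of Propositions~\ref{CouplingTrees1} and~\ref{CouplingTrees2} identify $A$ with a \emph{planar} tree (not just an abstract labelled tree) and that the corner $\mathfrak c_b$ at which the removed part is attached is ``the same'' in the three embeddings — this is what makes the corner word of $A$ model-independent — together with the verification, via the minimum hypothesis, that in $Q_n$ the successor of a corner of an $A$-vertex never wraps around into $A$ from the far side. Everything else is a routine consequence of the definition of the CVS successor.
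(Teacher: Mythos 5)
Your proof is correct and uses essentially the same approach as the paper: identify an edge between $u,v\in A$ with a CVS arc $c\to\sigma(c)$, use the hypothesis $\inf_{\Theta_n}\ell<\inf_{A\cup\widehat A}\ell$ to force the successor search in the cyclic corner sequence of $\Theta_n$ to terminate before wrapping back into $A$, and conclude that whether $\sigma(c)$ is a corner of $v$ is model-independent because the ordered list of corners (with labels) of $A$ is identical in the three trees on the coupling event. The ``hard part'' you flag is in fact automatic from the couplings of Propositions~\ref{CouplingTrees1} and~\ref{CouplingTrees2}, which are couplings of the encoding processes (spine labels and grafted forests) and therefore determine the planar embedding of $A$ and the attachment corner; your explicit treatment of the gluing step for $\overline{Q}_\infty$ and of the $\widehat{A}$ case spells out a point the paper leaves implicit.
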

\begin{proof}
    Without loss of generality, suppose that $u$ and $v$ belong to $A_{\beta n}(\Theta_n)$ and satisfy $d_{Q_n}(u,v)=1$. One can see that the edge between $u$ and $v$ is an edge drawn in the CVS bijection from a corner $c$ towards a corner $c'$ such that $c<c'$. Indeed, for every corners $c'<c$ of $A_{\beta n}(\Theta_n)$, on the considered event, there exists a corner between $c$ and $c'$ with a label strictly less than $\ell_{c'}$, which implies that $c'$ cannot be the successor of $c$. Therefore, $c'$ is also the successor of $c$ in $\Theta_\infty^{(n)}$ and $\overline{\Theta}_\infty^{(1)}$, which means that $u$ and $v$ are also connected by an edge in $Q_\infty$ and $\overline{Q}_\infty$. Conversely, if $u$ and $v$ are connected by an edge in $Q_\infty$ or $\overline{Q}_\infty$, the same argument shows that they are also connected in $Q_n$, which proves the result.
\end{proof}

\begin{proof}[Proof of Proposition \ref{localisation 2}]
    Fix $\varepsilon,\beta>0$. We define $\mathcal{H}_{n,\alpha,\beta}$ as the intersection of the following events :
    \begin{itemize}[label=\textbullet]
        \item $(A_{\beta n}(\Theta_n),\widehat{A}_{\beta n}(\Theta_n))=(A_{\beta n}(\Theta_\infty^{(n)}),\widehat{A}_{\beta n}(\Theta_\infty^{(n)}))=(A_{\beta n}(\overline{\Theta}^{(1)}_\infty),\widehat{A}_{\beta n}(\overline{\Theta}^{(2)}_\infty))$
        \item $\inf_{u\in\Theta_n}\ell_u<\inf_{u\in A_{\beta n}(\Theta_n)\cup\widehat{A}_{\beta n}(\Theta_n)}\ell_u$
    \item $\tau_n({\lfloor(1-2\alpha)n\rfloor}),\tau_n(\lfloor(1+2\alpha)n\rfloor)\in A_{\beta n}(\Theta_n)$ and $\widehat{\tau}_n({\lfloor(1-2\alpha)n\rfloor}),\widehat{\tau}_n({\lfloor(1+2\alpha)n\rfloor})\in \widehat{A}_{\beta n}(\Theta_n)$
    \item $\tau_\infty({\lfloor(1-2\alpha)n\rfloor}),\tau_\infty(\lfloor(1+2\alpha)n\rfloor)\in A_{\beta n}(\Theta_\infty^{(n)})$ and $\widehat{\tau}_\infty({\lfloor(1-2\alpha)n\rfloor}),\widehat{\tau}_\infty({\lfloor(1+2\alpha)n\rfloor})\in \widehat{A}_{\beta n}(\Theta_\infty^{(n)})$
    \item $\overline{\tau}_\infty^{(1)}({\lfloor-2\alpha n\rfloor}),\overline{\tau}_\infty^{(1)}({\lfloor2\alpha n\rfloor})\in A_{\beta n}(\overline{\Theta}^{(1)}_\infty)$ and $\overline{\tau}_\infty^{(2)}({\lfloor-2\alpha n\rfloor}),\overline{\tau}_\infty^{(2)}({\lfloor2\alpha n\rfloor})\in \widehat{A}_{\beta n}(\overline{\Theta}^{(2)}_\infty)$
        \item ${d}_{\tilde{Q}_{\lfloor\beta n\rfloor}^{(1)}}\left(B_{\alpha n}(\Tilde{Q}_{\lfloor\beta n\rfloor}^{(1)},\gamma_{\lfloor\beta n\rfloor}^{(\ell)}(\beta n)),\gamma_{\lfloor\beta n\rfloor}^{(r)}\right)>\delta n$ and ${d}_{\tilde{Q}_{\lfloor\beta n\rfloor}^{(2)}}\left(B_{\alpha n}(\Tilde{Q}_{\lfloor\beta n\rfloor}^{(2)},\gamma_{\lfloor\beta n\rfloor}^{(\ell)}(\beta n)),\gamma_{\lfloor\beta n\rfloor}^{(r)}\right)>\delta n$.
    \end{itemize}
    By Lemmas \ref{Celia}, \ref{Celia2}, \ref{distance bord} and Propositions \ref{CouplingTrees1}, \ref{CouplingTrees2} we can choose $\alpha,\delta$ such that for every $n$ large enough, 
    \[\P(\mathcal{H}_{n,\alpha,\beta})>1-\varepsilon.\]
    We will show that 
    \[\mathcal{H}_{n,\alpha,\beta}\subset\left\{B_{\overline{Q}_\infty}(\overline{\gamma}_\infty(0),\alpha n)\subset A_{\beta n}(\overline{\Theta}_\infty^{(1)})\cup\widehat{A}_{\beta n}(\overline{\Theta}_\infty^{(2)})\right\}\]
    which will prove the result for $\overline{Q}_\infty$. We first fix $u\in \overline{\Theta}^{(1)}_\infty$ such that $u\notin A_{\beta n}(\overline{\Theta}_\infty^{(1)})$, and a geodesic $\eta$ between $u$ and $\overline{\gamma}_\infty(0)$ in $\overline{Q}_\infty$. Note that we can suppose that $\eta$ is only composed of edges between vertices of $\overline{\Theta}^{(1)}_\infty$. Indeed, if the path contains a vertex of $\overline{\Theta}^{(2)}_\infty$, it must cross the infinite bigeodesic $\overline{\gamma}_\infty$ at some vertex $w$. Therefore, we can replace the geodesic $\eta$ by a geodesic $\eta'$, which coincides with $\eta$ until it hits $w$, and then follows $\overline{\gamma}_\infty$ to reach $\overline{\gamma}_\infty(0)$.

   \begin{figure}
       \centering
       \includegraphics[scale=1]{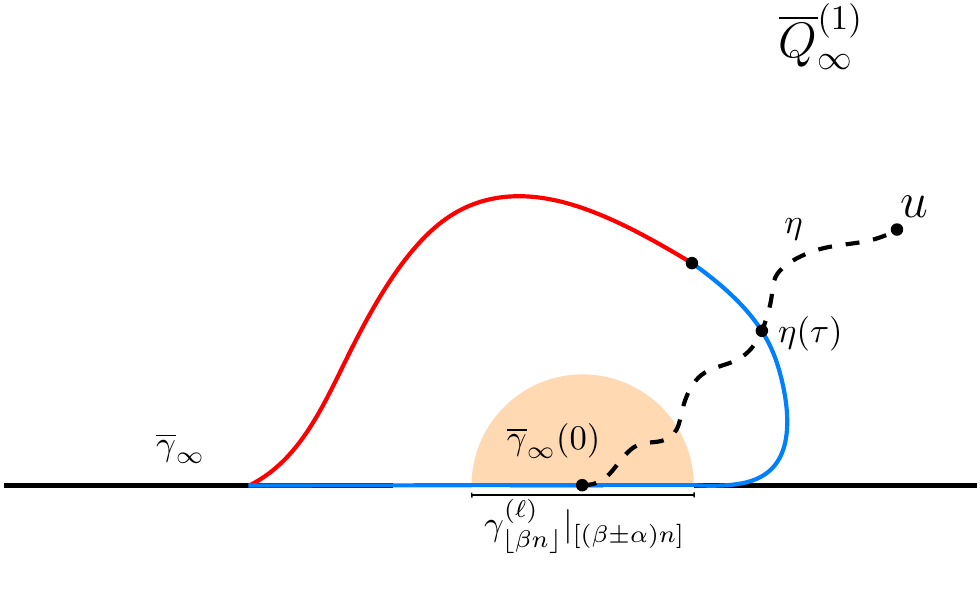}
       \caption{Illustration of the proof of Proposition \ref{localisation 2}. The region delimited by the blue and red paths correspond to $\Tilde{Q}_{\lfloor\beta n\rfloor}^{(2)}$, and the orange area represents $B_{\overline{Q}_{\infty}^{(1)}}(\overline{\gamma}_\infty(0),\alpha n)$. In order to apply Lemma \ref{distance bord}, we need to show that $\eta(\tau)\notin\gamma_{\lfloor\beta n\rfloor}^{(\ell)}|_{[\lfloor(\beta\pm\alpha) n\rfloor]}$.}
       \label{fig:loca}
   \end{figure} 
    Set 
    \[\tau=1+\sup\{k\geq0,\eta(k)\notin A_{\beta n}(\overline{\Theta}^{(1)}_\infty)\},\]
    which is well-defined because $u=\eta(0)\notin A_{\beta n}(\overline{\Theta}^{(1)}_\infty)$.Then, $\eta|_{\llbracket\tau,d_{\overline{Q}_\infty}(u,\overline{\gamma}_\infty(0))\rrbracket}$ is a geodesic between $\eta(\tau)$ and $\overline{\gamma}_\infty(0)$. Furthermore, by definition, it is only composed of edges that are also in $\Tilde{Q}_{\lfloor\beta n\rfloor}^{(1)}$. Therefore, the bound $d_{\overline{Q}_\infty}\leq d_{\Tilde{Q}_{\lfloor\beta n\rfloor}^{(1)}}$ implies that 
    \[d_{\overline{Q}_\infty}(\eta(\tau),\overline{\gamma}_\infty(0))=\Tilde{d}_{\tilde{Q}_{\lfloor\beta n\rfloor}^{(1)}}(\eta(\tau),\overline{\gamma}_\infty^{(1)}(0)).\]
    By Proposition \ref{Bord topologique}, we know that $\eta(\tau)\in\gamma_{\lfloor\beta n\rfloor}^{(\ell)}\cup\gamma_{\lfloor\beta n\rfloor}^{(r)}$. Let us prove that $\eta(\tau)\notin\gamma_{\lfloor\beta n\rfloor}^{(\ell)}|_{[\lfloor(\beta-\alpha) n\rfloor,\lfloor(\beta+\alpha) n\rfloor]}$.
    \begin{itemize}[label=\textbullet]
        \item if $|\ell_{\eta(\tau)}|>\alpha n$, $\eta(\tau)$ cannot be in $\gamma_{\lfloor\beta n\rfloor}^{(\ell)}|_{[\lfloor(\beta-\alpha) n\rfloor,\lfloor(\beta+\alpha) n\rfloor]}$, because every element $v$ of this set satisfies $|\ell_v|\leq\alpha n$. 
        \item suppose that $|\ell_{\eta(\tau)}|\leq\alpha n$. Note that if $\eta(\tau-1)$ cannot belong to the right part of the tree $\overline{\Theta}_\infty^{(1)}$. Indeed, since $\overline{\tau}_\infty^{(1)}\left({\lfloor2\alpha n\rfloor}\right)\in A_{\beta n}(\overline{\Theta}^{(1)}_\infty)$, we would have which means that then $\ell_{\eta(\tau)}\geq \ell_{\eta(\tau-1)}-1>2\alpha n-1$, which is not possible. Then, according to the CVS bijection, there exists two corners $c_0,c_1$ associated to $\eta(\tau),\eta(\tau-1)$ such that $c_0$ is the successor of $c_1$, or the other way around. Therefore, we either have $\ell_w\geq\ell_{\eta(\tau)}$ for every $w\in[c_0,c_1)$, or $\ell_w\geq\ell_{\eta(\tau)}$ for every $w\in[c_1,c_0)$. One the one hand, since the corner $c_1$ is on the left side of $\overline{\Theta}_\infty^{(1)}$, we have
        \[\inf_{w\in[c_1,c_0]}\ell_w=-\infty.\]
        On the other hand, observe that on $\mathcal{H}_{n,\alpha,\beta}$, for every corner $c$ associated to a vertex of $\gamma_{\lfloor\beta n\rfloor}^{(\ell)}|_{[\lfloor(\beta-\alpha) n\rfloor,\lfloor(\beta+\alpha) n\rfloor]}$, $[c,c_1]$ contains $\overline{\tau}_\infty^{(1)}\left({\lfloor-2\alpha n\rfloor}\right)\in A_{\beta n}(\overline{\Theta}^{(1)}_\infty)$. Therefore,
        \[\min_{w\in[c,c_1]}\ell_w\leq -2\alpha n.\]
        Since $|\ell_{\eta(\tau)}|\leq\alpha n$, this shows that $c_0$ is not associated to a vertex of $\gamma_{\lfloor\beta n\rfloor}^{(\ell)}|_{[\lfloor(\beta-\alpha) n\rfloor,\lfloor(\beta+\alpha) n\rfloor]}$, and proves the claim.
    \end{itemize}
    Finally, observe that on the event $\mathcal{H}_{n,\alpha,\beta}$, every element $v$ of $\gamma_{\lfloor\beta n\rfloor}^{(\ell)}\cup\gamma_{\lfloor\beta n\rfloor}^{(r)}$ that is not in $\gamma_{\lfloor\beta n\rfloor}^{(\ell)}|_{[\lfloor(\beta-\alpha) n\rfloor,\lfloor(\beta+\alpha) n\rfloor]}$ satisfies $d_{\tilde{Q}_{\lfloor\beta n\rfloor}^{(1)}}(\overline{\gamma}_\infty^{(1)}(0),v)\geq\alpha n$. Therefore, on $\mathcal{H}_n$, 
    \[d_{\overline{Q}_\infty}(u,\overline{\gamma}_\infty(0))\geq d_{\Tilde{Q}_{\lfloor\beta n\rfloor}^{(1)}}(\eta(\tau),\overline{\gamma}_\infty^{(1)}(0))\geq\alpha n,\]
    which proves the statement in this specific case. Note that the other case $u\in\overline{\Theta}^{(2)}_\infty\backslash \widehat{A}^\infty_{\beta n}(\overline{\Theta}^{(2)}_\infty)$ can be treated similarly by using $\Tilde{Q}_{\lfloor\beta n\rfloor}^{(2)}$, assuming without loss of generality that $\eta$ remains in $\overline{\Theta}^{(2)}_\infty\cup\gamma_\infty$.\\
    Let us use the result in this particular case to prove the statement for $Q_n$ and $Q_\infty$ (we only give the proof for $Q_n$, the ideas being similar for $Q_\infty$). As we did with $\left(A_{\beta n}(\overline{\Theta}^{(1)}_\infty),\widehat{A}_{\beta n}(\overline{\Theta}^{(2)}_\infty)\right)$, we can associate with $\left(A_{\beta n}(\Theta_n),\widehat{A}_{\beta n}(\Theta_n)\right)$ two quadrangulations with geodesics boundaries $\left(\widehat{Q}_{\lfloor\beta n\rfloor}^{(1)},\widehat{Q}_{\lfloor\beta n\rfloor}^{(2)}\right)$. Moreover, on $\mathcal{H}_{n,\alpha,\beta}$, Proposition \ref{comparaison distance1} implies that we have $\left(\widehat{Q}_{\lfloor\beta n\rfloor}^{(1)},\widehat{Q}_{\lfloor\beta n\rfloor}^{(2)}\right)=\left(\tilde{Q}_{\lfloor\beta n\rfloor}^{(1)},\tilde{Q}_{\lfloor\beta n\rfloor}^{(2)}\right)$. Fix $u\in Q_n\backslash(A_{\beta n}(\Theta_n)\cup\widehat{A}_{\beta n}(\Theta_n))$, and a geodesic $\eta$ between $u$ and $\gamma_n(n)$. We also define $\tau$ as we did previously, mutatis mutandis. Moreover, one can assume without loss of generality that $\eta|_{[\tau,d_{Q_n}(u,\gamma_n(n))]}$ has all its edges that are either in $\widehat{Q}_{\lfloor\beta n\rfloor}^{(1)}$, or in $\widehat{Q}_{\lfloor\beta n\rfloor}^{(2)}$. Without loss of generality, suppose that are in the first case. We have
    \[d_{Q_n}(\eta(\tau),\gamma_n(n))=d_{\widehat{Q}_{\lfloor\beta n\rfloor}^{(1)}}(\eta(\tau),\gamma_n(n))=d_{\tilde{Q}_{\lfloor\beta n\rfloor}^{(1)}}(\eta(\tau),\overline{\gamma}_\infty^{(1)}(0)).\]
    Then, we can conclude the proof with the same arguments used to prove the result for $\overline{Q}_\infty$, which gives the result. 
\end{proof}

\section{Proof of the main results}\label{sec main result}

In this section, we prove Theorem \ref{Main result}, Theorem \ref{theorem 2} and Corollary \ref{coro convergence}. To do so, we rely on Propositions \ref{CouplingTrees1}, \ref{CouplingTrees2} and \ref{localisation 2} to prove that we can couple our spaces so that macroscopic balls around $\gamma_n(n),\gamma_\infty(n)$ and $\overline{\gamma}_\infty(0)$ are isometric. Then, we use the method introduced in the proof of \cite[Theorem 2]{brownianplane} (together with Theorem \ref{convergergenceBP}) to prove the convergence.

Let $\mathcal{G}_{n,\alpha,\beta}$ be the intersection of the following events :
\begin{itemize}[label=\textbullet]
\item $\mathcal{H}_{n,\alpha,\beta}$ (introduced in the proof of Proposition \ref{localisation 2})
\item $\left\{B_{Q_n}(\gamma_n(n),2\alpha n)\subset A_{\beta n}(\Theta_n)\cup\widehat{A}_{\beta n}(\Theta_n)\right\}$
\item $\left\{B_{Q_\infty}(\gamma_\infty(n),2\alpha n)\subset A_{\beta n}(\Theta_\infty^{(n)})\cup\widehat{A}_{\beta n}(\Theta_\infty^{(n)})\right\}$
\item $\left\{B_{\overline{Q}_\infty}(\overline{\gamma}_\infty(0),2\alpha n)\subset A_{\beta n}(\overline{\Theta}_\infty^{(1)})\cup\widehat{A}_{\beta n}(\overline{\Theta}^{(2)}_\infty)\right\}.$
\end{itemize}  
On this event, we will be able to compare the balls $B_{Q_n}(\gamma_n(n),\alpha n), B_{Q_\infty}(\gamma_\infty(n),\alpha n)$ and $B_{\overline{Q}_\infty}(\overline{\gamma}_\infty(0),\alpha n)$. By Proposition \ref{localisation 2}, for every $\varepsilon,\beta>0$, we can choose $\alpha>0$ such that for every $n$ large enough, 
\[\P(\mathcal{G}_{n,\alpha,\beta})>1-\varepsilon.\]
The following result completes Proposition \ref{comparaison distance1}, and enables us to compare distances between all vertices in $B_{Q_n}(\gamma_n(n),2\alpha n),B_{Q_\infty}(\gamma_\infty(n),2\alpha n)$ and $B_{\overline{Q}_\infty}(\overline{\gamma}_\infty(0),2\alpha n)$.
\begin{proposition}\label{comparaison distance2}
    On $\mathcal{G}_{n,\alpha,\beta}$, for every $u\in A_{\beta n}(\Theta_n)$ and $v\in\widehat{A}_{\beta n}(\Theta_n)$, such that $u,v\in B_{Q_n}(\gamma_n(n),2\alpha n)$, we have
    \[d_{Q_n}(u,v)=1\Longleftrightarrow d_{Q_\infty}(u,v)=1\Longleftrightarrow d_{\overline{Q}_\infty}(u,v)=1.\]
\end{proposition}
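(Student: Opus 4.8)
The strategy is the one used for Proposition~\ref{comparaison distance1}, the new point being that an edge of $Q_n$ joining a vertex of $A_{\beta n}(\Theta_n)$ to a vertex of $\widehat A_{\beta n}(\Theta_n)$ can only occur ``the way the two geodesics meet''. Suppose $d_{Q_n}(u,v)=1$ with $u\in A_{\beta n}(\Theta_n)$, $v\in\widehat A_{\beta n}(\Theta_n)$ and $u,v\in B_{Q_n}(\gamma_n(n),2\alpha n)$. The edge between them is drawn in the CVS bijection from a corner $c$ to its successor $c'=\sigma_{\Theta_n}(c)$; thus $\ell_{c'}=\ell_c-1$ and every corner lying strictly between $c$ and $c'$ in clockwise order has label $\geq\ell_c$. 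Since $\ell_{\tau_n(n)}=\ell_{\gamma_n(n)}=0$, the bound \eqref{Borne} moreover gives $|\ell_c|,|\ell_{c'}|\leq 2\alpha n$.

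The first step is to show that $c$ is a corner of $v$, hence $c'$ a corner of $u$. On $\mathcal{G}_{n,\alpha,\beta}\subset\mathcal{H}_{n,\alpha,\beta}$, Corollary~\ref{Mininmum ailleurs} gives $\inf_{\Theta_n}\ell<\inf_{A_{\beta n}(\Theta_n)\cup\widehat A_{\beta n}(\Theta_n)}\ell$, so the global minimum of the labels is attained at a vertex $w_\ast$ outside $A_{\beta n}(\Theta_n)\cup\widehat A_{\beta n}(\Theta_n)$ with $\ell_{w_\ast}\leq\ell_c-1$ (lowering $\delta$ in the definition of $\mathcal H$ if necessary, one may even assume $\ell_{w_\ast}<-2\alpha n$). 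On the coupling event we have $Y^{(n)}_J>\beta n$, so $A_{\beta n}(\Theta_n)$ and $\widehat A_{\beta n}(\Theta_n)$ are disjoint (Remark~\ref{Remark1}) and are visited along two disjoint stretches of the contour of $\Theta_n$; the complementary corners thus split into two arcs. The arc running clockwise from $\widehat\tau_n(n-1)$ to $\tau_n(n)$ has all labels $\geq 1$, since $\tau_n(n)=\sigma(\widehat\tau_n(n-1))$ is the first corner of label $0$ after $\widehat\tau_n(n-1)$; hence $w_\ast$ lies in the other arc. Consequently, if $c$ were a corner of $u$, then going clockwise one would have to cross the whole arc containing $w_\ast$ before reaching $c'$, hence a corner of label $\leq\ell_c-1$, hence, consecutive contour corners having labels differing by at most one, a corner of label exactly $\ell_c-1$ strictly before $c'$, contradicting $c'=\sigma(c)$. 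Therefore $c$ is a corner of $v\in\widehat A_{\beta n}(\Theta_n)$ and $c'$ a corner of $u\in A_{\beta n}(\Theta_n)$.

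It remains to transfer this to $Q_\infty$ and $\overline Q_\infty$. On $\mathcal{G}_{n,\alpha,\beta}$ the labelled trees $A_{\beta n}$, $\widehat A_{\beta n}$ coincide for $\Theta_n$, $\Theta^{(n)}_\infty$ and $(\overline\Theta^{(1)}_\infty,\overline\Theta^{(2)}_\infty)$, and so does the way their boundary corners and the corners $\tau_\bullet(n),\widehat\tau_\bullet(n-1)$ are glued. Going clockwise from $c$ to $c'$ one runs through $\widehat A_{\beta n}$, then through the stretch of contour joining $\widehat\tau_n(n-1)$ to $\tau_n(n)$, then into $A_{\beta n}$, meeting $c'$ as the first corner of label $\ell_{c'}=\ell_c-1$. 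Since the corners of this stretch that belong to $A_{\beta n}\cup\widehat A_{\beta n}$ are identified across the three maps, while the remaining ones all carry labels far from $\ell_c-1$ (one checks, using $\alpha<\beta/2$ and the positions of $\tau_\bullet(\lfloor(1\pm2\alpha)n\rfloor)$, $\widehat\tau_\bullet(\lfloor(1\pm2\alpha)n\rfloor)$ recorded in $\mathcal{H}_{n,\alpha,\beta}$, that they cannot carry the label $\ell_c-1$), the successor of $c$ is still $c'$ in $\Theta^{(n)}_\infty$ and in $\overline\Theta_\infty$, i.e.\ $d_{Q_\infty}(u,v)=d_{\overline Q_\infty}(u,v)=1$; the converse implications follow by running the same argument with the three maps permuted. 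The main obstacle is precisely this last point: one must describe carefully the portion of the contour between $c$ and $c'$ that falls outside the common trees $A_{\beta n},\widehat A_{\beta n}$ and rule out the label $\ell_c-1$ on it. This is done, in the spirit of Section~\ref{Localisation} and of Proposition~\ref{sous ensemble} (the identification of $A_n(\overline\Theta^{(1)}_\infty)$ with $\Theta_n$), by using the control on the labels along the distinguished geodesics supplied by the events that make up $\mathcal{H}_{n,\alpha,\beta}$.
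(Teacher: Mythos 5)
Your proposal follows the same route as the paper: identify the direction of the CVS arc between $u$ and $v$ using the fact that the overall minimal label is attained outside $A_{\beta n}\cup\widehat A_{\beta n}$, and then transfer the successor relation by examining the corners that lie between $c$ and $c'$ in contour order. Your Step 1 (showing $c$ must be a corner of $v\in\widehat A_{\beta n}$ and $c'=\sigma(c)$ of $u\in A_{\beta n}$, via the fact that the ``inner'' arc $[a_2,b_1]$ contains the global-minimum vertex $w_\ast$ of label $<\ell_{c'}$) is in fact spelled out more explicitly than in the paper, which simply invokes the argument of Proposition~\ref{comparaison distance1}. Note you do not need the parenthetical ``lowering $\delta$'' remark: the event $\inf_{\Theta_n}\ell<\inf_{A_{\beta n}\cup\widehat A_{\beta n}}\ell$ in $\mathcal H_{n,\alpha,\beta}$ already gives $\ell_{w_\ast}<\ell_{c'}$ since $c'\in A_{\beta n}\cup\widehat A_{\beta n}$, which is all the intermediate-value argument requires.

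Where your write-up is thinner than the paper's is the transfer step, and the weakening is not merely cosmetic. To conclude $c'=\sigma(c)$ in $\Theta_\infty^{(n)}$ you need \emph{all} corners of the cyclic interval $(c,c')$ to have label $\geq\ell_c$, not merely to avoid the single value $\ell_c-1$ (a corner of label $\leq\ell_c-2$ strictly before $c'$ would, by the intermediate-value property of labels along the contour, force a corner of label $\ell_c-1$ before $c'$). Your parenthetical ``one checks ... that they cannot carry the label $\ell_c-1$'' states the weaker property; the paper proves the stronger one by observing that the corners of $(c,c')$ not lying in $A_{\beta n}\cup\widehat A_{\beta n}$ are contained in the arc from $\widehat\tau_\bullet(\lfloor(1-2\alpha)n\rfloor)$ to $\tau_\bullet(\lfloor(1-2\alpha)n\rfloor)$, whose labels all exceed $n-\lfloor(1-2\alpha)n\rfloor\geq 2\alpha n\geq|\ell_c|$, the containment being forced by the events $\tau_\bullet(\lfloor(1\pm2\alpha)n\rfloor)\in A_{\beta n}$ and $\widehat\tau_\bullet(\lfloor(1\pm2\alpha)n\rfloor)\in\widehat A_{\beta n}$ in $\mathcal H_{n,\alpha,\beta}$. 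You cite exactly these events, so the gap is readily filled, but the argument as written does not close. Finally, the case of $\overline Q_\infty$ needs its own treatment (the successor in $\overline\Theta_\infty^{(2)}$ may land on one of the auxiliary vertices $\lambda_i$ glued to $\overline\Theta_\infty^{(1)}$); your sentence about the boundary corners ``being glued the same way'' gestures at this but does not carry it out, whereas the paper gives the explicit argument using $\overline\tau_\infty^{(2)}(\lfloor\alpha n\rfloor)\in\widehat A_{\beta n}(\overline\Theta_\infty^{(2)})$.
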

\begin{proof}
    Note that the requirement $u,v\in B_{Q_n}(\gamma_n(n),2\alpha n)$ together with the bound \eqref{Borne} implies that 
    \[|\ell_u|\leq2\alpha n\quad\text{ and }\quad|\ell_v|\leq2\alpha n.\]
    Let $c_u$ and $c_v$ be the corners associated to $u$ and $v$ such that the edge between $u$ and $v$ corresponds to an edge drawn between $c_u$ and $c_v$ in the CVS bijection. As in the proof of Proposition \ref{comparaison distance1}, our hypotheses imply that $c_u$ must be the successor of $c_v$. This means that $\ell_u=\ell_v-1$ and that every corner $c$ encountered in the cyclic order between $c_v$ and $c_u$ must satisfy $\ell_c\geq\ell_v$. Observe that this is trivially satisfied for any corner between $\widehat{\tau}(\lfloor(1-2\alpha)n\rfloor)$ and $\tau(\lfloor(1-2\alpha)n\rfloor)$, because any such corner $c$ has $\ell_c\geq\lfloor(1-2\alpha)n\rfloor+1$. In particular, this means that $$c_u,c_v\notin\left[\widehat{\tau}(\lfloor(1-2\alpha)n\rfloor),\tau(\lfloor(1-2\alpha)n\rfloor)\right].$$ But this implies that $c_u$ is also the successor of $c_v$ in $\Theta_\infty^{(n)}$. Indeed, the requirement $\tau(\lfloor(1-2\alpha)n\rfloor),\tau(\lfloor(1+2\alpha)n\rfloor)\in A_{\beta n}(\Theta_\infty^{(n)})$ and $\widehat{\tau}(\lfloor(1-2\alpha)n\rfloor),\widehat{\tau}(\lfloor(1+2\alpha)n\rfloor)\in \widehat{A}_{\beta n}(\Theta_\infty^{(n)})$ ensures that $\ell_c\geq\ell_v$ for every corner between $c_v$ and $c_u$ that is not in $A_{\beta n}(\Theta_\infty^{(n)})\cup\widehat{A}_{\beta n}(\Theta_\infty^{(n)})$, and this also holds for the other corners in $A_{\beta n}(\Theta_\infty^{(n)})\cup\widehat{A}_{\beta n}(\Theta_\infty^{(n)})$ because of the equality $$\left(A_{\beta n}(\Theta_n),\widehat{A}_{\beta n}(\Theta_n)\right)=\left(A_{\beta n}(\Theta_\infty^{(n)}),\widehat{A}_{\beta n}(\Theta_\infty^{(n)})\right).$$ Therefore, $d_{Q_\infty}(u,v)=1$. Similarly, since $\overline{\tau}^{(2)}_\infty(\lfloor\alpha n\rfloor)\in\widehat{A}_{\beta n}(\overline{\Theta}_\infty^{(2)})$ and because there is no corner $c$ in $\widehat{A}_{\beta n}(\Theta_n)$ after $c_v$ such that $\ell_c<\ell_v$, the construction of $\overline{Q}_\infty$ implies that there is an edge between $c_v$ and the first corner of $\overline{\Theta}^{(1)}_\infty$ with label $\ell_v-1$, which is $c_u$. Therefore, we also have $d_{\overline{Q}_\infty}(u,v)=1$. Conversely, the same idea (and the fact that we work on the event $\mathcal{G}_{n,\alpha,\beta}$) shows that if there is an edge between $u$ and $v$ in $Q_\infty$ or $\overline{Q}_\infty$, this edge is also in $Q_n$, which concludes the proof. 
\end{proof} 
\begin{remark}
In fact, we proved a bit more than the statement of Proposition \ref{comparaison distance2} : we showed that on the considered event, the balls have the same set of vertices and edges. 
\end{remark}
Now, we can prove Theorem \ref{theorem 2} and Corollary \ref{coro convergence}, which are direct consequences of Propositions \ref{CouplingTrees1}, \ref{localisation 2} and \ref{comparaison distance2}. The following proposition gathers these two results, and is an enhancement of the main theorem of \cite{dieuleveut}. 
\begin{proposition}\label{couplage boules}
    For every $\varepsilon>0$, there exists $\alpha>0$ such that for every $n$ large enough, we can couple $Q_n,Q_\infty$ and $\overline{Q}_\infty$ in such way that the balls $B_{Q_n}(\gamma_n(n),\alpha n), B_{Q_\infty}(\gamma_\infty(n),\alpha n)$ and $ B_{\overline{Q}_\infty}(\overline{\gamma}_\infty(0),\alpha n)$ are isometric with probability at least $1-\varepsilon$. Moreover, we have the following convergence in distribution :
    \[\left(Q_n,\gamma_n(n)\right)\xrightarrow[n\rightarrow\infty]{(d)}\left(\overline{Q}_\infty,\overline{\gamma}_\infty(0)\right)\]
    for the local topology.
\end{proposition}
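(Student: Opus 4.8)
The plan is to assemble the coupling of trees and the localization lemma on the good event $\mathcal{G}_{n,\alpha,\beta}$ defined just above, and then read off both assertions. Fix $\varepsilon>0$. Coupling $\Theta_n$ and $\Theta_\infty^{(n)}$ simultaneously to the same pair $(\overline{\Theta}_\infty^{(1)},\overline{\Theta}_\infty^{(2)})$ through Propositions \ref{CouplingTrees1} and \ref{CouplingTrees2}, and using Proposition \ref{localisation 2}, one can choose $\beta>0$ and then $\alpha>0$ so that $\P(\mathcal{G}_{n,\alpha,\beta})>1-\varepsilon$ for all $n$ large enough. On $\mathcal{G}_{n,\alpha,\beta}$ the three sets $A_{\beta n}(\Theta_n)\cup\widehat{A}_{\beta n}(\Theta_n)$, $A_{\beta n}(\Theta_\infty^{(n)})\cup\widehat{A}_{\beta n}(\Theta_\infty^{(n)})$ and $A_{\beta n}(\overline{\Theta}_\infty^{(1)})\cup\widehat{A}_{\beta n}(\overline{\Theta}_\infty^{(2)})$ are identified, with $\gamma_n(n)$, $\gamma_\infty(n)$, $\overline{\gamma}_\infty(0)$ in correspondence, and each of the balls $B_{Q_n}(\gamma_n(n),2\alpha n)$, $B_{Q_\infty}(\gamma_\infty(n),2\alpha n)$, $B_{\overline{Q}_\infty}(\overline{\gamma}_\infty(0),2\alpha n)$ lies inside the corresponding union.

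On $\mathcal{G}_{n,\alpha,\beta}$ I would then show that the three balls of radius $\alpha n$ coincide. The key ingredient is the edge comparison provided by Propositions \ref{comparaison distance1} and \ref{comparaison distance2}: two vertices of the common copy of $B_{Q_n}(\gamma_n(n),2\alpha n)$ are adjacent in $Q_n$ if and only if they are adjacent in $Q_\infty$, if and only if they are adjacent in $\overline{Q}_\infty$ (apply Proposition \ref{comparaison distance1} when both lie in $A_{\beta n}$ or both in $\widehat{A}_{\beta n}$, whose hypotheses hold since $\mathcal{G}_{n,\alpha,\beta}\subset\mathcal{H}_{n,\alpha,\beta}$, and Proposition \ref{comparaison distance2} otherwise). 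Consequently, any path in one of the three maps all of whose vertices lie in the radius-$2\alpha n$ ball is also a path of the same length in the other two. Given $u$ with $d_{Q_n}(u,\gamma_n(n))\le\alpha n$, a $Q_n$-geodesic from $u$ to $\gamma_n(n)$ has all its vertices at distance at most $\alpha n$ from $\gamma_n(n)$, hence inside $B_{Q_n}(\gamma_n(n),2\alpha n)$, so it is also a path of the same length in $\overline{Q}_\infty$; this gives $d_{\overline{Q}_\infty}(u,\overline{\gamma}_\infty(0))\le d_{Q_n}(u,\gamma_n(n))$, and the reverse inequality follows the same way since $B_{\overline{Q}_\infty}(\overline{\gamma}_\infty(0),2\alpha n)$ is likewise contained in the common copy of $A_{\beta n}\cup\widehat{A}_{\beta n}$; the same applies to $Q_\infty$. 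Hence the three balls of radius $\alpha n$ have the same vertex set, and one more application of the edge comparison (for $n$ large every edge incident to such a vertex still has both endpoints in the radius-$2\alpha n$ ball) shows they have the same edges; being produced by the CVS bijection from the identified trees, they are equal as rooted maps, in particular isometric. This proves the first assertion.

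For the convergence in distribution, fix a radius $r\ge0$ and a finite rooted map $m_0$. By the first part, for each $\varepsilon>0$ there are $\alpha>0$ and $n_0$ with $\P(\mathcal{G}_{n,\alpha,\beta})>1-\varepsilon$ for $n\ge n_0$; enlarging $n_0$ so that $\alpha n>r+1$, on $\mathcal{G}_{n,\alpha,\beta}$ the balls $B_r(Q_n,\gamma_n(n))$ and $B_r(\overline{Q}_\infty,\overline{\gamma}_\infty(0))$ are equal as rooted maps, whence $|\P(B_r(Q_n,\gamma_n(n))=m_0)-\P(B_r(\overline{Q}_\infty,\overline{\gamma}_\infty(0))=m_0)|\le\varepsilon$ for $n\ge n_0$. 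As $\varepsilon$ is arbitrary, $B_r(Q_n,\gamma_n(n))\to B_r(\overline{Q}_\infty,\overline{\gamma}_\infty(0))$ in distribution for every $r$; since the Borel $\sigma$-algebra for the local topology is generated by the maps $m\mapsto B_r(m)$, this is exactly the convergence $(Q_n,\gamma_n(n))\xrightarrow[n\to\infty]{(d)}(\overline{Q}_\infty,\overline{\gamma}_\infty(0))$ for the local topology.

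The substantive work is entirely upstream, in the spinal decompositions and the coupling of the encoding processes (Propositions \ref{CouplingTrees1} and \ref{CouplingTrees2}) and in the localization estimate (Proposition \ref{localisation 2}); granting those inputs the argument above is bookkeeping. The one point that requires care is the control of geodesics: one must make sure that geodesics between points of the radius-$\alpha n$ ball, as well as the one-edge neighbourhoods needed to pin down its edge set, never leave the radius-$2\alpha n$ ball on which the three maps have been glued together. This is precisely why the safety factor $2$ is built into $\mathcal{G}_{n,\alpha,\beta}$, and it follows from the elementary observation that the distance to the base point is non-increasing along a geodesic aimed at it.
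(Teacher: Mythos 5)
Your proof is correct and follows essentially the same path as the paper's: you fix the good event $\mathcal{G}_{n,\alpha,\beta}$, argue that geodesics between points of the radius-$\alpha n$ balls cannot escape the radius-$2\alpha n$ region where the three maps are identified, invoke Propositions \ref{comparaison distance1} and \ref{comparaison distance2} for the edge comparison, and deduce that the balls coincide (and hence are isometric), after which the local convergence is a matter of unpacking the definition of the local topology. The paper phrases the ball comparison by showing directly that $d_{Q_n}(u,v)=d_{Q_\infty}(u,v)=d_{\overline{Q}_\infty}(u,v)$ for all $u,v$ in the radius-$\alpha n$ ball (which implicitly uses the same non-escape property of geodesics you note), whereas you organize it as "same vertex set, then same edge set," but this is only a difference of presentation, not of substance.
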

\begin{proof}[Proof of Proposition \ref{couplage boules}]
  First, we prove the coupling result. Fix $\varepsilon>0$ and $\beta>0$, and choose $\alpha>0$ such that for $n$ large enough, $\P(\mathcal{G}_{n,\alpha,\beta})>1-\varepsilon$. We will show that on $\mathcal{G}_{n,\alpha,\beta}$, for every $u,v\in A_{\beta n}(\Theta_n)\cup\widehat{A}_{\beta n}(\Theta_n)$ such that $u,v\in B_{Q_n}(\gamma_n(n),\alpha n)$, we have 
    \[d_{Q_n}(u,v)=d_{Q_\infty}(u,v)=d_{\overline{Q}_\infty}(u,v).\]
    
    Fix $u,v$ as described previously, and consider a geodesic $\eta$ between $u$ and $v$ (in $Q_n$). First, note that on the considered event, every vertex encountered by $\eta$ is in $A_{\beta n}(\Theta_n)\cup\widehat{A}_{\beta n}(\Theta_n)$. 
    Therefore, by Propositions \ref{comparaison distance1} and \ref{comparaison distance2}, every edge of $\eta$ is also in $Q_\infty$ and $\overline{Q}_\infty$, which implies that 
    \[d_{Q_n}(u,v)\geq d_{Q_\infty}(u,v)\quad\text{ and }\quad d_{Q_n}(u,v)\geq d_{\overline{Q}_\infty}(u,v).\] However, by symmetry, the converse bounds also hold, which gives the equality, and show that the balls are isometric. Finally, the convergence statement of the proposition is a direct consequence of the coupling result together with the definition of the local topology, which concludes the proof.   
\end{proof}

We can finally prove the main result of this paper.

\begin{proof}[Proof of Theorem \ref{Main result}]
    We proceed as in \cite[Theorem 2]{brownianplane}. Let $(k_n)_{n\geq1}$ be a sequence of non-negative real numbers converging to $\infty$, such that $k_n=o(n)$ as $n\rightarrow\infty$. We will prove that : 
    \begin{align*}
        B_r\big(k_n^{-1}\cdot Q_n,\gamma_n(n)\big)\xrightarrow[n\rightarrow\infty]{(d)}B_r(\overline{\mathcal{BP}})\\
        B_r\big(k_n^{-1}\cdot Q_\infty,\gamma_\infty(n)\big)\xrightarrow[n\rightarrow\infty]{(d)}B_r(\overline{\mathcal{BP}})\\
        B_r\big(k_n^{-1}\cdot \overline{Q}_\infty,\overline{\gamma}_\infty(0)\big)\xrightarrow[n\rightarrow\infty]{(d)}B_r(\overline{\mathcal{BP}}).
        \end{align*}
        
        To simplify notations, we fix $r=1$, and we let $\varepsilon>0$. Then, by Theorem \ref{convergergenceBP}, there exists $\lambda_0>0$ such that for every $\lambda>\lambda_0$, we can construct $\overline{\mathcal{BP}}$ and $\cS$ (under $\N_0(\cdot\,|\,W_*<-1)$) in such a way that the equality 
        \begin{equation}\label{couplage BP}
        B_1(\lambda\cdot\cS,\Gamma_1)=B_1(\overline{\mathcal{BP}})
        \end{equation} 
        holds with probability at least $1-\varepsilon$. Then, choose $\alpha>0$ such that the result of Theorem
\ref{theorem 2} holds with probability at least $1-\varepsilon$. We can suppose that $\alpha<(2\lambda_0)^{-1}$ and that $k_n<\alpha n$ for every $n\in\N$. We define $m_n=\alpha^{-1}k_n$. Note that for every $n\in\N$, $m_n\leq n$ and $k_n\leq\alpha m_n$.

        Hence, for $n$ large enough, we can construct $Q_n,\,Q_{m_n},\,Q_\infty$ and $\overline{Q}_{\infty}$ on the same probability space such that the equality 
        \begin{equation}\label{couplage quad}
            B_{k_n}(Q_n,\gamma_n(n))=B_{k_n}(Q_{m_{n}},\gamma_{m_n}(m_n))=B_{k_n}(Q_\infty,\gamma_\infty(n))=B_{k_n}(\overline{Q}_\infty,\overline{\gamma}_\infty(0))
            \end{equation}
        holds with probability at least $1-\varepsilon$.\\
        By Theorem \ref{Convergergence sphere}, we have : 
        \[\left(V(Q_{m_n}),\,(\alpha^{-1}k_n)^{-1}d_{gr},\rho_{(m_n)}\right)\xrightarrow[n\rightarrow\infty]{(d)}\bigg(\cS,\bigg(\frac{8}{9}\bigg)^{1/4}D,\rho\bigg).\]
        In particular, this implies that 
        \begin{equation}\label{convergence boules}
        B_1(k_n^{-1}\cdot Q_{m_n},\gamma_{m_n}(m_n))\xrightarrow[n\rightarrow\infty]{(d)}B_1(\lambda\cdot\cS,\Gamma_1)
        \end{equation}
        where $\lambda=\bigg(\frac{8}{9}\bigg)^{1/4}\alpha^{-1}$. Note that since $\lambda>\lambda_0$, for every bounded continuous function $F:\mathbb{K}\longrightarrow\R$, we have : 
        \begin{align*}
            \E\big[|F(B_1(k_n^{-1}\cdot Q_{n},\gamma_n(n)))-&F(B_1(\overline{\mathcal{BP}}))|\big]\leq\\
            &\,\,\,\,\,\E\left[\left|F(B_1(k_n^{-1}\cdot Q_{n},\gamma_n(n)))-F(B_1(k_n^{-1}\cdot Q_{m_n},\gamma_{m_n}(m_n)))\right|\right]\\
            &+\E\left[\left|F(B_1(k_n^{-1}\cdot Q_{m_n},\gamma_{m_n}(m_n)))-F(B_1(\lambda\cdot\cS,\Gamma_b))\right|\right]\\
            &+\E\left[\left|F(B_1(\lambda\cdot\cS,\Gamma_b))-F(B_1(\overline{\mathcal{BP}}))\right|\right].
        \end{align*}
By \eqref{couplage BP} and \eqref{couplage quad}, the first and third term of the right-hand side are bounded above by $\varepsilon\sup F$. Similarly, by \eqref{convergence boules}, the second term converges to $0$ as $n\rightarrow\infty$, which concludes the proof.        
\end{proof}

\appendix
\section{Proof of Theorem \ref{Convergergence sphere}}\label{appendix0}

This appendix is devoted to the proof of Theorem \ref{Convergergence sphere}.

\begin{proof}[Proof of Theorem \ref{Convergergence sphere}]
    Consider $(X,L)=(X_n,L_n)_{n\in\N}$, where $X$ is a simple random walk, and $L$ is the label process of the infinite forest encoded by $X$. It is well known that each excursion of $(X,L)$ above the running infimum of $X$ encodes a random labelled tree whose law is $\rho_{(0)}$. Consequently, according to what was said in Section \ref{sec cvs}, each excursion encodes a critical positive pointed Boltzmann quadrangulation.
    
    Let us introduce some notations. For $k\in \N$, set
    \[t_k=\inf\{n\geq 0, X_n=-k\}.\]
    For $i\in\N$, let $(X^{(i,j,n)},L^{(i,j,n)})_{1\leq j\leq n}$ denote the excursions of $X$ above its running infimum between $t_{in^2}$ and $t_{(i+1)n^2}$, indexed by decreasing order of their duration $\sigma^{(i,j,n)}$. We also shift these excursions so that they start at $0$. Then, we denote by $\left(Q^{(i,j,n)},d_{Q^{(i,j,n)}},v_0^{(i,j,n)},v_*^{(i,j,n)}\right)_{i,j,n\in\N}$ be the quadrangulations associated to these excursions via the CVS bijection.
By \cite[Theorem 3]{ConvergenceSerpent}, we have : 
    \begin{equation}\label{Convergence Brownian1}
        \bigg(\bigg(\frac{X_{\lfloor n^4t \rfloor}}{n^{2}}\bigg)_{t\geq0},\,\bigg(\frac{L_{\lfloor n^4t \rfloor}}{n}\bigg)_{t\geq0}\bigg)\xrightarrow[n\rightarrow\infty]{(d)}\bigg(\mathcal{B}_t,\mathcal{Z}_t\bigg)_{t\geq0},
    \end{equation} 
   where $\mathcal{B}$ is a standard Brownian motion and $\mathcal{Z}$ the Brownian snake driven by $\mathcal{B}$. In what follows, we denote by $T_i$ the first hitting time of $-i$ by $\mathcal{B}$. By Skorokhod representation theorem, we can suppose that this convergence holds almost surely. Then, for every $i\in\N$, let $(\mathcal{B}^{(i,j)},\mathcal{Z}^{(i,j)})_{j\in\N}$ be the excursions of $(\mathcal{B},\mathcal{Z})$ above the running infimum of $\mathcal{B}$ between $T_i$ and $T_{i+1}$, indexed by decreasing order of their duration $\sigma^{(i,j)}$. Since we assumed that the convergence \eqref{Convergence Brownian1} holds almost surely, we also have 
   \[\left(\frac{1}{n^2}X^{(i,j,n)}_{\lfloor n^4\cdot\rfloor},\frac{1}{n}L^{(i,j,n)}_{\lfloor n^4\cdot\rfloor},\frac{1}{n^4}\sigma^{(i,j,n)}\right)_{i\in\N,1\leq j\leq n}\xrightarrow[n\rightarrow\infty]{a.s.}\left(\mathcal{B}^{(i,j)},\mathcal{Z}^{(i,j)},\sigma^{(i,j)}\right)_{i,j\in\N}.\]
Observe that, conditionally on $\sigma^{(i,j,n)}$, the random variables $\left(Q^{(i,j,n)},d_{Q^{(i,j,n)}},v_0^{(i,j,n)},v_*^{(i,j,n)}\right)$ is uniform among the set of pointed quadrangulations with $\sigma^{(i,j,n)}$ faces. Therefore, by the main result of \cite{uniqueness,convergence}, for every $i,j\in\N$ and $n$ large enough, we have 
\begin{multline}\label{Convergence excursion}
\left(\frac{1}{n^2}X^{(i,j,n)}_{\lfloor n^4\cdot\rfloor},\frac{1}{n}L^{(i,j,n)}_{\lfloor n^4\cdot\rfloor},\left(Q^{(i,j,n)},\frac{\sqrt{3}}{\sqrt{2}n }d_{Q^{(i,j,n)}},v_0^{(i,j,n)},v_*^{(i,j,n)}\right)\right)\\
\xrightarrow[n\rightarrow\infty]{(d)}\left(\mathcal{B}^{(i,j)},\mathcal{Z}^{(i,j)},\left(\mathcal{S}^{(i,j)},D^{(i,j)},x_0^{(i,j)},x_*^{(i,j)}\right)\right),
\end{multline}
where $\left(\mathcal{S}^{(i,j)},D^{(i,j)},x_0^{(i,j)},x_*^{(i,j)}\right)$ is the Brownian sphere with random volume $\sigma^{(i,j)}$, and equal to $\Psi\left(\cB^{(i,j)},\cZ^{(i,j)}\right)$. We mention that even though we are dealing with positive quadrangulations, the results of \cite{uniqueness,convergence} still hold.

In what follows, we say that $(i,j)<(i',j')$ if $i<i'$, or if $i=i'$, and $j<j'$. Now, for every $n\in\N$, define 
\[(i_n,j_n)=\inf\left\{(i,j),\inf L^{(i,j,n)}\leq-n\right\}.\]
The key observation is that, by definition, $\left(Q^{(i_n,j_n,n)},d_{Q^{(i_n,j_n,n)}},v_0^{(i_n,j_n,n)},v_*^{(i_n,j_n,n)}\right)$ is distributed as the random quadrangulation $Q_n$ introduced in Section \ref{random maps}. Since we assumed that the convergence \eqref{Convergence Brownian1} holds almost surely, there exists a random index $(\tilde{i},\tilde{j})$ such that 
\[(i_n,j_n)\xrightarrow[n\rightarrow\infty]{a.s.}(\tilde{i},\tilde{j}).\]
Then, since $\N_0$ is the excursion measure of the Brownian snake, it follows that \[\left(\frac{1}{n^2}X^{(i_n,j_n,n)}_{\lfloor n^4\cdot\rfloor},\frac{1}{n}L^{(i_n,j_n,n)}_{\lfloor n^4\cdot\rfloor}\right)\xrightarrow[n\rightarrow\infty]{a.s.} \left(\mathcal{B}^{(\tilde{i},\tilde{j})},\mathcal{L}^{(\tilde{i},\tilde{j})}\right),\] and that the latter is distributed under the probability measure $\N_0(\cdot\,|\,W_*<-1)$. Together with \eqref{Convergence excursion} and since $Q^{(i_n,j_n,n)}$ is distributed as $Q_n$, this yields the result. 
\end{proof}

\section{Proof of Lemma \ref{couplage processus 2}}\label{appendix}

This appendix is devoted to the proof of Lemma \ref{couplage processus 2}. This result is an enhanced version of \cite[Lemma 3.3]{dieuleveut}, and the idea of the proof is similar.

First, we introduce some notation. Recall that $(X_n)_{n\geq0}$ is the Markov chain with transition probabilities given by \eqref{Transition}. For every $x,k\in\N$, we define the Green function
\[H_x(k)=\sum_{n\geq1}\P_x(X_{n-1}=k)\]
and 
\[H_x^*(k)=\sum_{n\geq1}(n+1)\P_x(X_{n-1}=k).\]
These quantities will appear in the following proofs, and were computed in \cite{dieuleveut}. 
\begin{lemme}
    Fix $k\geq2,x\in\N$. We have 
    \begin{itemize}[label=\textbullet]
        \item If $x\leq k$, 
        \[H_x(k)=\frac{3}{10}(2k+3)\]
        \item If $x>k$, 
        \[H_x(k)=\frac{3h(k)}{10h(x)}(2k+3)\]
    \end{itemize} where $h$ was defined in \eqref{Probabilité de retour}. 
\end{lemme}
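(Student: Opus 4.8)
The plan is to compute the Green functions $H_x(k)$ by exploiting the explicit knowledge of the return probabilities of the Markov chain $X$ recorded in \eqref{Probabilité de retour 2} and \eqref{Probabilité de retour}. First I would rewrite $H_x(k) = \sum_{n\geq 1}\P_x(X_{n-1}=k)$ as the expected total number of visits of $X$ to $k$ when started from $x$. A standard first-passage decomposition then gives
\[
H_x(k) = \P_x(H_k<\infty)\cdot \frac{1}{\P_k(X\text{ never returns to }k)} = \frac{\P_x(H_k<\infty)}{1-\P_k(H_k<\infty)},
\]
where in the case $x\leq k$ one has $\P_x(H_k<\infty)=1$ (the chain is transient upward, so it must eventually hit every level above, and levels below or equal to $k$ are all visited before escaping — one should double-check this using the structure of $p(x,x\pm1)$ and the fact that from $x\le k$ the walk reaches $k$ almost surely). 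In the case $x>k$, formula \eqref{Probabilité de retour} gives $\P_x(H_k<\infty)=h(k)/h(x)$ directly.

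The remaining ingredient is the denominator $1-\P_k(H_k<\infty)$. Plugging in \eqref{Probabilité de retour 2}, namely $\P_k(H_k<\infty)=\frac{6k-1}{3(2k+3)}$, I compute
\[
1-\P_k(H_k<\infty) = \frac{3(2k+3)-(6k-1)}{3(2k+3)} = \frac{10}{3(2k+3)}.
\]
Therefore $H_k(k)=\frac{3(2k+3)}{10}$ in the case $x\le k$, which matches the first claimed formula, and for $x>k$,
\[
H_x(k) = \frac{h(k)}{h(x)}\cdot\frac{3(2k+3)}{10} = \frac{3h(k)(2k+3)}{10\,h(x)},
\]
which matches the second claimed formula. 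For $x\le k$ with $x<k$ one similarly gets $H_x(k)=\frac{3(2k+3)}{10}$ since $\P_x(H_k<\infty)=1$.

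I expect the only genuine subtlety to be justifying the claim $\P_x(H_k<\infty)=1$ for $x\le k$, i.e. that the chain started below (or at) level $k$ visits $k$ almost surely. This follows because on $\{X_n\}$ restricted to $\{1,2,\dots\}$, absorption never occurs (the only issue would be absorption at $0$, but from $0$ we have $\P(X_1=1\mid X_0=0)=1$, so the chain immediately leaves $0$ and never sticks there), and the process diverges to $+\infty$ by the Lamperti-type convergence \eqref{convergence Bessel} toward a Bessel process of dimension $7$, which is transient; hence from any $x\le k$ the trajectory, being continuous in $\mathbb{Z}$-steps and tending to $+\infty$, must cross level $k$. Once this is in hand, the rest is the elementary renewal identity above together with the substitution of the two already-established return-probability formulas, so there is no heavy computation — everything reduces to the arithmetic displayed above. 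I would also note in passing that the hypothesis $k\ge 2$ is what is needed for \eqref{Probabilité de retour 2} and the formula for $h$ to be applied without degeneracy at small values.
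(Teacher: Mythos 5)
Your argument is correct and arrives at both formulas cleanly. One thing to note: the paper itself does not actually give a proof of this lemma; it states the result and attributes it to Dieuleveut, so there is no in-paper proof to compare against. That said, your route is the natural one: the renewal identity
\[
H_x(k) = \P_x(\text{ever visit } k)\cdot\frac{1}{1-\P_k(H_k<\infty)},
\]
followed by substitution of the return probabilities \eqref{Probabilité de retour 2} and \eqref{Probabilité de retour}, is exactly what the explicit form of the answer suggests. The arithmetic $1 - \frac{6k-1}{3(2k+3)} = \frac{10}{3(2k+3)}$ is right, and the transience argument (nearest-neighbour steps plus divergence to $+\infty$ implies a.s.\ passage through level $k$ from any $x<k$) is the right justification for the numerator being $1$ when $x<k$.

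One small imprecision worth fixing: when you write ``$\P_x(H_k<\infty)=1$ for $x\le k$,'' recall the paper defines $H_k=\inf\{n\geq 1:X_n=k\}$. For $x=k$ this is the \emph{return} time and $\P_k(H_k<\infty)=\frac{6k-1}{3(2k+3)}<1$, not $1$. What you actually want in the numerator is the probability of ever occupying $k$ at some time $n\ge 0$, which is trivially $1$ when $x=k$ (the visit at time $0$) and $1$ when $x<k$ by the transience argument. Since $H_x(k)=\sum_{m\ge 0}\P_x(X_m=k)$ includes the time-$0$ term, the correct decomposition is $H_x(k)=\P_x(\inf\{n\ge 0:X_n=k\}<\infty)\cdot H_k(k)$ with $H_k(k)=(1-\P_k(H_k<\infty))^{-1}$, and your final expressions are exactly this. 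So the conclusion stands; just separate the ``first-passage'' random variable in the numerator from the ``return time'' $H_k$ in the denominator to avoid the clash with the paper's convention.
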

The value of $H_x^*(k)$ was also computed in \cite[Lemma 2.4]{dieuleveut}. However, the formula given for $H_x^*(k)$ in the case $x>k$ appears to be false (it tends to $-\infty$ as $x\rightarrow\infty$, but it is a non-negative quantity). The following lemma gives the correct formula.
\begin{lemme}\label{ERREUR 404}
    Fix $k\geq2,x\in\N$, and let $C_x=\frac{3}{14}((x+1)(x+2)-6)$. Then, we have 
    \begin{itemize}
        \item If $x\leq k$, 
        \[H_x^*(k)=\frac{3f(k)}{f(1)w(k)}-\frac{3C_x}{10}(2k+3)\]
        where f was defined in \eqref{Transition}.
        \item If $x> k$,
        \[H_x^*(k)=\frac{h(k)}{h(x)}\left(H_k^*(k)+\frac{3A_{x,k}}{10}(2k+3)\right)\] where 
\[A_{x,k}=(k+2)+\frac{(x-k-1)(x+k+4)}{2}\]
and $h(x)=x(x+1)(x+2)(x+3)(2x+3)$.
    \end{itemize}
\end{lemme}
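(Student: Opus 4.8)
The strategy is to reduce everything to the single Green-function identity valid for all starting points above $k$, namely that hitting $k$ in finite time has probability $h(k)/h(x)$ and that, conditionally on hitting $k$, the chain behaves as if started at $k$. Concretely, by the strong Markov property applied at the first hitting time $\tau_k$ of $k$, for $x>k$ one has
\[
H_x^*(k)=\sum_{n\ge1}(n+1)\P_x(X_{n-1}=k)=\E_x\!\left[\1_{\{\tau_k<\infty\}}\,(\tau_k+ \widetilde H^*)\right],
\]
where $\widetilde H^*$ is an independent copy of the quantity $H_k^*(k)$ (more precisely: writing $H_k(k)$ for the Green mass at $k$ started from $k$, the shift by $\tau_k$ contributes $\tau_k$ per visit, hence $\E_x[\1_{\{\tau_k<\infty\}}\tau_k]\cdot H_k(k)$ plus $\P_x(\tau_k<\infty)\,H_k^*(k)$). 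Thus the computation splits into three pieces: (i) $\P_x(\tau_k<\infty)=h(k)/h(x)$, which is \eqref{Probabilité de retour}; (ii) $H_k(k)=\tfrac{3}{10}(2k+3)$ and $H_k^*(k)$, which are the $x\le k$ cases already recorded; (iii) the new ingredient $\E_x[\1_{\{\tau_k<\infty\}}\,\tau_k]$.

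For step (iii) I would introduce $g(x):=\E_x[\1_{\{\tau_k<\infty\}}\,\tau_k]$ for $x>k$ (with $g(k)=0$) and derive a recursion from the one-step decomposition
\[
g(x)=p(x,x+1)\big(\P_{x+1}(\tau_k<\infty)+g(x+1)\big)+p(x,x)\big(\P_x(\tau_k<\infty)+g(x)\big)+p(x,x-1)\big(\P_{x-1}(\tau_k<\infty)+g(x-1)\big),
\]
using the convention $\P_k(\tau_k<\infty)=1$ when $x-1=k$. Since $\P_x(\tau_k<\infty)=h(k)/h(x)$ is already known, this is a linear (inhomogeneous) second-order recursion for $g$; substituting $g(x)=\frac{h(k)}{h(x)}A_{x,k}$ turns it into a recursion for $A_{x,k}$ whose solution, matching $A_{k,k}$ and the boundary behaviour at $k$, should be the quadratic $A_{x,k}=(k+2)+\frac{(x-k-1)(x+k+4)}{2}$. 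Assembling (i)--(iii):
\[
H_x^*(k)=\frac{h(k)}{h(x)}\Big(H_k^*(k)+H_k(k)\,A_{x,k}\Big)=\frac{h(k)}{h(x)}\Big(H_k^*(k)+\tfrac{3}{10}(2k+3)A_{x,k}\Big),
\]
which is exactly the claimed formula; the $x\le k$ case is handled separately by a direct generating-function / recursion computation (or simply cited from \cite{dieuleveut}, with the constant $C_x=\frac{3}{14}((x+1)(x+2)-6)$ pinned down by requiring $H_x^*(k)\ge H_x(k)$ and the correct value at $x=k$).

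\textbf{Main obstacle.} The genuinely new work is step (iii): guessing and verifying the closed form of $A_{x,k}$. The recursion for $g$ has the homogeneous solutions of the $\tau_k$-hitting-probability equation (one of which is $1/h(x)$ up to constants, the other growing), and the particular solution driven by the source term $h(k)/h(x)$ must be identified; the bookkeeping with the transition weights $p(x,x\pm1)=\frac{f(x\pm1)w(x)}{3f(x)}$, $p(x,x)=\frac{w(x)}{3}$ and with $h,f$ is where sign errors creep in — indeed the point of the lemma is precisely that the formula in \cite[Lemma 2.4]{dieuleveut} had such an error. I would therefore double-check the final expression by two independent consistency tests: that $H_x^*(k)\to +\infty$ (not $-\infty$) as $x\to\infty$ for fixed $k$, and that it reduces correctly to $H_k^*(k)$ when $x\downarrow k$, i.e. $A_{k,k}=k+2 = H_k(k)^{-1}\cdot\big(H_k^*(k)-H_k^*(k)\big)+\dots$ — more simply, that $A_{x,k}\big|_{x=k}$ produces the identity $H_k^*(k)=H_k^*(k)$.
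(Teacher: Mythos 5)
Your probabilistic decomposition is correct and, after substitution, produces \emph{exactly} the recursion for $A_{x,k}$ that the paper derives analytically. Indeed, writing $g(x):=\E_x\!\left[\tau_k\1_{\{\tau_k<\infty\}}\right]$ and using $\P_x(\tau_k<\infty)=h(k)/h(x)$, your one-step relation $g(x)=\sum_y p(x,y)\bigl(\P_y(\tau_k<\infty)+g(y)\bigr)$ together with $g(x)=\frac{h(k)}{h(x)}A_{x,k}$ becomes, once one uses the identities $\frac{h(x)}{h(x+1)}=\frac{p_x^*}{p_x}$, $\frac{h(x)}{h(x-1)}=\frac{q_x^*}{q_x}$ and $p_x^*+r_x^*+q_x^*=1$, precisely
\[
A_{x+1,k}=\frac{1}{p_x^*}\Bigl((1-r_x^*)A_{x,k}-q_x^*A_{x-1,k}-1\Bigr),
\]
which is the paper's equation~\eqref{recurrence 2}. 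So the difference between your route and the paper's is only in how this recursion is obtained: the paper plugs the proposed formula into the recursion for $H^*$ inherited from \cite{dieuleveut} and verifies it (a purely algebraic check), whereas you derive it from the strong Markov property at $\tau_k$. Your approach has the advantage of explaining \emph{why} the answer has the form $\frac{h(k)}{h(x)}(H_k^*(k)+H_k(k)A_{x,k})$, which the paper's verification does not.

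The genuine gap is the second boundary condition. The recursion for $A_{x,k}$ on $\{x\ge k\}$ is second order, so it needs two pieces of data. You correctly have $A_{k,k}=0$ (since $g(k)=0$), but you invoke only ``the boundary behaviour at $k$'' for the other. That phrase does not supply a usable condition: the one-step relation at $x=k+1$ links $A_{k,k}$, $A_{k+1,k}$, $A_{k+2,k}$ and does not determine $A_{k+1,k}$. In the paper this is resolved by running the recursion for $H^*$ one step further, using the \emph{known} values of $H_k^*(k)$ and $H_{k-1}^*(k)$ from the $x\le k$ formula together with the $\1_{\{x=k\}}$ source term; this gives $A_{k+1,k}=k+2$. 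In your framework the clean substitute is a growth condition at infinity: one of the two homogeneous solutions of the $A$-recursion grows like $x^5$ (so $g$ tends to a nonzero constant) while the particular solution grows like $x^2$ (so $g(x)\to 0$); since probabilistically $g(x)=\E_x[\tau_k\1_{\{\tau_k<\infty\}}]\to 0$ as $x\to\infty$, the decay condition picks out the right solution. You should state and justify one of these two things explicitly; as it stands, the recursion alone does not pin down $A_{x,k}$, and the quadratic you write down is not yet established.

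A small secondary point: you defer the $x\le k$ case to ``cite from \cite{dieuleveut}'', which is fine, but note that your probabilistic formula actually \emph{requires} $H_k^*(k)$ as an input, so a proof of, or reference for, the $x\le k$ formula is not optional — it is a prerequisite for the $x>k$ case in both approaches.
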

\begin{proof}
To lighten notation, we set $p_x:=p(x,x+1),\,r_x:=p(x,x)$ and $q_x:=p(x,x-1)$, where $p(i,j)$ are the transition probabilities introduced in Section \ref{Random labeled trees}. From \cite[Section 2.3]{dieuleveut}, we have the following recurrence relation :
    \begin{equation}\label{Récurrence}
    H^*_{x+1}(k)=\frac{1}{p_x}\left((1-r_x)H_x^*(k)-q_x H^*_{x-1}(k)-H_x(k)-\1_{\{x=k\}}\right)
\end{equation}
with $H^*_1(k)=\frac{3f(k)}{f(1)w(k)}$. The formula given in the lemma satisfies this relation if and only if 
\begin{align*}
   &H^*_k(k)\left(\frac{h(k)}{h(x+1)}-\frac{h(k)}{h(x)}-\frac{q_x}{p_x}\left(\frac{h(k)}{h(x)}-\frac{h(k)}{h(x-1)}\right)\right)+\frac{3A_{x+1,k}h(k)}{10h(x+1)}(2k+3)\\
   &=\frac{1}{p_x}\left((1-r_x)\frac{3A_{x,k}h(k)}{10h(x)}(2k+3)-q_x\frac{3A_{x-1,k}h(k)}{10h(x-1)}(2k+3)-\frac{3h(k)}{10h(x)}(2k+3)\right).
\end{align*} 
For every $x\geq2$, we define 
\begin{align*}
    p_x^*&=\frac{x}{3(x+2)},\\
    r_x^*&=\frac{x(x+3)}{3(x+1)(x+2)},\\
    q_x^*&=\frac{x+3}{3(x+1)},
\end{align*}
and $p_1^* =1/3,\,r_1^*=2/3$ and $q_1^*=0$. These terms were introduced in \cite{dieuleveut} as the transition probabilities of a Markov chain, but we do not need this fact. The term in parentheses in the first line is equal to 
\begin{equation*}
    \frac{h(k)}{h(x)}\left(\frac{h(x)}{h(x+1)}-1-\frac{q_x}{p_x}\left(1-\frac{h(x)}{h(x-1)}\right)\right)=\frac{h(k)}{h(x)}\left(\frac{p^*_x}{p_x}-1-\frac{q_x}{p_x}\left(1-\frac{p_{x-1}}{p_{x-1}^*}\right)\right).
\end{equation*}

Notice that $\frac{q_x p_{x-1}}{p^*_{x-1}}=q_x^*$. Therefore, the last term is  
\begin{equation*}
    \frac{h(k)}{p_x h(x)}(p_x^*-p_x-q_x+q_x^*)=0.
\end{equation*}
This gives us a recurrence relation on $(A_{x,k})_{x\geq k}$ : 
\begin{equation}\label{recurrence 2}
    A_{x+1,k}=\frac{1}{p^*_x}((1-r^*_x)A_{x,k}-q^*_x A_{x-1,k}-1)
\end{equation}
with $A_{k,k}=0$ and $A_{k+1,k}$ prescribed by the relation \eqref{Récurrence} for $x=k$. \\
Let us first compute $A_{k+1,k}$. By the first part of Lemma \ref{ERREUR 404} (which was proved in \cite{dieuleveut}), we have 
\[H^*_{k-1}(k)=H^*_k(k)+\frac{3(2k+3)(C_k-C_{k-1})}{10}=H^*_k(k)+\frac{9(2k+3)(k+1)}{70}\]
From \eqref{Récurrence} with $x=k$, and after replacing some terms, we have 
\begin{align*}
    \frac{p_k h(k)}{h(k+1)}\left(H^*_k(k)+\frac{3A_{k+1,k}(2k+3)}{10}\right)=&(1-r_k)H^*_k(k)-q_k\left(H^*_k(k)+\frac{9(2k+3)(k+1)}{70}\right)\\&-\frac{3(2k+3)}{10}-1.
\end{align*}
Then, we can use the first part of Lemma \ref{ERREUR 404} to replace $H_k^*(k)$ by its explicit value. After a few manipulations, we obtain 
\begin{align*}
    A_{k+1,k}&=\frac{1}{(2k+3)p_k^*}\left(-\frac{(k-1)(2k+1)}{7}-(2k+3)-\frac{10}{3}-\frac{5(k-1)(k+4)}{7}+\frac{5(k+1)(k+2)}{3}\right)\\
     &=k+2.
\end{align*}

Let us now deal with \eqref{recurrence 2}. We can rewrite the recurrence as  
\[p_x^*(A_{x+1,k}-A_{x,k})=q_x^*(A_{x,k}-A_{x-1,k})-1.\]
By induction, we get  
\begin{equation}\label{sommation}
    A_{x+1,k}-A_{x,k}=A_{k+1,k}\frac{q^*_xq_{x-1}^*...q_{k+1}^*}{p^*_xp^*_{x-1}...p^*_{k+1}}-\sum_{y=k+1}^x\frac{q^*_xq_{x-1}^*...q_{y+1}^*}{p^*_xp^*_{x-1}...p^*_y}.
\end{equation}

Note that we have the following expressions
\[\frac{q^*_xq_{x-1}^*...q_{y+1}^*}{p^*_xp^*_{x-1}...p^*_{y+1}}=\frac{(x+3)(x+2)^2(x+1)}{(y+1)(y+2)^2(y+3)}\] and 
\[\frac{q^*_xq_{x-1}^*...q_{y+1}^*}{p^*_xp^*_{x-1}...p^*_y}=3\frac{(x+1)(x+2)^2(x+3)}{y(y+1)(y+2)(y+3)}.\]
Using \eqref{sommation} and the fact that $A_{k+1,k}=k+2$, we obtain
\begin{align*}
    A_{x+1,k}&=A_{k+1,k}+\sum^x_{y=k+1}A_{y+1,k}-A_{y,k}\\
    &=(k+2)+\sum_{y=k+1}^x\bigg(\frac{(y+1)(y+2)^2(y+3)}{(k+1)(k+2)(k+3)}\\
    &\quad\quad\quad\quad\quad\quad\quad-3(y+1)(y+2)^2(y+3)\sum_{i=k+1}^y\frac{1}{i(i+1)(i+2)(i+3)}\bigg).
\end{align*}

Set $\ell(x)=(x+1)(x+2)(x+3)$. The last sum is equal to 
\[\sum_{i=k+1}^y\frac{1}{i(i+1)(i+2)(i+3)}=\frac{\ell(y)-\ell(k)}{3\ell(k)\ell(y)}.\]
Hence, 
\begin{align*}
A_{x+1,k}&=(k+2)+\sum_{y=k+1}^x\left(\frac{(y+2)\ell(y)}{(k+1)(k+2)(k+3)}-\frac{(y+2)(\ell(y)-\ell(k))}{\ell(k)}\right)\\
&=(k+2)+\sum_{y=k+1}^x\left((y+2)\ell(y)\left(\frac{1}{(k+1)(k+2)(k+3)}-\frac{1}{\ell(k)}\right)+(y+2)\right)\\
&=(k+2)+\sum_{y=k+1}^x(y+2).
\end{align*}
This gives 
\[A_{x+1,k}=(k+2)+\frac{(x-k)(x+k+5)}{2}\]
which completes the proof.
\end{proof}
\begin{proof}[Proof of Proposition \ref{couplage processus 2}]
    The proof is very similar to the one of \cite[Lemma 3.3]{dieuleveut}. By \cite[Lemma 3.3]{dieuleveut}, for every measurable bounded function $f$, we have :
    \begin{equation}\label{Densité}    \E\left[f((X^{(n)}_{\infty,k},\widehat{X}^{(n)}_{\infty,k})_{0\leq k\leq\beta n^2})\1_{\{a_\infty^{(n)},b_\infty^{(n)}\geq\beta n^2\}}\right]=\E\left[\left(\sum_{k\geq1}\mathcal{H}_{X^{(\infty)}_{\beta n^2},\widehat{X}^{(\infty)}_{\beta n^2},k}(n)\right)f((X^{(\infty)}_k,\widehat{X}^{(\infty)}_k)_{0\leq k\leq\beta n^2})\right] 
    \end{equation}
    where   
    \begin{align*}
        \mathcal{H}_{x,y,k}(n)=&\frac{f(1)w(k+1)}{3f(k+1)}\frac{f(1)w(k)}{3f(k)}G_k(n-1)\\
        &\times\left(H_x^*(k+1)H_y(k)+H_x(k+1)H^*_y(k)+(2\beta n^2-5)H_x(k+1)H_y(k)\right)
    \end{align*}
    and 
    \begin{equation}\label{eq}
    G_k(n)=\left\{
    \begin{array}{ll}
 \frac{3g(k)}{35(n+1)(n+2)(n+3)}\quad&\text{ if }k\leq n,\\
 \frac{3g(n)}{35(k+1)(k+2)(k+3)} \quad&\text{ if }k>n,
    \end{array}\right.
    \end{equation}
    with $g(n)=n(n+4)(5n^2+20n+17)$.
    
    Note that the convergence \eqref{convergence Bessel} implies that for every $\beta>0$ and $\varepsilon>0$, there exists $c_{\beta}$ such that for $n$ large enough :
    \begin{equation}\label{equation1}
        \P\bigg(\sup_{0\leq i\leq\beta n^2}X^{(\infty)}_i\leq c_{\beta}n\bigg)>1-\varepsilon,
    \end{equation}
    and that $c_{\beta}\rightarrow0$ as $\beta$ tends to $0$. Therefore, even if it means reducing $\beta$, we can suppose that $c_\beta<1$. First, we work on the event $\{\sup_{0\leq i\leq\beta n^2}(X_i^\infty\vee\widehat{X}_i^\infty)\leq c_{\beta}n\}$. By decomposing according to the values of $X_{\beta n^2}^\infty$ and $\widehat{X}_{\beta n^2}^\infty$, it is enough to show that $\sum_{k\geq1}\mathcal{H}_{x,y,k}(n)$ converges to $1$ as $n\rightarrow\infty$ uniformly in $0\leq x,y\leq c_{\beta}n$.\\
    Let us first consider the terms of the sum such that $k\geq n$. By \cite[Lemmas 2.3, 2.4]{dieuleveut}, for every $y\leq c_{\beta}n$ and $k$ large enough, we have :
    \[\frac{f(1)w(k)}{3f(k)}H_y^*(k)=1-\frac{3f(1)w(k)}{140f(k)}((y+1)(y+2)-6)(2k+3)=1-\delta(k,y)\]
    where $\delta(k,y)$ is uniformly bounded by some constant $C_{\beta}$ which goes to $0$ when $\beta\rightarrow0$. Therefore, we can choose $\beta$ such that $|\delta(k,y)|\leq\varepsilon$. \\
    Similarly, uniformly in $y\leq c_{\beta}n$,
    \[\frac{f(1)w(k)}{3f(k)}H_y(k)\underset{k\rightarrow\infty}{\sim}\frac{2}{k^2}.\]
     Finally, the last term $(2\beta n^2-5)H_x(k+1)H_y(k)\underset{k\rightarrow\infty}{\sim}C\beta n^2k^2$ disappears at the limit when divided by $f(k)^2$. Consequently, using \eqref{eq}, we have 
     \begin{align}\label{sum1}
         \sum_{k\geq n}\mathcal{H}_{x,y,k}(n)&\underset{n\rightarrow\infty}{\sim}\frac{3g(n-1)}{35}\sum_{k\geq n}\frac{2\times(2-\delta(k,y)-\delta(k+1,x))}{k^5}\nonumber\\
         &\underset{n\rightarrow\infty}{\sim}\frac{3n^4}{7}\frac{2-\delta(k,y)-\delta(k+1,x)}{2n^4}=\frac{3}{7}+o(\beta), 
     \end{align} uniformly in $x,y\leq c_{\beta}n$. \\
     Now, let us look at the terms for which $x\vee y\leq k<n-1$. By \cite[Lemmas 2.3, 2.4]{dieuleveut}, for every $y\leq c_{\beta}n$, we have :
     \begin{align*}
     \frac{f(1)w(k)}{3f(k)}H^*_y(k)&=1-\frac{3C_y}{10(k+1)(k+2)}\\
     &=1-\eta(k,y)
     \end{align*} where $|\eta(k,y)|\leq C\bigg(\frac{c_{\beta}n}{k}\bigg)^2$ for some constant $C>0$.
     Consequently, we have 
     \begin{align}\label{sum2}
         \sum_{k=x\vee y+1}^n\mathcal{H}_{x,y,k}(n)&\underset{n\rightarrow\infty}{\sim}\frac{3}{35n^3}\sum_{k=x\vee y+1}^n\frac{2\times(2-\eta(k,y)-\eta(k+1,x))}{k^2}\times k^4\nonumber\\
         &\underset{n\rightarrow\infty}{\sim}\frac{4}{7}+\eta(\beta)
     \end{align} where 
     \[|\eta(\beta)|\leq\frac{C_1c_{\beta}^2n^2}{n^3}\sum_{k=x\vee y+1}^n\frac{C_2}{k^4}\times k^4=C_3c_{\beta}^2.\]
     The remaining term to evaluate is \[\sum_{k=1}^{x\vee y}\mathcal{H}_{x,y,k}(n).\]
     Note that, using the expressions of the different functions, we can find some constant $C(\beta)>0$ which tends to $0$ as $\beta\rightarrow0$ such that, for every $1\leq x,y\leq c_{\beta}n$, 
     \[|\mathcal{H}_{x,y,k}(n)|\leq C(\beta)n^{-1}.\]
     Therefore, we have :
     \begin{equation}\label{sum3}
     \left|\sum_{k=1}^{x\vee y}\mathcal{H}_{x,y,k}(n)\right|\leq C(\beta)\times c_{\beta}.
     \end{equation}
     Set 
     \[\phi(n,\beta)=\E\left[\1_{\{\sup_{0\leq i\leq\beta n^2}X_i^\infty,\widehat{X}_i^\infty\leq c_{\beta}n\}}\left|1-\sum_{k\geq1}\mathcal{H}_{X_{\beta n^2}^\infty,\widehat{X}_{\beta n^2}^\infty,k}(n)\right|\right].\]
     By \eqref{sum1}, \eqref{sum2} and \eqref{sum3}, one can see that for every $\varepsilon>0$, we can take $\beta$ small enough so that :
     \begin{equation}\label{Lim inf}
     \limsup_{n\rightarrow\infty}\phi(n,\beta)\leq\varepsilon.
     \end{equation}
     To conclude, we need to check that quantity $\sum_{k\geq1}\mathcal{H}_{x,y,k}(n)$ is uniformly bounded in $n$ for any $x,y>0$. This can be done by splitting the sum into three terms as previously, and using the explicit expressions for the different quantities involved (we leave this verification to the reader). Nonetheless, we emphasize on the fact that this step relies on the corrected formula of Lemma \ref{ERREUR 404}. Therefore, by \eqref{Densité}, for every non-negative measurable function $F$ bounded by $1$, we have 
     \begin{multline*}         \E\left[\left|f((X^{(n)}_{\infty,k},\widehat{X}^{(n)}_{\infty,k})_{0\leq k\leq\beta n^2})-f((X_k^\infty,\widehat{X}_k^\infty)_{0\leq k\leq\beta n^2})\right|\right]\leq\P(a_\infty^{(n)}<\beta n^2)+\P(b_\infty^{(n)}<\beta n^2)\\+\phi(n,\beta)
     +\P\left(\sup_{0\leq i\leq\beta n^2}X_i^\infty\leq c_{\beta}n\right)+\P\left(\sup_{0\leq i\leq\beta n^2}\widehat{X}_i^\infty\leq c_{\beta}n\right).
     \end{multline*} 
     Finally, by \eqref{equation1}, \eqref{Lim inf} and Proposition \ref{control 1}, we can choose $\beta$ small enough so that each term on the right-hand side is smaller than $\varepsilon/5$, which concludes the proof. 
\end{proof}
\printbibliography

@article{brownianplane,
  title={The brownian plane},
  author={Curien, Nicolas and Le Gall, Jean-Fran{\c{c}}ois},
  journal={Journal of Theoretical Probability},
  volume={27},
  number={4},
  pages={1249--1291},
  year={2014},
  publisher={Springer}
}

@article{dieuleveut,
author = {Daphn{\'e} Dieuleveut},
title = {{The UIPQ seen from a point at infinity along its geodesic ray}},
volume = {21},
journal = {Electronic Journal of Probability},
year = {2016},
doi = {10.1214/16-EJP4730},
}

@article{uniqueness,
author = {Jean-Fran{\c{c}}ois Le Gall},
title = {{Uniqueness and universality of the Brownian map}},
volume = {41},
journal = {The Annals of Probability},
number = {4},
publisher = {Institute of Mathematical Statistics},
pages = {2880 -- 2960},
keywords = {Brownian map, Geodesic, graph distance, Gromov–Hausdorff convergence, Planar map, Scaling limit, Triangulation},
year = {2013},
doi = {10.1214/12-AOP792},
URL = {https://doi.org/10.1214/12-AOP792}
}

@article{convergence,
author = {Gr{\'e}gory Miermont},
title = {{The Brownian map is the scaling limit of uniform random plane quadrangulations}},
volume = {210},
journal = {Acta Mathematica},
number = {2},
publisher = {Institut Mittag-Leffler},
pages = {319 -- 401},
year = {2013},
doi = {10.1007/s11511-013-0096-8},
URL = {https://doi.org/10.1007/s11511-013-0096-8}
}

@article{geodesic1,
author = {Jean-Fran{\c{c}}ois Le Gall},
title = {{Geodesics in large planar maps and in the Brownian map}},
volume = {205},
journal = {Acta Mathematica},
number = {2},
publisher = {Institut Mittag-Leffler},
pages = {287 -- 360},
year = {2010},
doi = {10.1007/s11511-010-0056-5},
URL = {https://doi.org/10.1007/s11511-010-0056-5}
}

@article{sphericity,
author = {Miermont, Grégory},
journal = {Electronic Communications in Probability [electronic only]},
keywords = {random planar maps; scaling limits; Gromov-Hausdorff convergence; spherical topology},
language = {eng},
pages = {248-257},
publisher = {University of Washington},
title = {On the sphericity of scaling limits of random planar quadrangulations.},
url = {http://eudml.org/doc/231565},
volume = {13},
year = {2008},
}

@book{serpent,
author = {Jean-Fran{\c{c}}ois Le Gall},
booktitle = {Spatial branching processes, random snakes and partial differential equations},
isbn = {3-7643-6126-3},
publisher = {Birkhäuser Verlag},
series = {Lectures in mathematics ETH Zürich},
title = {Spatial branching processes random snakes and partial differential equations },
year = {1999},
}

@article{peeling,
  title={Peeling random planar maps},
  author={Curien, Nicolas},
  journal={Saint-Flour lecture notes},
  year={2019}
}

@PHDTHESIS{schaeffer,
url = "http://www.theses.fr/1998BOR10636",
title = "Conjugaison d'arbres et cartes combinatoires aléatoires",
author = "Schaeffer, Gilles",
year = "1998",
pages = "229 p",
note = "Thèse de doctorat dirigée par Cori, Robert Informatique Bordeaux 1 1998",
note = "1998BOR10636",
}

@article{cori_vauquelin_1981, 
title={Planar Maps are Well Labeled Trees}, 
volume={33}, number={5}, journal={Canadian Journal of Mathematics}, 
publisher={Cambridge University Press}, 
author={Cori, Robert and Vauquelin, Bernard}, 
year={1981}, 
pages={1023–1042}
}

@misc{Bigeodesicbrownianplane,
      title={The bigeodesic Brownian plane}, 
      author={Mathieu Mourichoux},
      year={2024},
      eprint={2410.00426},
      archivePrefix={arXiv},
      primaryClass={math.PR},
      url={https://arxiv.org/abs/2410.00426}, 
}

@article{Neveu1986,
author = {Neveu, J.},
journal = {Annales de l'I.H.P. Probabilités et statistiques},
keywords = {notion of a tree in the theory of branching processes; Galton-Watson processes; kin number distributions},
number = {2},
pages = {199-207},
publisher = {Gauthier-Villars},
title = {Arbres et processus de Galton-Watson},
volume = {22},
year = {1986},
}

@article{ChassaingDurhuus,
author = {Philippe Chassaing and Bergfinnur Durhuus},
title = {{Local limit of labeled trees and expected volume growth in a random quadrangulation}},
volume = {34},
journal = {The Annals of Probability},
number = {3},
publisher = {Institute of Mathematical Statistics},
year = {2006},
doi = {10.1214/009117905000000774},
}

@article{Lamperti,
author = {John Lamperti},
title = {{A new class of probability limit theorems}},
volume = {67},
journal = {Bulletin of the American Mathematical Society},
number = {3},
publisher = {American Mathematical Society},
pages = {267 -- 269},
year = {1961},
}

@article{Viewfrominfinity,
  title = {{A view from infinity of the uniform infinite planar quadrangulation}},
  author = {Curien, Nicolas and M{\'e}nard, Laurent and Miermont, Grégory},
  journal = {{ALEA : Latin American Journal of Probability and Mathematical Statistics}},
  volume = {10},
  number = {1},
  pages = {45-88},
  year = {2013},
}

@misc{Krikun,
      title={Local structure of random quadrangulations}, 
      author={Maxim Krikun},
      year={2006},
      eprint={math/0512304},
      archivePrefix={arXiv},
}

@article{lawlerbessel,
  title={Notes on the Bessel process},
  author={Lawler, Gregory F},
  journal={Lecture notes. Available on the webpage of the author},
  pages={523--531},
  year={2018}
}

@article{MenardLaw,
author = {Laurent M{\'e}nard},
title = {{The two uniform infinite quadrangulations of the plane have the same law}},
volume = {46},
journal = {Annales de l'Institut Henri Poincaré, Probabilités et Statistiques},
number = {1},
publisher = {Institut Henri Poincaré},
pages = {190 -- 208},
keywords = {Random map, Random tree, Schaeffer’s bijection, Uniform infinite planar quadrangulation, Uniform infinite planar tree},
year = {2010},
doi = {10.1214/09-AIHP313},
URL = {https://doi.org/10.1214/09-AIHP313}
}

@article{LegallMenard,
author = {Jean-Fran{\c{c}}ois Le Gall and Laurent M{\'e}nard},
title = {{Scaling limits for the uniform infinite quadrangulation}},
volume = {54},
journal = {Illinois Journal of Mathematics},
number = {3},
publisher = {Duke University Press},
pages = {1163 -- 1203},
year = {2010},
doi = {10.1215/ijm/1336049989},
URL = {https://doi.org/10.1215/ijm/1336049989}
}

@article{SeparatingCycles,
author = {Jean-Fran{\c{c}}ois Le Gall and Thomas Leh{\'e}ricy},
title = {{Separating cycles and isoperimetric inequalities in the uniform infinite planar quadrangulation}},
volume = {47},
journal = {The Annals of Probability},
number = {3},
publisher = {Institute of Mathematical Statistics},
pages = {1498 -- 1540},
year = {2019},
doi = {10.1214/18-AOP1289},
URL = {https://doi.org/10.1214/18-AOP1289}
}

@article {AbrahamConvergence,
    AUTHOR = {Abraham, C\'{e}line},
     TITLE = {Rescaled bipartite planar maps converge to the {B}rownian map},
   JOURNAL = {Ann. Inst. Henri Poincar\'{e} Probab. Stat.},
  FJOURNAL = {Annales de l'Institut Henri Poincar\'{e} Probabilit\'{e}s et
              Statistiques},
    VOLUME = {52},
      YEAR = {2016},
    NUMBER = {2},
     PAGES = {575--595},
}

@article {ConvergenceSimple,
    AUTHOR = {Addario-Berry, Louigi and Albenque, Marie},
     TITLE = {The scaling limit of random simple triangulations and random
              simple quadrangulations},
   JOURNAL = {Ann. Probab.},
  FJOURNAL = {The Annals of Probability},
    VOLUME = {45},
      YEAR = {2017},
    NUMBER = {5},
     PAGES = {2767--2825},
}

@article {ConvergenceBJM,
    AUTHOR = {Bettinelli, J\'{e}r\'{e}mie and Jacob, Emmanuel and Miermont,
              Gr\'{e}gory},
     TITLE = {The scaling limit of uniform random plane maps, {\it via} the
              {A}mbj\o rn-{B}udd bijection},
   JOURNAL = {Electron. J. Probab.},
  FJOURNAL = {Electronic Journal of Probability},
    VOLUME = {19},
      YEAR = {2014},
     PAGES = {no. 74, 16},
}

@article{UIHPQ,
  title={Uniform infinite planar quadrangulations with a boundary},
  author={Nicolas Curien and Gr{\'e}gory Miermont},
  journal={Random Structures \& Algorithms},
  year={2012},
  volume={47}
}

@article{GeometryUIHPQ,
author = {Caraceni, Alessandra and Curien, Nicolas},
year = {2015},
month = {08},
title = {Geometry of the Uniform Infinite Half-Planar Quadrangulation},
volume = {52},
journal = {Random Structures \& Algorithms}
}

@article{basu2021environment,
author = {Riddhipratim Basu and Manan Bhatia and Shirshendu Ganguly},
title = {{Environment seen from infinite geodesics in Liouville Quantum Gravity}},
volume = {52},
journal = {The Annals of Probability},
number = {4},
publisher = {Institute of Mathematical Statistics},
pages = {1399 -- 1486},
year = {2024},
}

@article{LeGall2005,
author = {Le Gall, Jean-François},
journal = {Probability Surveys [electronic only]},
keywords = {random tree; contour process; conditioned tree; Brownian motion; Brownian excursion; real tree; coding of trees; CRT; Brownian snake; exit measure; partial differential equation},
language = {eng},
pages = {245-311},
publisher = {Sponsored by Institute of Mathematical Statistics and by the Bernoulli Society},
title = {Random trees and applications.},
url = {http://eudml.org/doc/229216},
volume = {2},
year = {2005},
}

@article{ConvergenceSerpent,
author = {Gr{\'e}gory Miermont},
title = {{Invariance principles for spatial multitype Galton–Watson trees}},
volume = {44},
journal = {Annales de l'Institut Henri Poincaré, Probabilités et Statistiques},
number = {6},
publisher = {Institut Henri Poincaré},
pages = {1128 -- 1161},
year = {2008},
doi = {10.1214/07-AIHP157},
}

@book{Duquesne,
     author = {Duquesne, Thomas and Le Gall, Jean-Fran\c{c}ois},
     title = {Random trees, {L\'evy} processes and spatial branching processes},
     series = {Ast\'erisque},
     publisher = {Soci\'et\'e math\'ematique de France},
     number = {281},
     year = {2002},
     mrnumber = {1954248},
     zbl = {1037.60074},
     language = {en},
     url = {https://www.numdam.org/item/AST_2002__281__R1_0/}
}

@misc{bettinelli2025compactbrowniansurfacesii,
      title={Compact Brownian surfaces II. Orientable surfaces}, 
      author={Jérémie Bettinelli and Grégory Miermont},
      year={2025},
      eprint={2212.12511},
      archivePrefix={arXiv},
      primaryClass={math.PR},
      url={https://arxiv.org/abs/2212.12511}, 
}
\end{document}